\title[Spectral flow]{On spectral flow for operator algebras}
\author{P. W. Ng}
\address{Department of Mathematics\\
University of Louisiana at Lafayette\\
P. O. Box 43568\\
Lafayette, Louisiana\\
70504-3568\\
USA}
\email{png@louisiana.edu}
\author{Arindam Sutradhar}
\address{Helencha South, Helencha Colony\\
North 24 Parganas\\
West Bengal, India\\
743270}
\email{arindam1050@gmail.com}
\author{Cangyuan Wang}
\address{School of Mathematics\\
East China University of Science and Technology\\
130 Meilong Road\\
Shanghai, 200237\\
P. R. China}
\email{cy.wang@ecust.edu.cn,  cy.wang2020@outlook.com}
\newtheorem{thm}{Theorem}[section]
\newtheorem{lem}[thm]{Lemma}
\newtheorem{prop}[thm]{Proposition}
\newtheorem{df}[thm]{Definition}
\newtheorem{rmk}[thm]{Remark}
\newtheorem{ex}[thm]{Example}
\newcommand{\A}{\mathcal{A}}
\newcommand{\B}{\mathcal{B}}
\newcommand{\C}{\mathcal{C}}
\newcommand{\D}{\mathcal{D}}
\newcommand{\E}{\mathcal{E}}
\newcommand{\W}{\mathcal{W}}
\newcommand{\Z}{\mathcal{Z}}
\newcommand{\F}{\mathcal{F}}
\newcommand{\Ff}{\textit{Fred}}
\newcommand{\J}{\mathcal{J}}  
\newcommand{\K}{\mathcal{K}}
\newcommand{\M}{\mathbb{M}}
\newcommand{\hh}{\mathcal{H}}
\newcommand{\Mul}{\mathcal{M}}
\newcommand{\ProI}{\makebox{\textit{Proj}}_{\infty}}
\newcommand{\Pg}{\mathfrak{P}}
\numberwithin{equation}{section}
\begin{document}

\maketitle  

\section{Introduction}

Spectral flow began as   an integer-valued
homotopy invariant for paths of self-adjoint Fredholm
operators on a separable infinite dimensional Hilbert space $l_2$, which
was first studied by Atiyah and Lusztig (unpublished) 
and first
appeared in print  
in 
\cite{AtiyahPatodiSinger1} and   
\cite{AtiyahPatodiSinger3}.   
Roughly speaking, it counted the \emph{net number} of eigenvalues that changed
sign, i.e., passed through zero in the positive direction,
as one moved along the path of self-adjoint Fredholm operators. 
Among other things, spectral flow is closely related to the eta invariant
term in the Atiyah--Patodi--Singer index theorem (e.g., see 
\cite{AtiyahPatodiSinger1} and \cite{AtiyahPatodiSinger3}).  
In another related direction, the spectral flow of a closed path
of self-adjoint
elliptic operators on a compact manifold can be computed by a 
topological index, giving an interesting special case of the Atiyah--Singer index theorem for families of elliptic operators (\cite{AtiyahPatodiSinger3} 
Theorem 7.4).  We note that this result (\cite{AtiyahPatodiSinger3} Theorem
7.4) is analogous to the original
Atiyah--Singer index theorem (\cite{AtiyahSingerFirst}, \cite{AtiyahSinger1}) 
but with the Fredholm index (or analytic index) of a single elliptic operator
replaced with the spectral flow of a closed path of self-adjoint elliptic
operators, leading to an analogy between Fredholm index and spectral flow.
This analogy   
is one of the themes of the present paper.      

Ever since the early work of Atiyah, Lusztig, Patodi and Singer, 
there has been an enormous literature on spectral flow and its 
applications which
are beyond the scope of this work, and this is true even for the case of
spectral flow for paths of operators over a Hilbert space $l_2$ 
 (e.g., see the ``Glimpse
at the Literature" in the last section of the lecture notes
\cite{Waterstraat}; for another example,  see \cite{Georgescu}). 
Our analytic point of view is that just
as the index of a 
Fredholm operator has had many fruitful and important
generalizations to general operator algebras (beyond $B(l_2)$),  
generalizing the spectral flow of a path of
self-adjoint Fredholm
operators would also be of great interest to operator theory. 
In this paper, we are mainly focused on operator theory,
and developing  interesting (and general) results about spectral flow
for bounded linear operators living in a C*-algebra.  
We focus on the general Hilbert C*-module  case, for which the current 
literature
is considerably thinner.

We now discuss, in a nutshell, some parts of  
the history of the Hilbert C*-module case, with an
emphasis on our point of view -- though, necessarily,
we will briefly move out of our framework and 
mention unbounded operators. Also, before moving forward, we mention that for
the convenience of the reader, a very short summary of the definition of 
spectral flow in \cite{Wu}, for the bounded case,
can be found in the Appendix Subsection
\ref{subsection:DZWLPDefinition}.
 
Dai and Zhang extended the \cite{AtiyahPatodiSinger3} notion of
spectral flow to the setting of a path of families of 
Dirac operators, where the families are
parameterized by a fixed compact manifold $X$ -- i.e., this is a
path of Dirac operators over the standard Hilbert module 
$\hh_{C(X)}$
(\cite{DaiZhang}). 
In their development, Dai and Zhang used the notion of \emph{spectral section}
introduced by Melrose and Piazza \cite{MelrosePiazza}; and Dai and Zhang also
invented the notion of \emph{difference element} or \emph{difference class}
 of two spectral sections
of a given Dirac operator, which is a special 
case of the notion of \emph{essential codimension}, and they used this concept
to define spectral flow for the aforementioned path of Dirac operators 
over $\hh_{C(X)}$.      
Wu generalized the concept of spectral section to the noncommutative case
(e.g., see Definition \ref{df:SpectralCutSection}),
and showed that
 Dai and Zhang's definitions of difference class and spectral flow work
to give a definition of spectral flow for paths of
 self-adjoint regular operators
with compact resolvents over a standard Hilbert module $\hh_{\A}$, 
where
$\A$ is a unital C*-algebra (\cite{Wu}, \cite{LeichtnamPiazzaJFA}, 
\cite{WahlSpectralFlow}).   
(Of course, 
such self-adjoint regular operators with compact resolvents
have to be unbounded, so Wu also gives
a separate exposition for the case of bounded self-adjoint Fredholm operators
in the last section of his preprint.)
Unfortunately, Wu's interesting preprint \cite{Wu} was never published, but,
happily, 
Leichtnam and Piazza (\cite{LeichtnamPiazzaJFA}) gave a nice exposition of a 
significant portion of Wu's work, including the nontrivial
filling in of some missing proofs (e.g., see \cite{LeichtnamPiazzaJFA} page 363,
third paragraph in the proof of Theorem 3).  We note that the paper
\cite{LeichtnamPiazzaJFA} contains many other very interesting results.
Wahl removed the compact resolvent condition in the works of 
\cite{LeichtnamPiazzaJFA} and \cite{Wu}, to give a definition of spectral flow for paths of  self-adjoint
regular Fredholm operators over a standard Hilbert module $\hh_{\A}$, where
$\A$ is a unital C*-algebra (\cite{WahlSpectralFlow}).   
In process, Wahl weakened the definition of spectral section (thus making
it more flexible and easier to work with;  see the
second part of Remark \ref{rmk:SpectralSectionTerminology}), and replaced the notion of 
difference class with the \emph{relative index of a Fredholm pair of projections} which,
again, is a special case of essential codimension. The interesting paper
\cite{WahlSpectralFlow} contains many other interesting items, including 
axiomatizations
for spectral flow as well as for the relative
index of a Fredholm pair of projections.  We also mention that the paper \cite{WahlSpectralFlow} also contains an interesting and important
 alternative definition
of spectral flow whose hypotheses are almost as general as those in our present paper (see \cite{WahlSpectralFlow} Section 4) --  but this second approach, like that of
\cite{PereraPaper} and \cite{PereraThesis}, is abstract and nonconstructive, using  Bott periodicity, and does not capture the original 
concrete intuition of \cite{AtiyahPatodiSinger3}  that spectral flow measures the ``net mass" of the part of the spectrum that passes through zero. 
Our present 
paper uses many ideas of \cite{WahlSpectralFlow}.

Since the present paper is called ``Spectral flow 
for \emph{operator algebras}", we here briefly mention two interesting directions which are not presently part of our main development.  Firstly, in his interesting work, Perera
defines spectral flow for a path of bounded self-adjoint Fredholm operators
in a type $II_{\infty}$ factor $\Mul$ with separable predual
(\cite{PereraPaper}, \cite{PereraThesis})
as well as in a multiplier algebra $\Mul(\A \otimes \K)$ where $\A$ is a unital
C*-algebra. (Recall that $\Mul(\A \otimes \K) \cong \mathbb{B}(\hh_{\A})$,
and $\A \otimes \K$ is called the \emph{canonical ideal}. In fact,
$\hh_{\A} \cong \hh_{\A \otimes \K}$.)  The approach of
Perera is abstract, nonconstructive, using Bott periodicity, and does not contain the original
intuition of \cite{AtiyahPatodiSinger3} where spectral flow measures the 
``net mass" of the spectrum that is crossing zero.  We will nonetheless 
return to the interesting work of Perera in a later part of this paper (e.g.,
see
Section \ref{sect:PreSFIsomorphism}).   
Secondly, John Phillips and his
collaborators have defined spectral flow in the context of semifinite von
Neumann algebras 
%with separable predual
(e.g., see \cite{PhillipsVictoria}, \cite{BenPhillEtAl} and
\cite{Georgescu}).  This version is ``analytic" and does indeed capture the
original intuition of \cite{AtiyahPatodiSinger3}, and while this is a very
interesting topic, we will not be returning to it substantially in the present paper, though
we were certainly
 inspired by reading the interesting papers of Phillips and his coauthors.

We now return to the discussion of the Hilbert module case.
We note that the present theory of spectral flow in the Hilbert module
case (\cite{DaiZhang}, \cite{Wu}, \cite{LeichtnamPiazzaJFA}, 
\cite{WahlSpectralFlow})  
requires that the canonical ideal have an 
approximate unit consisting of a sequence of projections. In fact,
the definition of difference class  itself already necessitates 
that the canonical ideal have a nonzero projection which in itself
is already quite nontrivial.  (Similar for the relative index for a Fredholm
pair of projections;  see \cite{WahlSpectralFlow} 3.2;  see also 
the beginning of Subsection \ref{subsection:DZWLPWDefinition} for more issues.)   From the perspective
of modern C*-algebra theory, this is a very strong assumption.  
For example, in the
Elliott program for classifying simple nuclear C*-algebras, many
interesting simple stably    
projectionless C*-algebras which exhaust the K theory invariant have
been constructed (\cite{GongLinRange}). (Recall that a
 C*-algebra $\D$ is \emph{stably projectionless} if the
only projection in the stabilization $\D \otimes \K$ is zero.) 

\iffalse
%In connection with the previous paragraph, we note that the relative
%index for
%a Fredholm pair of projections,  in \cite{WahlSpectralFlow} 3.2, also
%requires that the canonical ideal have an approximate unit consisting
%of projections.  Moreover, the proof of the
% axiomatization of this relative index
%appeals to spectral flow, 
%which in turn is defined using the relative index (see 
%\cite{WahlSpectralFlow} Proposition 3.11). 
%All this is remedied with the notion of essential
%codimension of a Fredholm pair of projections in an arbitrary
%separable stable C*-algebra (\cite{LoreauxNgSutradharEC}).  Among other
%things, the paper \cite{LoreauxNgSutradharEC} presents an axiomatization
%of essential codimension whose proof is elementary.  Since the paper
%\cite{LoreauxNgSutradharEC} is not yet published, we will be using
%the (equivalent) KK theory definition of essential codimension, which
%is still general, and we will be able to quickly deduce basic properties of
%essential codimension from the standard properties of the KK functor. 
\fi

In still another direction, previous 
definitions of spectral flow actually require 
the existence of spectral sections, which is a very 
strong assumption that is shown, under various conditions, to
be  equivalent to the
vanishing of a certain K theory index (e.g., see \cite{MelrosePiazza} 
Proposition 1, \cite{DaiZhang} Proposition 1.3, \cite{Wu} Theorem 2.2, and 
\cite{LeichtnamPiazzaJFA} Theorem 3). 
We will essentially
use a weakening of the notion of spectral section (the so-called
``generalized spectral sections") introduced in \cite{WahlSpectralFlow} 
Definition 3.4 (see Remark \ref{rmk:SpectralSectionTerminology} (2)).    
It turns out that the existence of generalized 
spectral sections is equivalent to a projection lifting problem (Theorem
\ref{thm:ProjectionLiftingCondition} (1) $\Leftrightarrow$ (3)), an  
important problem in operator theory, which also   
gives a condition that is both very strong and also not so easy to directly
check.  In this paper, instead of talking about ``the existence of spectral
sections", we prefer to talk about ``lifting projections", because the
latter terminology 
is more concrete and familar to analysts.
Projection lifting in multiplier algebras is connected to many fundamental
and interesting
problems in operator theory, including the Brown--Pedersen--Zhang conjectures,
the Weyl--von Neumann theorem, real rank zero, K theory conditions, and  
more (e.g.,
see Subsection \ref{subsect:LiftingProjectionsCondition}, including
Remark \ref{rmk:LiftingProjections}). As we will see (e.g., Theorem 
\ref{thm:ProjectionLiftingCondition} the last paragraph), 
perhaps one reason for the smoothness
and success of the theory of spectral flow in $\mathbb{B}(l_2)$ and more
general semifinite von Neumann algebras (e.g.,
see \cite{BenPhillEtAl}) is that von Neumann algebras have real rank zero.
Projection lifting implies the vanishing index condition, but the converse is
not true  
(see Theorem \ref{thm:ProjectionLiftingCondition} and Example
\ref{ex:VanishingK1NoProjectionLift}).  
In the earlier works, existence of spectral sections was shown to be equivalent
to the vanishing index condition, only because there were extra assumptions on 
the relevant operators (e.g., see \cite{LeichtnamPiazzaJFA} Theorem 2;
also, real rank zero multiplier algebras
will also guarantee the equivalence -- see
Theorem \ref{thm:ProjectionLiftingCondition} the last paragraph).

In this paper, our first goal is to provide a clean presentation of the 
theory of spectral flow for (norm-) continuous paths of self-adjoint Fredholm
operators, in a multiplier algebra, with invertible endpoints.  Our approach does not require that
the canonical ideal have any projections other than zero.  
Our most general definition of spectral flow does not \emph{explicitly}
require the projection lifting hypothesis,
but we require that the
endpoints of the path be invertible operators.  Nonetheless,
the hypotheses are relatively simple and are easy to check and easy to
work with.   A simple homotopy projection-lifting result will then imply 
the needed projection-lifting (see Lemma \ref{lem:HomotopyProjectionLift}).
This result follows from the Homotopy Lifting Theorem of \cite{BlackadarHomotopyLifting}, but 
for the convenience of the reader, we provide the short, easy proof for our special
case (Lemma \ref{lem:HomotopyProjectionLift}).  This also again illustrates how many difficulties are 
ultimately operator theoretic in nature.
All this is the content of section 2. (See Definition \ref{df:SpectralFlow}
for the main definition.)  
Our definition of spectral flow uses a quite general and
 independent definition of essential
codimension which does not require that the canonical ideal have a nonzero
projection (e.g., see the beginning of Subsection
\ref{subsection:DZWLPWDefinition}, Definition \ref{df:EssentialCodimension},
and Proposition \ref{prop:ECProperties}). 
Essential codimension and its properties are extensively discussed in 
\cite{LoreauxNgSutradharEC}.  However, 
 since this paper is not yet published, we use an equivalent KK definition of essential codimension, and the basic 
properties of essential codimension follow immediately from KK theory.  However,
the reader will not be expected to have extensive knowledge of KK theory, and
we provide the definition of the generalized homomorphism picture of KK at
the beginning of Subsection \ref{subsection:DZWLPWDefinition}. 
(See also the end of this section for the prerequisites for reading this
paper.)

We mention that for later substantial results, we introduce stronger hypotheses than what is mentioned above
(e.g., in the Spectral Flow Isomorphism Theorem of 
Subsection \ref{subsect:SFIsomorphism} 
(see Theorem \ref{thm:GeneralSFIsomorphism}) and  in the axiomatization of spectral flow
in Subsection \ref{subsect:Axiomatization} (see Definition 
\ref{df:Pg})).
Nonethess, these stronger hypotheses are still clean and simple, 
and are relatively easy to check and easy to work with.

In Section \ref{sec:FunctorialAxioms}, we prove functorial properties for spectral flow.              
Later on, in Subsection \ref{subsect:Axiomatization}, we will prove that some of these properties give
an axiomatization of spectral flow, under appropriate hypotheses (as mentioned
in previous paragraph).

In Section \ref{sect:PreSFIsomorphism} and Subsection
\ref{subsect:SFIsomorphism}, we prove the Spectral Flow Isomorphism Theorem,
which says that, under appropriate hypotheses,
 spectral flow induces a group isomorphism
$$\pi_1(\Ff_{SA, \infty}) \cong K_0(\B).$$  This is one of the main results
of this paper, and it generalizes a result of \cite{AtiyahPatodiSinger3} (see
\cite{AtiyahPatodiSinger3} (7.3);  see also, \cite{Phillips1996} the last
theorem, which is stated on page 464, and see also
\cite{PhillipsVictoria} Theorem 2.9 for the von Neumann factor case).   
In the process of proving the Spectral Flow Isomorphism Theorem, 
we also show that for every invertible $X \in \Ff_{SA, \infty}(\Mul(\B))$, 
$\Omega_X \Ff_{SA, \infty} (\Mul(\B))$ is a classifying space 
for the functor $K \mapsto 
K_0(C(K) \otimes \B)$,  
generalizing a result from \cite{AtiyahSingerSkew} and which is in itself
of interest.
We use many ideas from the type $II_{\infty}$ factor case of
\cite{PereraThesis} and \cite{PereraPaper}, with substantial modifications  (see this paper
Section
\ref{sect:PreSFIsomorphism}; see also \cite{AtiyahPatodiSinger3} Section 3
after page 81 and Section 7 page 94, \cite{AtiyahSingerSkew},
and \cite{Phillips1996} the proof of the last theorem for some earlier sources).
 
In Subsection \ref{subsect:Axiomatization},
we use the Spectral Flow Isomorphism Theorem to provide an axiomatization of
spectral flow, under appropriate hypotheses.
Our axiomatization follows the strategy of the proof
of \cite{LeschUniqueness} Theorem 5.4,
which is a ``folklore result" for the case of $\mathbb{B}(l_2)$.

In the last section, which is the Appendix, we provide some helpful 
results for the convenience of the reader.  We provide some miscellaneous
computations in operator theory, which are not as well known to beginning
operator theory students as they should be.  We provide some homotopy
results which are used in this paper, and which go beyond a first course,
but for which we cannot find a good reference. For the convenience of 
the reader, we also provide a quick summary of the approach to spectral flow
in \cite{Wu}.   We also fix some notation for Fredholm operators and Fredholm index in a multiplier algebra.

As mentioned previously, in this paper, we are mainly focused on operator
theory and bounded linear operators living in a C*-algebra. 
This paper is directed towards operator theory students with a basic background
that can be found in, for example, \cite{DavidsonBook}, \cite{WeggeOlsen}
or the parts of \cite{LinBook} that do not involve KK theory.  KK theory
is only used in several places in this paper.  There is a substantial amount of
homotopy theory, but it is mostly at the level of a first year graduate course in
most North American universities.
We provide full references to the standard literature for both KK and homotopy theory,
and for the one case where the homotopy theory goes beyond a first course, and for
which we could not find a good reference, we write out all the details  (e.g.,
see Subsection \ref{subsect:KappaIsHomotopyEquivalence}).   
Thus, a beginning student of analysis, with the above prerequisites, who is willing 
to read some definitions and accept some statements from the standard literature, should
be able to smoothly read this paper.\\
    
The authors thank Gabor Szabo for pointing out the result \cite{BlackadarHomotopyLifting}, which led to a considerable simplification
of the general definition of spectral flow.\\

\noindent {\bf  Acknowledgements  }\\

Part of this research was done while P. W. Ng and C. Wang were visiting the Research Center for Operator
Algebras at East China Normal University, Shanghai, China.  P. W. Ng was supported by funds from the Research Center for
Operator Algebras as well as a travel grant from the University of Louisiana at Lafayette.  C. Wang was partially supported
by the National Natural Science Foundation of China (grant number 12201216).\\

\section{The definition of spectral flow}

The most general definition of spectral flow, presented in this paper, is
in Definition \ref{df:SpectralFlow}.  It is defined for an arbitrary
norm-continuous path of self-adjoint Fredholm operators, with
invertible endpoints, in a multiplier
algebra $\Mul(\B)$ of an arbitrary separable stable C*-algebra $\B$ (see Subsection \ref{subsect:GeneralizedFredholm}).
In particular, the C*-algebra $\B$ need not have nonzero projections, and
the hypotheses are easy to check.\\  

\subsection{The DZWLPW definition} \label{subsection:DZWLPWDefinition} 
This subsection presents what is essentially a slight modification
of the definition of spectral flow in
\cite{WahlSpectralFlow}, but specialized to the bounded case, and
with the more general notion of essential 
codimension replacing the ``relative index of a Fredholm pair of projections"
in \cite{WahlSpectralFlow}, and without the need for the canonical ideal to 
have a nonzero projection  (see Definition 
\ref{df:WahlSFDefinition}).  For the benefit of the reader, we provide 
complete proofs.  In the title of this subsection, ``DZWLPW" abbreviates
``Dai--Zhang--Wu--Leichtnam--Piazza--Wahl".    

We begin by introducing  the definition of essential codimension, which generalizes the codimension of a subprojection of a bigger projection.  This concept was introduced in the groundbreaking paper of Brown--Douglas--Fillmore while proving functorial properties of their homology $Ext(X)$ (\cite{BDF}).
This notion has since found many applications, including in the theory of spectral flow (e.g., \cite{BenPhillEtAl}), giving K-theory characterizations of projection lifting for certain corona algebras (e.g., \cite{LeeProjLift}, 
\cite{BrownLee}),
as well as explaining the mysterious integers appearing in Kadison's characterization of the diagonal of a projection  (e,g., see 
\cite{KadisonPyth}, \cite{KaftalLoreaux}), and other places.

One of the weaknesses of the previous theories (while they were quite interesting and important)
 was that they used special cases of essential  codimension, which required
that the  canonical ideal have an approximate unit consisting of projections.  This includes the \emph{difference class}
of \cite{DaiZhang}, \cite{Wu} and \cite{LeichtnamPiazzaJFA} as well as the \emph{relative index of a Fredholm pair of
projections} of \cite{WahlSpectralFlow}.  Moreover, the proof of the axiomatization of the relative index appeals to spectral flow, which in turn
is defined using the relative index  (see \cite{WahlSpectralFlow} Proposition 3.11).  All this is remedied with the notion of essential codimension
of a Fredholm pair of projections, where the canonical ideal is an arbitrary separable stable C*-algebra (\cite{LoreauxNgSutradharEC}).
Among other things, the paper \cite{LoreauxNgSutradharEC} presents an axiomatization of essential codimension whose proof
is elementary.

We reemphasize
 that one of our improvements on the theory of spectral flow uses the fact that the notion  of essential codimension is a very general index for 
a Fredholm pair of projection (as opposed to the special cases used by 
\cite{DaiZhang}, \cite{Wu}, \cite{LeichtnamPiazzaJFA}, \cite{WahlSpectralFlow};
e.g., see Subsection \ref{subsection:DZWLPDefinition}) and 
does not require that the canonical ideal has an approximate unit consisting of projections. In fact,
essential codimension does not even require that the canonical ideal has
one nonzero projection.   

While there are concrete definitions of essential codimension (e.g.,
see \cite{LoreauxNgSutradharEC}), since the paper \cite{LoreauxNgSutradharEC}
has not yet been published, following \cite{LeeProjLift} (see 
also \cite{LoreauxNgSutradhar}), 
we introduce the (equivalent) KK theory version, and properties of 
essential codimension will immediately follow from properties of the 
KK functor.  Nonetheless, this paper is meant to be elementary without 
many technical prerequisites.  Hence, 
we introduce the KK theory version in a way so that those who are not 
familiar with KK theory will still be able to understand and work 
through this paper, as long as they read through the definitions and 
are willing to accept a few statements without proof (for which we will 
provide full references from the standard literature).  
In fact, the notion of essential codimension is a basic example of and 
an excellent introduction to $KK^0$ (e.g., \cite{LoreauxNgSutradhar},
\cite{LoreauxNgSutradharEC}).   To relate the KK version to more concrete, 
intuitive versions, please see \cite{LoreauxNgSutradharEC}.

Recall that 
for a nonunital C*-algebra $\B$, $\Mul(\B)$ denotes the multiplier 
algebra of $\B$ and $\C(\B) =_{df} \Mul(\B)/\B$ denotes the corona algebra
of $\B$.  We also let $\pi_{\B} : \Mul(\B) \rightarrow \C(\B)$ denote the usual
quotient map.  Often, we drop the subscript ``$\B$" 
and write ``$\pi$" instead of ``$\pi_{\B}$".

To introduce the definition of essential codimension, we need  to use the generalized homomorphism picture of KK theory, for which essential codimension will be a concrete example.   For more information, please see
\cite{JensenThomsenBook} Chapter 4 (see also \cite{BlackadarBook}).\\

Let $\A,\B$ be C*-algebras with $\A$ separable and $\B$ $\sigma$-unital and stable. A 
\emph{$KK_h(\A,\B)$-cycle} 
is a pair $(\phi,\psi)$ of *-homomorphisms $\phi,\psi:\A \to \Mul(\B)$ such 
that $\phi(a)-\psi(a) \in \B$ for all $a \in \A$. Recall that 
two $KK_h(\A,\B)$-cycles $(\phi_0,\psi_0)$ and $(\phi_1,\psi_1)$ are \emph{homotopic} 
if there exists a path $\{(\phi_t ,\psi_t)\}_{t \in [0,1]}$ of $KK_h(\A,\B)$-cycles such that for all $a \in \A$,

\begin{enumerate}
\item the maps $[0,1] \rightarrow \Mul(\B)$ given by $t \mapsto \phi_t(a)$ 
and $t \mapsto \psi_t(a)$ are both strictly continuous, and  
\item the map $[0,1] \to \B: t \mapsto \phi_t(a)-\psi_t(a)$ 
is norm continuous.
\end{enumerate}

For any $KK_h(\A,\B)$-cycle $(\phi,\psi)$, we let $[\phi, \psi]$ or $[\phi, \psi]_{KK}$
 denote its homotopy class. Then we may define $KK^0(\A,\B)$ or $KK(\A,\B)$ by
\begin{equation} \label{equdf:KK_h}
KK(\A,\B)=_{df} \{[\phi,\psi]:(\phi,\psi) \text{ is a } KK_h(\A,\B)-\text{cycle}.\}
\end{equation} 

With an appropriate addition operation, $KK(\A,\B)$ is an abelian group.  See 
\cite{JensenThomsenBook} Chapter 4 for more details. \\

\begin{df}  \label{df:EssentialCodimension}
	Let $\B$ be a separable, stable C*-algebra and $P,Q \in \Mul(\B)$ be projections such that $P-Q \in \B$.
	
	The (KK) essential codimension of $Q$ in $P$ is defined as
	\[[P:Q] =_{df} [\phi,\psi]_{KK} \in KK^0(\mathbb{C},\B) \cong K_0(B)\]
	where $\phi,\psi: \mathbb{C} \to \Mul(\B)$ are the *-homomorphisms 
given by $\phi(1)=P$ and $\psi(1)=Q$.\\  
\end{df}

For the convenience of the reader, we here now list some basic properties of essential codimension, all of which immediately follow from the properties of KK
(see \cite{JensenThomsenBook}).

\begin{prop}  \label{prop:ECProperties}
Let $\B$ be a separable stable C*-algebra.
Let $P, Q, R, P', Q' \in \Mul(\B)$ be projections such that $P - Q, Q - R, P' - Q' \in \B$.
Then the following statements are true:
\begin{enumerate}
\item (EC1) Let $\D$ be a separable stable C*-algebra and $\phi : \Mul(\B) \rightarrow \Mul(\D)$ a strictly continuous 
*-homomorphism for which $\phi(\B) \subseteq \D$.

Then $[\phi(P): \phi(Q)] = \phi_*([P:Q])$ in $K_0(\D)$.
\item (EC2) Suppose that there exists a path $\{ (P_t, Q_t) \}_{t \in [0,1]}$ where
$P_t, Q_t \in \Mul(\B)$ are projections and $P_t - Q_t \in \B$ for all $t \in [0,1]$,
the two maps $[0,1] \rightarrow \Mul(\B)$ given by $t \mapsto P_t$ and $t \mapsto Q_t$ are strictly continuous, and 
the map $[0,1] \rightarrow \B : t \mapsto P_t - Q_t$ is norm-continuous.

Then $[P_0: Q_0] = [P_1 :Q_1]$.
\item (EC3)  If $P, Q \in \B$ then $[P:Q] = [P] - [Q] \in K_0(\B)$.
\item (EC4) $[P:P]=0$.
\item (EC5)  $[P:Q] = -[Q:P]$.
\item (EC6) If $P \perp P'$ and $Q \perp Q'$ then
$[P + P' : Q + Q'] = [P:Q] + [P': Q']$.
\item (EC7) If $S \in \Mul(\B)$ is an isometry then $[P:Q] = [SPS^* : SQS^*]$.
\item (EC8) $[P:Q] + [Q:R] = [P:R]$.
\end{enumerate}
\end{prop}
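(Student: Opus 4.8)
The strategy is to reduce everything to standard properties of the $KK$ functor in the generalized homomorphism picture, as recalled before Definition \ref{df:EssentialCodimension}, together with the fact that for a separable stable C*-algebra $\B$ one has a natural isomorphism $KK^0(\mathbb{C},\B)\cong K_0(\B)$. Throughout, given projections $P,Q\in\Mul(\B)$ with $P-Q\in\B$, I write $(\phi_P,\phi_Q)$ for the $KK_h(\mathbb{C},\B)$-cycle with $\phi_P(1)=P$, $\phi_Q(1)=Q$, so that $[P:Q]=[\phi_P,\phi_Q]_{KK}$ by definition.

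First I would dispose of (EC1): a strictly continuous *-homomorphism $\phi:\Mul(\B)\to\Mul(\D)$ with $\phi(\B)\subseteq\D$ sends a $KK_h(\mathbb{C},\B)$-cycle $(\phi_P,\phi_Q)$ to the $KK_h(\mathbb{C},\D)$-cycle $(\phi\circ\phi_P,\phi\circ\phi_Q)=(\phi_{\phi(P)},\phi_{\phi(Q)})$, and this construction respects homotopies (strict continuity of $\phi$ turns a strictly continuous path into a strictly continuous path, and $\phi|_\B$ continuous turns a norm-continuous path in $\B$ into one in $\D$); the induced map on $KK^0(\mathbb{C},-)\cong K_0(-)$ is exactly $\phi_*$. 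Then (EC2) is literally the statement that a path of pairs $(P_t,Q_t)$ satisfying the stated continuity conditions is a homotopy of $KK_h(\mathbb{C},\B)$-cycles, so $[\phi_{P_0},\phi_{Q_0}]=[\phi_{P_1},\phi_{Q_1}]$. For (EC3), when $P,Q\in\B$ the cycle $(\phi_P,\phi_Q)$ is a ``degenerate minus degenerate'' cycle whose class under the identification $KK^0(\mathbb{C},\B)\cong K_0(\B)$ is $[P]-[Q]$ by the explicit description of that isomorphism; (EC4) is the special case $P=Q\in\B$ is not needed --- even for $P\in\Mul(\B)$, $(\phi_P,\phi_P)$ is a degenerate cycle and hence $[P:P]=0$ --- and (EC5) follows because swapping the two homomorphisms in a $KK$-cycle gives the additive inverse in $KK$.

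For the additivity statements (EC6) and (EC8), I would use that addition in $KK$ is implemented by ``block sum'' via an isometry pair in $\Mul(\B)$ absorbing two copies of $\B$ into $\B$ (here stability of $\B$ is used), combined with the invariance property (EC7). Property (EC7) itself says that conjugating both entries of a cycle by a fixed isometry $S\in\Mul(\B)$ does not change the $KK$-class: indeed $(S\phi_P S^*,S\phi_Q S^*)$ is homotopic to $(\phi_P,\phi_Q)$ by a standard Murray--von Neumann / rotation argument (choose isometries $S_t$ with $S_0=1$, $S_1=S$ up to the relevant equivalence, using stability), and this is exactly the homotopy invariance of the Kasparov product with the canonical element of $KK(\B,\B)$ induced by $S$. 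Given (EC7), (EC6) reads: writing $P+P'$ as $S_1 P S_1^* + S_2 P' S_2^*$ for isometries $S_1,S_2$ with $S_1 S_1^* + S_2 S_2^*\le 1$ and $S_1^* S_2=0$, the cycle for $(P+P':Q+Q')$ is the block sum of the cycles for $(P:Q)$ and $(P':Q')$, hence represents the sum in $KK^0(\mathbb{C},\B)$. Finally (EC8) follows from (EC6), (EC4) and (EC5): $[P:R]=[P:R]+[R:Q]+[Q:R]-[R:Q]$, and more directly, $[P:Q]+[Q:R]$ is represented by the block sum $(\phi_P\oplus\phi_Q,\phi_Q\oplus\phi_R)$, which is homotopic via an obvious rotation in the middle two coordinates to $(\phi_P\oplus\phi_Q,\phi_R\oplus\phi_Q)$, whose class is $[P:R]+[Q:Q]=[P:R]$ by (EC4).

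The main obstacle --- and the only place where real care is needed --- is making the homotopy-invariance and additivity arguments for (EC6), (EC7), (EC8) precise at the level of strictly-continuous paths of multipliers: one must exhibit the connecting homotopies (rotation paths, isometry interpolations) inside $\Mul(\B)$ with the correct continuity (strict continuity of the multiplier-valued maps, norm continuity of the $\B$-valued difference), and verify that the block-sum operation as I have set it up agrees with the chosen addition on $KK(\mathbb{C},\B)$. All of this is standard in the generalized homomorphism picture (see \cite{JensenThomsenBook} Chapter 4), so once the bookkeeping with isometries and stability is in place, each of (EC1)--(EC8) is immediate; I would simply cite \cite{JensenThomsenBook} for the general facts and spell out only the identification of block sum with addition.
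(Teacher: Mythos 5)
Your proposal is correct and takes essentially the same approach as the paper: the paper's proof simply cites the standard properties of the $KK$ functor (\cite{JensenThomsenBook}, \cite{Lee13}, \cite{LoreauxNgSutradharEC}) without spelling out the reductions, which is exactly what you sketch in more detail. One minor quibble: in your discussion of (EC3) the phrase ``degenerate minus degenerate'' is not quite standard usage (a degenerate $KK_h$-cycle has $\phi=\psi$, which $(\phi_P,\phi_Q)$ with $P\neq Q$ is not), but the substantive claim — that the explicit isomorphism $KK^0(\mathbb{C},\B)\cong K_0(\B)$ sends this class to $[P]-[Q]$ — is correct.
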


\begin{proof}  These properties follow immediately from the properties of KK 
(see \cite{JensenThomsenBook}).  
See also, for example, \cite{Lee13} Lemma 2.3 and \cite{LoreauxNgSutradharEC}. 
\end{proof}

We note that some of the functorial properties listed in Proposition \ref{prop:ECProperties} actually give an axiomatization of essential codimension.  For this and other interesting aspects, please see 
\cite{LoreauxNgSutradharEC}.\\

We begin with the definition (Definition \ref{df:WahlSFDefinition})
 of spectral flow which is essentially a slight modification
of the definition from  \cite{WahlSpectralFlow}
(which was inspired by the earlier works of \cite{DaiZhang}, \cite{Wu}
and 
\cite{LeichtnamPiazzaJFA}), for the bounded case, and we call this definition
the \emph{DZWLPW definition} of spectral flow.  
This will motivate the
simplifying results and more general approach
that we will introduce afterwards. 
We note that one way that the DZWLPW definition (Definition
\ref{df:WahlSFDefinition}) differs from the original approach in
\cite{WahlSpectralFlow} is that in Definition \ref{df:WahlSFDefinition}, 
the notion of essential codimension (which is more general then the ``relative
index of a Fredholm pair of projections" in \cite{WahlSpectralFlow})
is used, and  
Definition \ref{df:WahlSFDefinition} does not require that the canonical ideal has a nonzero projection (unlike the definition in \cite{WahlSpectralFlow}).\\

We let $1_{\geq 0} : \mathbb{R} \rightarrow [0,1]$ denote the
\emph{indicator function} on $[0,\infty)$, i.e., for all $t \in \mathbb{R}$, 
\begin{equation}  \label{equdf:IndicatorFunction}
1_{\geq 0}(t) =_{df} 
\begin{cases} 
	1 & t \in [0, \infty) \\
	0 & t \in (-\infty, 0)
\end{cases}  
\end{equation}
Recall that for an invertible element of a unital C*-algebra, its spectrum is bounded
away from $0$.  Hence, for an invertible self-adjoint element $x$ of a unital
C*-algebra, $1_{\geq 0}(x)$ is the support projection of the positive part
of $x$.\\

We provide a preliminary lemma and some preliminary definitions first, before
getting to the DZWLPW Definition (\ref{df:WahlSFDefinition}).

\begin{lem}
	Let $\B$ be a separable stable C*-algebra, and let
	$A_1, A_2 \in \Mul(\B)$ be invertible self-adjoint elements such that
	$A_1 - A_2 \in \B$.
	
	Then $1_{\geq 0} (A_1) - 1_{\geq 0}(A_2) \in \B$.
	\label{lem:FredholmA1A2} 
\end{lem}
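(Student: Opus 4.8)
The plan is to reduce the statement to a spectral/functional-calculus argument combined with the fact that $\B$ is an ideal. Since $A_1$ and $A_2$ are invertible self-adjoint elements, their spectra are each bounded away from $0$; hence there exists $\delta > 0$ such that $\sigma(A_i) \cap (-\delta, \delta) = \emptyset$ for $i = 1, 2$. On this set, $1_{\geq 0}$ agrees with a continuous function $f : \mathbb{R} \to [0,1]$ which is identically $1$ on $[\delta, \infty)$, identically $0$ on $(-\infty, -\delta]$, and (say) affine in between. Because $f$ is continuous and agrees with $1_{\geq 0}$ on a neighborhood of $\sigma(A_1) \cup \sigma(A_2)$, we have $f(A_i) = 1_{\geq 0}(A_i)$ for $i = 1, 2$ by the spectral mapping / functional calculus. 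So it suffices to show $f(A_1) - f(A_2) \in \B$.

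First I would handle this for $f$ a polynomial: if $f(t) = \sum_{k=0}^n c_k t^k$, then $f(A_1) - f(A_2) = \sum_{k=1}^n c_k (A_1^k - A_2^k)$, and each $A_1^k - A_2^k$ lies in $\B$ by the telescoping identity $A_1^k - A_2^k = \sum_{j=0}^{k-1} A_1^j (A_1 - A_2) A_2^{k-1-j}$, using that $A_1 - A_2 \in \B$ and $\B$ is a (two-sided, norm-closed) ideal of $\Mul(\B)$. For general continuous $f$ on a compact set containing $\sigma(A_1) \cup \sigma(A_2)$, I would approximate $f$ uniformly by polynomials $p_m$ via Stone--Weierstrass; then $p_m(A_i) \to f(A_i)$ in norm for $i = 1,2$, so $p_m(A_1) - p_m(A_2) \to f(A_1) - f(A_2)$ in norm, and since each $p_m(A_1) - p_m(A_2) \in \B$ and $\B$ is norm-closed, the limit $f(A_1) - f(A_2)$ lies in $\B$ as well. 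Combining with the previous paragraph, $1_{\geq 0}(A_1) - 1_{\geq 0}(A_2) = f(A_1) - f(A_2) \in \B$, as desired.

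I do not expect a serious obstacle here; the only point requiring care is the observation that continuous functional calculus commutes with the quotient map $\pi_{\B} : \Mul(\B) \to \C(\B)$ — i.e., that $\pi_{\B}(f(A_i)) = f(\pi_{\B}(A_i))$ — which is what ultimately makes $1_{\geq 0}(A_1) - 1_{\geq 0}(A_2)$ vanish in the corona. Indeed, an alternative and perhaps cleaner phrasing of the whole argument is: in $\C(\B)$, the elements $\pi_{\B}(A_1)$ and $\pi_{\B}(A_2)$ are equal (since $A_1 - A_2 \in \B$) and invertible self-adjoint, so $\pi_{\B}(1_{\geq 0}(A_1)) = 1_{\geq 0}(\pi_{\B}(A_1)) = 1_{\geq 0}(\pi_{\B}(A_2)) = \pi_{\B}(1_{\geq 0}(A_2))$, whence $1_{\geq 0}(A_1) - 1_{\geq 0}(A_2) \in \ker \pi_{\B} = \B$. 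One should also note in passing that $1_{\geq 0}(A_i)$ is genuinely a projection in $\Mul(\B)$ (it is the spectral projection of $A_i$ for $[0,\infty)$, which is clopen in $\sigma(A_i)$ by invertibility), so the statement makes sense; this is exactly the remark already recorded in the text just before the lemma.
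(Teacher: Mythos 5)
Your proposal is correct, and your ``cleaner alternative phrasing'' at the end is exactly the paper's proof: pass to $\C(\B)$, note $\pi(A_1)=\pi(A_2)$, use that continuous functional calculus commutes with the quotient map to conclude $\pi(1_{\geq 0}(A_1)) = 1_{\geq 0}(\pi(A_1)) = 1_{\geq 0}(\pi(A_2)) = \pi(1_{\geq 0}(A_2))$, and hence the difference lies in $\ker\pi = \B$. Your longer first argument (polynomial approximation plus the telescoping identity and closedness of the ideal) is just a self-contained derivation of the same underlying fact that functional calculus descends to the quotient, so the two are not really distinct proofs.
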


\begin{proof}  Since $A_1 - A_2 \in \B$, $\pi(A_1)=\pi(A_2)$.
	Since $A_1, A_2, \pi(A_1), \pi(A_2)$ are all invertible,
all of $1_{\geq 0}(A_1), 1_{\geq 0}(A_2), 1_{\geq 0}(\pi(A_1)), 1_{\geq 0}(\pi(A_2))$ are 
defined.  From the above, and the continuous functional calculus,
$$\pi(1_{\geq 0}(A_1))= 1_{\geq 0}(\pi(A_1)) = 1_{\geq 0}(\pi(A_2)) = \pi(1_{\geq 0}(A_2)).$$
	Therefore, $1_{\geq 0}(A_1)-1_{\geq 0}(A_2) \in \B$.\\ 
\end{proof}

Recall that for a nonunital C*-algebra $B$, we say that an element $X \in \Mul(\B)$
is \emph{Fredholm} if $\pi_{\B}(X)$ is invertible in $\C(\B)$.  Often,
we also call $\pi_{\B}(X)$ a Fredholm operator.  See the Appendix Subsection
\ref{subsect:GeneralizedFredholm} for a short summary of the relevant details
including the definiton of  
\emph{generalized Fredholm index}, which we will use in this paper.

\begin{df}  \label{df:WahlSFPrelim}
	Let $\B$ be a separable stable C*-algebra.
	\begin{enumerate}
		\item For any self-adjoint (necessarily Fredholm) element $A \in \Mul(\B)$,
		a \emph{trivializing operator} for $A$ (if it exists) is a self-adjoint element $b \in \B$
		such that $A + b$ is invertible. 
		\item Suppose that $A \in \Mul(\B)$ is self-adjoint and $b_1, b_2 \in \B_{SA}$
		are trivializing operators for $A$. We denote
		$$Ind(A, b_1, b_2) =_{df} [1_{\geq 0}(A + b_1): 1_{\geq 0}(A + b_2)] \in K_0(\B).$$  
		(See Lemma \ref{lem:FredholmA1A2}.  We also remind the reader that $[:]$ is our notation for essential codimension.)  
		\item Let $I$ be a compact metric space, and
		let $\{ A_t \}_{t \in I}$ be a norm-continuous family of self-adjoint
		(necessarily Fredholm) operators in $\Mul(\B)$. Let $\{ b_t \}_{t \in I}$
		be a norm-continuous family of self-adjoint elements of $\B$.
		We say that $\{ b_t \}_{t \in I}$ is a \emph{trivializing family} for 
		$\{ A_t \}_{t \in I}$ if viewing $\{ A_t \}_{t \in I}$ as an element of
		$\Mul(C(I) \otimes \B)$ and viewing $\{ b_t \}_{t \in I}$ as an element
		of $C(I) \otimes \B$, $\{ b_t \}_{t \in I}$ is a trivializing operator
		for $\{ A_t \}_{t \in I}$.  
		\item Let $I$ be a compact metric space, let $\{ A_t \}_{t \in I}$ be a norm-continuous family of self-adjoint
		(necessarily Fredholm) operators in $\Mul(\B)$. We say that there are
		\emph{local trivializing families} for $\{ A_t \}_{t \in I}$ if for every
		$t_0 \in I$, there exists a compact neighbourhood $U \ni t_0$ such that
		$\{ A_t \}_{t \in U}$ has a trivializing family.  
	\end{enumerate}
\end{df}

\begin{rmk} \label{rmk:SpectralSectionTerminology}
\begin{enumerate}
\item[(a)]  
 For a self-adjoint Fredholm operator in a multiplier algebra, a trivializing operator need not exist.  
In fact, the existence of trivializing operators is a strong, 
restrictive condition which is related to fundamental and interesting 
questions in operator theory, among them a projection lifting
condition. As a consequence, trivializing families and 
local trivializing families also need not exist. 
See Subsection \ref{subsect:LiftingProjectionsCondition} for more discussion
of this issue.  
\item[(b)] Let $\B$ be a separable stable C*-algebra.  
If $A \in \Mul(\B)_{SA}$ and $b \in \B_{SA}$ is a trivializing operator
for $A$ (i.e., $A + b$ is invertible; i.e., we are in the context of 
Definition \ref{df:WahlSFPrelim} (1)), then \cite{WahlSpectralFlow} Definition
3.4 calls the projection $1_{\geq 0}(A + b)$ a \emph{generalized spectral 
section} or \emph{spectral section}.  This generalizes the definition of
spectral section used in \cite{MelrosePiazza}, \cite{DaiZhang}, \cite{Wu} 
and \cite{LeichtnamPiazzaJFA} (see Definition \ref{df:SpectralCutSection}). 
Existence of spectral sections is a crucial
assumption in previous treatments of spectral flow.
 However, we will not use this terminology
much, preferring instead to discuss projection lifting, which is more concrete
operator theoretic terminology that is more familiar to analysts (both the
terminology and the phenomena).  
See Subsecton \ref{subsect:LiftingProjectionsCondition}. 
\end{enumerate}
\end{rmk}

We now give the DZWLPW definition of spectral flow, which has, as hypothesis,
the strong 
assumption of the existence of local trivializing families:

\begin{df}  \label{df:WahlSFDefinition}
Let $I =_{df} [0,1]$. 
Let $\B$ be a separable stable C*-algebra, and suppose that 
$\{ A_t \}_{t \in I}$ is a norm-continuous path of  self-adjoint Fredholm
operators in $\Mul(\B)$ for which local trivializing families exist.  
Let 
$$0 = t_0 < t_1 < ... < t_n = 1$$
be a partition of $I$ such that for all $0 \leq j \leq n-1$,
$\{ b^j_t \}_{t \in [t_j, t_{j+1}]}$ is a trivializing family for
$\{ A_t \}_{t \in [t_j, t_{j+1}]}$.  
For $k = 0,1$, let $b'_k \in \B_{SA}$ be a trivializing operator
for $A_k$.

We define the \emph{spectral flow}
\begin{equation}\label{equ:WahlSfDefinition}  Sf(\{ A_t \}_{t \in I}, b'_0, b'_1) =_{df} 
Ind(A_0, b'_0, b^0_0) + Ind(A_1, b^{n-1}_1, b'_1) + \sum_{j=1}^{n-1}
Ind(A_{t_j}, b^{j-1}_{t_j}, b^j_{t_j}) \end{equation}    
\noindent which is an element of $K_0(\B)$. 
\end{df}

\begin{rmk} \label{rmk:SfIntuitionAndOrientation1}
Firstly, given the definition of essential codimension, we see that 
Definition \ref{df:WahlSFDefinition} roughly measures the 
``net mass"  of the part of the 
spectrum which passes through zero in the negative direction, as one moves along the continuous path of self-adjoint Fredholm operators.  
(See, for example,  the intuitive discussion of the commuting
case for a Fredholm pair of projections in
 \cite{BenPhillEtAl} two paragraphs before Theorem 2.1.)   This captures the original intuition of \cite{AtiyahPatodiSinger3} for spectral flow.  
Note though that in the original \cite{AtiyahPatodiSinger3} 
spectral flow, as well as many subsequent versions of spectral flow
(e.g., see \cite{BenPhillEtAl} and the references therein), the spectral
flow measures the ``net mass" of the part of the
 spectrum which passes through zero in the
\emph{positive direction}.  Thus our version of spectral flow
has \emph{opposite orientation} to that of Atiyah--Patodi--Singer
and many subsequent versions of spectral flow. But this is just a sign change,
so it is not so important.      

Also, it is not hard to see that in Definition \ref{df:WahlSFDefinition}, 
the compact interval $[0,1]$ can be replaced by any compact subinterval 
of the real line,  and this will be true for all the results in this paper.  
However, for simplicity, thoughout this paper, we will mostly (though not 
always) stick with the interval $[0,1]$.

Next, we will prove that Definition \ref{df:WahlSFDefinition} is well-defined,
i.e., independent of choice of local trivializing families and partition,
in Proposition \ref{prop:WahlSFWellDefined}.

Finally, we will later on show that in Definition 
\ref{df:WahlSFDefinition}, \textbf{we do not need to partition the interval
$I$}, i.e., under the hypotheses of Definition \ref{df:WahlSFDefinition}
including 
that local trivializing families exist, \textbf{we will be able to find a
global trivializing family for $\{ A_t \}_{t \in I}$.}  
(See Proposition \ref{prop:WahlSFSingleCell}.)
  However, in practice, it will often be easier to work
with a partition that naturally arises from a given norm-continuous path
of self-adjoint Fredholm operators.  Also, our axiomatization of spectral
flow will eventually need the Axiom of Concatenation (which increases
cells in a partition). Thus, we prefer to keep the flexibility
afforded by a general partition.\\   
\end{rmk}

To prove that Definition \ref{df:WahlSFDefinition} is well-defined, we need some additional technical results, which are short exercises in basic operator theory:

\begin{lem}
Let $\A$ be a unital C*-algebra and $x \in \A$ a self-adjoint invertible. 
Then for every $\epsilon > 0$, we can choose $\delta >0$ so that the following statement is true:

If $y \in \A$ is a self-adjoint element and $\| x - y \| < \delta$ then $y$ is invertible and
$$\| 1_{\geq 0}(x) - 1_{\geq 0}(y) \| < \epsilon.$$ 

\label{lem:Aug520232am}
\end{lem}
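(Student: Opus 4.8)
The plan is to exploit the spectral gap of $x$ at $0$ in order to replace the discontinuous function $1_{\geq 0}$ by a genuinely continuous function on the spectra that actually occur, and then to invoke the norm‑continuity of the continuous functional calculus on self‑adjoint elements.

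First I would set $c = \|x^{-1}\|^{-1} > 0$, so that $\sigma(x) \subseteq \mathbb{R} \setminus (-c, c)$, and fix once and for all a continuous function $f : \mathbb{R} \to [0,1]$ with $f \equiv 0$ on $(-\infty, -c/2]$, $f \equiv 1$ on $[c/2, \infty)$, and $f$ affine on $[-c/2, c/2]$. Since $f$ agrees with $1_{\geq 0}$ on the closed set $\mathbb{R} \setminus (-c/2, c/2)$, which contains $\sigma(x)$, the continuous functional calculus gives $f(x) = 1_{\geq 0}(x)$. Next I would record the two standard perturbation facts for self‑adjoint (hence normal) elements: if $\|x - y\| < \delta$ with $\delta < c$, then $y = x\bigl(1 + x^{-1}(y - x)\bigr)$ is invertible, and $\sigma(y) \subseteq \{\, t \in \mathbb{R} : \mathrm{dist}(t, \sigma(x)) \le \|x - y\| \,\}$ — the latter because for $\lambda$ with $\mathrm{dist}(\lambda, \sigma(x)) > \|x - y\|$ one has $\|(x - \lambda)^{-1}\| = \mathrm{dist}(\lambda, \sigma(x))^{-1} < \|x - y\|^{-1}$, so that $y - \lambda = (x - \lambda)\bigl(1 + (x - \lambda)^{-1}(y - x)\bigr)$ is invertible. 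Consequently, if we insist that $\delta < c/2$, then $\sigma(y) \subseteq \mathbb{R} \setminus (-c/2, c/2)$ as well, so the same reasoning yields $f(y) = 1_{\geq 0}(y)$; moreover $\|y\| \le \|x\| + c/2 =: M$, so every spectrum in sight lies in $[-M, M]$.

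It then remains to make $\|1_{\geq 0}(x) - 1_{\geq 0}(y)\| = \|f(x) - f(y)\|$ small, which is just norm‑continuity of $z \mapsto f(z)$: given $\epsilon > 0$, use Stone--Weierstrass to choose a polynomial $p$ with $\sup_{[-M,M]} |f - p| < \epsilon/3$, so that $\|f(x) - p(x)\| \le \epsilon/3$ and $\|f(y) - p(y)\| \le \epsilon/3$; and from $x^k - y^k = \sum_{i=0}^{k-1} x^i (x - y) y^{k-1-i}$ one gets $\|p(x) - p(y)\| \le L_p \|x - y\|$ for a constant $L_p$ depending only on $p$ and $M$. Choosing $\delta < \min\{\, c/2,\ \epsilon/(3 L_p) \,\}$ gives $\|f(x) - f(y)\| < \epsilon$, finishing the proof.

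I do not expect a genuine obstacle here; the only points that need care are (i) arranging that the single continuous function $f$ simultaneously computes $1_{\geq 0}(x)$ \emph{and} $1_{\geq 0}(y)$ — which is precisely what the spectral‑gap estimate on $\sigma(y)$ delivers — and (ii) keeping the order of quantifiers straight, since $\delta$ must be chosen only after $\epsilon$, and hence after the approximating polynomial $p$ and its Lipschitz constant $L_p$.
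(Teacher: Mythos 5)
Your proposal is correct and takes essentially the same route as the paper's sketch: exploit the spectral gap of $x$ to locate $\sigma(y)$ away from $0$ for small perturbations, replace the discontinuous indicator by a polynomial via Stone--Weierstrass (you insert a harmless continuous intermediary $f$ before approximating, while the paper approximates $1_{\geq 0}$ directly on the relevant compact set away from $0$), and close with the triangle inequality $\|1_{\geq 0}(x)-p(x)\|+\|p(x)-p(y)\|+\|p(y)-1_{\geq 0}(y)\|$ together with the Lipschitz-type norm estimate for polynomial evaluation. The only cosmetic differences are the explicit spectral-perturbation bound $\sigma(y)\subseteq\{t:\operatorname{dist}(t,\sigma(x))\le\|x-y\|\}$ (the paper leaves this as ``a standard spectral theory argument'') and the explicit Lipschitz constant $L_p$ (the paper just asserts existence of a suitable $\delta$); both are sound.
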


\begin{proof}[Sketch of proof]  
Since $x$ is a self-adjoint invertible, $dist(sp(x), 0) > 0$.   Hence, by a standard spectral theory argument,
choose $\delta_1 > 0$
such that if $y \in \A$ is self-adjoint and $\| x - y \| < \delta_1$ then 
$$dist(sp(y), 0) > \frac{1}{2} dist(sp(x), 0);$$  
in particular, such a $y$ is invertible.   Let $r =_{df} \frac{1}{2} dist(sp(x), 0)$.  

By the Stone--Weierstrass theorem, choose a polynomial $p$ so that for all $t \in [- \| x \| - \delta_1, -r] \cup [r, \| x \| + \delta_1]$,
$$|p(t) - 1_{\geq 0}(t) | < \frac{\epsilon}{10}.$$

Choose $\delta > 0$ such that $\delta < \delta_1$ and 
whenever $z, z' \in \A$ are elements with  $\| z \|, \| z' \| < \| x \| + 
1 + \delta_1$
and  
$\| z - z' \| < \delta$ then
$$\| p(z) - p(z') \| < \frac{\epsilon}{10}.$$

Hence,
for all self-adjoint $y \in \A$ with $\| x - y \| < \delta$, 
$y$ is invertible (since $\delta < \delta_1$) and
\begin{eqnarray*}
\| 1_{\geq 0}(x) - 1_{\geq 0}(y) \| & \leq & \| 1_{\geq 0}(x) - p(x) \| + \| p(x) - p(y) \| + \| p(y) - 1_{\geq 0}(y) \| \\
& <  & \frac{\epsilon}{10} + \frac{\epsilon}{10} + \frac{\epsilon}{10} < \epsilon.\\ 
\end{eqnarray*}

\end{proof}

\begin{lem}  Let $\A$ be a unital C*-algebra, let $\{ x_n \}$ be a sequence of self-adjoint invertible elements of 
$\A$ and let $x \in \A$ be invertible.

If $x_n \rightarrow x$, then $x$ is a self-adjoint invertible element of $\A$ and
$$1_{\geq 0}(x_n)  \rightarrow 1_{\geq 0}(x).$$
\label{lem:Aug520231am}
\end{lem}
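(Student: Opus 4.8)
The plan is to obtain this as an immediate corollary of the quantitative continuity statement in Lemma \ref{lem:Aug520232am}; the present lemma is essentially just its sequential reformulation, with the minor extra observation that the limit of self-adjoint elements is self-adjoint.

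First I would note that $x$ is self-adjoint: the involution on a C*-algebra is isometric, so $x_n = x_n^*$ together with $x_n \to x$ forces $x^* = \lim_n x_n^* = \lim_n x_n = x$. Combined with the standing hypothesis that $x$ is invertible, this makes $1_{\geq 0}(x)$ well-defined, namely as the support projection of the positive part of $x$ (as recorded just after \eqref{equdf:IndicatorFunction}). Likewise each $1_{\geq 0}(x_n)$ is defined, since each $x_n$ is self-adjoint and invertible.

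Then, given $\epsilon > 0$, I would apply Lemma \ref{lem:Aug520232am} to the self-adjoint invertible element $x$ to produce a $\delta > 0$ with the property stated there. Since $x_n \to x$ in norm, there is an $N$ with $\| x - x_n \| < \delta$ for all $n \geq N$; as each such $x_n$ is self-adjoint, Lemma \ref{lem:Aug520232am} then yields $\| 1_{\geq 0}(x) - 1_{\geq 0}(x_n) \| < \epsilon$ for all $n \geq N$. Since $\epsilon > 0$ was arbitrary, $1_{\geq 0}(x_n) \to 1_{\geq 0}(x)$, which completes the proof.

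There is no real obstacle here once Lemma \ref{lem:Aug520232am} is available; the only point requiring a word of care is the bookkeeping that all the functional-calculus expressions $1_{\geq 0}(x)$ and $1_{\geq 0}(x_n)$ genuinely make sense. This is exactly why the self-adjointness of $x$ is extracted first, and why the invertibility of $x$ must be taken as a hypothesis rather than derived: without it, $0$ could lie in the spectrum of $x$, and $1_{\geq 0}$, being discontinuous at $0$, would fail to be norm-continuous there, so the conclusion could break down.
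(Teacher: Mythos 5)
Your proof is correct and follows exactly the paper's approach: extract self-adjointness of $x$ from the isometry of the involution, then invoke the quantitative continuity Lemma \ref{lem:Aug520232am} to get $1_{\geq 0}(x_n) \to 1_{\geq 0}(x)$. The paper's proof is a one-liner making the same two moves.
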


\begin{proof}
That $x$ is self-adjoint is a standard argument.  
That $1_{\geq 0}(x_n)  \rightarrow 1_{\geq 0}(x)$ follows from 
Lemma \ref{lem:Aug520232am}.\\  
\end{proof}

\begin{lem}
Let $I \subseteq \mathbb{R}$ be a compact interval.
Let $\A$ be a unital C*-algebra, and let
$\{ a_t \}_{t \in I}$  
be a norm-continuous path of self-adjoint invertible elements of $\A$.

Then $\{ 1_{\geq 0}(a_t) \}_{t \in I}$ is a norm-continuous path of
projections in $\A$.
\label{lem:1geq0Continuous}
\end{lem}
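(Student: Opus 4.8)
The plan is to reduce everything to the two preceding lemmas, the real content already having been done in Lemma \ref{lem:Aug520232am}. First I would observe that the statement is sensible: for each $t \in I$, since $a_t$ is a self-adjoint invertible element of the unital C*-algebra $\A$, its spectrum is a compact subset of $\mathbb{R}$ bounded away from $0$, so the restriction of $1_{\geq 0}$ to $sp(a_t)$ is continuous (indeed locally constant, with values in $\{0,1\}$). Hence $1_{\geq 0}(a_t)$ is a well-defined element of $\A$ via the continuous functional calculus, and since $1_{\geq 0}^2 = 1_{\geq 0} = \overline{1_{\geq 0}}$ on $sp(a_t)$, it is a projection. Thus $\{ 1_{\geq 0}(a_t) \}_{t \in I}$ is a path of projections in $\A$, and only norm-continuity remains.

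Norm-continuity is a pointwise assertion, so I would fix $t_0 \in I$ and show continuity at $t_0$. Given $\epsilon > 0$, apply Lemma \ref{lem:Aug520232am} with $x =_{df} a_{t_0}$ to obtain $\delta > 0$ such that every self-adjoint $y \in \A$ with $\| a_{t_0} - y \| < \delta$ is invertible and satisfies $\| 1_{\geq 0}(a_{t_0}) - 1_{\geq 0}(y) \| < \epsilon$. Since $t \mapsto a_t$ is norm-continuous at $t_0$, there is a neighbourhood $V$ of $t_0$ in $I$ with $\| a_t - a_{t_0} \| < \delta$ for all $t \in V$; taking $y = a_t$ (which is self-adjoint) for $t \in V$ then gives $\| 1_{\geq 0}(a_t) - 1_{\geq 0}(a_{t_0}) \| < \epsilon$. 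As $t_0 \in I$ and $\epsilon > 0$ were arbitrary, the path is norm-continuous. (Equivalently, one can argue sequentially: if $t_n \to t_0$ in $I$ then $a_{t_n} \to a_{t_0}$ by hypothesis, so $1_{\geq 0}(a_{t_n}) \to 1_{\geq 0}(a_{t_0})$ by Lemma \ref{lem:Aug520231am}, and since this holds along every such sequence we again get continuity at $t_0$.)

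There is no serious obstacle here once Lemma \ref{lem:Aug520232am} is available; the only point deserving a word of care is that the modulus of continuity $\delta$ furnished by Lemma \ref{lem:Aug520232am} depends on the base point $a_{t_0}$, so this argument yields continuity at each point rather than a uniform estimate. But pointwise continuity is precisely what ``norm-continuous path'' requires, so nothing more is needed. (If uniform continuity were wanted, it would follow from the compactness of $I$ together with the fact that $\{ a_t : t \in I \}$ is a norm-compact set of self-adjoint invertibles, but we do not use this.)
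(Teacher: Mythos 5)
Your proof is correct and takes essentially the same approach as the paper, which simply observes that the claim follows immediately from Lemma \ref{lem:Aug520231am}; your primary route via Lemma \ref{lem:Aug520232am} is just the $\epsilon$-$\delta$ form of the same fact, and you explicitly give the sequential version (the paper's route) as an alternative. The extra remarks on well-definedness and on the pointwise-versus-uniform distinction are sound but not needed.
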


\begin{proof}
This follows immediately from Lemma \ref{lem:Aug520231am}\\ 
\end{proof}

We now prove that spectral flow (as in Definition \ref{df:WahlSFDefinition})
 is well-defined.  

\begin{prop} \label{prop:WahlSFWellDefined}
	The spectral flow (i.e., Definition \ref{df:WahlSFDefinition}), if it exists, is well-defined.
	I.e., it is   
	independent of the choice of trivializing families as well as partition. 
\end{prop}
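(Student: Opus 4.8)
The plan is to distill everything into a single geometric statement about essential codimension and then let the algebra of Proposition~\ref{prop:ECProperties} do the rest; the step I expect to require the most care is this geometric input. Call it the \emph{Key Lemma}: if $[a,b]$ is a compact subinterval of the parameter interval and $\{b_t\}_{t\in[a,b]}$, $\{c_t\}_{t\in[a,b]}$ are two trivializing families for $\{A_t\}_{t\in[a,b]}$, then $Ind(A_a,b_a,c_a)=Ind(A_b,b_b,c_b)$ in $K_0(\B)$ — in words, the essential codimension of two trivializing sections over a cell is constant along that cell. To prove it I would observe that $A_t+b_t$ and $A_t+c_t$ are invertible self-adjoint elements of $\Mul(\B)$ for every $t\in[a,b]$ (this is the definition of a trivializing family), and that $t\mapsto A_t+b_t$ and $t\mapsto A_t+c_t$ are norm-continuous; hence by Lemma~\ref{lem:1geq0Continuous} the paths $P_t=_{df}1_{\geq 0}(A_t+b_t)$ and $Q_t=_{df}1_{\geq 0}(A_t+c_t)$ are norm-continuous (so strictly continuous) paths of projections in $\Mul(\B)$, while Lemma~\ref{lem:FredholmA1A2} gives $P_t-Q_t\in\B$ for all $t$ with $t\mapsto P_t-Q_t$ norm-continuous into $\B$. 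Thus $\{(P_t,Q_t)\}_{t\in[a,b]}$ satisfies the hypotheses of (EC2) in Proposition~\ref{prop:ECProperties}, which yields $[P_a:Q_a]=[P_b:Q_b]$, i.e.\ the Key Lemma.

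Next I would fix the partition $0=t_0<t_1<\cdots<t_n=1$ and the endpoint trivializing operators $b'_0,b'_1$ (which are the data of $Sf(\{A_t\},b'_0,b'_1)$ and stay fixed), and prove independence of the trivializing families $\{b^j_t\}_{t\in[t_j,t_{j+1}]}$. Given a second admissible choice $\{c^j_t\}$, a short computation with (EC5) and (EC8) rewrites each summand for $\{c^j\}$ in terms of the one for $\{b^j\}$: e.g.\ $Ind(A_0,b'_0,c^0_0)-Ind(A_0,b'_0,b^0_0)=Ind(A_0,b^0_0,c^0_0)$; at an interior breakpoint $Ind(A_{t_j},c^{j-1}_{t_j},c^j_{t_j})-Ind(A_{t_j},b^{j-1}_{t_j},b^j_{t_j})=Ind(A_{t_j},b^j_{t_j},c^j_{t_j})-Ind(A_{t_j},b^{j-1}_{t_j},c^{j-1}_{t_j})$; and $Ind(A_1,c^{n-1}_1,b'_1)-Ind(A_1,b^{n-1}_1,b'_1)=-Ind(A_1,b^{n-1}_1,c^{n-1}_1)$. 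Writing $\delta_j=_{df}Ind(A_{t_j},b^j_{t_j},c^j_{t_j})$ for the value at the \emph{left} end of cell $j$ and $\epsilon_j=_{df}Ind(A_{t_{j+1}},b^j_{t_{j+1}},c^j_{t_{j+1}})$ for the value at its \emph{right} end, the difference of the two spectral flows telescopes to $\sum_{j=0}^{n-1}(\delta_j-\epsilon_j)$. By the Key Lemma applied to each cell $[t_j,t_{j+1}]$ we get $\delta_j=\epsilon_j$, so the difference vanishes; this handles $n=1$ as well.

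Finally, for independence of the partition it suffices to treat refinement by a single point and then iterate, since any two partitions of $[0,1]$ share the common refinement $\mathcal{P}_1\cup\mathcal{P}_2$, obtained by adjoining finitely many points. If $s\in(t_j,t_{j+1})$ is adjoined and the two new cells $[t_j,s]$, $[s,t_{j+1}]$ are each assigned the restriction of $\{b^j_t\}$ (manifestly still a trivializing family), then the new defining sum equals the old one plus the single extra term $Ind(A_s,b^j_s,b^j_s)=[1_{\geq 0}(A_s+b^j_s):1_{\geq 0}(A_s+b^j_s)]=0$ by (EC4), with every other term literally unchanged. Combining this with the previous paragraph — which shows the value does not depend on which trivializing families are chosen on a fixed partition — passing to a common refinement shows that any two admissible pairs (partition, families) produce the same element of $K_0(\B)$, proving well-definedness. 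The one point demanding care throughout is the verification that the paths entering the Key Lemma satisfy the hypotheses of (EC2), namely strict (indeed norm) continuity of $t\mapsto 1_{\geq 0}(A_t+b_t)$ and norm-continuity of the difference into $\B$; this is exactly what Lemmas~\ref{lem:FredholmA1A2} and~\ref{lem:1geq0Continuous} supply, and once it is in place everything else is formal manipulation of essential codimension.
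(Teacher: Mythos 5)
Your proof is correct and follows essentially the same route as the paper's: both arguments fix the partition first, use (EC8) and (EC5) to reduce to the endpoints-of-cells comparison, and then invoke (EC2) to transport that comparison along each cell, before handling partition independence via a common refinement. Your version is slightly more explicit than the paper's at the refinement step — you spell out that adjoining a point $s$ introduces only the term $Ind(A_s,b^j_s,b^j_s)=0$ by (EC4), whereas the paper compresses this into "take a common refinement and apply the first step" — but the mathematical content is the same.
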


\begin{proof}  Let $\B$ be a separable stable C*-algebra, and suppose that $\{ A_t \}_{t \in [0,1]}$ is a norm-continuous path of self-adjoint Fredholm operators in $\Mul(\B)$ for which local trivializing families exist.  
Fix self-adjoint elements $b'_0, b'_1 \in \B$ so that $b'_j$ is a trivializing operator for $A_j$ for $j=0,1$. 
We want to show that $Sf(\{  A_t \}_{t \in [0,1]}, b'_0, b'_1)$ is well-defined. 

As a first step, let us show  that for a given partition of $[0,1]$ and for any two local 
trivializing families for $\{ A_t \}$ with that partition (if they exist), and with the same 
endpoint trivializing operators $b'_0, b'_1$, the recipe for spectral flow in 
Definition \ref{df:WahlSFDefinition} will give the same value in $K_0(\B)$.   
So suppose that $0 = t_0 < t_1 < ... < t_n = 1$ is a partition for $[0,1]$.
Suppose that for all $0 \leq j \leq n-1$, $\{ b^j_t \}_{t \in [t_j, t_{j+1}]}$ and $\{ c^j_t \}_{t \in [t_j, t_{j+1}]}$ are two trivializing families for $\{ A_t \}_{t \in [t_j, t_{j+1}]}$.   
We want to show that the two trivializing families give the same value for the spectral flow.  I.e., from (\ref{equ:WahlSfDefinition}), we want to prove 
that\\

\begin{equation}\label{equ:Nov120231AM}  \end{equation}
\begin{eqnarray*}
& & Ind(A_0, b'_0, b^0_0) +  Ind(A_1, b^{n-1}_1, b'_1) + \sum_{j=1}^{n-1} Ind(A_{t_j}, b^{j-1}_{t_j}, b^j_{t_j})\\
& = & Ind(A_0, b'_0, c^0_0) +  Ind(A_1, c^{n-1}_1, b'_1) + \sum_{j=1}^{n-1} Ind(A_{t_j}, c^{j-1}_{t_j}, c^j_{t_j}) \makebox{  in   } K_0(\B).
\end{eqnarray*}

So by the definition of $Ind$ in part (2) of Definition \ref{df:WahlSFPrelim} 
(which is defined in terms of essential codimension $[:]$), we want to 
prove that $Sf_1 = Sf_2$ where
$$Sf_1  =_{df}  
[P'_0 : P^0_0] + [P^{n-1}_1: P'_1] + 
\sum_{j=1}^{n-1} [P^{j-1}_{t_j} : P_{t_j}^j]$$ 
and 
$$Sf_2 =_{df} [P'_0 : Q^0_0] + [Q^{n-1}_1: Q'_1] +
\sum_{j=1}^{n-1} [Q^{j-1}_{t_j} : Q_{t_j}^j].$$
In the above (following equation (\ref{equ:Nov120231AM}) and
 Definition \ref{df:WahlSFPrelim} item (2)),
$P'_0, P'_1, P^j_t, Q^j_t$ are projections in $\Mul(\B)$ such that  
$P'_0 =  1_{\geq 0}(A_0 + b'_0)$,  $P'_1 = 1_{\geq 0}(A_1 + b'_1)$,
$P^j_t = 1_{\geq 0}(A_t + b^j_t)$ and $Q^j_t = 1_{\geq 0}(A_t + c^{j}_t)$
for all $0 \leq j \leq n-1$ and for all $t \in [t_j, t_{j+1}]$.  
Note that by our assumptions (and by Lemma \ref{lem:FredholmA1A2}), 
$P'_0 - P^0_0, P'_1 - Q^{n-1}_1 \in \B$,
$P^j_t - Q^j_t \in \B$ for all $0 \leq j \leq n-1$ and for all $t \in [t_j, 
t_{j+1}]$, and (by Lemma \ref{lem:1geq0Continuous})
 $\{ P^j_t \}_{t \in [t_j, t_{j+1}]}$ and 
$\{ Q^j_t \}_{t \in [t_j, t_{j+1}]}$ are norm-continuous paths of projections
in $\Mul(\B)$ for all $0 \leq j \leq n-1$.

We will repeatedly use the properties from Proposition \ref{prop:ECProperties}.
We have that 
\begin{eqnarray*}
&& Sf_1\\
& = & [P'_0:P^0_0] + [P^{n-1}_1 :P'_1] + \sum_{j=1}^{n-1} [P^{j-1}_{t_j}:
P^j_{t_j}]\\
& = &  [P'_0: Q^0_0] + [Q^0_0: P^0_0] + [P^{n-1}_1 : Q^{n-1}_1] + 
[Q^{n-1}_1:Q'_1] + \sum_{j=1}^{n-1}([P^{j-1}_{t_j} : Q^{j-1}_{t_j}]
+ [Q^{j-1}_{t_j}: Q^j_{t_j}] + [Q^j_{t_j}:P^j_{t_j}])\\
& & \makebox{(by repeated applications of Propositon \ref{prop:ECProperties}
Property (EC8))}\\
& = & Sf_2 + [Q^0_0: P^0_0] + [P^{n-1}_1 : Q^{n-1}_1] 
+ \sum_{j=1}^{n-1} ([P^{j-1}_{t_j} : Q^{j-1}_{t_j}]
+ [Q^j_{t_j}:P^j_{t_j}])\\  
& = & Sf_2 + [Q^0_{t_1}: P^0_{t_1}] + [P^{n-1}_1 : Q^{n-1}_1]
+ \sum_{j=1}^{n-1} ([P^{j-1}_{t_j} : Q^{j-1}_{t_j}]
+ [Q^j_{t_{j+1}}:P^j_{t_{j+1}}])\\
 & & \makebox{(by repeated applications of Propositon \ref{prop:ECProperties}
Property (EC2))}\\
& = & Sf_2 + \sum_{j=1}^{n} ([P^{j-1}_{t_j} : Q^{j-1}_{t_j}]
+ [Q^{j-1}_{t_{j}}:P^{j-1}_{t_{j}}])\\ 
& = & Sf_2\\
& & \makebox{(by repeated applications of Propositon \ref{prop:ECProperties}
Property (EC5))}
\end{eqnarray*}  
as required.  Since these partition and local trivializing families are
arbitrary, we have completed the first step  (the case where we have two sets
of local
trivializing families for $\{ A_t \}_{t \in [0,1]}$ with the same partition).

To complete the proof, suppose that we have two different sets of
 local trivializing families 
of $\{ A_t \}_{t \in [0,1]}$, corresponding to two different partitions.  Take a common 
refinement of the two partitions and apply the first step or first part of the proof.\\ 
\end{proof}

We end this section with a concrete example.  This is similar to  
\cite{WahlSpectralFlow} item 5 on page 153, but we add a (short)
explanation, and we use essential codimension (which is more general
than the relative index of \cite{WahlSpectralFlow}), and also,
our computation also
works for stably projectionless  canonical ideals (see also last paragraph
of Remark \ref{rmk:SfIntuitionAndOrientation2}).
                  
\begin{prop}  \label{prop:FirstExample} 
Let $\B$ be a separable stable C*-algebra, and let
$P, Q \in \Mul(\B)$ be projections such that $P - Q \in \B$.
(E.g., $P, Q \in \B$ are allowed.)  

Let $\{ A_t \}_{t \in [0,1]}$ be the continuous  path of 
self-adjoint Fredholm operators in $\Mul(\B)$ given by 
$$A_t =_{df} (1-t) (2 P - 1) + t (2Q - 1) \makebox{ for all  }
t \in [0,1].$$

Then 
$$Sf(\{ A_t \}_{t \in [0,1]},0,0) = [P:Q].$$ 
\end{prop}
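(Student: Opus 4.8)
The plan is to check directly that the constant zero family is a (global) trivializing family for $\{A_t\}_{t\in[0,1]}$, so that Definition \ref{df:WahlSFDefinition} can be applied with the trivial partition $0 = t_0 < t_1 = 1$ and $b^0_t = 0$ for all $t$, and $b'_0 = b'_1 = 0$. First I would observe that $2P-1$ and $2Q-1$ are self-adjoint symmetries (their square is $1$), hence invertible with spectrum $\subseteq \{-1,1\}$, and that $A_t = (2P-1) + t\,((2Q-1)-(2P-1)) = (2P-1) + 2t(Q-P)$, where $Q - P \in \B$. So $\pi_\B(A_t) = \pi_\B(2P-1)$ is a fixed invertible element of the corona, which shows each $A_t$ is Fredholm and $A_t - (2P-1) \in \B$. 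The genuinely necessary point is that each $A_t$ is itself \emph{invertible} in $\Mul(\B)$, not merely Fredholm; this is the step I expect to require the most care.

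For invertibility of $A_t$ I would argue as follows. Write $R = 2P-1$ and $S = 2Q-1$, both symmetries, so $A_t = (1-t)R + tS$ for $t\in[0,1]$. Since $\|R\| = \|S\| = 1$ we get $\|A_t\| \le 1$, and one computes $A_t^2 = (1-t)^2 + t^2 + (1-t)t(RS + SR)$. Using $RS + SR = (RS+SR)^*$ and the $C^*$-identity bound $\|RS + SR\| \le 2$, a crude estimate gives $A_t^2 \ge ((1-t)^2 + t^2 - 2(1-t)t)\cdot 1 = (1-2t)^2\cdot 1$, which unfortunately degenerates at $t = 1/2$. So the crude symmetry bound is not enough, and here is where the hypothesis $P - Q \in \B$ together with the invertibility in the \emph{corona} must be used: $\pi_\B(A_t) = \pi_\B(R)$ is invertible with $\pi_\B(A_t)^2 = 1$, so $\mathrm{sp}(\pi_\B(A_t)) \subseteq \{-1,1\}$, whence $\mathrm{dist}(\mathrm{sp}(A_t), 0)$ is bounded below away from $0$ \emph{modulo} $\B$. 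To upgrade this to genuine invertibility of $A_t$ in $\Mul(\B)$ I would instead use a perturbative/path argument: the set of invertible self-adjoint elements is open, the map $t \mapsto A_t$ is norm-continuous, and $A_0 = R$ is invertible; by a connectedness argument along the path it suffices to know that invertibility cannot be lost, which one can see from the fact that the spectrum of $A_t$ is symmetric-free near $0$. Concretely, I would show $0 \notin \mathrm{sp}(A_t)$ by noting $A_t$ commutes with $E_t =_{df} 1_{\ge 0}(\pi_\B(R))$ lifted appropriately — actually the cleanest route is: since $A_t$ is self-adjoint with $\pi_\B(A_t)$ invertible and $A_t - R \in \B$ with $R$ invertible, and since the path $s\mapsto (1-s)R + sA_t$ ($s\in[0,1]$) stays within distance $\le \|A_t - R\|$, apply Lemma \ref{lem:Aug520232am} iteratively along a fine partition of $[0,1]$ to conclude each $A_t$ is invertible. (Alternatively, and more simply: $A_t = R + 2t(Q-P)$, and I claim $R + 2t(Q-P)$ is invertible for all $t \in [0,1]$ because its square computes to $1 + 4t(t-1)(P + Q - PQ - QP)$-type expression which, after using $2P-1$, $2Q-1$ are symmetries, reduces to a positive element; I would verify the bookkeeping here carefully, as this is the crux.)

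Granting that each $A_t$ is invertible, the constant family $\{0\}_{t\in[0,1]}$ is a trivializing family for $\{A_t\}$, so by Proposition \ref{prop:WahlSFWellDefined} we may compute $Sf$ using the partition $0 = t_0 < t_1 = 1$, the trivializing family $b^0_t = 0$, and endpoint trivializing operators $b'_0 = b'_1 = 0$. Plugging into (\ref{equ:WahlSfDefinition}):
\begin{equation*}
Sf(\{A_t\}_{t\in[0,1]}, 0, 0) = Ind(A_0, 0, 0) + Ind(A_1, 0, 0) = [1_{\ge 0}(A_0) : 1_{\ge 0}(A_0)] + [1_{\ge 0}(A_1) : 1_{\ge 0}(A_1)] + 0,
\end{equation*}
but that is not quite the right bookkeeping; rather, the recipe with a single cell $[t_0, t_1] = [0,1]$ gives $Ind(A_0, b'_0, b^0_0) + Ind(A_1, b^{0}_1, b'_1) = Ind(A_0,0,0) + Ind(A_1,0,0)$, each of which is $[P':P'] = 0$ by (EC4) — which would give $Sf = 0$, not $[P:Q]$. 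So I must instead \emph{not} use the trivial constant family across the whole interval; the correct computation keeps $b^0 \equiv 0$ but uses $n = 1$ so that the middle sum $\sum_{j=1}^{n-1}$ is empty and we get $Ind(A_0, 0, 0) + Ind(A_1, 0, 0)$. The resolution is that $1_{\ge 0}(A_0) = 1_{\ge 0}(2P-1) = P$ and $1_{\ge 0}(A_1) = 1_{\ge 0}(2Q-1) = Q$ (since $2P-1$ has spectrum in $\{-1,1\}$ with the support projection of its positive part equal to $P$, and similarly for $Q$), and the subtlety is that $Ind(A_0, b'_0, b^0_0)$ with \emph{different} trivializers at the two ends of the cell is what produces a nontrivial class. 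Therefore I would instead choose the single-cell trivializing family to be $b^0 \equiv 0$ but take $b'_0, b'_1$ to be the \emph{same} zero operator and realize that the definition forces $Ind(A_0, 0, 0) + Ind(A_1, 0, 0) = 0 \ne [P:Q]$ unless $[P:Q]=0$ — signalling that the constant-$0$ family must not in fact be trivializing unless $A_{1/2}$ fails invertibility. I now suspect the actual proof does \emph{not} claim a global trivializing family but instead argues that $\{A_t\}$ genuinely has trivializing families only locally, or computes via a clever non-constant trivializing family; the cleanest correct approach is: take $b^0_t = 0$ for $t$ near $0$ and near $1$ only if those restrictions are trivializing, and in general note $Sf(\{A_t\}, 0, 0) = [1_{\ge 0}(A_0 + b^0_0) : \cdots] = [P : Q]$ by telescoping the essential-codimension identity (EC8) along $1_{\ge 0}(A_0) = P$, $1_{\ge 0}(A_1) = Q$, $P - Q \in \B$, giving directly $[P:Q]$. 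Thus the key steps are: (i) identify $1_{\ge 0}(2P-1) = P$, $1_{\ge 0}(2Q-1)=Q$; (ii) establish each $A_t$ (or at least enough of them) is invertible so local trivializing families — in fact $b_t \equiv 0$ on a suitable cover — exist; (iii) apply Definition \ref{df:WahlSFDefinition} with the constant-zero trivializers and collapse the telescoping sum using (EC8), (EC4), (EC2), (EC5) to land on $[1_{\ge 0}(A_0) : 1_{\ge 0}(A_1)] = [P:Q]$. The main obstacle is step (ii): proving invertibility of $A_t$ for \emph{all} $t \in [0,1]$, where the naive symmetry estimate degenerates at $t = 1/2$ and one must exploit $P - Q \in \B$ plus a perturbation argument via Lemma \ref{lem:Aug520232am} along a fine partition.
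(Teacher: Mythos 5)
Your starting plan---using the constant zero family as a global trivializing family---cannot work, and the difficulties you run into are genuine signals of that rather than technical obstacles to be overcome. The path $A_t$ need not be invertible for all $t$: at $t = 1/2$ one has $A_{1/2} = P + Q - 1$, and if, say, $P$ and $Q$ commute with $P(1-Q) \neq 0$, then $A_{1/2}$ annihilates the range of $P(1-Q)$ and so is not invertible. You in fact detect the contradiction yourself: if $b_t \equiv 0$ were a global trivializing family, the single-cell formula would give $Ind(A_0,0,0) + Ind(A_1,0,0) = [P:P] + [Q:Q] = 0$ by (EC4), forcing $[P:Q] = 0$. So for nonzero $[P:Q]$ the constant zero family cannot be trivializing, which is exactly what the failure of invertibility at $t=1/2$ witnesses. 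You correctly surmise that a non-constant trivializing family is needed, but never identify it, and your final summary still lists ``invertibility of each $A_t$'' as the key step, which is both unnecessary and false in general.

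The resolution is short. Take $b_t =_{df} -2t(Q-P)$, which lies in $\B$ for every $t$ because $Q - P \in \B$, and is norm-continuous in $t$. Then for all $t \in [0,1]$,
\begin{equation*}
A_t + b_t = (1-t)(2P-1) + t(2Q-1) - 2t(Q-P) = 2P - 1,
\end{equation*}
a constant symmetry, in particular invertible in $\Mul(\B)$. So $\{b_t\}_{t\in[0,1]}$ is a trivializing family on the whole interval, a single cell suffices, and no invertibility of the $A_t$ themselves is ever required. Plugging into Definition \ref{df:WahlSFDefinition} with $b'_0 = b'_1 = 0$ gives
\begin{equation*}
Sf(\{A_t\}_{t\in[0,1]},0,0) = Ind(2P-1,0,0) + Ind\bigl(2Q-1, -2(Q-P), 0\bigr) = [P:P] + [P:Q] = [P:Q],
\end{equation*}
using (EC4) and the identifications $1_{\geq 0}(2P-1) = P$, $1_{\geq 0}(2Q-1) = Q$ that you record in step~(i). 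Those identifications are correct and do enter at the last stage, but the heart of the argument is choosing a trivializing family that makes the perturbed path constant, not establishing invertibility of the unperturbed $A_t$.
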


\begin{proof}
A trivializing family for $\{ A_t \}_{t \in [0,1]}$
is $\{ b_t =_{df} - 2t(Q - P) \}_{t \in [0,1]}$.
Note that $b_t \in \B$ for all $t \in [0,1]$, since $Q - P \in \B$.  

In fact, for all $t \in [0,1]$,
\begin{eqnarray*}
A_t + b_t & = & (1 -t) (2P - 1) + t(2 Q - 1) + b_t \\
& = & (2P -1)  -t(2P - 1) + t (2Q - 1) + b_t \\
& = & (2P - 1) - t(2P) + t(2Q) + b_t \\
& = & (2P-1)  + 2t(Q - P) + b_t \\
& = & 2 P - 1.
\end{eqnarray*}

Hence, by  Definitions \ref{df:WahlSFDefinition} and \ref{df:WahlSFPrelim} (2), 
\begin{eqnarray*}
Sf(\{ A_t \}_{t \in [0,1]}, 0, 0) 
& = & Ind(2P - 1, 0, b_0) + Ind(2 Q - 1, b_1, 0)\\
& = & Ind(2P - 1, 0, 0) + Ind(2Q - 1, -2(Q - P), 0)\\
& = & [P:P] + [P:Q]\\
& = & [P:Q]   \makebox{     (by 
Proposition \ref{prop:ECProperties} (EC4))}\\   
\end{eqnarray*}
\end{proof}

\begin{rmk}  \label{rmk:SfIntuitionAndOrientation2}
Note that in the above example, if $P, Q \in \B$, then the spectral flow
is $[P:Q] = [P] - [Q] \in K_0(\B)$.  Hence, as one moves along the continuous path of 
self-adjoint Fredholm operators, the ``net mass" of the part of
the spectrum that passes through zero in the negative direction
is $[P] - [Q]$.  This conforms with our natural intuitions 
for this example.

Note again that, as mentioned previously, the \emph{orientation is opposite} 
from that of the original
Atiyah--Patodi--Singer spectral flow, as well as  subsequent versions
of spectral flow (e.g., \cite{AtiyahPatodiSinger3}, \cite{BenPhillEtAl}), 
where the 
spectral flow measures the ``net mass" of the part of the spectrum that passes 
through zero in the \emph{positive direction}. 

Finally, Proposition \ref{prop:FirstExample} can be used to give interesting
 examples  
where the canonical ideal is stably projectionless 
(recall that a C*-algebra $\D$ is stably projectionless if the stabilization
$\D \otimes \K$ has no projection other than zero).
By \cite{GongLinRange}, we can find a separable, simple, nuclear, 
stably finite, 
stable and stably projectionless C*-algebra $\B$ such that 
$K_0(\B) = \mathbb{Z}$. Since $K_0(\B) = KK(\mathbb{C}, \B)$, using the
generalized homomorphism picture of KK theory (see (\ref{equdf:KK_h})),
we can find projections $P, Q \in \Mul(\B)$ with $P - Q \in \B$ such that
$0 \neq [P:Q] = 1 \in K_0(\B)$.         
Defining $\{ A_t \}_{t \in [0,1]}$ as in Proposition \ref{prop:FirstExample},
we have an interesting example, with stably projectionless canonical 
ideal $\B$, for which the spectral flow $Sf(\{ A_t \}_{t \in [0,1]}, 0, 0)$
is nonzero.\\  
\end{rmk}

\subsection{The lifting projections hypothesis}
\label{subsect:LiftingProjectionsCondition}

Before discussing our main, general definition of spectral flow
(Definition \ref{df:SpectralFlow}), we discuss a relatively strong
hypothesis which has been present in all previous treatments of 
spectral flow for Hilbert C*-modules, as well as in the early work
of Melrose--Piazza  (\cite{MelrosePiazza}, \cite{DaiZhang}, \cite{Wu}, \cite{LeichtnamPiazzaJFA}, 
\cite{WahlSpectralFlow}).   This is essentially an existence of spectral
sections hypothesis (see Definition \ref{df:SpectralCutSection}) and this
hypothesis was shown, in various specific contexts, 
to be equivalent to the vanishing of a certain 
index (e.g., see \cite{MelrosePiazza} Proposition 1, \cite{DaiZhang} 
Proposition 1.3 (A), \cite{Wu} Theorem 2.2,  \cite{LeichtnamPiazzaJFA}
Theorem 3).  We will see that in general, the existence of spectral sections
implies vanishing of the $K_1$ index, but the converse is not true (even
after weakening the notion of spectral section) without
additional assumptions.

We will essentially be working with the more general notion of generalized
spectral section
from \cite{WahlSpectralFlow} Definition 3.4 (see 
Remark \ref{rmk:SpectralSectionTerminology}), 
but we prefer not to use the terminology
``existence of spectral section".  
Instead, we prefer to talk about ``projection lifting" which is an equivalent
condition where the terminology is more familiar to analysts, and which
points to well-known and longstanding
 questions in operator theory.  (We will also employ 
the terminology ``existence of trivializing operator", which
we have used already in defining spectral flow, and which also gives
an equivalent condition. See Theorem \ref{thm:ProjectionLiftingCondition}
for the equivalences.) 

Before moving forward, we will also define the index in the ``vanishing index
characterization" (for the existence of
spectral sections) in \cite{MelrosePiazza}, \cite{DaiZhang},
\cite{Wu} and \cite{LeichtnamPiazzaJFA}.  Firstly, recall that if $\B$  
is a separable stable C*-algebra, and if $A \in \Mul(\B)$   
is a self-adjoint Fredholm operator, then the Fredholm index 
$\partial([\pi(A)]) = 0$, where $\partial$ is the index map from the 
appropriate six-term
exact sequence (see Subsection \ref{subsect:GeneralizedFredholm}).        
(This is because since $\pi(A) \in \C(\B)$ is a self-adjoint invertible, 
it is homotopic
to $1$ in $GL(\C(\B))$,  and $[1] = 0$ in $K_1(\C(\B))$.) 
Thus, the index that is defined in \cite{MelrosePiazza}, \cite{DaiZhang},
\cite{Wu}, and \cite{LeichtnamPiazzaJFA} is something other than the usual
Fredholm index. Following,  \cite{Wu} and \cite{LeichtnamPiazzaJFA}, we use
$\partial_1$ to define this index.  Recall that $\partial_1 : 
K_0(\C(\B)) \rightarrow K_1(\B)$ is the \emph{exponential map} (see
Subsection \ref{subsect:GeneralizedFredholm}) and that it can be considered
an index map since it is defined using $\partial$ and some other obvious
maps.\\  

\begin{df} \label{df:K1Index}
Let $\B$ be a separable stable C*-algebra, and let $A \in \Mul(\B)$ be a 
self-adjoint Fredholm operator.

Then the \emph{$K_1$-index} of $A$ is defined to be
$$Ind_1(A) =_{df} \partial_1([1_{\geq 0}(\pi(A))])  \in K_1(\B).$$\\  

\noindent(Recall that $1_{\geq 0}(\pi(A))$ is the support projection of $\pi(A)_+$. See
the paragraph before Lemma \ref{lem:FredholmA1A2}.)
\end{df}

For a C*-algebra $\C$ and an ideal $\J \subseteq \C$, we let
$\pi_{\J} : \C \rightarrow \C/\J$ denote the quotient map.
Next we provide a preliminary exercise whose proof we give for the 
convenience of the reader.

\begin{prop} \label{prop:posinvlift}
Let $\C$  be a unital C*-algebra and $\J \subseteq \C$ a C*-ideal. Then every positive invertible element of
$\C/\J$ lifts to positive invertible element of   $\C$.
\end{prop}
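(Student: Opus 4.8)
The plan is to reduce the statement to the standard fact that the quotient map $\pi_{\J}$ is surjective and then use continuous functional calculus to move from an arbitrary self-adjoint lift to a positive invertible one. First I would take a positive invertible element $\bar{x} \in \C/\J$. Since $\bar{x}$ is positive and invertible, its spectrum is contained in a compact interval $[\alpha, \beta] \subseteq (0, \infty)$ with $\alpha > 0$. Because $\pi_{\J}$ is surjective (the image of a $*$-homomorphism between C*-algebras onto the quotient), I can lift $\bar{x}$ to some element of $\C$; by replacing that lift with the real part $\frac{1}{2}(y + y^*)$, I obtain a self-adjoint $y \in \C$ with $\pi_{\J}(y) = \bar{x}$.

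The key step is to modify $y$ by functional calculus so that its spectrum becomes bounded away from $0$. Let $f : \mathbb{R} \to \mathbb{R}$ be a continuous function with $f(t) = t$ for $t \geq \alpha$ and $f(t) \geq \alpha/2$ for all $t$ (for instance $f(t) =_{df} \max\{t, \alpha/2\}$, which is continuous and agrees with the identity on $[\alpha, \infty)$). Set $z =_{df} f(y) \in \C$. Since $y$ is self-adjoint, $z$ is self-adjoint, and its spectrum satisfies $sp(z) = f(sp(y)) \subseteq [\alpha/2, \infty)$, so $z$ is positive and invertible in $\C$ (its spectrum is a compact subset of $(0,\infty)$). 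It remains to check that $z$ lifts $\bar{x}$: since $\pi_{\J}$ is a $*$-homomorphism, it commutes with continuous functional calculus, so $\pi_{\J}(z) = \pi_{\J}(f(y)) = f(\pi_{\J}(y)) = f(\bar{x})$. Finally, $f(\bar{x}) = \bar{x}$ because $sp(\bar{x}) \subseteq [\alpha, \beta]$ and $f$ is the identity on $[\alpha, \infty) \supseteq [\alpha,\beta]$. Hence $\pi_{\J}(z) = \bar{x}$, and $z$ is the desired positive invertible lift.

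There is no real obstacle here; the only point requiring any care is ensuring the chosen function $f$ is genuinely continuous on all of $\mathbb{R}$ (so that $f(y)$ makes sense even though we do not a priori control $sp(y)$) while still being the identity on a neighborhood of $sp(\bar{x})$, and invoking the compatibility of continuous functional calculus with $*$-homomorphisms. Both are completely standard, so the proof is short.
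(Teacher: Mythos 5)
Your proof is correct, and it is essentially the same idea as the paper's: use continuous functional calculus with a function that is the identity on a neighborhood of $sp(\bar{x})$ and is strictly positive on all of $\mathbb{R}$, so that applying it to any self-adjoint lift does not change the image in the quotient but pushes the spectrum off of $0$. Your version is a modest streamlining: the paper first produces a \emph{positive} contractive lift $a_0$ and then adds the ideal element $f(a_0)$ (with $f$ a bump at $0$ vanishing past $r$), so that $a_0 + f(a_0) = g(a_0)$ with $g(t) = t + f(t)$; you skip the step of arranging positivity of the lift and instead apply the single truncation $f(t) = \max\{t, \alpha/2\}$ directly to a self-adjoint lift $y$. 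Both hinge on exactly the same two facts — that quotient maps commute with continuous functional calculus, and that the chosen function is the identity on $sp(\bar{x})$ while being bounded below by a positive constant — so the proofs buy the same thing; yours just has one fewer preliminary reduction.
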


\begin{proof}
Let $a \in \C/\J$ be a positive invertible element.  We may assume that $a$ is contractive.
Let $r > 0$ be such that the spectrum of $a$ is at least a distance
$2r$ away from $0$.  Let $f : [0,  \infty) \rightarrow [0,1]$ be the unique continuous function such that
\[
f(t) =
\begin{cases}
0 & t \in [r, \infty) \\
1 & t \in [0, \frac{r}{2}] \\
\makebox{linear on  } & [\frac{r}{2}, r].
\end{cases}
\]
Let $a_0 \in \C$ be a contractive positive element that lifts $a$, i.e., $\pi_{\J}(a_0) = a$.   
Then $\pi_{\J}(f(a_0)) = f(\pi_{\J}(a_0)) = f(a) =0$, and so $f(a_0) \in \J$.
So $a_0 + f(a_0) \in C^*(a_0, 1_{\C}) \subseteq \C$ is a positive lift of $a$.
Moreover, since $t + f(t)$ is strictly positive on the spectrum of $a_0$, 
$a_0 + f(a_0)$ is an invertible element of $C^*(a_0, 1_{\C})$.  Hence, $a_0 + f(a_0)$
is an invertible element of $\C$.  So $a_0 + f(a_0)$ is a positive invertible lift of $a$.\\ 
\end{proof}

The next preliminary result is the key connection between the existence
of a trivializing operator and the projection lifting
condition.

\begin{prop} \label{prop:SAInvertibleLift}
Let $\C$ be a unital C*-algebra and $\J \subseteq \C$ a C*-ideal. 
Let $a \in \C/\J$ be a self-adjoint invertible element.
Then $a$ can be lifted to a self-adjoint invertible element of $\C$ if and only 
if $1_{\geq 0}(a)$ can be lifted to a projection in $\C$.
\end{prop}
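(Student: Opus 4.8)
The plan is to prove the two implications separately; the forward one is immediate, and the reverse one is where all the work lies, via reduction to Proposition \ref{prop:posinvlift} applied inside two orthogonal corners.

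For the forward direction, suppose $b \in \C$ is a self-adjoint invertible element with $\pi_{\J}(b) = a$. Then $sp(b)$ is bounded away from $0$, so $1_{\geq 0}$ is continuous on $sp(b)$ and $1_{\geq 0}(b) \in \C$ is a projection. Since $\pi_{\J}$ is a $*$-homomorphism it commutes with continuous functional calculus, so $\pi_{\J}(1_{\geq 0}(b)) = 1_{\geq 0}(\pi_{\J}(b)) = 1_{\geq 0}(a)$; hence $1_{\geq 0}(b)$ is the desired projection lift. (This is the same argument as in Lemma \ref{lem:FredholmA1A2}.)

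For the reverse direction, suppose $p =_{df} 1_{\geq 0}(a)$ lifts to a projection $P \in \C$. Write $a = a_+ - a_-$ for the positive and negative parts of $a$. Since $a$ is invertible there is $c > 0$ with $a^2 \geq c$, and using $a_+ = |a| p$ and $a_- = |a|(1-p)$ one checks that $a_+ \geq \sqrt{c}\, p$ is a positive invertible element of the corner $p(\C/\J)p$ (unital with unit $p$), while $a_- \geq \sqrt{c}\,(1-p)$ is a positive invertible element of $(1-p)(\C/\J)(1-p)$. The key structural point is that $P\C P$ is a unital C*-algebra with unit $P$, that $P\J P$ is an ideal of it, and that $PxP \mapsto p\,\pi_{\J}(x)\,p$ induces a $*$-isomorphism $P\C P/P\J P \cong p(\C/\J)p$ (surjectivity since $\pi_{\J}$ is onto and $\pi_{\J}(P) = p$; the kernel is $P\C P \cap \J = P\J P$). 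Applying Proposition \ref{prop:posinvlift} to the pair $(P\C P, P\J P)$, the positive invertible element $a_+$ of $p(\C/\J)p$ lifts to a positive invertible element $b_+ \in P\C P$; applying Proposition \ref{prop:posinvlift} to $((1-P)\C(1-P), (1-P)\J(1-P))$, the positive invertible $a_-$ of $(1-p)(\C/\J)(1-p)$ lifts to a positive invertible $b_- \in (1-P)\C(1-P)$. Now set $b =_{df} b_+ - b_-$, a self-adjoint element of $\C$ with $\pi_{\J}(b) = a_+ - a_- = a$. Since $b_+ \in P\C P$ and $b_- \in (1-P)\C(1-P)$, we have $b_+ b_- = b_- b_+ = 0$, so $b^2 = b_+^2 + b_-^2$; and since $b_+ \geq \epsilon P$ and $b_- \geq \delta(1-P)$ for suitable $\epsilon, \delta > 0$, we get $b^2 \geq \epsilon^2 P + \delta^2(1-P) \geq \min(\epsilon^2,\delta^2)\,1_{\C}$, so $b$ is invertible. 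Thus $b$ is a self-adjoint invertible lift of $a$. (Note that this construction also yields $1_{\geq 0}(b) = P$, since $b_+$ is invertible in $P\C P$, so the lift can be chosen compatibly with the given projection lift.)

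The main obstacle — really the only nonroutine point — is recognizing that one should split the problem along $P$ and reduce it, corner by corner, to the already-proved positive invertible lifting result; once the corners $P\C P$ and $(1-P)\C(1-P)$ and their identifications with $p(\C/\J)p$ and $(1-p)(\C/\J)(1-p)$ are set up, the rest is routine bookkeeping with orthogonal corners and the fact that a sum of positive invertibles in complementary corners is invertible in $\C$.
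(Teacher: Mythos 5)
Your proof is correct and follows essentially the same route as the paper's: lift $1_{\geq 0}(a)$ to a projection $P \in \C$, split into the orthogonal corners $P\C P$ and $(1-P)\C(1-P)$, apply Proposition \ref{prop:posinvlift} in each corner to lift $a_+$ and $a_-$ to positive invertibles, and take their difference. Your write-up is somewhat more explicit than the paper's about the corner isomorphism $P\C P / P\J P \cong p(\C/\J)p$ and about verifying invertibility of $b = b_+ - b_-$ via $b^2 \geq \min(\epsilon^2,\delta^2)\,1_{\C}$, but the underlying argument is the same.
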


\begin{proof}

Recall that $\pi_{\J} :\C\to \C/\J$ is the quotient map. Assume that 
there is a self-adjoint and invertible $b\in \C$ such that 
$\pi_{\J}(b)=a$. Since the function $1_{\geq 0}$ is continuous on the spectrum of $b$, by the continuous functional calculus, we have $\pi_{\J}(1_{\geq 0}(b))=1_{\geq 0}(\pi(b))=1_{\geq 0}(a).$ 

For the converse direction, we assume that $p\in\C$ is a projection such that 
$\pi_{\J}(p)=1_{\geq 0}(a)$. Recall that $a_+$ and 
$a_-$ denote the positive and negative parts of $a$, respectively.  Recall that since the spectrum of  a self-adjoint 
invertible is bounded 
away from zero, $\pi_{\J}(p) = 1_{\geq 0}(a)$ is the support 
projection of $a_+$.  Similarly, $\pi_{\J}(1_{\C} - p) = 
1_{<0}(a)$ is the support projection of $a_-$ (here, $1_{<0}$ is the 
indicator function of $(-\infty, 0)$).   Since $a$ is invertible, $a_+$ and $a_-$ are both invertible in their respective
support projections.  So by Proposition \ref{prop:posinvlift}, let $a'_+$ be a positive invertible element of $p \C p$ and let $a'_-$ be a positive invertible element of $(1_{\C} - p) \C (1_{\C} - p)$ such that $\pi_{\J}(a'_+) = a_+$ and 
$\pi_{\J}(a'_-) = a_-$.  Then $a' =_{df} a'_+ - a'_-$ is a self-adjoint invertible element of $\C$ for which $\pi_{\J}(a') = a$.\\  
\end{proof}

We continue with a preliminary exercise in the continuous functional
calculus:

\begin{lem} \label{lem:May520235PM}
Let $0 < \epsilon < \frac{1}{2}$ be given.
Let $\C$ be a unital C*-algebra and $\J \subseteq \C$ a C*-ideal.
   
If $u \in \C/\J$ is a unitary such that $\| u - 1_{\C/\J} \| < \epsilon$
then there exists a unitary $U \in \C$ such that
$\pi(U) = u$ and $\| U - 1_{\C} \| < \epsilon$.
\end{lem}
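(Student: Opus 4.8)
The plan is to reduce the problem to lifting a self-adjoint element of small norm, via the continuous functional calculus. Put $\delta =_{df} \| u - 1_{\C/\J} \|$, so that $0 \leq \delta < \epsilon < 1/2$. Since $u$ is a unitary, $u - 1_{\C/\J}$ is normal, so $sp(u) \subseteq \{ z \in \mathbb{C} : |z| = 1 \text{ and } |z - 1| \leq \delta \}$; using the identity $|e^{i\theta} - 1| = 2|\sin(\theta/2)|$, this arc sits inside $\{ e^{i\theta} : |\theta| \leq 2\arcsin(\delta/2) \}$, and $2\arcsin(\delta/2) < 2\arcsin(1/4) < 1 < \pi$, so the arc avoids the ray $(-\infty, 0]$. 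Hence a continuous branch $\ell$ of the logarithm is defined on a neighbourhood of $sp(u)$ with $\ell(e^{i\theta}) = i\theta$ for $|\theta| < \pi$, and $h =_{df} -i\,\ell(u) \in \C/\J$ is a self-adjoint element (its spectrum lies in $i\mathbb{R}$) with $e^{ih} = u$ and $\| h \| \leq 2\arcsin(\delta/2)$.

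Next I would lift $h$ to a self-adjoint element of $\C$ of norm at most $\| h \|$. Pick any lift $h_0 \in \C$ of $h$; then $(h_0 + h_0^*)/2$ is a self-adjoint lift of $h$. Applying the truncation function $g(t) =_{df} \max\{ -\|h\|,\ \min\{ t,\ \|h\| \} \}$ through the continuous functional calculus, the element $H =_{df} g\!\left( (h_0 + h_0^*)/2 \right)$ is self-adjoint with $\| H \| \leq \| h \|$, and since $g$ restricts to the identity on $[-\|h\|, \|h\|] \supseteq sp(h)$, we get $\pi(H) = g(h) = h$. Now set $U =_{df} e^{iH} \in \C$; this is a unitary, and $\pi(U) = e^{i\pi(H)} = e^{ih} = u$, so $U$ lifts $u$.

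Finally I would estimate $\| U - 1_{\C} \|$: since $H$ is self-adjoint with $\| H \| \leq 2\arcsin(\delta/2) < \pi$, the spectral mapping theorem gives $\| U - 1_{\C} \| = \max_{t \in sp(H)} |e^{it} - 1| = \max_{t \in sp(H)} 2|\sin(t/2)| \leq 2\sin(\|H\|/2) \leq 2\sin(\arcsin(\delta/2)) = \delta < \epsilon$, using that $t \mapsto 2\sin(t/2)$ is nondecreasing on $[0,\pi]$ and that $\arcsin(\delta/2) < \pi/2$. This completes the argument. I do not expect any genuine obstacle: the only points calling for a little care are confining $sp(u)$ to a small circular arc (a one-line consequence of the normality of $u - 1_{\C/\J}$) and controlling the norm of the self-adjoint lift of $h$, for which the truncation trick above is the standard device. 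The statement is essentially the folklore fact that a unitary sufficiently close to the identity lifts along any C*-quotient map, with explicit control on the norm, and the write-up should be short and routine.
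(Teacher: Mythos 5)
Your proof is correct and follows essentially the same route as the paper's: both write $u = e^{ia}$ for a self-adjoint $a \in \C/\J$ via the continuous functional calculus (possible because $\| u - 1 \| < 1/2$ keeps $sp(u)$ away from $-1$), lift $a$ to a self-adjoint $A \in \C$ of the same norm, and set $U = e^{iA}$. The paper's version is terser; you have simply spelled out the elementary estimates, including the truncation device for the norm-preserving self-adjoint lift and the $2\sin(\cdot/2)$ bookkeeping that the paper compresses into one line.
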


\begin{proof}  Since $\| u - 1\| < \epsilon < \frac{1}{2}$, let
$a \in \C/\J$ be
a self-adjoint element with $\| a \| < 1$ such that
$u = e^{ia}$, and if $\max\{ |s - 0| : s \in sp(a) \} = \delta$ then $|e^{i\delta} -1| < \epsilon$.
Let $A \in \C_{SA}$ be such that $\pi(A) = a$ and $\| A \| = \| a \|$.
If we let $U =_{df} e^{iA} \in U(\C)$, then $\pi(U) = u$ and
$\| U - 1 \| < \epsilon.$\\
\end{proof}

The next preliminary result follows immediately from \cite{BlackadarHomotopyLifting}  (see \cite{BlackadarHomotopyLifting} Theorem 5.1), but for the
convenience of the reader, we provide the short concrete proof for our special case.

\begin{lem}  \label{lem:HomotopyProjectionLift}
Let $\C$ be a unital C*-algebra and $\J \subseteq \C$ a C*-ideal.
 
Let $p \in \C/\J$ be a projection, and suppose that $p$ is homotopy-equivalent (in $Proj(\C/\J)$) to a projection $q \in \C/\J$ which can
be lifted to a projection in $\C$.

Then $p$ can be lifted to a projection in $\C$.
\end{lem}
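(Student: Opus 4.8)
The plan is to reduce the statement to a standard fact about lifting unitaries that are close to $1$, which was already isolated as Lemma \ref{lem:May520235PM}. Since $p$ is homotopy-equivalent to $q$ in $Proj(\C/\J)$, there is a norm-continuous path $\{ p_t \}_{t \in [0,1]}$ of projections in $\C/\J$ with $p_0 = q$ and $p_1 = p$. By compactness of $[0,1]$ and uniform continuity, I would choose a partition $0 = t_0 < t_1 < \dots < t_n = 1$ fine enough that $\| p_{t_j} - p_{t_{j+1}} \| < 1/2$ for all $j$. The idea is then to propagate the lift from $q$ across each subinterval one step at a time: it suffices to show that if a projection $p_{t_j}$ lifts to a projection in $\C$ and $\| p_{t_j} - p_{t_{j+1}} \| < 1/2$, then $p_{t_{j+1}}$ also lifts. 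Iterating this $n$ times starting from $q = p_{t_0}$ yields a projection lift of $p = p_{t_n}$.

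So the heart of the matter is the following single-step claim: if $e, f \in \C/\J$ are projections with $\| e - f \| < 1/2$ and $e$ lifts to a projection $E \in \C$, then $f$ lifts to a projection in $\C$. To prove this, I would use the classical trick that two projections at distance less than $1$ are unitarily equivalent via a canonically constructed unitary close to $1$. Concretely, set $z =_{df} ef + (1 - e)(1 - f) \in \C/\J$; a short computation gives $\| z - 1 \| \le \| e - f \| \cdot (\text{something} \le 2)$, and more carefully one has $1 - z = (e - f)(f - e) + \dots$, so in fact $\| 1 - z \| \le \| e - f\| + \| e - f \|^2 < 3/4 < 1$ (the precise constant does not matter, only that it is less than $1$), hence $z$ is invertible; and one checks $z f = e f = e z$, so $z f z^{-1} = e$. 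Taking the unitary $u =_{df} z |z|^{-1}$ from the polar decomposition, $u$ commutes with $|z|$, still satisfies $u f u^* = e$, and since $z$ is within distance $<1$ of $1$ and self-adjointly close, $u$ is within distance $< \epsilon$ of $1$ for any prescribed $\epsilon < 1/2$ once the partition is taken fine enough (shrinking the mesh if needed). Now apply Lemma \ref{lem:May520235PM}: lift $u$ to a unitary $U \in \C$ with $\pi_{\J}(U) = u$. Then $U^* E U \in \C$ is a projection, and $\pi_{\J}(U^* E U) = u^* e u = f$, so $U^* E U$ is the desired lift of $f$.

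The main obstacle, and the only place requiring genuine care, is bookkeeping the constants: I need the distance $\| p_{t_j} - p_{t_{j+1}} \|$ small enough that the associated unitary $u$ ends up within distance $< 1/2$ of $1$, as required by Lemma \ref{lem:May520235PM}. This is handled purely by making the partition mesh small, using uniform continuity of $t \mapsto p_t$ on the compact interval $[0,1]$; there is no topological subtlety beyond this. (Alternatively, one can invoke \cite{BlackadarHomotopyLifting} Theorem 5.1 directly, which is exactly the homotopy lifting theorem for projections, but the elementary argument above is self-contained and short, and it is worth including for the reader.) I would present the single-step claim as the key sublemma, then conclude by the finite induction over the partition.
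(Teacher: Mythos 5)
Your argument is correct and follows essentially the same route as the paper: partition the path of projections, conjugate across each subinterval by a unitary in $\C/\J$ close to $1$, and lift those unitaries via Lemma \ref{lem:May520235PM}. The only cosmetic difference is that the paper cites \cite{WeggeOlsen} Proposition 5.2.6 to produce the nearby conjugating unitaries (and composes them into a single $v$ before lifting), whereas you unwind the construction explicitly via $z = ef + (1-e)(1-f)$ and its polar part, lifting at each step. One small arithmetic slip in your sketch: the correct identities are $z^*z = 1 - (e-f)^2$ (so $z$ is invertible whenever $\|e-f\|<1$) and $\|1-z\| = \|e(1-f) + (1-e)f\| \le 2\|e-f\|$, not $\|e-f\| + \|e-f\|^2$; but as you already note, this is immaterial since the partition mesh can be taken small enough.
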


\begin{proof}
Fix $0 < \epsilon < \frac{1}{2}$.  

Let $\{ p_t \}_{t \in [0,1]}$ be a norm-continuous path of projections in $\C/\J$
such that $p_0 = q$  and $p_1 = p$.   Since $\{ p_t \}_{t \in [0,1]}$ is norm-continuous, by \cite{WeggeOlsen} Proposition 5.2.6,
we can find a partition of $[0,1]$
$$0 = t_0 < t_1 < ... < t_n = 1$$
and unitaries $u_0, u_1, ..., u_{n-1} \in \C/\J$, with  $\| u_j - 1 \| < \epsilon$ for all $0 \leq j \leq n-1$, such that
$u_j p_{t_j} u_j^* = p_{t_{j+1}}$ for all $0 \leq j \leq n-1$.   In particular,
if we define $v =_{df} u_{n-1} u_{n-2} \cdot \cdot \cdot u_0$, then  $v q v^* = p$.

But by Lemma \ref{lem:May520235PM}, $u_j$ can be lifted to a unitary in $\C$ for all $0 \leq j \leq n-1$.  Hence, $v$ can be lifted
to a unitary in $\C$.  Hence, $v q v^*$ can be lifted to a projection in $\C$.  Hence, $p$ can be lifted to a projection in $\C$.\\
\end{proof}

We now prove the first main result of this Subsection, 
which is a characterization of the existence of trivializing operators,
generalizing \cite{MelrosePiazza} Proposition 1, \cite{DaiZhang} Proposition
1.3 (A), \cite{Wu} Theorem 2.2 and \cite{LeichtnamPiazzaJFA} Theorem 3.
We note that, in view of the last statement in Theorem \ref{thm:ProjectionLiftingCondition}, perhaps one reason for the smoothness and success of the theory
of spectral flow in $\mathbb{B}(l_2)$ and more general semifinite von Neumann
algebras (e.g., see \cite{BenPhillEtAl}) is that
von Neumann algebras have real rank zero.   

\begin{thm} \label{thm:ProjectionLiftingCondition}
Let $\B$ be a separable stable C*-algebra, and let 
$A \in \Mul(\B)$ be a self-adjoint Fredholm operator.
Recall that $\pi(A) \in \C(\B)$ is a self-adjoint invertible operator. 

Then the following statements are equivalent:
\begin{enumerate}
\item $A$ has a trivializing operator (i.e., $A$ has a generalized
spectral section; see Remark \ref{rmk:SpectralSectionTerminology} (2)). 
\item $\pi(A)$ can be lifted to a self-adjoint invertible operator
 in $\Mul(\B)$.  
\item $1_{\geq 0}(\pi(A))$ can be lifted to a projection in $\Mul(\B)$.
\item  $1_{\geq 0}(\pi(A))$ is homotopy equivalent to a projection  
$q \in \C(\B)$ such that $q$ can be lifted to a projection in $\Mul(\B)$.\\ 
\end{enumerate} 

Now consider the following statement: 
\begin{enumerate}
   
\item[(5)] The $K_1$-index $Ind_1(A) = 0$. 
\end{enumerate}

Then we have that  (3) $\Rightarrow$ (5).\\

If $1_{\C(\B)} \preceq 1_{\geq 0}(\pi(A))$ \emph{and}
$1_{\C(\B)} \preceq 1_{\C(\B)} - 1_{\geq 0}(\pi(A))$,
then (3) $\Leftrightarrow$ (5), and hence,
(1) $\Leftrightarrow$ (2) $\Leftrightarrow$ (3) $\Leftrightarrow$
(4) $\Leftrightarrow$ (5).\\ 

If $\Mul(\B)$ has real rank zero, then all the statements (1), (2), (3), (4)
and (5) are true.   

\end{thm}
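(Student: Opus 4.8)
The plan is to prove the chain of equivalences (1)$\Leftrightarrow$(2)$\Leftrightarrow$(3)$\Leftrightarrow$(4), then the one-way implication (3)$\Rightarrow$(5), then the conditional equivalence (3)$\Leftrightarrow$(5) under the comparison hypotheses, and finally the real-rank-zero corollary. The first block is the easy part and should be done by cycling through the preliminary results already assembled. For (1)$\Leftrightarrow$(2): if $b \in \B_{SA}$ trivializes $A$, then $A+b$ is a self-adjoint invertible in $\Mul(\B)$ with $\pi(A+b) = \pi(A)$, giving (2); conversely, if $\pi(A)$ lifts to a self-adjoint invertible $C \in \Mul(\B)$, then $b =_{df} C - A \in \B$ (since $\pi(C) = \pi(A)$) is self-adjoint and $A + b = C$ is invertible, giving (1). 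The equivalence (2)$\Leftrightarrow$(3) is exactly Proposition \ref{prop:SAInvertibleLift} applied with $\C = \Mul(\B)$, $\J = \B$, and $a = \pi(A)$ (recall $1_{\geq 0}(\pi(A))$ is well-defined since $\pi(A)$ is a self-adjoint invertible). The implication (3)$\Rightarrow$(4) is trivial (take $q = 1_{\geq 0}(\pi(A))$ and the constant homotopy), and (4)$\Rightarrow$(3) is precisely Lemma \ref{lem:HomotopyProjectionLift} applied to $\C(\B) = \Mul(\B)/\B$.

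For (3)$\Rightarrow$(5): by definition $Ind_1(A) = \partial_1([1_{\geq 0}(\pi(A))]) \in K_1(\B)$, where $\partial_1 : K_0(\C(\B)) \to K_1(\B)$ is the exponential map from the six-term exact sequence associated to $0 \to \B \to \Mul(\B) \to \C(\B) \to 0$. If $1_{\geq 0}(\pi(A))$ lifts to a projection $P \in \Mul(\B)$, then $[1_{\geq 0}(\pi(A))] = \pi_*([P])$ lies in the image of $K_0(\Mul(\B)) \to K_0(\C(\B))$, hence in the kernel of $\partial_1$ by exactness; thus $Ind_1(A) = 0$.

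The main substance is the conditional equivalence (5)$\Rightarrow$(3) under the hypothesis $1_{\C(\B)} \preceq 1_{\geq 0}(\pi(A))$ and $1_{\C(\B)} \preceq 1_{\C(\B)} - 1_{\geq 0}(\pi(A))$. I expect this to be the hard step, and the strategy is: from $Ind_1(A) = \partial_1([1_{\geq 0}(\pi(A))]) = 0$, exactness gives a class $x \in K_0(\Mul(\B))$ with $\pi_*(x) = [1_{\geq 0}(\pi(A))]$; the goal is to realize this $K_0$-equality at the level of an honest projection lift, and this is where the two subequivalences (the projection and its complement both dominate $1_{\C(\B)}$, hence the ambient corona unit) are used to absorb stable equivalences into genuine unitary equivalences inside $\C(\B)$. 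Concretely, writing $p = 1_{\geq 0}(\pi(A))$, the comparison hypotheses let us find, for a large matrix amplification, a projection in $\Mul(\B)$ whose image is Murray--von Neumann equivalent in $\C(\B)$ to $p$, and then Murray--von Neumann equivalence lets us conjugate the lift by a lifted partial isometry (as in the proof of Lemma \ref{lem:HomotopyProjectionLift}, but now using the comparison to produce the needed isometry rather than a homotopy). Once (3)$\Leftrightarrow$(5) holds, combining with the first block yields (1)$\Leftrightarrow\cdots\Leftrightarrow$(5).

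For the final assertion, suppose $\Mul(\B)$ has real rank zero. Then $\B$ has real rank zero and, by the standard permanence results (real rank zero passes to quotients), $\C(\B)$ has real rank zero as well; in particular projections lift along $\pi : \Mul(\B) \to \C(\B)$ — this is one of the classical consequences of real rank zero together with the fact that $\B$ has an approximate unit of projections, so the lifting of projections from the quotient is automatic. Applying this to $p = 1_{\geq 0}(\pi(A))$ gives statement (3) outright, and then the equivalences (1)$\Leftrightarrow$(2)$\Leftrightarrow$(3)$\Leftrightarrow$(4) established above force (1), (2), (4) to hold, while (3)$\Rightarrow$(5) gives (5). Hence all five statements are true, completing the proof.
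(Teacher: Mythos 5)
Your proposal matches the paper's approach step by step for the first block: (1)$\Leftrightarrow$(2) is the same direct argument, (2)$\Leftrightarrow$(3) is Proposition~\ref{prop:SAInvertibleLift}, (3)$\Rightarrow$(4) is trivial, and (4)$\Rightarrow$(3) is Lemma~\ref{lem:HomotopyProjectionLift}, exactly as in the paper. Your (3)$\Rightarrow$(5) is a mild variant: you use exactness of the six-term sequence to place $[1_{\geq 0}(\pi(A))]$ in $\ker \partial_1$, whereas the paper uses $K_0(\Mul(\B)) = 0$ to conclude $[1_{\geq 0}(\pi(A))] = 0$ outright; both are correct, yours being marginally leaner. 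The real-rank-zero assertion is also in substance what the paper does (it cites \cite{ZhangPacific} directly for projection lifting; your route through approximate units of projections also works but is more roundabout).

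The gap is in (5)$\Rightarrow$(3). Your sketch ends with ``Murray--von Neumann equivalence lets us conjugate the lift by a lifted partial isometry,'' but a unitary in $\C(\B)$ implementing the equivalence $\pi(P) \sim 1_{\geq 0}(\pi(A))$ has no reason to lift to a unitary in $\Mul(\B)$ --- that lifting is precisely the obstruction the whole theorem is about, and glossing over it is circular. In Lemma~\ref{lem:HomotopyProjectionLift} the unitaries lift because they are norm-close to $1$ (Lemma~\ref{lem:May520235PM}); here no smallness is available, and matrix amplification does not help. The paper's actual argument is: from $Ind_1(A) = 0$ and the fact that $\partial_1$ is an \emph{isomorphism} (because $K_0(\Mul(\B)) = K_1(\Mul(\B)) = 0$ by \cite{WeggeOlsen} Theorem 10.2), one gets $[1_{\geq 0}(\pi(A))] = 0$ in $K_0(\C(\B))$, and since $[1_{\C(\B)}] = 0$ also, $[1_{\C(\B)} - 1_{\geq 0}(\pi(A))] = 0$ too. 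The comparison hypotheses plus proper infiniteness of $1_{\C(\B)}$ then upgrade these $K_0$-identities to genuine Murray--von Neumann equivalences $1_{\geq 0}(\pi(A)) \sim 1_{\C(\B)} \sim 1_{\C(\B)} - 1_{\geq 0}(\pi(A))$. Fixing a projection $P \in \Mul(\B)$ with $P \sim 1 \sim 1-P$ and a unitary $u \in \C(\B)$ conjugating $\pi(P)$ to $1_{\geq 0}(\pi(A))$, the key move is to multiply $u$ on the right by a unitary of the form $v + (1_{\C(\B)} - \pi(P))$, with $v$ a unitary in the corner $\pi(P)\C(\B)\pi(P)$, chosen to cancel the $K_1$-class of $u$; the resulting unitary $u'$ is then homotopic to $1$ and therefore lifts (by \cite{WeggeOlsen} Corollary 4.3.3), while still conjugating $\pi(P)$ to $1_{\geq 0}(\pi(A))$. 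Your sketch needs an explicit mechanism of this kind to make the conjugating unitary liftable; without it the argument does not close.
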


\begin{proof}
That (1) $\Leftrightarrow$ (2) is trivial. 
That (2) $\Leftrightarrow$ (3) follows immediately from 
Proposition  \ref{prop:SAInvertibleLift}.  
Hence, we have that (1) $\Leftrightarrow$ (2) $\Leftrightarrow$ (3).

That (3) $\Rightarrow$ (4) is trivial.  The converse, that (4) $\Rightarrow$ (3), follows immediately from
Lemma \ref{lem:HomotopyProjectionLift}.
Let us now prove that  (3) $\Rightarrow$ (5):  Suppose that
$1_{\geq 0}(\pi(A))$ can be lifted to a projection, say $R \in \Mul(\B)$
(so $\pi(R) = 1_{\geq}(\pi(A))$).  By \cite{WeggeOlsen} Theorem 10.2,
$K_0(\Mul(\B)) = 0$, so $[R] = 0$ in $K_0(\Mul(\B))$; and hence,
$[1_{\geq 0}(\pi(A))] = [\pi(R)] = 0$ in $K_0(\C(\B))$.
Thus, 
the $K_1$-index $Ind_1(A) = \partial_1([1_{\geq 0}(\pi(A))]) = 0$ in
$K_1(\B)$.  This completes the proof of (3) $\Rightarrow$ (5).

Now suppose that 
$1_{\C(\B)} \preceq 1_{\geq 0}(\pi(A))$ and  
$1_{\C(\B)} \preceq 1_{\C(\B)} - 1_{\geq 0}(\pi(A))$.
We now prove that (5) $\Rightarrow$ (3).  So assume (5).
So $Ind_1(A) = \partial_1([1_{\geq 0}(\pi(A))]) = 0$ in $K_1(\B)$.    
Since $\partial_1$ is a group isomorphism (see Subsection
\ref{subsect:GeneralizedFredholm}, the paragraph before
Definition \ref{df:FredholmIndex}),      
we have that $[1_{\geq 0}(\pi(A))] = 0$ in $K_0(\C(\B))$.
But since $[1_{\Mul(\B)}] = 0$ in $K_0(\Mul(\B))$ (by 
\cite{WeggeOlsen} Theorem 10.2), we must have that 
$[1_{\C(\B)}] = \pi_*([1_{\Mul(\B)}]) = 0$ in $K_0(\C(\B))$.
Hence, $[1_{\C(\B)} - 1_{\geq 0}(\pi(A))] = 0$. Hence,
$[1_{\geq 0}(A)] = [1_{\C(\B)} - 1_{\geq 0}(\pi(A))] = 0$ in 
$K_0(\C(\B))$.  Hence, 
since $1_{\C(\B)} \preceq 1_{\geq 0}(\pi(A))$
and $1_{\C(\B)} \preceq 1_{\C(\B)} - 1_{\geq 0}(\pi(A))$, and
since $1_{\C(\B)}$ is properly infinite, we must have that  
$$1_{\geq 0}(\pi(A)) \sim 1_{\C(B)} \sim 
1_{\C(\B)} - 1_{\geq 0}(\pi(A)).$$   
 Now since $\B$ is stable,  we can find a projection
 $P \in \Mul(\B)$  such that
$P \sim 1_{\Mul(\B)} \sim 1_{\Mul(\B)} - P$.
Hence, $\pi(P) \sim 1_{\C(\B)}  \sim 1_{\C(\B)} - \pi(P)$.
Hence, $\pi(P) \sim 1_{\geq 0}(\pi(A))$ and 
$1_{\C(\B)} - \pi(P) \sim 1_{\C(\B)} - 1_{\geq 0}(\pi(A))$.
Hence, let $u \in \C(\B)$ be a unitary such that 
$u \pi(P) u^* = 1_{\geq 0}(\pi(A))$. 
Since $\pi(P) \sim 1_{\C(\B)}$ and since $1_{\C(\B)}$ is properly infinite,
we can find a unitary $v \in \pi(P) \C(\B) \pi(P)$ such that 
if we define $u' =_{df} u ( v + 1_{\C(\B)} - \pi(P))$, then $u' \in \C(\B)$
is a unitary which is homotopic to $1_{\C(\B)}$.  Hence, by 
\cite{WeggeOlsen} Corollary 4.3.3, we can find a unitary $U \in \Mul(\B)$
for which $\pi(U) = u'$.
But by the definition of $u'$, $u' \pi(P) (u')^* = 1_{\geq 0}(\pi(A))$.
Hence, $1_{\geq 0}(\pi(A))$ can be lifted to the projection
$U P U^* \in \Mul(\B)$.   I.e., we have proven (3).

Now assume that $\Mul(\B)$ has real rank zero.
By \cite{ZhangPacific}, every projection in $\C(\B)$ lifts to a projection
in $\Mul(\B)$.  Hence, we have condition (3), and thus, from the above,
we have all the
conditions (1), (2), (3), (4) and (5).
\end{proof}

\begin{ex}  \label{ex:VanishingK1NoProjectionLift}
Here is an example that shows that vanishing of the $K_1$ index (alone) 
does not
imply the existence of a trivializing operator (equivalently, projection lifting).

Let $\W$ be the continuous scale Razak algebra (\cite{Razak}).
By \cite{EGLNW}, $\W$ is the unique continuous scale simple nuclear 
$\Z$-stable stably projectionless C*-algebra with a unique (up to positive
scalar multiple) trace and
satisfying the Universal 
Coefficient Theorem such that $K_*(\W) = 0$.  
(Here, $\Z$ is the Jiang--Su algebra (\cite{JiangSu}).)
Let $\tau$ be the unique (up to positive scalar multiple) trace of $\W$,
normalized to being a tracial state on $\W$.   
Let $\B =_{df} \W \otimes \K$, and we extend $\tau$ to a strict 
lower semicontinuous trace $\Mul(\B)_+ \rightarrow [0, \infty]$, which we 
also denote by ``$\tau$".
By \cite{LinContScale} (see also \cite{KNZMinimal}), let $\J \subset \Mul(\B)$
be the (unique) minimal ideal of $\Mul(\B)$ which properly contains $\B$.
By \cite{KNZMinimal}, $\J/\B$ is simple purely infinite, and by 
\cite{KNZPI}, $\pi(\J)$ is the unique proper nontrivial ideal of
$\C(\B)$.  By \cite{LinNgZ} Proposition 4.2, Corollary 4.6 and Theorem 4.7, 
we have that $(K_0(\J), K_0(\J)_+) = (\mathbb{R}, \mathbb{R}_+)$; in fact,
every element of $K_0(\J)_+ - \{ 0 \}$ can be realized as the class of a
projection in $\J - \B$, and for every projection $P \in \J - \B$,   
$[P] = \tau(P) \in \mathbb{R}_+ - \{ 0 \} = K_0(\J)_+ - \{ 0  \}$.    
Note also that by applying the six term exact sequence in K theory to the 
extension $0 \rightarrow \B \rightarrow \J  \rightarrow \J/\B \rightarrow 0$,
and using that $K_0(\J) = \mathbb{R}$ and
$K_j(\B) = K_j(\W) = 0$ ($j = 0,1$), we have that the map
$(\pi_{\B})_* : K_0(\J) \rightarrow K_0(\J/\B)$ (induced by the 
quotient map $\pi_{\B} : \J \rightarrow \J/\B$)  is a group isomorphism -- 
indeed it is the identity map on $\mathbb{R}$; and so 
$K_0(\J/\B) = K_0(\J) =
\mathbb{R}$.  
Also, by \cite{WeggeOlsen} Theorem 10.2 and Corollary 10.3, $K_0(\Mul(\B)) = 0$
and $K_0(\C(\B)) = K_1(\B) = K_1(\W) = 0$.  

Now since $\J/\B$ is simple purely infinite, we can find a nonzero projection
$p \in \J/\B$ such that $[p] = 0$ in $K_0(\J/\B)$.  $p$ cannot be lifted to
a projection in $\J$, i.e., there is no projection $P \in \J$  such that
$\pi_{\B}(P) = p$.  This is because if such a $P$ exists then, necessarily, 
$P \in \J - \B$, and so there exists a nonzero $r \in \mathbb{R}$ for
which $[P] = \tau(P) = r \in K_0(\J) = \mathbb{R}$.  So since the  map    
$(\pi_{\B})_* :  K_0(\J) \rightarrow K_0(\J/\B)$ is the 
identity map on $\mathbb{R}$, $[p] = [\pi_{\B}(P)] = r \neq 0$.  This 
contradicts that $[p] = 0$ in $K_0(\J/\B)$.    

Since $p$ cannot be lifted to a projection in $\J$, $p$ cannot be lifted to 
a projection in $\Mul(\B)$.  Let $A \in \Mul(\B)$ be 
a self-adjoint lift of $p - (1 - p) \in GL(\C(\B))_{SA}$.  Hence, 
$A$ is a self-adjoint Fredholm operator in $\Mul(\B)$ for which 
$Ind_1(A) = \partial_1([1_{\geq 0 }(\pi(A))]) = \partial_1([p]) 
= \partial_1(0) = 0$.  But $1_{\geq 0 }(\pi(A)) = p$ cannot be lifted to 
a projection in $\Mul(\B)$.\\ 
\end{ex}

\begin{rmk} \label{rmk:LiftingProjections} 
Note that a self-adjoint invertible element of a unital C*-algebra is path-connected to the unit via a norm-continuous
path of invertible elements.  Hence,
the self-adjoint invertible $a \in \C/\J$ in Proposition 
\ref{prop:SAInvertibleLift} can always be lifted to an invertible element of  
$\C$, even without the
requirement that $1_{\geq 0} (a)$ can be lifted to a projection in 
$\C$ (see \cite{WeggeOlsen} Corollary 4.3.3).  
However, the lift need not be self-adjoint. This subtlety 
is part of the point of 
Proposition \ref{prop:SAInvertibleLift} and Theorem 
\ref{thm:ProjectionLiftingCondition}.

Also, a projection in $\C/\J$ need not be liftable to a projection in $\C$.
In fact, this is a very strong condition that has been very much the subject
of study in operator theory and 
operator algebras. E.g., a well-known result is that if $\C$ has real 
rank zero, then every projection in $\C/\J$ lifts to a projection in
$\C$ (see \cite{ZhangPacific} 3.2 and \cite{BrownPedersen} Theorem
3.14).

Let us look at the projection lifting
condition, which is an important statement in analysis,
from still another point of view.  Say that
$\J = \B$ where $\B$ is a separable stable C*-algebra, and say that
$\C = \Mul(\B)$.  The previous two paragraphs also say the following: 
A self-adjoint Fredholm operator in $\Mul(\B)$ has Fredholm index zero,
and hence, it can always be perturbed (by adding an element of $\B$) to
an invertible element of $\Mul(\B)$.  However, a subtlety is that
 the perturbation need not
be a \emph{self-adjoint} invertible element of $\Mul(\B)$.  By Proposition
\ref{prop:SAInvertibleLift}, this is connected with the projection lifting
problem, and this problem is connected to interesting and fundamental problems
in operator theory.  For example, the following is a conjecture of Brown--Pedersen--Zhang
(e.g., see \cite{SZhang} and \cite{BrownPedersen}).  
Let $\B$ be a separable, stable C*-algebra with real rank zero.  
Then are the following statements equivalent?:
\begin{enumerate}    
\item $\Mul(\B)$ has real rank zero.
\item $\Mul(\B)$ has the Weyl--von Neumann theorem for self-adjoint operators.
\item $K_1(\B) = 0$.
\item Every projection in $\C(\B)$ lifts to a projection in $\Mul(\B)$.
\end{enumerate}
Some of the above implications have already been proven, and some for 
special cases (e.g., see \cite{ZhangPacific}, \cite{LinWvN} and
\cite{BrownPedersen}).    
We note that real rank zero and projection-lifting were implicitly used in the ground-breaking work
of Brown--Douglas--Fillmore (\cite{BDF}), 
even though this was before the terminology
``real rank zero" was invented.   Moreover, real rank zero has formal similarities to the SAW* property for corona
algebras which is implied by the Kasparov Technical Lemma, a fundamental result which is used to prove basic
properties of KK.                   
In fact, as mentioned previously, perhaps one big reason, for the success and
smoothness of the theory of spectral flow for $\mathbb{B}(l_2)$ 
and more general semifinite von Neumann algebras (e.g.,
\cite{BenPhillEtAl}),
is that von Neumann algebras have real rank zero.  For example, if 
$\Mul$ is a $II_{\infty}$ factor with separable predual, and if $\K_{\Mul}
\subseteq \Mul$ is its Breuer ideal then, since $\Mul$ has real rank zero,
every projection in $\Mul/\K_{\Mul}$ lifts to a projection in $\Mul$.  

Thus, in light of Proposition \ref{prop:SAInvertibleLift} and 
Theorem \ref{thm:ProjectionLiftingCondition}, 
the conditions of having a trivializating operator (or ``generalized spectral
sections") or having local
 trivializing families, as in Definition \ref{df:WahlSFDefinition}, are, from
the perspective of an analyst, very restrictive conditions but also very interesting and tied to fundamental problems
in operator theory. 
As noted earlier in this remark, this condition is even stronger than having Fredholm
index zero.

Finally, we note that the hypotheses in the second last paragraph of 
the statement of Theorem \ref{thm:ProjectionLiftingCondition}  (i.e.,
the hypotheses $1 \preceq 1_{\geq 0}(\pi(A))$ and 
$1 \preceq 1 - 1_{\geq 0}(\pi(A))$, which are used to 
show (3) $\Leftrightarrow$ (5)) is the same as 
statements used in \cite{LeichtnamPiazzaJFA} to show that (under certain
conditions) the existence of spectral sections is equivalent to the vanishing
of a certain index  (e.g., see \cite{LeichtnamPiazzaJFA} Theorem 2). In fact,
parts of the proof here are also similar.\\    
\end{rmk}

\iffalse
%We now examine the projection-lifting problem from the point of view
%o%f spectral flow for paths with invertible endpoints,  
%We begin by noting that for a norm-continuous path of self-adjoint
%Fredholm
%operators with invertible endpoints, the vanishing $K_1$ index condition is     
%automatically satisfied locally.  In fact, it will be shown later on
%that the vanishing index condition is also satisfied globally 
%(see Proposition \ref{prop:VanishGlobalIndex}).  

%\begin{lem} \label{lem:VanishLocalIndex}
%Let $\B$ be a separable stable C*-algebra, and let $\{ A_t \}_{t \in [0,1]}$
%be a norm-continuous path of Fredholm operators in $\Mul(\B)$
%such that $A_0$ and  $A_1$ are invertible.

%Then for all $t \in [0,1]$, $Ind_1(A_t) = 0$. 
%\end{lem}

%\begin{proof}
%We have that $\{ \pi(A_t) \}_{t \in [0,1]}$ is a norm-continuous path of
%self-adjoint invertible operators in $\C(\B)$.  Hence, 
%by Lemma \ref{lem:1geq0Continuous},  $\{ 1_{\geq 0}(\pi(A_t)) \}_{t \in [0,1]}$
%is a norm-continuous path of projections in $\C(\B)$.
%Hence, for all $t \in [0,1]$, $[1_{\geq 0}(\pi(A_t))] = 
%[1_{\geq 0}(\pi(A_0))]$ in $K_0(\C(\B))$.   
%But since $A_0$ is a self-adjoint invertible, $1_{\geq 0}(A_0)$ is a 
%well-defined
%projection in $\Mul(\B)$ and thus, by the continuous functional calculus and by
%\cite{WeggeOlsen} Theorem 10.2,  
%$$0 = \pi_*(0) = \pi_*([1_{\geq 0}(A_0)]) = [\pi(1_{\geq 0}(A_0))]
%= [1_{\geq 0}(\pi(A_0))] \makebox{  in  } K_0(\C(\B)).$$
%Hence, for all $t \in [0,1]$,
%$[1_{\geq 0}(\pi(A_t))] = 
%0$ in $K_0(\C(\B))$.   
%Hence, for all $t \in [0,1]$,
%$Ind_1(A_t) = \partial_1([1_{\geq 0}(\pi(A_t))]) = 0$.\\  
%\end{proof}

%As shown in Example \ref{ex:VanishingK1NoProjectionLift}, vanishing of 
%the $K_1$ index does not imply projection lifting (though the converse is
%true).  However, we do not know of a counterexample in the spectral flow
%context of Lemma \ref{lem:VanishLocalIndex} (which we will later see 
%also gives vanishing of the global index; see Proposition 
%\ref{prop:VanishGlobalIndex}),
%even when we add some standard regularity conditions, unless our assumptions
%are strong enough (see Remark \ref{rmk:OnQuestion}).  This operator theory
%problem does not  
%seem to be captured by K theory, and it seems to us to be more difficult
%than the ``Weyl--von Neumann theorem" type problem presented in 
%Remark \ref{rmk:LiftingProjections}.   Since this problem seems to
%be very interesting and fundamental, we raise it explicitly
%as a question in the present paper:  
\fi

We end this section by proving that in Definition \ref{df:WahlSFDefinition},
one only needs one cell in the partition, i.e., we just need the interval
$[0,1]$.  Towards this, we prove the following preliminary lemma:

\begin{lem} \label{lem:Nov320231AM} 
Let $\C$ be a unital C*-algebra, and let $\J \subset \C$ be a C*-ideal.
Let $\{ p_t \}_{t \in [r, s]}$ be a
norm-continuous path of projections in $\C$, and suppose that $q \in \C$ is
a projection for which $p_r - q \in \J$.

Then we can find a norm-continuous path $\{ q_t \}_{t \in [r,s]}$ of projections
in $\C$ such that 
$q_r = q$ and $p_t - q_t \in \J$ for all $t \in [r,s]$. 
\end{lem}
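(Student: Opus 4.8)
The plan is to build the lifted path $\{q_t\}$ locally and then patch. First I would use the norm-continuity of $\{p_t\}_{t\in[r,s]}$ together with compactness of $[r,s]$ to produce, via \cite{WeggeOlsen} Proposition 5.2.6 (the same device used in the proof of Lemma \ref{lem:HomotopyProjectionLift}), a partition $r = t_0 < t_1 < \dots < t_n = s$ and unitaries $w_0, \dots, w_{n-1} \in \C$ with $\|w_j - 1_{\C}\|$ small such that $w_j p_{t_j} w_j^* = p_{t_{j+1}}$, and in fact a norm-continuous path of unitaries $\{u_t\}_{t\in[r,s]}$ in $\C$ with $u_r = 1_{\C}$ and $u_t p_r u_t^* = p_t$ for all $t$ (one gets this by concatenating the canonical small-perturbation unitary paths on each subinterval $[t_j, t_{j+1}]$). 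The existence of such a $\{u_t\}$ is the standard fact that a norm-continuous path of projections in a unital C*-algebra is unitarily equivalent, via a norm-continuous unitary path starting at $1_{\C}$, to the constant path at its initial point.

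Given such $\{u_t\}$, I would then simply set $q_t =_{df} u_t q u_t^*$ for $t \in [r,s]$. This is visibly a norm-continuous path of projections in $\C$ with $q_r = u_r q u_r^* = q$. It remains to check that $p_t - q_t \in \J$ for all $t$. We have $p_r - q \in \J$ by hypothesis, and
\begin{equation*}
p_t - q_t = u_t p_r u_t^* - u_t q u_t^* = u_t (p_r - q) u_t^*,
\end{equation*}
which lies in $\J$ because $\J$ is an ideal of $\C$. This completes the argument.

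The only mild subtlety — and the place I would be most careful — is justifying the norm-continuous unitary path $\{u_t\}$ with $u_r = 1_{\C}$: one needs to concatenate the local unitaries $w_j$ coming from \cite{WeggeOlsen} Proposition 5.2.6 into a genuinely continuous path over all of $[r,s]$, rather than a discrete family relating $p_{t_j}$ to $p_{t_{j+1}}$. Since \cite{WeggeOlsen} Proposition 5.2.6 already gives that on each small subinterval $[t_j,t_{j+1}]$ the path $\{p_t\}$ is of the form $v_t p_{t_j} v_t^*$ for a norm-continuous unitary path $v_t$ with $v_{t_j} = 1_{\C}$ (this is exactly the content of its proof, using $\|p_t - p_{t_j}\| < 1$), one patches these together by left-multiplying the $j$-th piece by the accumulated unitary $v_{t_j}^{(j-1)}\cdots v_{t_1}^{(0)}$ from the earlier pieces; the result is a norm-continuous $\{u_t\}$ with $u_r = 1_{\C}$ and $u_t p_r u_t^* = p_t$. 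Everything else is formal. In fact this lemma is essentially just the observation that conjugation by a lifted unitary path preserves the relation "differ by an element of $\J$", combined with the standard homotopy-implies-unitary-equivalence fact for projections.
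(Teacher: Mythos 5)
Your proof is correct and takes essentially the same approach as the paper: both invoke \cite{WeggeOlsen} Proposition 5.2.6 to produce a norm-continuous path of unitaries $\{u_t\}$ with $u_r = 1_{\C}$ and $u_t p_r u_t^* = p_t$, define $q_t = u_t q u_t^*$, and observe that $p_t - q_t = u_t(p_r - q)u_t^* \in \J$. The only difference is that you spell out the patching argument behind Proposition 5.2.6, whereas the paper simply cites it.
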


\begin{proof} 
By \cite{WeggeOlsen} Proposition 5.2.6, we can find a norm-continuous path
$\{ u_t \}_{t \in [r,s]}$ of unitaries in $\C$ 
with $u_r = 1_{\C}$ such that $u_t p_r u_t^* = p_t$ for all $t \in [r, s]$.
Define $q_t =_{df} u_t q u_t^*$ for all $t \in [r,s]$. 
Note that for all $t \in [r,s]$,
$p_t - q_t = u_t(p_r - q) u_t^* \in \J$.\\   
\end{proof}

We now show that, in Definition \ref{df:WahlSFDefinition}, we only need one
cell in the partition, i.e., $\{ A_t \}_{t \in [0,1]} \in 
\Mul(C[0,1] \otimes \B)$
can have a single trivializing operator in $C[0,1] \otimes \B$.  This also
shows the similarity between Definition \ref{df:WahlSFDefinition} and the
older versions of spectral flow in \cite{DaiZhang}, \cite{Wu} and
\cite{LeichtnamPiazzaJFA}  (see Subsection \ref{subsection:DZWLPDefinition},
Definition \ref{df:DZWLPSpectralFlow}).

\begin{prop}  \label{prop:WahlSFSingleCell}
Let $I =_{df} [0,1]$.  Let $\B$ be a separable stable C*-algebra, and suppose
that $\{ A_t \}_{t \in I}$ is a norm-continuous path   
of self-adjoint Fredholm operators in $\Mul(\B)$ for which local 
trivializing families exist.

Then we have a norm-continuous path $\{ b_t \}_{t \in I}$ of self-adjoint
operators in $\B$ such that
$\{ b_t \}_{t \in I}$ is a trivializing family for
$\{ A_t \}_{t \in I}$.  

As a consequence, if  $b'_0$ and $b'_1$, in $\B_{SA}$,  
are trivializing operators for 
$A_0$ and $A_1$ respectively, then
$$Sf(\{ A_t \}_{t \in I}, b'_0, b'_1) 
= Ind(A_0, b'_0, b_0)  + Ind(A_1, b_1, b'_0).$$

Equivalently, from the above, $\{ Q_t =_{df} 
1_{\geq 0}(A_t + b_t) \}_{t \in [0,1]}$
is a norm-continuous path of projections in $\Mul(\B)$, and 
$$Sf(\{ A_t \}_{t \in I}, b'_0, b'_1) = 
[P_0 : Q_0] + [Q_1 : P_1]$$
where $P_0 =_{df} 1_{\geq 0}(A_0 + b'_0)$ and
$P_1 =_{df} 1_{\geq 0}(A_1 + b'_1)$.  (Note the similarity with
Definition \ref{df:DZWLPSpectralFlow}.)   
\end{prop}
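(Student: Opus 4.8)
The plan is to reduce the construction of a global trivializing family to the local-to-global patching already in place, by using the homotopy-lifting machinery (Lemma \ref{lem:HomotopyProjectionLift}) combined with Lemma \ref{lem:Nov320231AM}. First I would invoke the existence of local trivializing families and the compactness of $I = [0,1]$ to produce a partition $0 = t_0 < t_1 < \dots < t_n = 1$ together with trivializing families $\{ b^j_t \}_{t \in [t_j, t_{j+1}]}$ for $\{ A_t \}_{t \in [t_j, t_{j+1}]}$, $0 \le j \le n-1$. Setting $P^j_t =_{df} 1_{\geq 0}(A_t + b^j_t)$, Lemma \ref{lem:FredholmA1A2} gives $P^j_t - P^j_s \in \B$ whenever $t, s$ lie in the same cell (and more to the point $\pi(P^j_t) = 1_{\geq 0}(\pi(A_t))$ is independent of the choice of trivializing family), and Lemma \ref{lem:1geq0Continuous} gives that each $\{ P^j_t \}_{t \in [t_j, t_{j+1}]}$ is norm-continuous in $\Mul(\B)$.

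The heart of the argument is an inductive patching over the cells. Suppose I have already built a norm-continuous path $\{ Q_t \}_{t \in [0, t_j]}$ of projections in $\Mul(\B)$ with $Q_t - 1_{\geq 0}(\pi(A_t)) \cdot (\text{a fixed lift})$ controlled --- more precisely, with $Q_t - P^{j-1}_t \in \B$ (for $j \ge 1$; for $j = 0$ there is nothing to do yet and I start from $Q_t =_{df} P^0_t$ on $[t_0,t_1]$). At the node $t_j$ I have two projections $Q_{t_j}$ and $P^j_{t_j}$ with $\pi(Q_{t_j}) = 1_{\geq 0}(\pi(A_{t_j})) = \pi(P^j_{t_j})$, hence $Q_{t_j} - P^j_{t_j} \in \B$. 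Applying Lemma \ref{lem:Nov320231AM} to the norm-continuous path $\{ P^j_t \}_{t \in [t_j, t_{j+1}]}$ and the projection $Q_{t_j}$, I obtain a norm-continuous path $\{ Q_t \}_{t \in [t_j, t_{j+1}]}$ of projections in $\Mul(\B)$ with $Q_{t_j}$ agreeing with the previously constructed value and $Q_t - P^j_t \in \B$ for all $t$ in that cell. Concatenating, I get a single norm-continuous path $\{ Q_t \}_{t \in [0,1]}$ of projections in $\Mul(\B)$ with $\pi(Q_t) = 1_{\geq 0}(\pi(A_t))$ for every $t$, and in particular $Q_t - 1_{\geq 0}(\pi(A_t)) \cdot (\cdot) \in \B$ locally in the appropriate sense.

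Next I would promote this to an honest trivializing family. Viewing $\{ A_t \}_{t \in I}$ as a self-adjoint Fredholm element of $\Mul(C(I) \otimes \B)$ (with corona $C(I) \otimes \C(\B)$, using $C(I) \otimes \B \lhd C(I) \otimes \Mul(\B) \subseteq \Mul(C(I) \otimes \B)$), the path $\{ Q_t \}$ is exactly a projection $Q \in \Mul(C(I) \otimes \B)$ lifting $1_{\geq 0}(\pi(\{A_t\}))$: it lifts because $\pi(Q_t) = 1_{\geq 0}(\pi(A_t))$ pointwise and everything is norm-continuous. Then Proposition \ref{prop:SAInvertibleLift} (applied with $\C = \Mul(C(I)\otimes\B)$ and $\J = C(I) \otimes \B$) upgrades this projection lift to a self-adjoint invertible lift of $\pi(\{A_t\})$, i.e. a self-adjoint invertible element of $\Mul(C(I)\otimes\B)$ differing from $\{A_t\}$ by an element $\{ b_t \} \in C(I)\otimes\B$; that $\{b_t\}$ is the desired global trivializing family, and $1_{\geq 0}(A_t + b_t) = Q_t$ (up to the usual identification), giving the norm-continuous path of projections claimed. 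Alternatively, and perhaps more cleanly, one can run the homotopy-lifting Lemma \ref{lem:HomotopyProjectionLift} directly in $\Mul(C(I)\otimes\B)$: the projection $1_{\geq 0}(\pi(\{A_t\}))$ restricted to, say, the cell containing $0$ already lifts, and a one-cell-at-a-time homotopy (using that $[0,1]$ deformation retracts onto a point, or rather chaining the retractions of the cells) reduces to a projection that lifts --- but the explicit concatenation above is more transparent and avoids subtleties about homotopies of $\{A_t\}$ itself.

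Finally, the two displayed formulas follow immediately. Once $\{ b_t \}_{t \in I}$ is a single trivializing family on $[0,1]$, the partition $0 < 1$ (with $b^0_t =_{df} b_t$) is a legitimate choice in Definition \ref{df:WahlSFDefinition}, so by the well-definedness Proposition \ref{prop:WahlSFWellDefined} the spectral flow computed with any partition equals the one computed with this trivial partition, which is precisely $Ind(A_0, b'_0, b_0) + Ind(A_1, b_1, b'_1)$ (I would flag the obvious typo ``$b'_0$'' for ``$b'_1$'' in the last summand of the proposition statement); unwinding $Ind$ via Definition \ref{df:WahlSFPrelim} (2) and writing $Q_t = 1_{\geq 0}(A_t + b_t)$, $P_0 = 1_{\geq 0}(A_0 + b'_0)$, $P_1 = 1_{\geq 0}(A_1 + b'_1)$ gives $[P_0 : Q_0] + [Q_1 : P_1]$. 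The main obstacle is the inductive patching step: one must be careful that Lemma \ref{lem:Nov320231AM} is being applied with the correct ideal and that the ``in $\B$'' condition is genuinely preserved across each node (it is, precisely because $\pi(Q_{t_j}) = 1_{\geq 0}(\pi(A_{t_j}))$ does not depend on which local trivializing family produced it), and that the concatenation of finitely many norm-continuous paths agreeing at endpoints is again norm-continuous --- routine, but it is where all the content sits.
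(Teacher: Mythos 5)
Your proposal matches the paper's proof essentially step for step: use compactness and the local-trivializing-family hypothesis to get a partition, apply Lemma \ref{lem:1geq0Continuous} and then repeatedly apply Lemma \ref{lem:Nov320231AM} to patch the local projection paths into a single norm-continuous path $\{Q_t\}$ of projections in $\Mul(\B)$ lifting $1_{\geq 0}(\pi(\{A_t\}))$, and then pass through the lifting criterion (you cite Proposition \ref{prop:SAInvertibleLift} directly, the paper cites Theorem \ref{thm:ProjectionLiftingCondition} (1) $\Leftrightarrow$ (3), which is just a packaged form of the same proposition) to produce the global trivializing family and read off the two displayed formulas via Proposition \ref{prop:WahlSFWellDefined}. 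You also correctly flag the typo in the second summand ($b'_0$ should read $b'_1$).
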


\begin{proof}
Following Definition \ref{df:WahlSFDefinition},
by the hypothesis of the existence of local trivializing families,
we can find a partition 
$$0 = t_0 < t_1 < ... < t_n = 1$$
of $I$ such that for all $0 \leq j \leq n-1$, we can find 
$\{ c^j_t \}_{t \in [t_j, t_{j+1}]}$ (continuous 
family in $\B_{SA}$) which is a
trivializing family for $\{ A_t \}_{t \in [t_j, t_{j+1}]}$.   
Hence, by Lemma \ref{lem:1geq0Continuous}, for all $0 \leq j \leq n-1$,
$\{ 1_{\geq 0}(A_t + c^j_t) \}_{t \in [t_j, t_{j+1}]}$ is a norm-continuous
path of projections in $\Mul(\B)$.    
By repeatedly applying Lemma \ref{lem:Nov320231AM}, we can find a 
norm-continuous path $\{ R_t \}_{t \in [0,1]}$ of projections in $\Mul(\B)$
such that 
$R_0 = 1_{\geq 0}(A_0 + c^0_0)$ 
and
$$R_t - 1_{\geq 0}(A_t + c^j_t) \in \B \makebox{ for all  } 0 \leq j \leq n-1
\makebox{  and  } t \in [t_j, t_{j+1}].$$
Hence, for all $0 \leq j \leq n-1$ and for all $t \in [t_j, t_{j+1}]$,
\[
\pi(R_t) = \pi(1_{\geq 0}(A_t + c^j_t)) = 1_{\geq 0}(\pi(A_t + c^j_t)) =
1_{\geq 0}(\pi(A_t)).
\]
(Recall that for all $t \in I$,  since $A_t$ is self-adjoint
and Fredholm, $\pi(A_t)$
is self-adjoint and  invertible.)
So for all $t \in I$, $\pi(R_t)$ is the support projection of  
$\pi(A_t)_+$.  Hence, 
the projection $\{ \pi(R_t) \}_{t \in I}$, in $\C(C[0,1] \otimes \B)$,
is the support projection of the positive part of the
 self-adjoint invertible element
$\{ \pi(A_t) \}_{t \in I}$ of $\C(C[0,1] \otimes \B)$. 
Hence, since $\{ \pi(R_t) \}_{t \in I}$ lifts to a projection
$\{ R_t \}_{t \in I}$ in $\Mul(C[0,1] \otimes \B)$,  by Theorem 
\ref{thm:ProjectionLiftingCondition} (1) $\Leftrightarrow$ (3),
we have that $\{ A_t \}_{t \in I}$ has a trivializing operator in 
$(C[0,1] \otimes \B)_{SA}$. I.e., we can find a norm-continuous
path $\{ b_t \}_{t \in I}$ of self-adjoint operators in $\B$ such that
$\{ b_t \}_{t \in I}$ is trivialing family for $\{ A_t \}_{t \in I}$.
The remaining statements follow from      
Lemma \ref{lem:1geq0Continuous} and 
Definition \ref{df:WahlSFDefinition}.\\  
\end{proof}

\subsection{The general definition}

Towards a concrete and more general definition for spectral flow with
hypotheses that are relatively general and easy to check   (and which
does not require the canonical ideal
 to have a nonzero projection),
let us first
prove some preliminary technical lemmas, one of which essentially says that the simple homotopy projection lifting result of 
Lemma \ref{lem:HomotopyProjectionLift} gives us the projection-lifting that we need.\\

\begin{lem}
\label{lem:TechnicalPreDef1}  
Let $I \subset \mathbb{R}$ be a compact interval, and 
let $\B$ be a separable stable C*-algebra.  
Let $\{ A_t \}_{t \in I}$ be a norm-continuous path of self-adjoint Fredholm operators in 
$\Mul(\B)$ such that $A_0$ is invertible in $\Mul(\B)$.

Then for all $t \in I$,
$A_t$ has a trivializing 
operator in $\B$.
\end{lem}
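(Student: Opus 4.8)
The idea is to reduce everything to the homotopy projection-lifting lemma (Lemma \ref{lem:HomotopyProjectionLift}) together with the trivializing-operator characterization of Theorem \ref{thm:ProjectionLiftingCondition}. First I would record the two key observations. Since $A_0$ is invertible in $\Mul(\B)$, it already has a trivializing operator, namely $b_0 =_{df} 0 \in \B_{SA}$; equivalently, by Theorem \ref{thm:ProjectionLiftingCondition} (1) $\Leftrightarrow$ (3), the projection $1_{\geq 0}(\pi(A_0))$ lifts to the projection $1_{\geq 0}(A_0) \in \Mul(\B)$. Next, since each $A_t$ is a self-adjoint Fredholm operator, $\pi(A_t)$ is a self-adjoint invertible in $\C(\B)$, so $1_{\geq 0}(\pi(A_t))$ is a well-defined projection; and since $\{ A_t \}_{t \in I}$ is norm-continuous, so is $\{ \pi(A_t) \}_{t \in I}$, whence by Lemma \ref{lem:1geq0Continuous} (applied in the unital C*-algebra $\C(\B)$) the path $\{ 1_{\geq 0}(\pi(A_t)) \}_{t \in I}$ is a norm-continuous path of projections in $\C(\B)$.

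\textbf{Main argument.} Now fix $t \in I$. The path $\{ 1_{\geq 0}(\pi(A_s)) \}_{s \in I}$ is a norm-continuous path of projections in $\C(\B)$ connecting $1_{\geq 0}(\pi(A_0))$ to $1_{\geq 0}(\pi(A_t))$ (restrict to the subinterval between $0$ and $t$; if $t < 0$ this is the subinterval $[t,0]$, which is fine since norm-homotopy of projections is symmetric). In particular $1_{\geq 0}(\pi(A_t))$ is homotopy-equivalent, in $Proj(\C(\B))$, to the projection $q =_{df} 1_{\geq 0}(\pi(A_0))$, which by the first observation lifts to a projection in $\Mul(\B)$. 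Therefore, by Lemma \ref{lem:HomotopyProjectionLift} (with $\C = \Mul(\B)$ and $\J = \B$), the projection $1_{\geq 0}(\pi(A_t))$ lifts to a projection in $\Mul(\B)$. Since $\pi(A_t)$ is a self-adjoint invertible in $\C(\B) = \Mul(\B)/\B$, Theorem \ref{thm:ProjectionLiftingCondition} (3) $\Rightarrow$ (1) now gives that $A_t$ has a trivializing operator in $\B_{SA}$, as required.

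\textbf{Remark on the obstacle.} There is essentially no hard step here: the entire content has been packaged into the two preceding results, Lemma \ref{lem:HomotopyProjectionLift} and the equivalence (1) $\Leftrightarrow$ (3) of Theorem \ref{thm:ProjectionLiftingCondition}. The only point requiring a small amount of care is the bookkeeping of the homotopy — making sure the path connecting $1_{\geq 0}(\pi(A_0))$ to $1_{\geq 0}(\pi(A_t))$ is a genuine norm-continuous path of \emph{projections} in $\C(\B)$, which is exactly what Lemma \ref{lem:1geq0Continuous} provides once one notes that $t \mapsto \pi(A_t)$ is a norm-continuous path of self-adjoint \emph{invertibles} in the unital C*-algebra $\C(\B)$ (invertibility coming from the Fredholm hypothesis). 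I would state this reduction cleanly and let Lemma \ref{lem:HomotopyProjectionLift} and Theorem \ref{thm:ProjectionLiftingCondition} do the work.
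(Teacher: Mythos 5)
Your proof is correct and follows essentially the same route as the paper: both establish that $\{1_{\geq 0}(\pi(A_s))\}_{s \in I}$ is a norm-continuous path of projections in $\C(\B)$ (via Lemma \ref{lem:1geq0Continuous}), observe that $1_{\geq 0}(\pi(A_0))$ lifts to the projection $1_{\geq 0}(A_0) \in \Mul(\B)$, and conclude via the projection-lifting equivalence of Theorem \ref{thm:ProjectionLiftingCondition}. The only superficial difference is that you invoke Lemma \ref{lem:HomotopyProjectionLift} together with (3) $\Leftrightarrow$ (1) of Theorem \ref{thm:ProjectionLiftingCondition}, whereas the paper invokes (1) $\Leftrightarrow$ (4) directly — but (4) $\Rightarrow$ (3) is itself proved by Lemma \ref{lem:HomotopyProjectionLift}, so you have just unwound the same step.
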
   

\begin{proof}
Fix an arbitrary $t' \in I$.  We will prove that $A_{t'}$
has a trivializing operator in $\B$.  

We have that $\{ \pi(A_t) \}_{t \in I}$ is a norm-continuous 
path of self-adjoint
invertible elements of $\C(\B)$.  
By Lemma \ref{lem:1geq0Continuous},  $\{ 1_{\geq 0}(\pi(A_t)) \}_{t \in I}$ is 
a norm-continuous path of projections in $\C(\B)$.
But since $A_0$ is a self-adjoint invertible in $\Mul(\B)$,
$1_{\geq 0}(A_0)$ is a projection in $\Mul(\B)$ and, by the continuous      
functional calculus, 
$\pi(1_{\geq 0}(A_0)) = 1_{\geq 0}(\pi(A_0))$.  Hence, 
$1_{\geq 0}(\pi(A_{t'}))$ is homotopy equivalent to a projection in $\C(\B)$
which can be lifted to a projection in $\Mul(\B)$. Hence,
Hence, by Theorem \ref{thm:ProjectionLiftingCondition}  (1) $\Leftrightarrow$ (4),
$A_{t'}$ has a trivializing operator in $\B$.\\ 
\end{proof}

\begin{lem} (Cf. \cite{WahlSpectralFlow}  Lemma 3.12)  
\label{lem:TechnicalPreDef2}
Let $I \subset \mathbb{R}$ be a compact interval, and let
$\B$ be a separable stable C*-algebra.
Let $\{ A_t \}_{t \in I}$ be a norm-continuous path of Fredholm operators
in $\Mul(\B)$.
\begin{enumerate}
\item \label{lem:TechnicalPreDef2Item1}  
For all $t' \in I$, if  $A_{t'}$ has a trivializing operator $b \in \B_{SA}$
then there is an open neighbourhood $O \ni t'$ such that $A_{t}+ b$ is invertible
for all $t \in O$.
\item \label{lem:TechnicalPreDef2Item2}  
As a consequence, if $A_t$ has a trivializing operator for all $t \in I$,
then there are local trivializing families for 
$\{ A_t \}_{t \in I}$. 
\end{enumerate}
\end{lem}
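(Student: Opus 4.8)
The plan is to prove the two items separately, with item (2) being an easy consequence of item (1) together with a compactness argument.

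For item (1), fix $t' \in I$ and suppose $b \in \B_{SA}$ is a trivializing operator for $A_{t'}$, i.e., $A_{t'} + b$ is invertible in $\Mul(\B)$. The key point is that invertibility is an open condition in the norm topology: the set of invertible elements of a unital C*-algebra is open. Concretely, if $\| (A_{t'} + b)^{-1} \| = M$, then any self-adjoint element $z$ with $\| z - (A_{t'} + b) \| < 1/M$ is invertible. Since $t \mapsto A_t$ is norm-continuous, the map $t \mapsto A_t + b$ is norm-continuous, so there is an open neighbourhood $O \ni t'$ such that $\| (A_t + b) - (A_{t'} + b) \| = \| A_t - A_{t'} \| < 1/M$ for all $t \in O$. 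Hence $A_t + b$ is invertible for all $t \in O$, as required. (I should double-check that $A_t + b$ is self-adjoint, which is immediate since each $A_t$ is self-adjoint and $b \in \B_{SA}$; but in fact self-adjointness is not even needed for this openness argument.)

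For item (2), assume $A_t$ has a trivializing operator for every $t \in I$. Fix an arbitrary $t_0 \in I$; we must produce a compact neighbourhood $U \ni t_0$ such that $\{ A_t \}_{t \in U}$ has a trivializing family, i.e., a norm-continuous family $\{ b_t \}_{t \in U}$ in $\B_{SA}$ with $A_t + b_t$ invertible for all $t \in U$. By hypothesis, $A_{t_0}$ has a trivializing operator $b \in \B_{SA}$; by item (1), there is an open neighbourhood $O \ni t_0$ with $A_t + b$ invertible for all $t \in O$. Choose a compact neighbourhood $U \ni t_0$ with $U \subseteq O$ (possible since $I \subseteq \mathbb{R}$). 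Then the \emph{constant} family $\{ b_t =_{df} b \}_{t \in U}$ is norm-continuous, lies in $\B_{SA}$, and has $A_t + b_t = A_t + b$ invertible for all $t \in U$; viewing $\{A_t\}_{t\in U}$ as an element of $\Mul(C(U) \otimes \B)$ and $\{b_t\}_{t\in U} = b \otimes 1$ as an element of $C(U) \otimes \B$, invertibility of every fibre $A_t + b$ together with norm-continuity of $t \mapsto (A_t+b)^{-1}$ on the compact set $U$ gives that $\{A_t + b\}_{t \in U}$ is invertible in $\Mul(C(U) \otimes \B)$, so $\{b_t\}_{t\in U}$ is indeed a trivializing family for $\{A_t\}_{t\in U}$ in the sense of Definition \ref{df:WahlSFPrelim} (3). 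Since $t_0$ was arbitrary, local trivializing families exist for $\{ A_t \}_{t \in I}$.

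I do not anticipate a genuine obstacle here; both parts are routine. The only mild subtlety is the bookkeeping in item (2) connecting fibrewise invertibility on the compact parameter space $U$ with invertibility of the corresponding element of the multiplier algebra $\Mul(C(U) \otimes \B)$ — one needs that $t \mapsto (A_t + b)^{-1}$ is norm-bounded on $U$, which follows from compactness of $U$ and norm-continuity of inversion on the invertibles. That is exactly the passage already used implicitly in Definition \ref{df:WahlSFPrelim} (3), so nothing new is required.
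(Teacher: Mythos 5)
Your proof is correct and follows essentially the same route as the paper: item (1) is the openness of the invertibles in $\Mul(\B)$ combined with norm-continuity of $t \mapsto A_t$, and item (2) follows by taking a compact neighbourhood inside the open set from item (1) and using the constant family $b_t = b$. You merely spell out the bookkeeping (boundedness of $t \mapsto (A_t+b)^{-1}$ on the compact neighbourhood) that the paper leaves implicit.
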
  

\begin{proof}
Statement (\ref{lem:TechnicalPreDef2Item2}) follows immediately from statement
(\ref{lem:TechnicalPreDef2Item1}).   Statement (\ref{lem:TechnicalPreDef2Item1})
follows from the fact that in a unital C*-algebra, the set of invertible elements
forms a (norm topology) open set (e.g., see \cite{WeggeOlsen}
Lemma 4.2.1).\\ 
\end{proof}

The preliminary technical lemmas lead to 
the following result, which is essentially
our general definition of spectral flow:

\begin{prop}   \label{July1120231AM}
Let $\B$ be a separable stable C*-algebra and let $\{ A_t \}_{t \in [0,1]}$
be a norm-continuous path of self-adjoint Fredholm operators in $\Mul(\B)$ such that 
$A_0$ and $A_1$ are invertible in $\Mul(\B)$.

Then $Sf(\{ A_t  \}_{t \in [0,1]}, 0, 0)$, as in    
equation (\ref{equ:WahlSfDefinition}) of Definition \ref{df:WahlSFDefinition},
exists and is well-defined in $K_0(\B)$. 
\end{prop}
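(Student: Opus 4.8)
The plan is to reduce this to the already-proved well-definedness result (Proposition \ref{prop:WahlSFWellDefined}) by verifying that the hypotheses here imply the existence of local trivializing families, which is the standing hypothesis of Definition \ref{df:WahlSFDefinition}. Once local trivializing families exist, Proposition \ref{prop:WahlSFWellDefined} immediately gives that $Sf(\{A_t\}_{t\in[0,1]},0,0)$ is well-defined in $K_0(\B)$ (and the fact that it exists, i.e. that the recipe of equation (\ref{equ:WahlSfDefinition}) can be carried out, is part of the same package, since local trivializing families can be patched to a partition by compactness). Note also that $0 \in \B_{SA}$ is a trivializing operator for each of $A_0$ and $A_1$ precisely because those two endpoints are assumed invertible, so the endpoint terms $Ind(A_0, 0, b^0_0)$ and $Ind(A_1, b^{n-1}_1, 0)$ in (\ref{equ:WahlSfDefinition}) make sense.

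The key step is therefore: show that $\{A_t\}_{t\in[0,1]}$ has local trivializing families. First I would invoke Lemma \ref{lem:TechnicalPreDef1} with $I = [0,1]$: since $A_0$ is invertible in $\Mul(\B)$ and $\{A_t\}$ is a norm-continuous path of self-adjoint Fredholm operators, that lemma tells us $A_t$ has a trivializing operator in $\B_{SA}$ for \emph{every} $t \in [0,1]$. (The invertibility of $A_1$ is not even needed for this step, though it is needed to make the endpoint choice $b'_1 = 0$ legitimate.) Then I would apply Lemma \ref{lem:TechnicalPreDef2}, part (\ref{lem:TechnicalPreDef2Item2}): since every $A_t$ has a trivializing operator, there are local trivializing families for $\{A_t\}_{t\in[0,1]}$. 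That is exactly the hypothesis required by Definition \ref{df:WahlSFDefinition}.

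With local trivializing families in hand, I would then conclude as follows. By compactness of $[0,1]$, cover it by finitely many of the compact neighbourhoods $U$ furnished by the definition of local trivializing families, and extract from the endpoints of these a partition $0 = t_0 < t_1 < \cdots < t_n = 1$ on each cell of which a trivializing family exists; this shows the expression in (\ref{equ:WahlSfDefinition}) can actually be formed, so the spectral flow \emph{exists}. Finally, Proposition \ref{prop:WahlSFWellDefined} says that whenever it exists it is independent of the choice of trivializing families and of the partition, so $Sf(\{A_t\}_{t\in[0,1]},0,0)$ is a well-defined element of $K_0(\B)$.

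I do not anticipate a serious obstacle here: this proposition is essentially a bookkeeping consequence of Lemmas \ref{lem:TechnicalPreDef1} and \ref{lem:TechnicalPreDef2} together with Proposition \ref{prop:WahlSFWellDefined}. The only point requiring a sentence of care is the compactness argument that turns the pointwise existence of trivializing operators (equivalently, local trivializing families) into an honest finite partition with trivializing families on each cell — but this is the standard Lebesgue-number/finite-subcover argument on $[0,1]$ and is already implicit in the statement of Definition \ref{df:WahlSFDefinition}. The other thing worth stating explicitly, so the reader sees why the endpoint data $b'_0 = b'_1 = 0$ is admissible, is that $A_0 + 0$ and $A_1 + 0$ are invertible by hypothesis, so $0$ is a trivializing operator for each endpoint in the sense of Definition \ref{df:WahlSFPrelim} (1).
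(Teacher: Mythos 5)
Your proposal matches the paper's own proof: both invoke Lemma \ref{lem:TechnicalPreDef1} to get a trivializing operator for each $A_t$ from the invertibility of $A_0$, then Lemma \ref{lem:TechnicalPreDef2} to upgrade this to local trivializing families, and finally cite Proposition \ref{prop:WahlSFWellDefined} for well-definedness. The additional remarks you make (the compactness argument producing a finite partition, and the observation that $b'_0 = b'_1 = 0$ are admissible because the endpoints are invertible) are correct and merely spell out details the paper leaves implicit.
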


\begin{proof}
Firstly, by Lemmas \ref{lem:TechnicalPreDef1}  and  \ref{lem:TechnicalPreDef2},
$\{ A_t \}_{t \in [0,1]}$, as a norm-continuous path of
Fredholm operators in $\Mul(\B)$, has local trivializing families.
Hence, we can apply Definition \ref{df:WahlSFDefinition}.   

Well-definedness follows from Proposition \ref{prop:WahlSFWellDefined}.\\ 
\end{proof}

Our general definition of spectral flow now follows from
Proposition \ref{July1120231AM}:  

\begin{df}  \label{df:SpectralFlow} 
Let $\B$ be  a separable stable C*-algebra and let $\{ A_t \}_{t \in [0,1]}$
be a norm-continuous path of self-adjoint Fredholm operators in $\Mul(\B)$ such that
$A_0$ and $A_1$ are invertible in $\Mul(\B)$.

Then  the \emph{spectral flow} of  $\{ A_t \}_{t \in [0,1]}$ is defined to 
be 
$$Sf(\{ A_t \}_{t \in [0,1]}) =_{df} 
Sf(\{ A_t \}_{t \in [0,1]}, 0, 0)
\in K_0(\B).$$\\  
\end{df}

\begin{rmk}
\label{rmk:Nov820231AM}
We reemphasize that in the above definition, the canonical ideal is an arbitrary separable stable C*-algebra (and need not have
a nonzero projection), and we are starting with an arbitrary norm-continuous path of Fredholm operators with invertible endpoints, which is a quite general,  simple 
and easy to check set of conditions.
\end{rmk}

\begin{rmk}
As an interesting side remark, we note that Theorem 
\ref{thm:ProjectionLiftingCondition} 
and  Lemma 
\ref{lem:TechnicalPreDef2} imply the following:  Let $\B$ be a separable stable C*-algebra, and let 
$\{ A_t \}_{t \in [0,1]}$ be a norm-continuous path of self-adjoint Fredholm operators in $\Mul(\B)$.
Then $\{ A_t \}_{t \in [0,1]}$ has local trivializing families if and only if for all $t \in [0,1]$, the projection
$1_{\geq 0}(\pi(A_t)) \in \C(\B)$ lifts to a projection in $\Mul(\B)$.

We have certainly used the above principle implicitly in our arguments, and this is a feature which we have emphasized in the present paper.\\ 

\end{rmk}

\begin{prop} \label{prop:SecondExample}
Let $\B$ be a separable stable C*-algebra.  
Let $P, Q \in \Mul(\B)$ be projections such that
$P - Q \in \B$.

Let $\{ A_t \}_{t \in [0,1]}$ be a (norm-) continuous path of
self-adjoint Fredholm operators in $\Mul(\B)$ for which
$$A_t =_{df} (1 - t) (2P - 1) + t (2Q - 1) \makebox{  for all  }
t \in [0,1].$$   

Then 
$$Sf(\{ A_t \}_{t \in [0,1]}) = [P : Q].$$ 
\end{prop}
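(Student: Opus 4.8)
The plan is to reduce this to the earlier computation in Proposition \ref{prop:FirstExample}. The only new feature here is that the endpoints $A_0 = 2P - 1$ and $A_1 = 2Q - 1$ are already invertible in $\Mul(\B)$ (since $2P-1$ is a self-adjoint unitary, and likewise $2Q-1$), so the spectral flow $Sf(\{A_t\}_{t\in[0,1]})$ is defined via Definition \ref{df:SpectralFlow}, and by that definition it equals $Sf(\{A_t\}_{t\in[0,1]}, 0, 0)$ in the sense of Definition \ref{df:WahlSFDefinition} with the zero endpoint trivializing operators. But this is precisely the quantity computed in Proposition \ref{prop:FirstExample}.

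First I would verify that the path $\{A_t\}_{t\in[0,1]}$ here is literally the same path as in Proposition \ref{prop:FirstExample}: both are given by the identical formula $A_t = (1-t)(2P-1) + t(2Q-1)$, and in both cases the hypothesis $P - Q \in \B$ guarantees (via $\pi(A_t) = 2\pi(P) - 1 = 2\pi(Q)-1$, a self-adjoint unitary, hence invertible) that each $A_t$ is a self-adjoint Fredholm operator in $\Mul(\B)$. Next I would observe that $A_0 = 2P-1$ and $A_1 = 2Q-1$ are invertible in $\Mul(\B)$ — indeed $(2P-1)^2 = 4P^2 - 4P + 1 = 1$ — so the hypotheses of Proposition \ref{July1120231AM} and Definition \ref{df:SpectralFlow} are satisfied, and $Sf(\{A_t\}_{t\in[0,1]}) = Sf(\{A_t\}_{t\in[0,1]}, 0, 0)$ by definition.

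Finally I would simply invoke Proposition \ref{prop:FirstExample}, which already computed $Sf(\{A_t\}_{t\in[0,1]}, 0, 0) = [P:Q]$ using the explicit trivializing family $\{b_t = -2t(Q-P)\}_{t\in[0,1]}$ together with property (EC4) of essential codimension. Stringing these together yields
$$Sf(\{A_t\}_{t\in[0,1]}) = Sf(\{A_t\}_{t\in[0,1]}, 0, 0) = [P:Q],$$
as claimed. There is essentially no obstacle here; the one small point worth stating explicitly is the consistency check that the two notions of spectral flow — the one from Definition \ref{df:SpectralFlow} and the DZWLPW one from Definition \ref{df:WahlSFDefinition} with zero endpoint data — agree on this path, which is exactly the content of Definition \ref{df:SpectralFlow} combined with Proposition \ref{July1120231AM}. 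So in practice the proof is one line: "This is immediate from Definition \ref{df:SpectralFlow} and Proposition \ref{prop:FirstExample}."
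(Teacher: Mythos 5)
Your proposal is correct and matches the paper's own proof, which simply observes that the endpoints $2P-1$ and $2Q-1$ are invertible and then cites Definition \ref{df:SpectralFlow} together with the computation already done in Proposition \ref{prop:FirstExample}. The extra verification you spell out (that $(2P-1)^2=1$, so the endpoints are invertible and Proposition \ref{July1120231AM} applies) is the correct bookkeeping underlying that one-line reduction.
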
 

\begin{proof}
This follows from Definition \ref{df:SpectralFlow}, following the same computation
as that of Proposition \ref{prop:FirstExample}.\\ 

\end{proof}

Finally, we end this subsection by showing that our standard minimal assumptions
for spectral flow
(norm-continuous paths of self-adjoint Fredholm operators with invertible endpoints)
imply the vanishing of the global index.

\begin{prop}  \label{prop:VanishGlobalIndex}
Let $\B$ be a separable stable C*-algebra, and let $\{ A_t \}_{t \in [0,1]}$
be a norm-continuous path of self-adjoint Fredholm operators in $\Mul(\B)$
with $A_0$ and $A_1$ being invertible. 
By Lemma \ref{lem:1geq0Continuous}, $\{ 1_{\geq 0}(\pi(A_t)) \}_{t \in [0,1]}$ 
is a norm-continuous path of projections in $\C(\B)$.

Then there exists a norm-continuous path $\{ P_t \}_{t \in [0,1]}$ of 
projections in $\Mul(\B)$ such that 
$$\pi(P_t) = 
1_{\geq 0}(\pi(A_t))  \makebox{ for all  }t \in [0,1].$$
As a consequence, (viewing $\{ A_t \}_{t \in [0,1]}$ as a self-adjoint
Fredholm operator in $\Mul(C[0,1] \otimes \B)$),
$$Ind_1(\{ A_t \}_{t \in [0,1]}) = 0 \makebox{  in  } K_1(C[0,1] \otimes \B).$$
\end{prop}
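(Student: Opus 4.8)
The plan is to build the path $\{P_t\}$ by a compactness-and-patching argument, using the local trivializing families that are guaranteed to exist under our hypotheses, and then to conclude the vanishing of $Ind_1$ by reinterpreting $\{P_t\}$ as a global projection lift over $C[0,1]\otimes\B$ and invoking Theorem \ref{thm:ProjectionLiftingCondition}. First I would observe that since $A_0$ is invertible in $\Mul(\B)$, Lemma \ref{lem:TechnicalPreDef1} applies and gives that $A_t$ has a trivializing operator in $\B$ for every $t\in[0,1]$; hence by Lemma \ref{lem:TechnicalPreDef2} the path $\{A_t\}_{t\in[0,1]}$ has local trivializing families. This is exactly the hypothesis of Definition \ref{df:WahlSFDefinition}, so Proposition \ref{prop:WahlSFSingleCell} applies: there is a norm-continuous path $\{b_t\}_{t\in[0,1]}$ of self-adjoint elements of $\B$ which is a (global) trivializing family for $\{A_t\}_{t\in[0,1]}$. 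Setting $P_t =_{df} 1_{\geq 0}(A_t + b_t)$, Lemma \ref{lem:1geq0Continuous} shows $\{P_t\}_{t\in[0,1]}$ is a norm-continuous path of projections in $\Mul(\B)$.

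Next I would compute the image of $P_t$ in $\C(\B)$. Since $b_t\in\B$, we have $\pi(A_t + b_t) = \pi(A_t)$, which is a self-adjoint invertible element of $\C(\B)$; and by the continuous functional calculus $\pi(P_t) = \pi(1_{\geq 0}(A_t + b_t)) = 1_{\geq 0}(\pi(A_t + b_t)) = 1_{\geq 0}(\pi(A_t))$ for every $t\in[0,1]$. This establishes the first assertion of the proposition.

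For the consequence, I would view $\{A_t\}_{t\in[0,1]}$ as a self-adjoint Fredholm operator in $\Mul(C[0,1]\otimes\B)$, with $\{P_t\}_{t\in[0,1]}$ a projection in $\Mul(C[0,1]\otimes\B)$ and $\{b_t\}_{t\in[0,1]}$ a trivializing operator for it in $(C[0,1]\otimes\B)_{SA}$. Since $\pi(P_t)=1_{\geq 0}(\pi(A_t))$ pointwise, the projection $\{\pi(P_t)\}_{t\in[0,1]}$ in $\C(C[0,1]\otimes\B)$ is precisely $1_{\geq 0}$ of the self-adjoint invertible $\{\pi(A_t)\}_{t\in[0,1]}$; in other words $1_{\geq 0}(\pi(\{A_t\}))$ lifts to a projection in $\Mul(C[0,1]\otimes\B)$. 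Now $C[0,1]\otimes\B$ is again separable and stable, so Theorem \ref{thm:ProjectionLiftingCondition} applies to the self-adjoint Fredholm operator $\{A_t\}_{t\in[0,1]}\in\Mul(C[0,1]\otimes\B)$: the implication (3) $\Rightarrow$ (5) yields $Ind_1(\{A_t\}_{t\in[0,1]}) = \partial_1([1_{\geq 0}(\pi(\{A_t\}))]) = 0$ in $K_1(C[0,1]\otimes\B)$, as required.

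The only step requiring any care is the reduction to a global trivializing family, i.e., the appeal to Proposition \ref{prop:WahlSFSingleCell} — but that work has already been done earlier in the excerpt, so here it is essentially a bookkeeping matter of assembling the pieces. The potential pitfall is making sure the endpoint hypothesis is actually used: Lemma \ref{lem:TechnicalPreDef1} only needs $A_0$ invertible to produce trivializing operators at every point, and invertibility of $A_1$ is not needed for this proposition at all (it is part of the ambient standing assumptions but is harmless); one should simply note that the single-cell result goes through from the local trivializing families alone.
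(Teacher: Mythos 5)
Your proposal is correct and follows essentially the same route as the paper's proof: both obtain the global trivializing family $\{b_t\}_{t\in[0,1]}$ from Lemmas \ref{lem:TechnicalPreDef1}, \ref{lem:TechnicalPreDef2} and Proposition \ref{prop:WahlSFSingleCell}, set $P_t = 1_{\geq 0}(A_t + b_t)$, and deduce $\pi(P_t) = 1_{\geq 0}(\pi(A_t))$ by functional calculus. For the final step the paper inlines the argument that $K_0(\Mul(C[0,1]\otimes\B))=0$ forces $[\{\pi(P_t)\}]=0$ in $K_0(\C(C[0,1]\otimes\B))$, whereas you invoke the already-proved implication (3) $\Rightarrow$ (5) of Theorem \ref{thm:ProjectionLiftingCondition} applied to the separable stable C*-algebra $C[0,1]\otimes\B$ --- the same argument, just packaged as a citation rather than unwound in place; your side observation that invertibility of $A_1$ is not actually needed is also correct.
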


\begin{proof}
By Lemmas \ref{lem:TechnicalPreDef1} and \ref{lem:TechnicalPreDef2},
$\{ A_t  \}_{t \in [0,1]}$ is a norm-continuous
path of self-adjoint Fredholm operators in $\Mul(\B)$ (with invertible endpoints) for which
local trivializing families exist  
(in $\B_{SA}$). 
By Proposition \ref{prop:WahlSFSingleCell}, 
there is a norm-continuous path $\{ b_t \}_{t \in [0,1]}$
of  self-adjoint operators in $\B$ such that
$\{ b_t \}_{t \in [0, 1]}$ is a trivializing family for
$\{ A_t  \}_{t \in [0,1]}$ (so $A_t  + b_t$
is self-adjoint and
invertible in $\Mul(\B)$ for all $t \in [0,1]$), and
$\{ P_t =_{df} 1_{\geq 0}(A_t 
 + b_t) \}_{t \in [0,1]}$ is a norm-continuous
path of projections in $\Mul(\B)$.
By the continuous functional calculus,
for all $t \in [0,1]$,
$$\pi(P_t) = \pi(1_{\geq 0}(A_t + b_t)) = 1_{\geq 0}(\pi(A_t + b_t))
= 1_{\geq 0}(\pi(A_t)),$$
and it is not hard to see that viewing $\{ P_t \}_{t \in [0,1]}$
and $\{ 1_{\geq 0}(\pi(A_t)) \}_{t \in [0,1]}$ 
as projections in $\Mul( C[0,1] \otimes \B)$  and
$\C(C[0,1] \otimes \B)$ respectively,
$$\pi(\{ P_t \}_{t \in [0,1]}) = 
\{ 1_{\geq 0}(\pi(A_t)) \}_{t \in [0,1]}.$$
Here, to keep things simple, we use the same notation $\pi$ to denote
all quotient maps $\Mul(\B) \rightarrow \C(\B)$,
$\Mul(C[0,1] \otimes \B) \rightarrow \C(C[0,1] \otimes \B)$,
$\Mul(\B) \rightarrow \C(\B)$, and 
$\Mul(C[0,1] \otimes \B) \rightarrow 
\C(C[0,1] \otimes \B)$. 
 
By \cite{WeggeOlsen} Theorem 10.2, $K_0(\Mul(C[0,1] \otimes \B)) = 0$.
Hence,
$[\{ P_t \}_{t \in [0,1]}] = 0$ in $K_0(\Mul(C[0,1] \otimes \B))$.
Hence, 
$[\{ \pi(P_t) \}_{t \in [0,1]}] = 
[\{ 1_{\geq 0}(\pi(A_t)) \}_{t \in [0,1]}] = 0$ 
in $K_0(\C(C[0,1] \otimes \B))$.
Hence,
$$Ind_1(\{ A_t \}_{t \in [0,1]}) = 
\partial_1 ([ \{ 1_{\geq 0}(\pi(A_t)) \}_{t \in [0,1]}]) = 0 
\makebox{  in   } K_1(C[0,1] \otimes \B).$$\\   
 \end{proof}

By a slight modification of the argument of Proposition \ref{prop:VanishGlobalIndex}, one can show that under the hypotheses of Proposition
\ref{prop:VanishGlobalIndex}, $Ind_1(A_t) =0$ in $K_1(\B)$ for all $t \in [0,1]$, i.e., we have vanishing of the local index.\\

\section{Functorial properties of spectral flow}
\label{sec:FunctorialAxioms}

In this section, we will state some properties for spectral flow, and we will show later in Subsection \ref{subsect:Axiomatization} that
some of these properties give an axiomatization of spectral flow under appropriate extra hypotheses.

To avoid confusion, we here mention that we use the notation $Sf$ (capital $S$) to denote spectral flow, as defined 
in Definition \ref{df:SpectralFlow}, and we use the notation $sf$ (lower case $s$) to simply denote a general functor which may
or may not be $Sf$.

To state the Functoriality Axiom of spectral flow,
recall that if $\B$ and $\D$ are nonunital C*-algebras and $\phi : \B 
\rightarrow \D$ is a *-homomorphism that brings approximate units to  
approximate units, then $\phi$ has a unique strictly continuous extension to a 
unital  
*-homomorphism $\Mul(\B) \rightarrow \Mul(\D)$, which  
we will also denote by $\phi$  (see \cite{JensenThomsenBook} Corollary 1.1.15). We also get an induced unital *-homomorphism
$\overline{\phi} : \C(\B) \rightarrow \C(\D)$. 
 This is the basis of the Functoriality
Axiom in Definition \ref{df:SFAxioms} and Propositon 
\ref{prop:SfSatisfiesAxioms}.  

To state other functorial properties of spectral flow,  
we will also need some more notions which we put together in the following
definition:

\begin{df} \label{df:DfGather} 
Let $\B$ be a separable stable C*-algebra. 
\begin{enumerate}
\item Let
$\Ff(\Mul(\B))$ denote the set of Fredholm operators in $\Mul(\B)$,
i.e., the set of $A \in \Mul(\B)$
such that $\pi(A)$ is an invertible element of $\C(\B)$  (see Subsection \ref{subsect:GeneralizedFredholm}).
Let $\Ff_{SA}(\Mul(\B))$ denote the set of self-adjoint elements
of $\Ff(\Mul(\B))$.    $\Ff(\Mul(\B))$ and $\Ff_{SA}(\Mul(\B))$ are given the restriction
of the norm topology from $\Mul(\B)$.
Oftentimes, when the context is clear,
we drop the $\Mul(\B)$ and just write $\Ff$ and $\Ff_{SA}$ respectively. 
Also, we let $C([0,1], \Ff_{SA})$ denote the collection of all continuous maps
$[0,1] \rightarrow \Ff_{SA}$, i.e., 
the norm-continuous paths in $\Ff_{SA}$.
\item 
Let $\{ A_t \}_{t \in [0,1]}$ and $\{ B_t \}_{t \in [0,1]}$ be
two norm continuous paths of self-adjoint Fredholm operators
in $\Mul(\B)$ (i.e., two continuous paths 
in $\Ff_{SA}$), where $A_0, A_1, B_0$ and $B_1$
are invertible elements of $\Mul(\B)$.
We say that $\{ A_t \}_{t \in [0,1]}$ and $\{ B_t \}_{t \in [0,1]}$
are \emph{homotopic} (and write $\{ A_t \}_{t \in [0,1]} \sim_h 
\{ B_t \}_{t \in [0,1]}$) if 
there exists a norm-continuous family
 $\{ A_{s,t} \}_{(s,t) \in [0,1]\times [0,1]}$ 
in $\Ff_{SA}$ with $A_{s,0}$ and $A_{s,1}$ invertible
in $\Mul(\B)$ for all $s \in [0,1]$ such that 
$$A_{0,t} = A_t \makebox{ and  } A_{1,t} = B_t \makebox{  for all  }
t\in [0,1].$$
\item
Let $\{ A_t \}_{t \in [0,1]}$ and
$\{ B_t \}_{t \in [0,1]}$ be two norm-continuous paths of self-adjoint
Fredholm operators in $\Mul(\B)$ with invertible endpoints 
such that $A_1 = B_0$.
A \emph{concatenation} of the above two paths is defined to be a  
norm-continuous path $\{ C_t \}_{t \in [0,1]}$  
such that there exist 
$t_0 \in (0,1)$,  a  
homeomorphism $\omega_0 : [0, t_0] \rightarrow [0,1]$ and 
a homeomorphism $\omega_1 : [t_0, 1] \rightarrow [0,1]$ with $\omega_0(0) = 0 =
\omega_1(t_0)$ and $\omega_0(t_0) = 1 = \omega_1(1)$ such that  
\[
C_t = 
\begin{cases}
A_{\omega_0(t)}  & t \in [0, t_0] \\
B_{\omega_1(t)} & t \in (t_0, 1].
\end{cases}
\]
    
It is a short exercise to prove that $\{ C_t \}_{t \in [0,1]}$ is, up to 
homotopy 
(as defined in the previous item), 
independent of the choices of $t_0$, $\omega_0$ and $\omega_1$.
I.e., the concatenation $\{ C_t \}_{t \in [0,1]}$ is unique up to homotopy,
and we denote it by 
$$\{ C_t \}_{t \in [0,1]} = \{ A_t \}_{t \in [0,1]} * \{ C_t \}_{t \in 
[0,1]}.$$\\
\end{enumerate}
\end{df}

We are now ready to state our axioms for spectral flow.
Again, we remind the reader that we use the notation $sf$ (lower case $s$) to denote a general functor, as opposed 
to the notation $Sf$ (upper case $S$) which always denotes spectral flow as defined in Definition
\ref{df:SpectralFlow}.

\begin{df} \label{df:SFAxioms}
Suppose that for every separable stable C*-algebra $\B$, we have a map
$$sf : \{ \{ A_t \}_{t \in [0,1]} \in C([0,1],\Ff_{SA}) : A_0 \makebox{  and  } A_1 \makebox{  are
invertible in  } \Mul(\B) \} \rightarrow K_0(\B).$$  

Then $sf$ is said to satisfy Axiom SFj or (SFj) if it satisfies the following 
property (with the specific integer j) for every separable stable C*-algebra
$\B$:      
\begin{enumerate}
\item[(SF1)] (Functoriality in $\B$)  Suppose that
 $\D$ is another separable stable C*-algebra
and $\phi : \B \rightarrow \D$ is a *-homomorphism which brings approximate
units to approximate units. If $\{ A_t \}_{t \in [0,1]}$ is a
(norm-) continuous path in $\Ff_{SA}(\Mul(\B))$
with invertible endpoints then $\{ \phi(A_t) \}_{t \in [0,1]}$ is a
(norm-) continuous path in 
$\Ff_{SA}(\Mul(\D))$ with invertible endpoints,
and $$[\phi] \circ sf(\{ A_t \}_{t \in [0,1]}) = sf(\{ \phi(A_t) \}_{t \in [0,1]}) 
\in K_0(\D).$$ 
\item[(SF2)] (Homotopy Axiom) 
Suppose that $\{ A_{0, t} \}_{t \in [0,1]}$ and 
$\{ A_{1,t} \}_{t \in [0,1]}$ are two homotopic (norm-) continuous
paths in $\Ff_{SA}$ with invertible endpoints.  
Then
$$sf(\{ A_{0,t} \}_{t \in [0,1]}) = sf(\{ A_{1,t} \}_{t \in [0,1]}) 
\makebox{  in  } K_0(\B).$$ 
\item[(SF3)] (Concatenation)  Let $\{ A_t \}_{t \in [0,1]}$  and 
$\{ B_t \}_{t \in [0,1]}$ be two (norm)-continuous paths of self-adjoint
Fredholm operators in $\Mul(\B)$ with invertible endpoints such that
$A_1 = B_0$.
Then 
$$sf(\{ A_t \}_{t \in [0,1]} * \{ B_t \}_{t \in [0,1]}) =
sf(\{ A_t \}_{t \in [0,1]}) + sf(\{ B_t \}_{t \in [0,1]}).$$
Here, $\{ A_t \}_{t \in [0,1]} * \{ B_t \}_{t \in [0,1]}$ 
is \emph{concatenation} of paths.  See Definition \ref{df:DfGather} above. 
\item[(SF4)] (Normalization) Let $P, Q \in \Mul(\B)$ be projections such 
that $P - Q \in \B$ and $P \sim 1 \sim 1 - P$.
Then $$sf(\{ (1-t) (2P-1) + t (2Q - 1) \}_{t \in [0,1]})  = [P:Q].$$\\ 
\end{enumerate}
\end{df}

\begin{rmk}
By Lemma \ref{lem:AShortComputation}, the projection $Q$, in the
Normalization Axiom (SF4) above, must satisfy that
$$Q \sim 1 \sim 1 - Q.$$\\
\end{rmk}

\begin{prop}  \label{prop:TrivialityPrinciple}
(Triviality Principle; c.f. \cite{LeschUniqueness} Lemma 5.3)
Suppose that for every separable stable C*-algebra $\B$ we have a map
$$sf : \{ \{ A_t \}_{t \in [0,1]} \in C([0,1], \Ff_{SA}) : A_0 \makebox{ and  }
A_1 \makebox{  are invertible in } \Mul(\B) \} \rightarrow K_0(\B).$$

Suppose that $sf$ satisfies the Homotopy Axiom (SF2) and the Concatenation
Axiom (SF3).     

Then for any separable stable C*-algebra $\B$, for any norm-continuous path
$\{ C_t \}_{t \in [0,1]}$ of self-adjoint invertible elements of $\Mul(\B)$,
$$sf(\{ C_t \}_{t \in [0,1]}) = 0 \makebox{  in  } K_0(\B).$$
\end{prop}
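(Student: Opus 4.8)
The plan is to exploit the fact that a path of invertibles is contractible, so that spectral flow along it must vanish by the Homotopy Axiom, once we know spectral flow of a constant path is zero. First I would establish the latter: let $\{C_t\}_{t\in[0,1]}$ be a norm-continuous path of self-adjoint invertibles in $\Mul(\B)$, and consider the constant path $\{C_0\}_{t\in[0,1]}$. Since $C_0$ is a single self-adjoint invertible element, the constant path at $C_0$ concatenated with itself is (reparametrized to) the constant path at $C_0$ again; so by the Concatenation Axiom (SF3), $sf(\{C_0\}_{t\in[0,1]}) = sf(\{C_0\}_{t\in[0,1]}) + sf(\{C_0\}_{t\in[0,1]})$ in $K_0(\B)$, and hence $sf(\{C_0\}_{t\in[0,1]}) = 0$. (One has to be a little careful that the concatenation of a constant path with itself is genuinely a reparametrization of the constant path, but this is immediate from Definition \ref{df:DfGather}(3), since both halves equal $C_0$ for all parameter values.)

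Next I would build a homotopy from $\{C_t\}_{t\in[0,1]}$ to the constant path $\{C_0\}_{t\in[0,1]}$ through paths of self-adjoint invertibles with invertible endpoints. The natural candidate is the ``shrinking'' homotopy $A_{s,t} =_{df} C_{(1-s)t}$ for $(s,t)\in[0,1]\times[0,1]$. This is norm-continuous in $(s,t)$ because $t\mapsto C_t$ is norm-continuous and $(s,t)\mapsto (1-s)t$ is continuous; for each $(s,t)$, $C_{(1-s)t}$ is a self-adjoint invertible element of $\Mul(\B)$, hence lies in $\Ff_{SA}$; the endpoints $A_{s,0} = C_0$ and $A_{s,1} = C_{1-s}$ are invertible in $\Mul(\B)$ for every $s$; and $A_{0,t} = C_t$ while $A_{1,t} = C_0$ for all $t$. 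So $\{C_t\}_{t\in[0,1]} \sim_h \{C_0\}_{t\in[0,1]}$ in the sense of Definition \ref{df:DfGather}(2). Applying the Homotopy Axiom (SF2) gives $sf(\{C_t\}_{t\in[0,1]}) = sf(\{C_0\}_{t\in[0,1]}) = 0$.

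The main obstacle, such as it is, lies in the first step: making precise that the concatenation of the constant path with itself is homotopic to (indeed literally a reparametrization of) the constant path, so that (SF3) applies to give the idempotence $x = x + x$ in $K_0(\B)$. This is a purely formal bookkeeping point about the reparametrization homeomorphisms $\omega_0,\omega_1$ in Definition \ref{df:DfGather}(3), but it is the one place where one must actually invoke the definition of $*$ rather than just quoting an axiom. Everything else is routine: the homotopy in the second step is the standard contraction of a path to its initial point, and it stays inside $\Ff_{SA}$ with invertible endpoints for the trivial reason that every $C_t$ is already invertible. No projection-lifting or essential-codimension machinery is needed here, since the hypothesis that the entire path consists of invertibles makes all the Fredholm/trivializing-operator subtleties vacuous.
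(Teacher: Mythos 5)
Your proof is correct and takes essentially the same route as the paper's: first show $sf$ of a constant path vanishes using the Concatenation Axiom (SF3) and the idempotence $x = x + x \Rightarrow x = 0$, then contract a general path of invertibles to its constant path of initial values and apply the Homotopy Axiom (SF2). You supply the explicit contracting homotopy $A_{s,t} = C_{(1-s)t}$, which the paper leaves implicit, but this is a minor presentational difference rather than a different argument.
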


\begin{proof}
The argument is already in \cite{LeschUniqueness} Lemma 5.3.  We provide
the short proof for the convenience of the reader.\\  

\noindent Case 1: Suppose that $\{ C_t \}_{t \in [0,1]}$ is constant, i.e., suppose
that $C_t = C_0$ for all $t \in [0,1]$, where $C_0 \in \Mul(\B)$ is a 
self-adjoint invertible.

Then, $\{ C_t \}_{t \in [0,1]} * \{ C_t \}_{t \in [0,1]}
= \{ C_t \}_{t \in [0,1]}$ (and this is independent of our choices for the concatenation).  Hence, by (SF3),
$$sf(\{ C_t \}_{t \in [0,1]}) = sf(\{ C_t \}_{t \in [0,1]} * \{ C_t \}_{t \in [0,1]}) = sf(\{ C_t \}_{t \in [0,1]}) + sf(\{ C_t \}_{t \in [0,1]})$$
in $K_0(\B)$.  
Then $$sf(\{ C_t \}_{t \in [0,1]}) = 0  \makebox{  in  } K_0(\B).$$\\

\noindent Case 2 (General Case):  Suppose that $\{ C_t \}_{t \in [0,1]}$ is
an arbitrary norm-continuous path of self-adjoint invertible elements of
$\Mul(\B)$.
$\{ C_t \}_{t \in [0,1]}$ is homotopic 
to the constant path $\{ D_t \}_{t \in [0,1]}$,
where $D_t =_{df} C_0$ for all $t \in [0,1]$.  
Hence, by Axiom (SF2) and by Case 1,
$$sf(\{ C_t \}_{t \in [0,1]}) = sf(\{ D_t \}_{t \in [0,1]}) = 0 \makebox{  in  }
K_0(\B).$$\\
\end{proof}

To prove that $Sf$ satisfies the Functoriality Axiom, we need the following
lemma:      

\begin{lem} \label{lem:FunctorialityOfInd}
Let $\B$ and $\D$ be separable stable C*-algebras, and let
$\phi : \B \rightarrow \D$ be a *-homomorphism that brings an approximate
of $\B$ to an approximate unit of $\D$.

Let $A \in \Mul(\B)$ be a self-adjoint Fredholm operator. 
Then $\phi(A)$ is a self-adjoint Fredholm operator in $\Mul(\D)$.

Suppose, in addition, that  
$b_1, b_2 \in \B_{SA}$ are trivializing operators for $A$.
     
Then $\phi(b_1)$ and $\phi(b_2)$ are trivializing operators for $\phi(A)$
and  
$$Ind(\phi(A), \phi(b_1), \phi(b_2)) = [\phi] (Ind(A, b_1, b_2))
\makebox{  in  } K_0(\D).$$ 
\end{lem}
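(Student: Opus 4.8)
The plan is to reduce everything to the functoriality property (EC1) of essential codimension in Proposition \ref{prop:ECProperties}, after checking that the relevant objects behave well under $\phi$. First I would recall that since $\phi : \B \to \D$ brings approximate units to approximate units, it extends uniquely to a strictly continuous unital $*$-homomorphism $\Mul(\B) \to \Mul(\D)$, also denoted $\phi$, and induces $\overline{\phi} : \C(\B) \to \C(\D)$ with $\overline{\phi} \circ \pi_{\B} = \pi_{\D} \circ \phi$. Since $A \in \Mul(\B)$ is self-adjoint, $\phi(A) \in \Mul(\D)$ is self-adjoint, and since $A + b_1$ is invertible in $\Mul(\B)$ (as $b_1$ is a trivializing operator), $\phi(A + b_1) = \phi(A) + \phi(b_1)$ is invertible in $\Mul(\D)$ (a unital $*$-homomorphism preserves invertibility). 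Hence $\phi(A)$ is Fredholm (its image in $\C(\D)$, being $\overline{\phi}(\pi_\B(A+b_1))$ modulo a compact, is invertible — or more directly, $\phi(A) + \phi(b_1)$ invertible forces $\pi_\D(\phi(A))$ invertible since $\phi(b_1) \in \D$), and $\phi(b_1), \phi(b_2) \in \D_{SA}$ are trivializing operators for $\phi(A)$.

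Next I would unwind the definition of $Ind$. By Definition \ref{df:WahlSFPrelim}(2),
$$Ind(\phi(A), \phi(b_1), \phi(b_2)) = [\,1_{\geq 0}(\phi(A) + \phi(b_1)) : 1_{\geq 0}(\phi(A) + \phi(b_2))\,] \in K_0(\D).$$
The key point is that $\phi$, being a unital $*$-homomorphism, commutes with the continuous functional calculus applied to self-adjoint invertibles: $1_{\geq 0}(\phi(A) + \phi(b_i)) = 1_{\geq 0}(\phi(A + b_i)) = \phi(1_{\geq 0}(A + b_i))$. (Here $1_{\geq 0}$ is continuous on the spectrum of the invertible self-adjoint element $A + b_i$, so this is just the standard fact that $*$-homomorphisms intertwine continuous functional calculus.) Writing $P_i =_{df} 1_{\geq 0}(A + b_i) \in \Mul(\B)$, we have $P_1 - P_2 \in \B$ by Lemma \ref{lem:FredholmA1A2}, and $Ind(A, b_1, b_2) = [P_1 : P_2]$. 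So the claim becomes
$$[\,\phi(P_1) : \phi(P_2)\,] = [\phi]\big([P_1 : P_2]\big) \quad \text{in } K_0(\D),$$
which is exactly (EC1) of Proposition \ref{prop:ECProperties}, applied with the strictly continuous $*$-homomorphism $\phi : \Mul(\B) \to \Mul(\D)$ satisfying $\phi(\B) \subseteq \D$ (which holds since $\phi$ was originally a map $\B \to \D$). Note $\phi(P_1) - \phi(P_2) = \phi(P_1 - P_2) \in \D$, so the hypotheses of (EC1) are met.

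I do not anticipate a serious obstacle here; the only thing requiring a moment of care is the chain of identifications $\phi(1_{\geq 0}(A + b_i)) = 1_{\geq 0}(\phi(A) + \phi(b_i))$, which relies on both $A + b_i$ and its image being invertible self-adjoint (so that $1_{\geq 0}$ is continuous on the relevant spectra) together with the functorial property of continuous functional calculus — a fact at the level of Lemma \ref{lem:FredholmA1A2} and the discussion preceding it. Everything else is bookkeeping: verifying that $\phi(b_i)$ genuinely trivializes $\phi(A)$, and then invoking (EC1). So the structure of the proof would be: (i) extend $\phi$ and record compatibility with $\pi$; (ii) check $\phi(A)$ is self-adjoint Fredholm and $\phi(b_i)$ are trivializing operators; (iii) apply functoriality of continuous functional calculus to rewrite $1_{\geq 0}(\phi(A) + \phi(b_i)) = \phi(1_{\geq 0}(A + b_i))$; (iv) conclude by (EC1) of Proposition \ref{prop:ECProperties}.
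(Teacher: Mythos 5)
Your proposal is correct and follows essentially the same route as the paper: extend $\phi$ to a unital strictly continuous $*$-homomorphism on multiplier algebras, observe that $\phi$ intertwines the continuous functional calculus so that $\phi(1_{\geq 0}(A+b_i)) = 1_{\geq 0}(\phi(A)+\phi(b_i))$, and then invoke functoriality (EC1) of essential codimension. The only small organizational point is that the Fredholmness of $\phi(A)$ is asserted in the lemma \emph{before} trivializing operators are assumed to exist, so the clean argument is that $\overline{\phi}(\pi(A))$ is invertible because $\overline{\phi}$ is a unital $*$-homomorphism and $\pi(A)$ is invertible --- rather than routing through $b_1$; you clearly know this fact, but your first paragraph ties it to $b_1$ unnecessarily.
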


\begin{proof}
Recall that we are using the notation
$\phi : \Mul(\B) \rightarrow \Mul(\D)$
to also denote the unique unital strictly continuous *-homomorphism which extends $\phi : \B \rightarrow \D$, and
$\overline{\phi} : \C(\B) \rightarrow \C(\D)$ is the induced 
unital *-homomorphism. 

Since $\pi(A)$ is invertible in $\C(\B)$ and $\overline{\phi}$ is unital, 
$\overline{\phi}(\pi(A)) = 
\pi \circ \phi(A)$ is invertible in $\C(\D)$, and hence,
$\phi(A)$ is a self-adjoint Fredholm operator in $\Mul(\D)$.

Suppose that for $j = 1,2$, $b_j \in \B_{SA}$ and
  $A + b_j$ is a self-adjoint invertible element of 
$\Mul(\B)$.  Then $\phi(b_j) \in \D_{SA}$, and since $\phi$ is unital,
$\phi(A + b_j) = \phi(A) + \phi(b_j)$ is a self-adjoint invertible element
of $\Mul(\D)$.  Hence, $\phi(b_j) \in \D$ is a trivializing operator
for $\phi(A)$.

By the Functoriality of essential codimension (Proposition \ref{prop:ECProperties} (EC1)),
$$[\phi](Ind(A, b_1, b_2)) = [\phi]([1_{\geq 0}(A + b_1) : 1_{\geq 0}(A + 
b_2)]) = [\phi(1_{\geq 0}(A + b_1)) : \phi(1_{\geq 0}(A + b_2))].$$
But by the continuous functional calculus,
$$[\phi(1_{\geq 0}(A + b_1)) : \phi(1_{\geq 0}(A + b_2))] =
[1_{\geq 0}(\phi(A + b_1)) : 1_{\geq 0}(\phi(A + b_2))]    
= Ind(\phi(A), \phi(b_1), \phi(b_2)).$$\\ 
\end{proof}

\begin{prop} \label{prop:SfSatisfiesAxioms}
The spectral flow $Sf$, as defined in 
Definition \ref{df:SpectralFlow}, satisfies Axioms (SF1) to (SF4).
\end{prop}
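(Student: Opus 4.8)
The plan is to verify each of the four axioms in turn, working directly from Definition \ref{df:SpectralFlow}, which unwinds into the DZWLPW recipe of Definition \ref{df:WahlSFDefinition} (with zero endpoint-trivializing operators), and from the properties of essential codimension in Proposition \ref{prop:ECProperties}. The key preliminary observation is that, thanks to Proposition \ref{prop:WahlSFSingleCell}, whenever we have a norm-continuous path $\{A_t\}_{t\in[0,1]}$ of self-adjoint Fredholm operators with invertible endpoints, we can compute $Sf(\{A_t\})$ using a \emph{single} global trivializing family $\{b_t\}_{t\in[0,1]}\subset\B_{SA}$, getting $Sf(\{A_t\}) = [P_0:Q_0] + [Q_1:P_1]$, where $P_0 = 1_{\geq 0}(A_0)$, $P_1 = 1_{\geq 0}(A_1)$ (using that the endpoints are invertible, so the zero trivializing operator works there), and $Q_t = 1_{\geq 0}(A_t + b_t)$. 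Using (EC8), (EC2) (continuity of essential codimension along the norm-continuous path $\{Q_t\}$), and (EC5), this collapses further to $Sf(\{A_t\}) = [1_{\geq 0}(A_0) : 1_{\geq 0}(A_1)]$ \emph{whenever} a global trivializing family can be chosen so that $1_{\geq 0}(A_j)$ and $Q_j$ agree at the endpoints — but in general I will keep the two-term form and massage it per axiom.

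For (SF1), I would apply Lemma \ref{lem:FunctorialityOfInd} term-by-term to the expression in equation (\ref{equ:WahlSfDefinition}): given $\phi:\B\to\D$ bringing approximate units to approximate units, the strict extension $\phi:\Mul(\B)\to\Mul(\D)$ sends the path $\{A_t\}$ to a norm-continuous path of self-adjoint Fredholm operators with invertible endpoints (since $\phi$ is a unital $*$-homomorphism, it preserves invertibility), and it sends each trivializing family $\{b^j_t\}$ to a trivializing family for $\{\phi(A_t)\}$; then $[\phi]$ applied to each $Ind$-term in the sum equals the corresponding $Ind$-term for the pushed-forward data, by Lemma \ref{lem:FunctorialityOfInd}, and since $[\phi]$ is a group homomorphism on $K_0$ it commutes with the finite sum. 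For (SF4), this is exactly Proposition \ref{prop:SecondExample} (which in turn reduces to the computation of Proposition \ref{prop:FirstExample}); one only needs to note that the extra hypothesis $P\sim 1\sim 1-P$ in (SF4) is harmless — it is not used in the computation, which gives $[P:Q]$ regardless.

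For (SF3), the Concatenation Axiom: let $\{C_t\} = \{A_t\} * \{B_t\}$ with $A_1 = B_0$ invertible. I would choose the concatenation realized on a partition that refines the "join point" $t_0$; on $[0,t_0]$ use a trivializing family arising from (a reparametrization of) one for $\{A_t\}$, on $[t_0,1]$ one arising from $\{B_t\}$. Then the sum defining $Sf(\{C_t\})$ splits, at the join point, into the contributions from the $A$-part and the $B$-part, plus one "seam" term $Ind(A_1, b^{A}_1, b^{B}_0) = [1_{\geq 0}(A_1 + b^A_1) : 1_{\geq 0}(A_1 + b^B_0)]$; but since $A_1 = B_0$ is invertible, I can choose both the $A$-side endpoint trivializing operator and the $B$-side endpoint trivializing operator at the seam to be $0$ (legal, since the endpoints of $\{C_t\}$, $\{A_t\}$, $\{B_t\}$ are all invertible and Definition \ref{df:SpectralFlow} uses zero endpoints), so the seam term is $[1_{\geq 0}(A_1):1_{\geq 0}(A_1)] = 0$ by (EC4), leaving exactly $Sf(\{A_t\}) + Sf(\{B_t\})$; reparametrization invariance is routine and already noted in Definition \ref{df:DfGather}(3). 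For (SF2), the Homotopy Axiom: given a norm-continuous $\{A_{s,t}\}_{(s,t)\in[0,1]^2}$ with $A_{s,0}, A_{s,1}$ invertible for all $s$, I would view $s\mapsto A_{s,\cdot}$ as giving, for each fixed $s$, a trivializing-family situation, and note that $s\mapsto 1_{\geq 0}(A_{s,0})$ and $s\mapsto 1_{\geq 0}(A_{s,1})$ are norm-continuous paths of projections in $\Mul(\B)$ (Lemma \ref{lem:1geq0Continuous}); the cleanest route is to produce, using Lemma \ref{lem:Nov320231AM} and the local-trivializing-family machinery applied to the two-parameter family (viewed in $\Mul(C([0,1]^2)\otimes\B)$, whose endpoints-in-$t$ are invertible, so Lemma \ref{lem:TechnicalPreDef1} applies), a norm-continuous two-parameter family $Q_{s,t} = 1_{\geq 0}(A_{s,t} + b_{s,t})$ of projections with $b_{s,t}\in\B_{SA}$; then $Sf(\{A_{s,t}\}_t) = [1_{\geq 0}(A_{s,0}):Q_{s,0}] + [Q_{s,1}:1_{\geq 0}(A_{s,1})]$ and each of the four projections $1_{\geq 0}(A_{s,0}), Q_{s,0}, Q_{s,1}, 1_{\geq 0}(A_{s,1})$ traces a norm-continuous path in $\Mul(\B)$ with the relevant differences remaining in $\B$, so by (EC2) each essential-codimension term is independent of $s$, whence $Sf(\{A_{0,t}\}_t) = Sf(\{A_{1,t}\}_t)$.

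I expect (SF2) to be the main obstacle, because it requires upgrading the one-parameter single-trivializing-family result (Proposition \ref{prop:WahlSFSingleCell}) to a two-parameter version — i.e., producing a global trivializing family for the homotopy that is \emph{jointly} continuous in $(s,t)$, so that (EC2) can be applied along the $s$-direction. The honest way to do this is to observe that the homotopy $\{A_{s,t}\}$, regarded as an element of $\Mul(C([0,1]^2)\otimes\B)$, is a norm-continuous family of self-adjoint Fredholm operators whose restriction to $\{t=0\}$ and $\{t=1\}$ is invertible; applying Lemma \ref{lem:TechnicalPreDef1} along $t$-slices (with $s$ a parameter) and then the partition-and-patch argument of Proposition \ref{prop:WahlSFSingleCell} over the compact square $[0,1]^2$ — using Lemma \ref{lem:Nov320231AM} to glue across cells — yields the desired jointly continuous trivializing operator. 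Once that is in hand, (SF2) is a two-line (EC2) argument. The other three axioms are essentially bookkeeping with Proposition \ref{prop:ECProperties} and the already-proven single-cell reduction, and I would write them out first, since they also fix the notational conventions that make the (SF2) argument readable.
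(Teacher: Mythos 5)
Your treatments of (SF1), (SF3), and (SF4) match the paper's proof essentially verbatim: (SF1) is the term-by-term application of Lemma \ref{lem:FunctorialityOfInd} to equation (\ref{equ:WahlSfDefinition}), (SF3) is straightforward bookkeeping (your seam-term observation is a nice elaboration of what the paper leaves implicit), and (SF4) is Proposition \ref{prop:SecondExample}.

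For (SF2), however, the paper takes a genuinely different and much cleaner route than what you propose, and it is worth internalizing. Rather than constructing a jointly continuous two-parameter trivializing family $b_{s,t}$ and then invoking (EC2) term-by-term in $s$, the paper bootstraps (SF2) from the already-proven (SF1): regard $A_t =_{df} \{A_{s,t}\}_{s\in[0,1]}$ as a norm-continuous path of self-adjoint Fredholm operators in $\Mul(C[0,1]\otimes\B)$ with invertible endpoints (so Proposition \ref{July1120231AM} applies over the ideal $C[0,1]\otimes\B$ with no new machinery); then (SF1) applied to the evaluation $*$-homomorphisms $ev_s: C[0,1]\otimes\B\to\B$ gives $[ev_s]\circ Sf(\{A_t\}_t) = Sf(\{A_{s,t}\}_t)$, and since $\{ev_s\}_s$ is a homotopy of $*$-homomorphisms and $K_0$ is homotopy-invariant, $[ev_0]=[ev_1]$ and (SF2) follows in two lines. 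This entirely sidesteps the two-parameter trivializing family, which you correctly flagged as "the main obstacle."

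Your direct approach to (SF2) has a real gap as written. The phrase "the partition-and-patch argument of Proposition \ref{prop:WahlSFSingleCell} over the compact square $[0,1]^2$ — using Lemma \ref{lem:Nov320231AM} to glue across cells" does not go through: Lemma \ref{lem:Nov320231AM} and the proof of Proposition \ref{prop:WahlSFSingleCell} patch local projections along a one-dimensional parameter interval, and their arguments do not extend to a two-dimensional partition of a square without additional work (edge and corner compatibility must be resolved, and nothing in the cited lemmas handles this). The gap is fixable — for instance, the radial homotopy $r\mapsto \{A_{s,rt}\}_{(s,t)}$ connects $\{A_{s,t}\}$, viewed in $\Mul(C([0,1]^2)\otimes\B)$, to the invertible element $\{A_{s,0}\}_{(s,t)}$, so Lemma \ref{lem:TechnicalPreDef1} (applied in $\Mul(C([0,1]^2)\otimes\B)$ along the one-parameter path in $r$) directly yields the jointly continuous trivializing operator you need, after which your (EC2) argument is correct. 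But you would need to say something like this explicitly; as stated, the patching step does not follow from the tools you cite. In any case, the paper's functorialize-and-use-(SF1) argument is the one to remember: it converts a two-parameter problem into one application of the one-parameter machinery plus standard K-theory.
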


\begin{proof}
Let $\B$ be a separable stable C*-algebra.\\

\emph{Proof of (SF1) Functoriality:} 
Suppose that $\D$ is another separable stable C*-algebra, 
and suppose
that $\phi : \B \rightarrow \D$ is a *-homomorphism that brings an approximate
unit of $\B$ to an approximate unit of $\D$.   
Let $\{ A_t \}_{t \in [0,1]}$ be a norm-continuous path of self-adjoint 
Fredholm operators
in $\Mul(\B)$ such that $A_0$ and $A_1$ are invertible elements 
of $\Mul(\B)$.                          
By Lemma \ref{lem:FunctorialityOfInd},  
$\{ \phi(A_t) \}_{t \in [0,1]}$ is a norm-continuous path of self-adjoint
 Fredholm operators
in $\Mul(\D)$. (Recall that $\phi$ extends uniquely to a strictly
continuous unital *-homomorphism $\Mul(\B) \rightarrow \Mul(\D)$, which we
also denote by $\phi$.)
Also, since $A_0$ and $A_1$ are invertible in $\Mul(\B)$,
$\phi(A_0)$ and $\phi(A_1)$ are invertible in $\Mul(\D)$.   
So $Sf(\{ \phi(A_t) \}_{t \in [0,1]})$ is defined by Definition \ref{df:SpectralFlow}  (see also
Proposition \ref{July1120231AM}).  

Hence, let $$0 = t_0 < t_1 < ... < t_n = 1$$
be a partition of $[0,1]$ and for all $0 \leq j \leq n-1$, let 
$\{ b^j_t \}_{t \in [t_j, t_{j+1}]}$ (a continuous path) in 
$\B_{SA}$
 be a trivializing family for $\{ A_t \}_{t
\in [t_j, t_{j+1}]}$. By Lemma \ref{lem:FunctorialityOfInd},
for all $j$ and $t \in [t_j, t_{j+1}]$,  
$\phi(A_t) +\phi(b^j_t)$ 
is      
invertible in $\Mul(\D)$.  Moreover, for all $j$, the map
$[t_j, t_{j+1}] \rightarrow  \Mul(\D) : t \mapsto 
\phi(A_t)       + \phi(b^j_t)$
is norm-continuous. 
Hence, for all $j$, $\{ \phi(b^j_t) \}_{t \in [t_j, t_{j+1}]}$ is a trivializing
family for $\{ \phi(A_t)   \}_{t \in [t_j, t_{j+1}]}$. 

Hence, we have that 
\begin{eqnarray*}
& & [\phi] \circ Sf(\{ A_t \}_{t\in [0,1]})\\
& = & [\phi] \circ Ind(A_0, 0, b^0_0) + 
[\phi] \circ Ind(A_1, b^{n-1}_1, 0) \\
& & + \sum_{j=1}^{n-1} [\phi] \circ Ind(A_{t_j},
b^{j-1}_{t_j},
b^j_{t_j}) \makebox{  (by Definition \ref{df:SpectralFlow}).}\\
& = &  Ind(\phi(A_0), 0, \phi(b^0_0)) 
+  Ind(\phi(A_1),  
\phi(b^{n-1}_1), 0) \\
& & + \sum_{j=1}^{n-1} Ind(\phi(A_{t_j}), 
\phi(b^{j-1}_{t_j}),
\phi(b^j_{t_j})) 
\makebox{  (by Lemma \ref{lem:FunctorialityOfInd}).}\\   
& = & Sf(\{ \phi(A_t) \}_{t \in [0,1]}).\\ 
\end{eqnarray*}

\emph{Proof of (SF2) Homotopy Axiom:}   
Suppose that $\{ A_{s,t} \}_{(s,t) \in [0,1] \times [0,1]}$ is 
a norm-continuous 
family of self-adjoint Fredholm operators in $\Mul(\B)$ such that for all
$s \in [0,1]$, $A_{s,0}$ and $A_{s,1}$ are invertible elements of 
$\Mul(\B)$.     

For all $t \in [0,1]$, let $A_t =_{df} \{ A_{s, t} \}_{s \in [0,1]}$.
Then  $\{ A_t \}_{t \in [0,1]}$ is a norm-continuous path of self-adjoint
Fredholm operators in $\Mul(C[0,1] \otimes \B)$, and 
$A_0$ and $A_1$ are invertible elements of $\Mul(C[0,1] \otimes \B)$.
For all $s \in [0,1]$, let $ev_s : C[0,1] \otimes \B \rightarrow \B : f 
\mapsto f(s)$ be the evaluation at $s$  *-homomorphism.  Clearly, $ev_s$
brings approximate units to approximate units.  
Also, for all $s, t \in [0,1]$, $ev_s(A_t) = A_{s,t}$.  
For all $s \in [0,1]$, we
have the induced group homomorphism $[ev_s] : K_0(C[0,1] \otimes \B) \rightarrow K_0(\B)$.
By Axiom (SF1), the Functoriality Axiom, we have that   
$$[ev_s]\circ Sf(\{ A_t \}_{t \in [0,1]}) = Sf(\{ A_{s,t} \}_{t \in [0,1]})
\makebox{  in  } K_0(\B).$$
Since $\{ ev_s \}_{s \in [0,1]}$ is a (pointwise-norm continuous) homotopy
between the  maps $ev_0$ and $ev_1$ and since $K_0$ is homotopy-invariant, we have that 
$$Sf(\{ A_{0, t} \}_{t \in [0,1]}) = Sf( \{ A_{1,t} \}_{t \in [0,1]}).$$\\

\emph{Proof of (SF3) Concatenation Axiom:}  This is straightforward from
the definitions.\\

\emph{Proof of (SF4) Normality Axiom:}  
This follows from Proposition \ref{prop:SecondExample}.\\

\end{proof}

\section{The Pre Spectral Flow isomorphism} 
\label{sect:PreSFIsomorphism}

A central result in the paper \cite{AtiyahSingerSkew} is that
$\Ff_{SA, *}(\mathbb{B}(l_2))$ (see Definition \ref{df:FfSADefinitions})
is a  classifying space for 
the functor 
$K^1$.  There are, of course, other realizations for a classifying space
for $K^1$ (e.g., $U(\infty)$), but $\Ff_{SA, *}$ is important, for instance,
for the index theorem for families of self-adjoint elliptic operators
(e.g., see \cite{AtiyahPatodiSinger3} Theorem 3.4 and the discussion 
before that).  A consequence of the result from \cite{AtiyahSingerSkew} is
that $\pi_1(\Ff_{SA,*}(\mathbb{B}(l_2))) \cong \mathbb{Z} = K_0(\K)$, which
we call the ``Pre Spectral Flow Isomorphism". And 
in \cite{AtiyahPatodiSinger3} Section 7 (just before Theorem 7.4; see
also \cite{Phillips1996} the last theorem in the paper), 
it was shown that the above isomorphism is induced by spectral flow,  
and we call this the ``Spectral Flow Isomorphism Theorem".  

In the interesting papers \cite{PereraPaper} and \cite{PereraThesis} (using ideas
from \cite{AtiyahSingerSkew} and \cite{AtiyahPatodiSinger3}), 
the Pre Spectral Flow
Isomorphism was generalized to type $II_{\infty}$ factors with separable
predual and multiplier algebras of the form
$\Mul(\B)$ where $\B$ is a separable stable
C*-algebra with an approximate unit consisting of projections; we focus 
on the latter $\Mul(\B)$ result. 
For the above Pre Spectral Flow isomorphism 
$PSf: \pi_1(\Ff_{*,\infty}(\Mul(\B))) \cong K_0(\B)$, on the
basis that in \cite{AtiyahPatodiSinger3}, it was shown that $PSf$ was 
induced by spectral flow for the case $\B = \K$, 
and also on the basis of the computation of some
intuitively reasonable concrete examples, it was proposed, 
in \cite{PereraThesis}, that $PSf$ (with some modification to include
paths that are not loops) constitutes a viable definition of spectral
flow for $\Mul(\B)$. 
While this proposal has many merits and will certainly be useful and
important for future progress, 
a defect is that $PSf$ is defined abstractly, using Bott Periodicity
among other things, and does not capture the original             
Atiyah--Lusztig--Patodi--Singer concrete intuition that spectral  
flow roughly 
measures the ``net mass of the part of the spectrum that passes through zero" 
as we move along the continuous path of self-adjoint Fredholm operators.
 
  In this section, we will generalize the Pre Spectral Flow Isomorphism
to multiplier algebras $\Mul(\B)$ where $\B$ need not have a nonzero projection.
Then, in the next section, we will prove that the above Pre Spectral Flow
Isormorphism is induced by our concrete definition of Spectral Flow given
in Definition \ref{df:SpectralFlow} (Spectral Flow Isomorphism Theorem), recovering the original concrete
intuition of
\cite{AtiyahPatodiSinger3}. In the process, we will show  
that for every invertible $X \in \Ff_{SA, \infty}(\Mul(\B))$,
$\Omega_X \Ff_{SA, \infty}(\Mul(\B))$
is a classifying space for the functor $K \mapsto K_0(C(K) \otimes \B)$, 
generalizing the result from
\cite{AtiyahSingerSkew}.  Our proofs use ideas from 
the semifinite factor case of 
\cite{PereraPaper}, \cite{PereraThesis}, \cite{AtiyahSingerSkew}, 
but with many substantial modifications.
A certain amount of 
homotopy theory is present in this part of the argument. Our goal is to provide a clean and
logically complete proof of the Spectral Isomorphism Theorem in a 
general context, which is 
amenable for students from analysis.\\\\

\textbf{\emph{We fix two  notations that will apply for this 
whole section:
Throughout this whole section, $\B$ will be a separable stable C*-algebra,
and $P_0 \in \Mul(\B)$ will be a projection such that   
$P_0 \sim 1_{\Mul(\B)} \sim 1_{\Mul(\B)} - P_0$. We refer to these conventions
by ``($\Lambda$)".}}\\\\

\subsection{Fredholm operators, topological groups of invertible operators,
 and Grassmannian spaces}

Following \cite{AtiyahSingerSkew}, \cite{PereraPaper} and
\cite{PereraThesis}, we begin with an analysis of the path-components of the space of self-adjoint
Fredholm operators in a multiplier algebra.   
We then study
 the relationships between the relevant space of Fredholm operators, a group of invertible
operators   
and a certain Grassmannian space, culminating in the first step in the construction of
the Spectral flow isomorphism.\\

Recall that $\Ff$ or $\Ff(\Mul(\B))$ denotes the set of Fredholm operators in $\Mul(\B)$, i.e., 
$\Ff  =_{df} \{ A \in \Mul(\B) : \pi(A) \makebox{ is invertible in } \C(\B) \}$
(see Subsection \ref{subsect:GeneralizedFredholm}).
Recall that 
$\Ff_{SA}$ or $\Ff_{SA}(\Mul(\B))$ consists of the self-adjoint elements of $\Ff$, i.e., 
$\Ff_{SA}$ consists of self-adjoint Fredholm operators in $\Mul(\B)$.  All these spaces
are given the restriction of the norm topology from $\Mul(\B)$.
Now also recall that for a C*-algebra $\C$ and $x \in \C$, $sp(x)$ is 
our notation for the spectrum of $x$.
We now introduce four more definitions:
\begin{df}  \label{df:FfSADefinitions}
$$\Ff_{SA, +} = \Ff_{SA, +}(\Mul(\B))
 =_{df} \{ A \in \Ff_{SA} : sp(\pi(A)) \subset (0, \infty) \}.$$
$$\Ff_{SA, -} = \Ff_{SA, -}(\Mul(\B))  
 =_{df} \{ A \in \Ff_{SA} : sp(\pi(A)) \subset (-\infty, 0) \}.$$
$$\Ff_{SA, *} = \Ff_{SA, *}(\Mul(\B)) =_{df} \Ff_{SA} - (\Ff_{SA, +} \cup \Ff_{SA, -}).$$
$$\Ff_{SA, \infty} = \Ff_{SA, \infty}(\Mul(\B)) =_{df}
\{ A \in \Ff_{SA, *} : 1_{\geq 0}(\pi(A)) \sim 1_{\C(\B)}
\sim 1_{\geq 0}(-\pi(A)) \}.$$ 
All the above spaces are given the restriction of the norm topology 
from $\Mul(\B)$.\\ 
\end{df}

Clearly, we have a (pairwise) disjoint union
$$\Ff_{SA} = \Ff_{SA, +} \sqcup \Ff_{SA, -} \sqcup \Ff_{SA, *}.$$ 
In the works of \cite{AtiyahSingerSkew}, \cite{BenPhillEtAl} 
and \cite{PereraPaper},  the object of interest is $\Ff_{SA,*}$.  In fact, the above decomposition partially generalizes 
\cite{AtiyahSingerSkew} Theorem B.  However,
in this paper, we will instead focus on the smaller topological space $\Ff_{SA, \infty}$.   This is because
the multiplier algebras in \cite{AtiyahSingerSkew}, \cite{BenPhillEtAl}
and \cite{PereraPaper}, $\mathbb{B}(l_2)$ and $\Mul$ (where $\Mul$ is a type $II_{\infty}$ factor with separable predual), have relatively simple
structure -- for example,  each of $\mathbb{B}(l_2)$ and $\Mul$ has a unique proper nontrivial ideal.  On
the other hand, a general multiplier algebra $\Mul(\B)$ can have infinitely 
many ideals.  Moreover, unlike the case of $\mathbb{B}(l_2)$, for general
multiplier algebras, $\Ff_{SA,*}$ need not be a path-connected component
of $\Ff_{SA}$ (see this paper Lemma 4.6 and compare it with
\cite{AtiyahSingerSkew} Theorem B).  All these (and other) reasons
 necessitate working
instead with the object $\Ff_{SA, \infty}$.  (E.g., see the generalized
Spectral Flow Isomorphism Theorem \ref{thm:GeneralSFIsomorphism}.)

Recall also, that for a unital C*-algebra $\C$, 
$GL(\C)$ denotes its group of invertibles (or
``general linear group").  
We also let $GL(\C)_{SA}$ denote the self-adjoint elements of 
$GL(\C)$ (i.e., the self-adjoint invertible elements of $\C$), and 
$GL(\C)_+$ and $GL(\C)_-$ the positive and negative elements of $GL(\C)$
respectively.
We let $GL(\C)_{SA,*}$ denote the $x \in GL(\C)_{SA}$ such that $sp(x)$ contains both strictly
positive and strictly negative real numbers.
Finally, $GL(\C)_{SA, \infty}$ denotes the elements $x \in GL(\C)_{SA}$
such that $1_{\geq 0}(x) \sim 1_{\C} \sim 1_{\geq 0}(-x)$.   
All the above are topological spaces with the restriction of the norm topology of $\C$.\\

 The 
first several results are basic exercises, but we nonetheless provide some proofs 
for the convenience
of the reader.

\begin{lem}
Let $\C$ be a unital C*-algebra.
Then $GL(\C)_+$ and $GL(\C)_-$ are both (norm-) contractible topological spaces which
are closed in $GL(\C)$ and clopen in $GL(\C)_{SA}$.    

$GL(\C)_{SA, \infty}$ is (norm-) closed in $GL(\C)$ and clopen in 
$GL(\C)_{SA}$.  

$GL(\C(\B))_{SA, \infty}$ is a (norm-) path-connected topological space which is
closed in $GL(\C(\B))$ and clopen in $GL(\C(\B))_{SA}$.   
\label{lem:May220231AM}  
\end{lem}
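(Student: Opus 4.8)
The plan is to handle the three assertions in order; the first two are routine spectral theory and the weight of the lemma sits in path-connectedness of $GL(\C(\B))_{SA,\infty}$.

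First I would treat $GL(\C)_{\pm}$. The set $GL(\C)_+$ is convex (for positive invertibles $a,b$ and $t\in[0,1]$ one has $ta+(1-t)b\geq t\|a^{-1}\|^{-1}1_{\C}>0$ when $t>0$, and $=b>0$ when $t=0$), so, containing $1_{\C}$, it is contractible via the straight-line homotopy to $1_{\C}$; likewise $GL(\C)_-$. A norm limit of positive invertibles is positive and self-adjoint, hence lies in $GL(\C)_+$ whenever it lies in $GL(\C)$, giving closedness in $GL(\C)$ (hence in $GL(\C)_{SA}$), while $b=b^*$ with $\|b-a\|<\|a^{-1}\|^{-1}$ forces $b>0$, giving openness in $GL(\C)_{SA}$; thus $GL(\C)_\pm$ are clopen in $GL(\C)_{SA}$. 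For $GL(\C)_{SA,\infty}$, both closedness in $GL(\C)$ and openness in $GL(\C)_{SA}$ follow from Lemmas~\ref{lem:Aug520232am} and~\ref{lem:Aug520231am} together with the standard fact that projections at distance $<1$ are Murray--von Neumann equivalent: if $y=y^*$ is close enough to $x\in GL(\C)_{SA,\infty}$ then $y$ is invertible with $\|1_{\geq 0}(\pm y)-1_{\geq 0}(\pm x)\|<1$, so $1_{\geq 0}(\pm y)\sim 1_{\geq 0}(\pm x)\sim 1_{\C}$; the sequential version gives closedness. Taking $\C=\C(\B)$ disposes of the topological half of the last assertion.

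For path-connectedness of $GL(\C(\B))_{SA,\infty}$ I would proceed in two steps. Step~1: if $x\in GL(\C(\B))_{SA,\infty}$ and $p:=1_{\geq 0}(x)$, write $x=x_+-x_-$ with $x_+$ a positive invertible of $p\,\C(\B)\,p$ and $x_-$ a positive invertible of $(1-p)\C(\B)(1-p)$; then $t\mapsto\bigl((1-t)x_++tp\bigr)-\bigl((1-t)x_-+t(1-p)\bigr)$ is a norm-continuous path from $x$ to $2p-1$ lying in $GL(\C(\B))_{SA,\infty}$, since its positive part stays supported on $p$ and its negative part on $1-p$, with $p\sim 1\sim 1-p$ throughout. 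Step~2: given projections $p,q$ with $p\sim 1\sim 1-p$ and $q\sim 1\sim 1-q$, I would produce $u\in U_0(\C(\B))$ with $upu^*=q$; then $t\mapsto 2u_tpu_t^*-1$, for $\{u_t\}$ a path in $U(\C(\B))$ from $1$ to $u$, joins $2p-1$ to $2q-1$ inside $GL(\C(\B))_{SA,\infty}$ (here $u_tpu_t^*\sim p\sim 1$ and its complement $\sim 1-p\sim 1$). Applying Step~1 to a general point and then Step~2 with $q=\pi(P_0)$ fixed gives path-connectedness.

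To build $u$ in Step~2: since $p\sim 1\sim 1-p$ (and similarly for $q$), choose isometries $a_p,b_p\in\C(\B)$ with $a_pa_p^*=p$, $b_pb_p^*=1-p$ and $a_q,b_q$ with $a_qa_q^*=q$, $b_qb_q^*=1-q$; then $u_0:=a_qa_p^*+b_qb_p^*$ is a unitary with $u_0pu_0^*=q$, but its class in $K_1(\C(\B))$ may be nonzero. For any $z\in U(\C(\B))$, the unitary $V:=a_qz^*a_q^*+(1-q)$ fixes $q$ under conjugation, so $Vu_0$ still conjugates $p$ to $q$; and by the standard $2\times 2$ trick --- $T:=\left(\begin{smallmatrix} a_q & 1-q \\ 0 & a_q^* \end{smallmatrix}\right)$ is unitary in $M_2(\C(\B))$ with $T\,\mathrm{diag}(w,1)\,T^*=\mathrm{diag}(a_qwa_q^*+(1-q),\,1)$ --- the map $w\mapsto a_qwa_q^*+(1-q)$ induces the identity on $K_1$, so $[V]=-[z]$ and $[Vu_0]=[u_0]-[z]$. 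Since $\B$ is stable, $1_{\C(\B)}$ is properly infinite, so $K_1(\C(\B))=U(\C(\B))/U_0(\C(\B))$; picking $z$ with $[z]=[u_0]$ yields $u:=Vu_0\in U_0(\C(\B))$ with $upu^*=q$. The hard part of the whole argument is precisely this last point --- arranging the implementing unitary to be null-homotopic, i.e.\ neutralizing the possibly nontrivial $K_1(\C(\B))$; the rest is routine functional calculus and perturbation estimates.
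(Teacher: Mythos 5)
Your argument mirrors the paper's proof almost exactly in structure and detail: the reduction of a general $x\in GL(\C(\B))_{SA,\infty}$ to $2p-1$ via functional calculus, the conjugation of $p$ to $q$ by a unitary $u_0$ built from isometries, and the correction of $u_0$ by a unitary $V=a_q z^* a_q^*+(1-q)$ supported on a corner so as to kill the $K_1$ class are all the same moves the paper makes. Your explicit $2\times 2$-matrix computation showing $w\mapsto a_qwa_q^*+(1-q)$ induces the identity on $K_1$ is correct and in fact makes a step explicit that the paper only asserts, and you also supply the first paragraph (contractibility and clopenness of $GL(\C)_{\pm}$), which the paper leaves as an exercise.

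There is, however, a genuine gap at the very end of Step 2. You assert that proper infiniteness of $1_{\C(\B)}$ gives $K_1(\C(\B))=U(\C(\B))/U_0(\C(\B))$, and from that you conclude $[Vu_0]=0$ forces $Vu_0\in U_0(\C(\B))$. That last implication is precisely $K_1$-injectivity of $\C(\B)$. Proper infiniteness of the unit does give $K_1$-\emph{surjectivity} (Cuntz), but whether every properly infinite unital C*-algebra is $K_1$-\emph{injective} is a well-known open problem of Blanchard, Rohde and Roerdam --- one the paper itself flags in Subsection \ref{subsect:CoronaK1InjAndSurj}. So the identity $K_1(\C(\B))=U(\C(\B))/U_0(\C(\B))$ is not a consequence of proper infiniteness alone; it needs a separate argument. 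The paper supplies this in Lemma \ref{lem:CoronaK1InjAndSurj}, which proves $K_1$-injectivity of $C(K)\otimes\C(\B)$ for $\B$ separable stable using stability of $\B$ and \cite{BlanchardRohdeRordam} Proposition 5.1. You should invoke that lemma (or reprove its content) in place of the appeal to proper infiniteness; with that substitution your proof is complete and matches the paper's.
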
 
\begin{proof}
We prove the second and third paragraphs of the lemma, leaving the first paragraph (which is basic operator theory) as a warm-up 
exercise for the reader.

Let $\{ X_n \}$ be a sequence in $GL(\C)_{SA, \infty}$ and $X \in GL(\C)$ such that $X_n \rightarrow X$.
Since $X_n$ is self-adjoint for all $n$, $X$ is self-adjoint;  so 
$X   \in GL(\C)_{SA}$.  
Also, by Lemma \ref{lem:Aug520231am},  $1_{\geq 0}(X_n) \rightarrow 1_{\geq 0}(X)$.   Hence, by \cite{WeggeOlsen} Propositions 5.2.6 and 5.2.10,
 for sufficiently large $n$,
$1_{\geq 0}(X) \sim 1_{\geq 0}(X_n) \sim 1$ and $1 - 1_{\geq 0}(X) \sim 1 - 1_{\geq 0}(X_n)
\sim 1$.   Hence, $X \in GL(\C)_{SA, \infty}$.  Since $\{ X_n \}$ and $X$ were arbitrary,
$GL(\C)_{SA, \infty}$ is closed in $GL(\C)$.

Since $GL(\C)_{SA, \infty}$ is closed in $GL(\C)$, $GL(\C)_{SA, \infty}$ is closed 
in $GL(\C)_{SA}$ (the set of self-adjoint elements of $GL(\C)$).  

That $GL(\C)_{SA, \infty}$ is open in $GL(\C)_{SA}$ follows immediately from Lemma
\ref{lem:Aug520232am} and from \cite{WeggeOlsen} Propositions 5.2.6 and
5.2.10.

Let $X, Y \in GL(\C(\B))_{SA, \infty}$.   Let $P =_{df} 1_{\geq 0}(X)$ and $Q =_{df} 1_{\geq 0}(Y)$.  (So
$1 - P = 1_{\geq 0}(-X)$ and $1 - Q =_{df} 1_{\geq 0}(-Y)$.)  By applying the continuous functional calculus,
we see that $X \sim_h P + -(1-P)$ and $Y \sim_h Q-(1-Q)$  in $GL(\C(\B))_{SA, \infty}$.  
By the definition of $GL(\C(\B))_{SA, \infty}$, $P \sim 1 \sim Q \sim 1-P \sim 1 - Q$.  Hence, let $V, W \in \C(\B)$ be partial isometries
such that $V^* V = P$, $V V^* = Q$, $W^* W = 1 - P$ and $W W^* = 1 - Q$.   Then $U' =_{df} V + W$ is 
a unitary in $\C(\B)$ such that $U' (P + -(1-P)){U'}^* = Q + -(1 - Q)$.  Since $P \sim 1$ and $\C(\B)$ is properly
infinite, let $u \in P \C(\B) P$ be a unitary such that $[U'] = [u^*]$ in $K_0(\C(\B))$.
Let $U =_{df} U' (u + (1 - P)) \in U(\C(\B))$.  Then $[U] = 0$ in $K_1(\C(\B))$.   Since $\C(\B)$ is 
$K_1$-injective (see Lemma \ref{lem:CoronaK1InjAndSurj} and Subsection
\ref{subsect:CoronaK1InjAndSurj} in general), 
$U \sim_h 1$ in $U(\C(\B))$. 
Also, 
\begin{eqnarray*}
U (P- (1-P)) U^* & = & U' (u + (1-P))(P - (1-P)) (u^* + (1-P)){U'}^* \\
 & = & U' (P - (1-P)) {U'}^*   \\  
& = & Q - (1-Q).
\end{eqnarray*} 
Hence, $P + -(1-P) \sim_h Q + -(1-Q)$ in $GL(C(\B))_{SA, \infty}$.
Hence, $X \sim_h Y$ in $GL(\C(\B))_{SA, \infty}$.   Since $X, Y$ are arbitrary, $GL(\C(\B))_{SA, \infty}$ is
(norm-) path-connected.\\ 
\end{proof}

\begin{rmk}
The arguments of Lemma \ref{lem:May220231AM} actually show that
$GL(\C)_{SA, \infty}$ is an open subset of $\C_{SA}$. Similarly,  
$GL(\C)_+$ and  $GL(\C)_-$ are open subsets of $\C_{SA}$.\\ 
\end{rmk}

\begin{lem} (Cf. \cite{AtiyahSingerSkew} Theorem B,  \cite{PereraPaper}
Proposition 3.1 and \cite{PereraThesis} Lemma 3.3.3)
 \label{lem:May220232AM}  $\Ff_{SA, +}$ and $\Ff_{SA,-}$ are (norm-) 
contractible topological spaces which are closed in $\Ff$ and clopen in 
$\Ff_{SA}$.
\end{lem}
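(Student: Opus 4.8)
The plan is to establish all three assertions for $\Ff_{SA, +}$ by elementary spectral theory together with the (already stated) first paragraph of Lemma~\ref{lem:May220231AM}; the statements for $\Ff_{SA, -}$ then follow for free, since $\Ff_{SA, -} = - \Ff_{SA, +}$ is the image of $\Ff_{SA, +}$ under the self-homeomorphism $A \mapsto -A$ of $\Mul(\B)$, which clearly carries $\Ff$, $\Ff_{SA}$, $\Ff_{SA, +}$ to $\Ff$, $\Ff_{SA}$, $\Ff_{SA, -}$ respectively.

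For contractibility I would exhibit an explicit deformation retraction of $\Ff_{SA, +}$ onto the single point $1_{\Mul(\B)}$ (which lies in $\Ff_{SA,+}$, since $sp(\pi(1_{\Mul(\B)})) = \{1\} \subset (0,\infty)$). Define
$$H : \Ff_{SA, +} \times [0,1] \rightarrow \Mul(\B), \qquad H(A, t) =_{df} (1-t) A + t \cdot 1_{\Mul(\B)}.$$
This map is jointly norm-continuous, and $H(A, 0) = A$, $H(A, 1) = 1_{\Mul(\B)}$, and $H(1_{\Mul(\B)}, t) = 1_{\Mul(\B)}$ for all $t$. The one point to verify is that $H(A,t)$ stays in $\Ff_{SA, +}$: it is visibly self-adjoint, and $\pi(H(A,t)) = (1-t)\pi(A) + t \cdot 1_{\C(\B)}$. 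Since $A \in \Ff_{SA,+}$, the element $\pi(A)$ is positive and invertible, so $sp(\pi(A)) \subseteq [c, \|\pi(A)\|]$ for some $c > 0$; hence $sp(\pi(H(A,t))) \subseteq \big[ (1-t)c + t, \ (1-t)\|\pi(A)\| + t \big]$, and the left endpoint is a convex combination of the positive reals $c$ and $1$, hence strictly positive for every $t \in [0,1]$. Thus $\pi(H(A,t))$ is positive invertible, i.e. $H(A,t) \in \Ff_{SA,+}$, and so $\Ff_{SA,+}$ is contractible.

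For the clopen and closed statements I would use the norm-continuous map $\rho : \Ff_{SA} \rightarrow GL(\C(\B))_{SA}$ obtained by (co)restricting the quotient map $\pi$ (this is well-defined since $\pi(A)$ is self-adjoint and invertible whenever $A \in \Ff_{SA}$), together with the identity $\Ff_{SA, +} = \rho^{-1}\big( GL(\C(\B))_+ \big)$. By the first paragraph of Lemma~\ref{lem:May220231AM}, $GL(\C(\B))_+$ is clopen in $GL(\C(\B))_{SA}$, so $\Ff_{SA,+}$ is clopen in $\Ff_{SA}$. Since the involution $A \mapsto A^*$ is norm-continuous, $\Ff_{SA}$ is closed in $\Ff$, and hence $\Ff_{SA,+}$, being closed in $\Ff_{SA}$, is closed in $\Ff$ as well. (One could equally argue directly: a norm limit in $\Ff$ of elements of $\Ff_{SA,+}$ is self-adjoint with $\pi$-image a positive invertible, hence again lies in $\Ff_{SA,+}$.)

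I do not anticipate any genuine obstacle — as the surrounding text signals, this is a warm-up exercise. The only step carrying any actual content is the spectral estimate showing that the straight-line homotopy $t \mapsto (1-t)A + t \cdot 1_{\Mul(\B)}$ does not leave $\Ff_{SA,+}$, i.e. that convex combinations of a positive invertible with the unit remain positive invertible; everything else is continuity bookkeeping for $\pi$ and the involution, plus a citation of Lemma~\ref{lem:May220231AM}.
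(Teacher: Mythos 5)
Your proof is correct. The contractibility argument is essentially identical to the paper's: the same straight-line homotopy $H(A,t) = (1-t)A + t\cdot 1_{\Mul(\B)}$, verified to stay inside $\Ff_{SA,+}$ by checking that $\pi(H(A,t)) = (1-t)\pi(A) + t\cdot 1_{\C(\B)}$ remains positive invertible. The topological assertions you handle by a slightly different route: you observe that $\Ff_{SA,+} = \rho^{-1}(GL(\C(\B))_+)$ where $\rho$ is the (co)restriction of $\pi$ to $\Ff_{SA}$, and then pull back the clopen-ness of $GL(\C(\B))_+ \subseteq GL(\C(\B))_{SA}$ from the first paragraph of Lemma~\ref{lem:May220231AM}; closedness in $\Ff$ then follows because $\Ff_{SA} = \Ff \cap \Mul(\B)_{SA}$ is closed in $\Ff$. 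The paper instead runs the spectral estimate directly in $\Mul(\B)$ to show openness in $\Ff_{SA}$, and explicitly leaves the closedness claim and the $\Ff_{SA,-}$ case as exercises for the reader. Your pullback argument is cleaner and more complete — it gets all three assertions in one stroke and disposes of $\Ff_{SA,-}$ by the self-homeomorphism $A \mapsto -A$ — but it does lean on the (unproven, reader-exercise) first paragraph of Lemma~\ref{lem:May220231AM}, whereas the paper's direct argument is self-contained. Both are fine; the spectral content is the same, just carried out in $\C(\B)$ vs.\ $\Mul(\B)$.
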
  
\begin{proof}
We prove the lemma for $\Ff_{SA,+}$, leaving the (similar) arguments for $\Ff_{SA,-}$ as an exercise for the reader.

Let $F : \Ff_{SA, +} \times [0,1] \rightarrow \Mul(\B)_{SA}$ be the continuous map given by
$$F(A,t)=_{df}  (1-t) A + t 1_{\Mul(\B)} \makebox{  for all  } t \in [0,1] \makebox{  and  }  A \in 
\Ff_{SA, +}.$$

Firstly, for all $t \in [0,1]$ and $A \in \Ff_{SA,+}$,
$$\pi(F(A,t)) =  (1-t) \pi(A) + t 1_{\C(\B)}$$
which is a positive invertible element of $\C(\B)$, since 
$A \in \Ff_{SA, +}$.   Hence, for all $t \in [0,1]$ and $A \in \Ff_{SA,+}$, 
$F(A,t) \in \Ff_{SA, +}$.      Hence, $ran(F) \subseteq \Ff_{SA, +}$.

Also, note that $F(.,0) = id_{\Ff_{SA, +}}$ and $F(., 1) = 1$.  Hence, $F$ is a homotopy between the 
map $id_{\Ff_{SA, +}}$ and the constant map $\Ff_{SA,+} \rightarrow \Ff_{SA, +} : C \mapsto 1$.
I.e., $\Ff_{SA,+}$ is contractible.

Let $A \in \Ff_{SA, +}$ be arbitrary.  Then $\pi(A)$ is a positive invertible element of $\C(\B)$.  Hence, choose $r > 0$ so that
$sp(\pi(A)) \subset (r, \infty)$.  Hence, by a standard spectral theory argument, we can choose $\delta > 0$ so that whenever $C \in \C(\B)$ is a self-adjoint element such that $\| C - \pi(A) \| < \delta$, then $sp(C) \subset (\frac{r}{2}, \infty)$; in particular, such a $C$ is positive and invertible in $\C(\B)$.   Let $D \in B(A, \delta)_{SA} $ (i.e., $D$ is self-adjoint and  $\| D - A \| < \delta$) be arbitrary.   Hence,
$\| \pi(D) - \pi(A) \| < \delta$.  Hence, by our choice of $\delta$, $\pi(D)$ is a positive invertible element of $\C(\B)$. Hence,
$D \in \Ff_{SA,+}$.  Since $D$ is arbitrary, $B(A, \delta)_{SA} \subset \Ff_{SA, +}$, i.e., $A$ is an interior point of 
$\Ff_{SA,+}$. Since $A$ is arbitrary, $\Ff_{SA, +}$  is an open subset of $\Ff_{SA}$.

We leave the proofs of the remaining statements as exercises for the reader.\\

\end{proof}

The quotient map $\pi : \Mul(\B) \rightarrow \C(\B)$ restricts to quotient  maps (with the same name) 
$\pi: \Ff \rightarrow GL(\C(\B))$,
$\pi : \Ff_{SA} \rightarrow GL(\C(\B))_{SA}$,  
$\pi : \Ff_{SA,*} \rightarrow GL(\C(\B))_{SA,*}$ and 
$\pi : \Ff_{SA, \infty} \rightarrow GL(\C(\B))_{SA, \infty}$, which are
all (norm-) continuous.\\

\begin{lem}
\label{lem:May320232AM}  
The quotient maps $\pi : \Ff \rightarrow GL(\C(\B))$,
$\pi : \Ff_{SA} \rightarrow GL(\C(\B))_{SA}$,  
$\pi : \Ff_{SA,*} \rightarrow GL(\C(\B))_{SA,*}$, and
$\pi : \Ff_{SA, \infty} \rightarrow GL(\C(\B))_{SA, \infty}$ are all  
homotopy equivalences, where all the spaces are given the restriction of the norm topology from the ambient 
C*-algebra ($\Mul(\B)$ or $\C(\B)$).  
\end{lem}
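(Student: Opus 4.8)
The strategy is to exhibit, for each of the four quotient maps $\pi$, a continuous section (or a map in the reverse direction) and then a pair of homotopies showing the compositions are homotopic to the respective identities. The key point is that $\C(\B) = \Mul(\B)/\B$ is a quotient of a $\sigma$-unital (indeed, since $\B$ is separable stable, $\Mul(\B)$ is) C*-algebra, and the set of invertibles is open (\cite{WeggeOlsen} Lemma 4.2.1), so the real content is a lifting-of-paths / lifting-of-homotopies statement at the level of Fredholm operators. For the non-self-adjoint case $\pi : \Ff \rightarrow GL(\C(\B))$, I would use the fact that an invertible in $\C(\B)$ polar-decomposes and, after passing to the positive part via $x \mapsto x(x^*x)^{-1/2}$, reduces the homotopy problem to unitaries; here $\C(\B)$ is $K_1$-injective and $K_1$-surjective (Lemma \ref{lem:CoronaK1InjAndSurj}), which lets one lift unitaries in $\C(\B)$ to unitaries in $\Mul(\B)$ up to homotopy, and the same tools (together with \cite{WeggeOlsen} Corollary 4.3.3 on lifting paths of invertibles) give that $\pi$ is a homotopy equivalence rather than merely $\pi_0$- and $\pi_1$-bijective.

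First I would treat the self-adjoint cases by the $1_{\geq 0}$ functional-calculus trick that already appears throughout Section 2: given a self-adjoint invertible $x \in \C(\B)$, the element $2\cdot 1_{\geq 0}(x) - 1$ is a self-adjoint invertible symmetry homotopic to $x$ within $GL(\C(\B))_{SA}$ (straight-line homotopy after scaling, using Lemma \ref{lem:Aug520231am} and Lemma \ref{lem:1geq0Continuous} for continuity), and likewise any self-adjoint Fredholm $A \in \Mul(\B)$ deformation-retracts, via $A \mapsto$ a self-adjoint invertible perturbation when one exists — but in general one cannot perturb to an invertible, so instead one retracts $\Ff_{SA}$ onto the subspace of self-adjoint \emph{symmetries-mod-$\B$}, i.e. elements $A$ with $\pi(A)^2 = 1$, which is not quite available either. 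The cleaner route: show $\pi$ is a \emph{Serre (indeed Hurewicz) fibration} with contractible-or-at-least weakly-contractible fibres. The fibre over $x \in GL(\C(\B))_{SA}$ is $\{A \in \Mul(\B)_{SA} : \pi(A) = x\}$, which is an affine translate of $\B_{SA} \cap \{A : A + A_0 \in \Ff_{SA}\}$ for a fixed lift $A_0$; since $A_0 + \B_{SA}$ is a real-affine subspace of $\Mul(\B)_{SA}$ and the Fredholm condition is automatically satisfied (any lift of an invertible is Fredholm), the fibre is the full affine space $A_0 + \B_{SA}$, which is convex, hence contractible. So once $\pi$ is shown to be a fibration, the long exact sequence of a fibration with contractible fibre gives that $\pi$ is a weak homotopy equivalence; upgrading to a genuine homotopy equivalence then follows because all spaces in sight are open subsets of Banach spaces (metrizable, locally contractible, so they have the homotopy type of CW complexes, by Milnor's theorem / Palais), and a weak equivalence between CW-homotopy-type spaces is a homotopy equivalence (Whitehead).

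The main obstacle — and the step that needs genuine care rather than citation — is verifying that $\pi$ has the homotopy lifting property. Here one uses that the quotient map $\Mul(\B) \to \C(\B)$ admits continuous, but \emph{not} linear, cross-sections on the set of invertibles and on appropriate open neighbourhoods, together with the openness of $GL$ and a partition-of-unity / local-section patching argument over the compact parameter space $[0,1]^n$ of a homotopy; the self-adjointness is preserved by replacing an arbitrary lift $L(x)$ of a self-adjoint $x$ with $\frac{1}{2}(L(x) + L(x)^*)$, which still lifts $x$ and varies continuously. Concretely, given a homotopy $h : Z \times [0,1] \to GL(\C(\B))_{SA,\infty}$ and a lift $\tilde h_0$ of $h_0$, one covers $Z \times [0,1]$ by finitely many sets on which a continuous self-adjoint local lift exists (using openness of $GL$ and Lemma \ref{lem:Aug520232am} to keep the Fredholm index and the $\sim 1$ conditions stable), then glues successive time-intervals by the standard "correct the lift by an element of $\B_{SA}$" move, which is possible precisely because the fibre is the affine space $A_0 + \B_{SA}$ and one can linearly interpolate corrections there. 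Once HLP is in hand for all four maps simultaneously (the arguments are identical, only the target subspace changes and one checks the extra spectral conditions are open, which is Lemma \ref{lem:May220231AM} and Lemma \ref{lem:May220232AM}), the proof concludes as above. I expect the bulk of the write-up to be this fibration verification; everything else is bookkeeping with results already established in the excerpt.
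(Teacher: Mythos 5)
Your core observation — that each fibre of $\pi$ is an affine translate of the closed subspace $\B_{SA}$ (or $\B$), hence convex and contractible — is the right geometric insight, and the fibration route you sketch in your second and third paragraphs would eventually work. But it is considerably more indirect than what the paper does, and your first paragraph heads in a direction that cannot succeed: polar decomposition plus $K_1$-injectivity/$K_1$-surjectivity of $\C(\B)$ only controls $\pi_0$ and $\pi_1$ of the relevant spaces, not the full weak homotopy type, so those tools alone cannot yield a homotopy equivalence. You acknowledge this in passing but then assert without justification that ``the same tools\ldots{} give that $\pi$ is a homotopy equivalence,'' which is the gap.

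What the paper does instead is much shorter and bypasses the fibration machinery entirely: the Bartle--Graves/Michael selection theorem (\cite{Michael} Proposition 7.2) produces a \emph{global} norm-continuous (nonlinear) section $s_1 : \C(\B) \to \Mul(\B)$ of $\pi$; symmetrizing via $s(x) = \tfrac{1}{2}(s_1(x) + s_1(x)^*)$ gives a continuous section that preserves self-adjointness. Then $\pi \circ s = \mathrm{id}$ on the nose, and $s\circ\pi \simeq \mathrm{id}$ via the straight-line homotopy $F(A,t) = (1-t)A + t\,s(\pi(A))$, which stays in $\Ff_{SA,\infty}$ (resp. $\Ff_{SA,*}$, etc.) precisely because $\pi(F(A,t)) = \pi(A)$ for all $t$ — i.e.\ the homotopy moves along the line segment joining $A$ to $s(\pi(A))$ inside the convex fibre over $\pi(A)$, which is the very convexity you noticed. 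Thus $s$ is already a homotopy inverse, with no need for the HLP verification, the long exact sequence of a fibration, or the CW-plus-Whitehead upgrade from weak to genuine equivalence. You do gesture at ``continuous, but \emph{not} linear, cross-sections'' near the end, but only as a local ingredient for patching; the key simplification you miss is that Bartle--Graves gives this \emph{globally}, collapsing the whole argument to four lines. The Carey--Phillips fibration result (Proposition \ref{prop:CareyPhillipsFibration}) that would streamline your HLP step is indeed in the paper, but it is used elsewhere (Theorem \ref{thm:LastComponentPerera}), not here.
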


\begin{proof}
This argument is similar to that of \cite{AtiyahSingerSkew} Lemma 2.3
(see also \cite{PereraPaper} Lemma 3.3 and \cite{PereraThesis} Lemma 3.3.4). 
We will prove that the map
$\pi : \Ff_{SA, \infty} \rightarrow GL(\C(\B))_{SA,\infty}$
is a homotopy equivalence.  The proofs, for the other maps, are similar
and easier.   

Since $\pi : \Mul(\B) \rightarrow \C(\B)$ is a surjective continuous linear map, by the
Bartle--Graves Selection Theorem 
(see \cite{Michael} Proposition 7.2; see also
the corollary, in the introduction, to \cite{Michael} Theorem 3.2''), 
$\pi$ has a continuous right inverse, i.e., there
is a (norm-) continuous (not necessarily linear) map 
$s_1 : \C(\B) \rightarrow \Mul(\B)$
for which $\pi \circ s_1 = id_{\C(\B)}$.    
Let $s : \C(\B) \rightarrow \Mul(\B)$ be the (norm-) continuous map given by
$$s(x) =_{df} \frac{s_1(x) + s_1(x)^*}{2} \makebox{  for all  } x \in \C(\B).$$
Clearly, $s$ restricts to a map (with the same name)
$s : GL(\C(\B))_{SA, *} \rightarrow \Ff_{SA,*}$  
and
$\pi \circ s = id_{GL(\C(\B))_{SA,*}}$. 
And it is also clear, by the definition of $\Ff_{SA, \infty}$,
 that $s$ restricts further to a map (with the same
name)
$s : GL(\C(\B))_{SA, \infty} \rightarrow \Ff_{SA, \infty}$ and
$$\pi \circ s = id_{GL(\C(\B))_{SA, \infty}}.$$

\noindent \emph{Claim:}  There is a homotopy 
$F : \Ff_{SA,\infty} \times [0,1] \rightarrow \Ff_{SA,\infty}$ 
between $id_{\Ff_{SA,\infty}}$ 
and $s \circ \pi$ which is given by 
$$F(A, t) =_{df} (1 - t) A + t s \circ \pi(A) \makebox{  for all  } t\in [0,1] 
\makebox{  and  } A \in \Ff_{SA, \infty}.$$ 
\noindent\emph{Proof of Claim:}   
It suffices to prove that $ran(F) \subseteq \Ff_{SA, \infty}$. 
(The remaining items are
trivial.)  

Clearly, $ran(F) \subseteq \Mul(\B)_{SA}$.   
Let $A \in \F_{SA, \infty}$ and $t \in [0,1]$ be given.  
Hence, $\pi(A) \in GL(\C(\B))_{SA,\infty}$.   
Also, since $s$ is a right inverse of $\pi$,
$\pi \circ s \circ \pi(A) = \pi(A)$.
So   
\begin{eqnarray*}
\pi(F(A,t)) & = & \pi((1-t)A + t s \circ \pi(A)) \\
& = & (1-t) \pi(A) + t \pi \circ s \circ \pi(A) \\
& = & (1-t)\pi(A) + t \pi(A)\\
& = & \pi(A) \in GL_{SA, \infty}(\C(\B)).
\end{eqnarray*}
Hence, $F(A, t) \in \Ff_{SA, \infty}$.
Since $A$ and $t$ were arbitrary, $ran(F) \subseteq \Ff_{SA, \infty}$.\\ 
This completes the proof of the Claim and the whole result.\\
\end{proof}

The next result again partly generalizes \cite{AtiyahSingerSkew} Theorem B. 
Note that unlike the case of $\mathbb{B}(l_2)$, for a general multiplier
algebra $\Mul(\B)$, $\Ff_{SA, *}(\Mul(\B))$ need not be a path-connected
component of $\Ff_{SA}(\Mul(\B))$.

\begin{lem} \label{lem:Aug520235AM}
$\Ff_{SA, \infty}$ is a path-connected topological space which is
closed in $\Ff$ and clopen in $\Ff_{SA}$.
\end{lem}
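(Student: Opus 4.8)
The plan is to leverage Lemma~\ref{lem:May320232AM}, which tells us that the quotient map $\pi : \Ff_{SA, \infty} \rightarrow GL(\C(\B))_{SA, \infty}$ is a homotopy equivalence, together with the last paragraph of Lemma~\ref{lem:May220231AM}, which asserts that $GL(\C(\B))_{SA, \infty}$ is norm-path-connected. Since a homotopy equivalence preserves path-connectedness (it induces a bijection on $\pi_0$), it follows immediately that $\Ff_{SA, \infty}$ is path-connected. This handles the first assertion with essentially no work.

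For the topological assertions, I would argue directly, mimicking the corresponding arguments in Lemma~\ref{lem:May220231AM} and Lemma~\ref{lem:May220232AM}. To see that $\Ff_{SA, \infty}$ is closed in $\Ff$: suppose $\{A_n\}$ is a sequence in $\Ff_{SA, \infty}$ with $A_n \rightarrow A$ in $\Ff$. Each $A_n$ is self-adjoint, so $A$ is self-adjoint; hence $\pi(A)$ is a self-adjoint invertible element of $\C(\B)$. Since $\pi$ is continuous, $\pi(A_n) \rightarrow \pi(A)$ in $GL(\C(\B))_{SA}$, and by Lemma~\ref{lem:Aug520231am}, $1_{\geq 0}(\pi(A_n)) \rightarrow 1_{\geq 0}(\pi(A))$ and $1_{\geq 0}(-\pi(A_n)) \rightarrow 1_{\geq 0}(-\pi(A))$. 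By \cite{WeggeOlsen} Propositions 5.2.6 and 5.2.10, for $n$ large, $1_{\geq 0}(\pi(A)) \sim 1_{\geq 0}(\pi(A_n)) \sim 1_{\C(\B)}$ and similarly $1_{\geq 0}(-\pi(A)) \sim 1_{\geq 0}(-\pi(A_n)) \sim 1_{\C(\B)}$. Hence $A \in \Ff_{SA, \infty}$, so $\Ff_{SA, \infty}$ is closed in $\Ff$, and therefore also closed in $\Ff_{SA}$.

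For openness of $\Ff_{SA, \infty}$ in $\Ff_{SA}$: given $A \in \Ff_{SA, \infty}$, note that $\pi(A) \in GL(\C(\B))_{SA, \infty}$, which is open in $GL(\C(\B))_{SA}$ by Lemma~\ref{lem:May220231AM}. By continuity of $\pi : \Ff_{SA} \rightarrow GL(\C(\B))_{SA}$, the preimage $\pi^{-1}(GL(\C(\B))_{SA, \infty})$ is open in $\Ff_{SA}$ and contains $A$; but this preimage is exactly $\Ff_{SA, \infty}$ (by the very definition, since membership in $\Ff_{SA, \infty}$ depends only on $\pi(A)$ together with $A$ being self-adjoint Fredholm). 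Alternatively, one can argue directly from Lemma~\ref{lem:Aug520232am} applied in $\C(\B)$ combined with \cite{WeggeOlsen} Propositions 5.2.6 and 5.2.10, exactly as in Lemma~\ref{lem:May220231AM}. Combining closedness and openness in $\Ff_{SA}$ gives that $\Ff_{SA, \infty}$ is clopen in $\Ff_{SA}$.

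I do not expect any serious obstacle here: the statement is a routine packaging of the preceding three lemmas, and the only mild subtlety is making sure that ``membership in $\Ff_{SA, \infty}$'' is genuinely an open/closed condition on top of ``being a self-adjoint Fredholm operator,'' which it is, because $\Ff_{SA}$ is already (by Lemma~\ref{lem:May220232AM}) clopen in $\Ff$ and the remaining condition $1_{\geq 0}(\pi(A)) \sim 1_{\C(\B)} \sim 1_{\geq 0}(-\pi(A))$ pulls back, via the continuous map $\pi$, the corresponding clopen condition defining $GL(\C(\B))_{SA, \infty}$ inside $GL(\C(\B))_{SA}$.
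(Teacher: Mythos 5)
Your proposal is correct and follows essentially the same route as the paper: path-connectedness via the homotopy equivalence $\pi : \Ff_{SA,\infty} \rightarrow GL(\C(\B))_{SA,\infty}$ from Lemma~\ref{lem:May320232AM} combined with the path-connectedness of $GL(\C(\B))_{SA,\infty}$ from Lemma~\ref{lem:May220231AM}, and closedness in $\Ff$ by the same sequence/continuity argument through Lemma~\ref{lem:Aug520231am}. For openness, your observation that $\Ff_{SA,\infty} = \pi^{-1}\bigl(GL(\C(\B))_{SA,\infty}\bigr)$ as a subset of $\Ff_{SA}$ is a mildly cleaner packaging of what the paper does directly with $\delta$-balls via Lemma~\ref{lem:Aug520232am}, but it is the same underlying idea.
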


\begin{proof}
By Lemma \ref{lem:May220231AM},  $GL(\Mul(\B))_{SA, \infty}$ is (norm-) path-connected.   
By Lemma \ref{lem:May320232AM}, $GL(\Mul(\B))_{SA, \infty}$ is homotopy-equivalent to
$\Ff_{SA, \infty}$.   Hence, $\Ff_{SA, \infty}$ is path-connected.

Let $\{ X_n \}$ be a sequence in $\Ff_{SA, \infty}$ and $X \in \Ff$ such that $X_n \rightarrow X$.
By a standard argument, $X$ is self-adjoint.
Now $\pi(X_n) \rightarrow \pi(X)$ and $\pi(X)$ is a self-adjoint invertible element of $\C(\B)$.
By Lemma \ref{lem:Aug520231am},  
$$1_{\geq 0}(\pi(X_n)) \rightarrow  1_{\geq 0}(\pi(X))$$
and
$$1 - 1_{\geq 0}(\pi(X_n)) \rightarrow 1 - 1_{\geq 0}(\pi(X)).$$
But since $X_n \in \Ff_{SA, \infty}$, $$1_{\geq 0}(\pi(X_n))  \sim 1 \sim 1 - 
1_{\geq 0}(\pi(X_n))$$ for all $n$.    
Hence,
$$1_{\geq 0}(\pi(X))  \sim 1 \sim 1 - 
1_{\geq 0}(\pi(X)).$$
Hence, $X \in \Ff_{SA, \infty}$.   Since $\{ X_n \}$ and $X$ are arbitrary, $\Ff_{SA, \infty}$ is closed in $\Ff$.
Hence, $\Ff_{SA, \infty}$ is also closed in $\Ff_{SA}$.

Let $X \in \Ff_{SA, \infty}$ be arbitrary.   Hence, $\pi(X)$ is a self-adjoint invertible element of $\C(\B)$ such that
$$1_{\geq 0}(\pi(X)) \sim 1 \sim 1 - 1_{\geq 0}(\pi(X)).$$ 
By Lemma \ref{lem:Aug520232am},  choose $\delta > 0$ so that if $y \in \C(\B)$ is self-adjoint and
$\| \pi(X) - y \| < \delta$ then $y$ is invertible and $\| 1_{\geq 0}(\pi(X)) - 1_{\geq 0}(y) \| < \frac{1}{2}$.  

Now suppose that $Y \in B(X, \delta)_{SA}$ (i.e., $Y$ is self-adjoint and $\| X - Y \| < \delta$) is given.
Hence, $\| \pi(X) - \pi(Y) \| < \delta$.  Hence, by our choice of $\delta$, $\pi(Y)$ is invertible and 
$\| 1_{\geq 0}(\pi(X)) - 1_{\geq 0}(\pi(Y)) \| < \frac{1}{2}$.  Hence,
$$1_{\geq 0}(\pi(Y)) \sim 1_{\geq 0}(\pi(X)) \sim 1$$
and
$$1 - 1_{\geq 0}(\pi(Y)) \sim 1 - 1_{\geq 0}(\pi(X)) \sim 1.$$
Hence, $Y \in \Ff_{SA, \infty}$.  Since $Y$ is arbitrary, $B(X, \delta)_{SA} \subset \Ff_{SA, \infty}$, i.e., 
$X$ is an interior point of $\Ff_{SA, \infty}$.  Since $X$ is arbitrary, $\Ff_{SA, \infty}$ is an open subset
of $\Ff_{SA}$.\\
\end{proof}

\begin{rmk}  \label{rmk:Aug5202310PM}
The proofs of Lemma \ref{lem:May220232AM} and Lemma \ref{lem:Aug520235AM} actually show that
$\Ff_{SA,+}$, $\Ff_{SA, -}$ and $\Ff_{SA, \infty}$ are all (norm-) open subsets of 
$\Mul(\B)_{SA}$ (i.e., the set of self-adjoint elements of $\Mul(\B)$).\\
\end{rmk}

For a unital C*-algebra $\C$,
recall that $U(\C)$ denotes the unitary group of $\C$, with 
the restriction of the norm topology from $\C$.  We have notation 
for various topological subspaces of $U(\C)$, analogous to those for
$GL(\C)$.  In more detail, we consider the following:     
Let $U(\C)_{SA}$ denote the self-adjoint elements of $U(\C)$ (i.e., the self-adjoint unitaries
in $\C$).   Let $U(\C)_{SA,*}$ denote the $x \in U(\C)_{SA}$ such that $sp(x)$ contains both
$1$ and $-1$.  
Let $U(\C)_{SA, \infty}$ denote the $x \in U(\C)_{SA}$ such that 
$1_{\geq 0}(x) \sim 1_{\C} \sim 1_{\geq 0}(-x)$. 
By arguments similar to the $GL(\C)$ case,  $U(\C)_{SA}$, $U(\C)_{SA,*}$ and $U(\C)_{SA, \infty}$ are (norm-) closed 
subsets of $U(\C)$.  Similarly,  by arguments similar to 
the $GL(\C)$ case, 
$U(\C)_{SA, \infty}$ is an (norm-) open subset of $U(\C)_{SA}$. 
Note also that all the above spaces are given the
restriction of the norm topology from $\C$.

Recall also that by convention ($\Lambda$) (stated at the beginning of this section 4), $P_0 \in \Mul(\B)$ is a projection
such that $P_0 \sim 1 \sim 1 - P_0$.\\

\begin{df}
We define
$$GL_{P_0}(\Mul(\B)) =_{df} \{ X \in GL(\Mul(\B)) : X P_0 - P_0 X \in \B \}$$
and 
$$U_{P_0}(\Mul(\B)) =_{df} \{ U \in U(\Mul(\B)) : U P_0 - P_0 U \in \B \},$$
and both objects are topological groups (with the restriction of the norm topology from $\Mul(\B)$).\\
\end{df}

\begin{rmk}   \label{rmk:Aug520238PM} Let
$$\Mul(\B)_{P_0} =_{df} \{ Z \in \Mul(\B) : Z P_0 - P_0 Z \in \B \}.$$
Then $\Mul(\B)_{P_0}$, with the norm and *-algebraic operations inherited from $\Mul(\B)$, is a 
C*-algebra.  Moreover, 
$$GL_{P_0}(\Mul(\B)) = GL(\Mul(\B)_{P_0}) \makebox{  and  }
U_{P_0}(\Mul(\B)) = U(\Mul(\B)_{P_0}).$$\\
\end{rmk}

\begin{lem}  \label{lem:May320235AM}
$U_{P_0}(\Mul(\B))$ is a deformation retract of 
$GL_{P_0}(\Mul(\B))$.

Let $\C$ be a unital C*-algebra.    
$U(\C)_{SA,*}$ is  a deformation retract of $GL(\C)_{SA,*}$;
and $U(\C)_{SA, \infty}$ is also a deformation retract of
$GL(\C)_{SA, \infty}$. 
In particular, the map 
\begin{equation} \label{equ:Aug520237PM} GL(\C)_{SA, \infty} \rightarrow U(\C)_{SA, \infty} :  X 
\mapsto 1_{\geq 0}(X) - 1_{\geq 0}(-X) 
\end{equation}
 is a retraction onto 
$U(\C)_{SA, \infty}$ which 
is homotopic to the identity map $id_{GL(\C)_{SA,\infty}} : 
GL(\C)_{SA, \infty} \rightarrow GL(\C)_{SA, \infty}$.

In the above, all spaces are given the restriction of the norm topology (from the ambient 
C*-algebra which is either $\Mul(\B)$ or $\C$).  
\end{lem}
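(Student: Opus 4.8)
The plan is to produce all three retractions from a single uniform device: the polar-decomposition deformation of a general linear group onto its unitary group, restricted to the relevant subspaces. For the statement about $U_{P_0}(\Mul(\B)) \subseteq GL_{P_0}(\Mul(\B))$, I would first invoke Remark \ref{rmk:Aug520238PM}: $\Mul(\B)_{P_0}$ is a unital C*-algebra with $GL_{P_0}(\Mul(\B)) = GL(\Mul(\B)_{P_0})$ and $U_{P_0}(\Mul(\B)) = U(\Mul(\B)_{P_0})$, so it suffices to show that for an arbitrary unital C*-algebra $\mathcal{D}$ the unitary group $U(\mathcal{D})$ is a strong deformation retract of $GL(\mathcal{D})$. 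For this, set $|X| =_{df} (X^*X)^{1/2}$ (a positive invertible) for $X \in GL(\mathcal{D})$ and define $H : GL(\mathcal{D}) \times [0,1] \to GL(\mathcal{D})$ by $H(X,t) =_{df} X|X|^{-t}$. Then $H(X,0) = X$; $H(X,1) = X|X|^{-1}$ is unitary, since $(X|X|^{-1})^*(X|X|^{-1}) = |X|^{-1}(X^*X)|X|^{-1} = 1$ and it is invertible; and $H(U,t) = U$ for every unitary $U$ and every $t$, because then $|U| = 1$. Writing $|X|^{-t} = \exp(-t\log|X|)$ and using norm-continuity of $X \mapsto \log|X|$ on $GL(\mathcal{D})$ together with joint continuity of $(a,s)\mapsto\exp(sa)$, the map $H$ is norm-continuous. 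Hence $H$ is the required strong deformation retraction.

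For the self-adjoint statements, the key point is that when $X$ is a self-adjoint invertible of a unital C*-algebra $\C$, then $|X| = (X^2)^{1/2}$ lies in the commutative C*-algebra $C^*(X,1_{\C})$, so $H(X,t) = X|X|^{-t} = f_t(X)$ is obtained by the continuous functional calculus from the function $f_t(\lambda) =_{df} \mathrm{sgn}(\lambda)\,|\lambda|^{1-t}$, which is continuous on $\mathbb{R}\setminus\{0\}$ (this contains $sp(X)$). Thus $H(X,t)$ is self-adjoint for all $t$; $H$ restricts to a strong deformation retraction of $GL(\C)_{SA}$ onto $U(\C)_{SA}$ exactly as above; and $H(X,1) = f_1(X) = 1_{\geq 0}(X) - 1_{\geq 0}(-X)$, which is precisely the map in (\ref{equ:Aug520237PM}). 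It then remains only to check that $H$ preserves the two distinguished subspaces. Since each $f_t$ is strictly sign-preserving on $\mathbb{R}\setminus\{0\}$, we have $sp(H(X,t)) = f_t(sp(X))$, which meets $(0,\infty)$ (respectively $(-\infty,0)$) precisely when $sp(X)$ does; hence $H$ maps $GL(\C)_{SA,*}\times[0,1]$ into $GL(\C)_{SA,*}$ and fixes $U(\C)_{SA,*}$ pointwise, giving the first self-adjoint claim. Sign-preservation also yields $1_{\geq 0}(H(X,t)) = 1_{\geq 0}(X)$ and $1_{\geq 0}(-H(X,t)) = 1_{\geq 0}(-X)$, so the relations $1_{\geq 0}(X) \sim 1_{\C} \sim 1_{\geq 0}(-X)$ defining $GL(\C)_{SA,\infty}$ are constant along the homotopy; hence $H$ restricts to a strong deformation retraction of $GL(\C)_{SA,\infty}$ onto $U(\C)_{SA,\infty}$, and since $H(\cdot,0)$ is the identity while $H(\cdot,1)$ is the map (\ref{equ:Aug520237PM}), the ``in particular'' clause follows.

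The only points requiring genuine (if mild) care --- and hence the main obstacle --- are the norm-continuity of $(X,t)\mapsto X|X|^{-t}$ and the verification that the restricted homotopies never leave $GL(\C)_{SA,*}$ or $GL(\C)_{SA,\infty}$. The former reduces to the standard estimate that $g_n(x_n)\to g(x)$ whenever $x_n\to x$ are self-adjoint with spectra contained in a fixed compact set and $g_n\to g$ uniformly on that set (apply it after writing $|X|^{-t}$ via $\exp$ and $\log$). The latter comes down to the elementary observation that $\lambda\mapsto\mathrm{sgn}(\lambda)|\lambda|^{1-t}$ preserves the sign of $\lambda$ and fixes $\pm1$; everything else is routine bookkeeping with the continuous functional calculus.
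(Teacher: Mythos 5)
Your proposal is correct and follows essentially the same route as the paper: reduce the $GL_{P_0}/U_{P_0}$ statement to the standard $GL/U$ deformation retraction of a unital C*-algebra via Remark \ref{rmk:Aug520238PM}, and handle the self-adjoint cases by polar decomposition together with functional-calculus bookkeeping to check that the sign projections $1_{\geq 0}(\cdot)$ and $1_{\geq 0}(-\cdot)$ are constant along the homotopy. The only (cosmetic) difference is the explicit path: you interpolate the positive part geometrically via $X|X|^{-t}$, while the paper interpolates it linearly via $\bigl(1_{\geq 0}(X)-1_{\geq 0}(-X)\bigr)\bigl((1-t)|X|+t1_{\C}\bigr)$; both fix the unitary group pointwise and both preserve the relevant subspaces, so the choice is immaterial.
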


\begin{proof}
In all of the spaces mentioned in the statement of Lemma \ref{lem:May320235AM},  all the deformation retraction maps 
(including the map (\ref{equ:Aug520237PM})) are the standard maps using polar decomposition; i.e., they all
have the form
\begin{equation} \label{equ:Aug520239PM}
X \mapsto X |X|^{-1}.
\end{equation}

By Remark \ref{rmk:Aug520238PM},  $GL_{P_0}(\Mul(\B))$  and $U_{P_0}(\Mul(\B))$ are the general linear group and 
unitary group, respectively, of the unital C*-algebra $\Mul(\B)_{P_0}$.   It is a standard result that the map
$GL(\Mul(\B)_{P_0}) \rightarrow U(\Mul(\B)_{P_0})$, given by (\ref{equ:Aug520239PM}), is a deformation retraction (e.g., 
see \cite{WeggeOlsen} Lemma 4.2.3). 

Clearly, the map (\ref{equ:Aug520237PM}) is the identity map when restricted to $U(\C)_{SA, \infty}$.  
By an argument using the continuous functional calculus, it is not hard to see that the map (\ref{equ:Aug520237PM}) is an 
instant of the map  (\ref{equ:Aug520239PM}),  and also, 
$$|X| 1_{\geq 0} (X) = 1_{\geq 0}(X) |X| \makebox{  and  } |X|  1_{\geq 0} (-X) = 1_{\geq 0}(-X) |X|, 
\makebox{  for all  } X \in GL(\C)_{SA, \infty}.$$  
Consider the (norm-) continuous map
$$F : GL(\C)_{SA, \infty} \times [0,1] \rightarrow GL(\C)$$
given by
$$F(X, t) =_{df} (1_{\geq 0}(X) - 1_{\geq 0 }(-X))((1-t) |X| + t 1_{\C}) \makebox{ for all  } X \in GL(\C)_{SA, \infty}
\makebox{  and  } t \in [0,1].$$
By an argument using the continuous functional calculus, for all $X \in GL(\C)_{SA, \infty}$
  and   $t \in [0,1]$, $F(X, t) \in GL(\C)_{SA}$,    and
$$1_{\geq 0}(F(X, t)) = 1_{\geq 0}(X) \sim 1 \makebox{  and  } 1_{\geq 0} (-F(X,t)) = 1_{\geq 0}(-X) \sim 1.$$
Hence, $ran(F) \subset GL(\Mul(\B))_{SA, \infty}$.  Also, it is not hard to see that
$F(., 0) = id_{GL(\C)_{SA, \infty}}$, and $F(.,1)$ is the map (\ref{equ:Aug520237PM}). 
Hence, $F$ is a homotopy between the map $id_{GL(\C)_{SA, \infty}}$ and  (\ref{equ:Aug520237PM}).
Hence,  (\ref{equ:Aug520237PM}) is a deformation retraction map
 of $GL(\C)_{SA, \infty}$ onto $U(\C)_{SA, \infty}$.

By a similar argument,  (\ref{equ:Aug520237PM}) is a deformation
retraction of $GL(\C)_{SA,*}$ onto $U(\C)_{SA,*}$.\\ 
\end{proof}

Recall that 
for a C*-algebra $\C$, we let $Proj(\C)$ denote the collection of projections 
in $\C$.  

\begin{df}  \label{df:ProI}   Let $\C$ be a unital C*-algebra.
$$\ProI(\C) =_{df} \{ p \in Proj(\C) :  p \sim 1_{\C} 
\sim 1_{\C} - p \}.$$ 

We give $\ProI(\C)$ the restriction of the norm topology from $\C$.
\end{df}

Often, one refers to $\ProI(\C)$ as a 
type of \emph{Grassmannian space}.\\

Basic operator theory immediately
 gives us an alternate  characterization of $\ProI(\C(\B))$:     

\begin{lem} \label{lem:ProIChar}
$$\ProI(\C(\B)) = \{  p \in Proj(\C(\B)) : \exists P \in Proj(\Mul(\B)) \makebox{  s.t.  }   P \sim 1_{\Mul(\B)} \sim 
1_{\Mul(\B)} -P \makebox{  and  }  \pi(P) = p \}.$$
\end{lem}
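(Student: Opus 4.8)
The plan is to prove the two inclusions separately, with the right-to-left inclusion being essentially trivial and the left-to-right inclusion requiring a lifting argument. First I would handle the easy direction: if $p \in Proj(\C(\B))$ and there exists $P \in Proj(\Mul(\B))$ with $P \sim 1_{\Mul(\B)} \sim 1_{\Mul(\B)} - P$ and $\pi(P) = p$, then applying $\pi$ (which is a unital $*$-homomorphism) to the partial isometries implementing the Murray--von Neumann equivalences $P \sim 1_{\Mul(\B)}$ and $1_{\Mul(\B)} - P \sim 1_{\Mul(\B)}$ immediately gives $p \sim 1_{\C(\B)} \sim 1_{\C(\B)} - p$, so $p \in \ProI(\C(\B))$. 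This shows the right-hand side is contained in $\ProI(\C(\B))$.

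For the reverse inclusion, let $p \in \ProI(\C(\B))$, so $p$ is a projection in $\C(\B)$ with $p \sim 1_{\C(\B)} \sim 1_{\C(\B)} - p$. The goal is to produce a projection $P \in \Mul(\B)$ with $\pi(P) = p$ and $P \sim 1_{\Mul(\B)} \sim 1_{\Mul(\B)} - P$. The key point is that by convention $(\Lambda)$, $\B$ is separable and stable, so there is a fixed projection $P_0 \in \Mul(\B)$ with $P_0 \sim 1_{\Mul(\B)} \sim 1_{\Mul(\B)} - P_0$; consequently $\pi(P_0) \sim 1_{\C(\B)} \sim 1_{\C(\B)} - \pi(P_0)$, so $\pi(P_0)$ and $p$ are two properly infinite projections in $\C(\B)$ that are each equivalent to $1_{\C(\B)}$, hence equivalent to each other, and moreover their complements are equivalent. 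Thus there is a unitary $u \in \C(\B)$ with $u\,\pi(P_0)\,u^* = p$. Since $\pi(P_0) \sim 1_{\C(\B)}$ and $1_{\C(\B)}$ is properly infinite, one can (exactly as in the last part of the proof of Lemma \ref{lem:May220231AM}) modify $u$ inside the corner $\pi(P_0)\,\C(\B)\,\pi(P_0)$ to obtain a unitary $u' \in \C(\B)$ with $u'\,\pi(P_0)\,(u')^* = p$ and $[u'] = 0$ in $K_1(\C(\B))$. Because $\C(\B)$ is $K_1$-injective (Lemma \ref{lem:CoronaK1InjAndSurj}), $u' \sim_h 1_{\C(\B)}$ in $U(\C(\B))$, and then by \cite{WeggeOlsen} Corollary 4.3.3 there is a unitary $U \in \Mul(\B)$ with $\pi(U) = u'$. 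Setting $P =_{df} U P_0 U^*$, we get a projection in $\Mul(\B)$ with $\pi(P) = u'\,\pi(P_0)\,(u')^* = p$, and since conjugation by a unitary preserves Murray--von Neumann equivalence, $P \sim P_0 \sim 1_{\Mul(\B)} \sim 1_{\Mul(\B)} - P_0 \sim 1_{\Mul(\B)} - P$. This establishes the reverse inclusion and completes the proof.

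The main obstacle is the unitary lifting step: starting from $u \in \C(\B)$ with $u\,\pi(P_0)\,u^* = p$, one cannot in general lift $u$ itself to a unitary in $\Mul(\B)$, so one must first arrange that the conjugating unitary is connected to the identity, which is precisely where the proper infiniteness of $1_{\C(\B)}$, the $K_1$-injectivity of the corona algebra, and the $K_1$-surjectivity / lifting statement \cite{WeggeOlsen} Corollary 4.3.3 come into play. This is exactly the mechanism already used in Lemma \ref{lem:May220231AM}, so the argument here is a direct adaptation of that one; no genuinely new difficulty arises, but the bookkeeping with the corner corrections is the part that requires care.
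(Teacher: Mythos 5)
Your proof is correct and is essentially the same argument the paper uses: the paper's one-line proof cites Lemma \ref{lem:Aug520233pm} (in the appendix), which proves exactly the nontrivial left-to-right inclusion via the same conjugating-unitary, $K_1$-class-correction, $K_1$-injectivity, and unitary-lifting mechanism you write out; the easy right-to-left inclusion is implicit there, as in your proposal. The only difference is that you reprove the appendix lemma inline (noting correctly that it mirrors the end of the proof of Lemma \ref{lem:May220231AM}) rather than citing it.
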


\begin{proof}

This follows immediately from Definition \ref{df:ProI} and Lemma \ref{lem:Aug520233pm}.\\

\end{proof}

\begin{lem}  \label{lem:May320237AM}   Let $\C$ be a unital C*-algebra.
The map  $$U(\C)_{SA, \infty} \rightarrow 
\ProI(\C) : u \mapsto \frac{u+1}{2}$$
is a homeomorphism, where all the spaces are given the restriction of the norm topology from $\C$. 
\end{lem}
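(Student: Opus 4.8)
The plan is to exhibit an explicit (affine) inverse and check that the correspondence restricts correctly to the Grassmannian subspaces.

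First I would record the elementary fact that the affine maps $\Phi : u \mapsto \frac{u+1}{2}$ and $\Psi : p \mapsto 2p - 1$ are mutually inverse norm-homeomorphisms between the set $U(\C)_{SA}$ of self-adjoint unitaries of $\C$ and $Proj(\C)$. Indeed, if $u = u^* \in U(\C)$ then $u^2 = u^* u = 1_{\C}$, so $\Phi(u)^* = \Phi(u)$ and $\Phi(u)^2 = \frac{u^2 + 2u + 1}{4} = \frac{u+1}{2} = \Phi(u)$, whence $\Phi(u) \in Proj(\C)$; conversely, if $p \in Proj(\C)$ then $\Psi(p) = 2p - 1$ is self-adjoint and $(2p - 1)^2 = 4p^2 - 4p + 1 = 1_{\C}$, so $\Psi(p) \in U(\C)_{SA}$. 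Clearly $\Phi \circ \Psi = id$ and $\Psi \circ \Phi = id$ on the respective sets, and both maps are Lipschitz for the norm (with constants $\frac{1}{2}$ and $2$ respectively), hence continuous.

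Next I would identify the relevant spectral projections of a self-adjoint unitary. Let $u \in U(\C)_{SA}$; then $sp(u) \subseteq \{ -1, 1 \}$, and on this set the indicator function $1_{\geq 0}$ of (\ref{equdf:IndicatorFunction}) agrees with the polynomial $t \mapsto \frac{t+1}{2}$, while $1_{\geq 0}(-t)$ agrees with $t \mapsto \frac{1-t}{2}$. Hence, by the continuous functional calculus, $1_{\geq 0}(u) = \frac{u+1}{2} = \Phi(u)$ and $1_{\geq 0}(-u) = \frac{1_{\C} - u}{2} = 1_{\C} - \Phi(u)$. Consequently, the defining condition for $u \in U(\C)_{SA, \infty}$, namely $1_{\geq 0}(u) \sim 1_{\C} \sim 1_{\geq 0}(-u)$, holds if and only if $\Phi(u) \sim 1_{\C} \sim 1_{\C} - \Phi(u)$, i.e., if and only if $\Phi(u) \in \ProI(\C)$ (Definition \ref{df:ProI}). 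Therefore $\Phi$ restricts to a bijection $U(\C)_{SA, \infty} \to \ProI(\C)$ whose inverse is the restriction of $\Psi$; both restrictions are continuous for the subspace norm topologies by the first paragraph, so this restriction is a homeomorphism, as claimed.

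There is essentially no obstacle here; the only point requiring a moment's care is the functional-calculus identity $1_{\geq 0}(u) = \frac{u+1}{2}$, which holds because $1_{\geq 0}$ and $t \mapsto \frac{t+1}{2}$ agree on $sp(u) \subseteq \{ -1, 1 \}$ regardless of whether that spectrum is $\{ 1 \}$, $\{ -1 \}$ or $\{ -1, 1 \}$ (and in a nonzero C*-algebra the condition $\Phi(u) \sim 1_{\C} \sim 1_{\C} - \Phi(u)$ in any case forces $sp(u) = \{ -1, 1 \}$).
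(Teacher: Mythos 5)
Your proof is correct and complete; the paper leaves this lemma as an exercise for the reader, and your argument (the affine bijection $u \mapsto \frac{u+1}{2}$ with inverse $p \mapsto 2p-1$, together with the functional-calculus identities $1_{\geq 0}(u) = \frac{u+1}{2}$ and $1_{\geq 0}(-u) = 1 - \frac{u+1}{2}$ to match the defining $\sim$-conditions on each side) is the standard and intended one.
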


\begin{proof}
This is an excellent basic exercise for the reader.\\
\end{proof}

We are now ready for the map which is the first component in the Pre Spectral Flow Isomorphism.
\begin{lem}  \label{lem:Aug520239AM}
The map 
$$\Ff_{SA, \infty} \rightarrow \ProI(\C(\B)) : A \mapsto 1_{\geq 0}(\pi(A))$$ 
is a homotopy equivalence,   where  all the spaces are given the restriction of the norm topology from 
the ambient C*-algebra (which will be either $\Mul(\B)$ or $\C(\B)$). 

As a consequence, $\ProI(\C(\B))$ is a path-connected topological space.  
\end{lem}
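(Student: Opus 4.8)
The plan is to recognize the map $A \mapsto 1_{\geq 0}(\pi(A))$ as a composition of three maps that have already been identified as homotopy equivalences, and then invoke the fact that a composition of homotopy equivalences is a homotopy equivalence. Concretely, I would factor it as
$$\Ff_{SA, \infty} \xrightarrow{\ \pi\ } GL(\C(\B))_{SA, \infty} \xrightarrow{\ r\ } U(\C(\B))_{SA, \infty} \xrightarrow{\ \cong\ } \ProI(\C(\B)),$$
where $\pi$ is the quotient map, $r$ is the retraction $X \mapsto 1_{\geq 0}(X) - 1_{\geq 0}(-X)$ of Lemma \ref{lem:May320235AM}, and the last arrow is the homeomorphism $u \mapsto (u+1)/2$ of Lemma \ref{lem:May320237AM}. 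The only computation required is the verification that this composite really equals $A \mapsto 1_{\geq 0}(\pi(A))$: for a self-adjoint invertible $X$ in a unital C*-algebra one has $1_{\geq 0}(X) + 1_{\geq 0}(-X) = 1_{\C}$ by the continuous functional calculus, so $(r(X)+1)/2 = (1_{\geq 0}(X) + (1_{\C} - 1_{\geq 0}(-X)))/2 = 1_{\geq 0}(X)$, and precomposing with $\pi$ gives the claimed map.

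Next I would assemble the three homotopy equivalences: $\pi \colon \Ff_{SA, \infty} \to GL(\C(\B))_{SA, \infty}$ is a homotopy equivalence by Lemma \ref{lem:May320232AM}; $r$ is a deformation retraction onto $U(\C(\B))_{SA, \infty}$ by Lemma \ref{lem:May320235AM}, hence a homotopy equivalence; and the third arrow is a homeomorphism by Lemma \ref{lem:May320237AM}, hence a homotopy equivalence. Composing yields that $A \mapsto 1_{\geq 0}(\pi(A))$ is a homotopy equivalence.

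For the ``as a consequence'' clause I would simply note that a homotopy equivalence induces a bijection on the sets of path-components, and $\Ff_{SA, \infty}$ is path-connected by Lemma \ref{lem:Aug520235AM}; hence $\ProI(\C(\B))$ is path-connected. (One could equally route this through the path-connectedness of $GL(\C(\B))_{SA, \infty}$ established in Lemma \ref{lem:May220231AM}.)

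I do not expect a genuine obstacle here: every ingredient is already in place and the content is essentially bookkeeping — fixing the correct factorization and checking the single functional-calculus identity above. The only mild point worth stating explicitly is that the intermediate maps land in the intended subspaces, i.e.\ that $r(X)$ is a self-adjoint unitary with $1_{\geq 0}(r(X)) = 1_{\geq 0}(X) \sim 1_{\C(\B)}$ and $1_{\geq 0}(-r(X)) = 1_{\geq 0}(-X) \sim 1_{\C(\B)}$, so that $r(X) \in U(\C(\B))_{SA, \infty}$; but this is precisely what Lemma \ref{lem:May320235AM} records.
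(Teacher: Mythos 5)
Your proof is correct and follows exactly the paper's approach: the paper's own proof states that the map "is an obvious composition of the maps" from Lemmas \ref{lem:May320232AM}, \ref{lem:May320235AM}, and \ref{lem:May320237AM}, and derives path-connectedness from Lemma \ref{lem:Aug520235AM} and homotopy invariance. Your only addition is spelling out the functional-calculus identity $(r(X)+1)/2 = 1_{\geq 0}(X)$ that the paper calls "obvious", which is a welcome detail but not a different route.
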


\begin{proof}
The first part follows immediately from  
Lemmas \ref{lem:May320232AM}, \ref{lem:May320235AM} and \ref{lem:May320237AM}.
In fact, the map in the present lemma is an obvious composition of
the maps from the above lemmas.

The last statement follows from Lemma \ref{lem:Aug520235AM}, the first part of the present lemma and the fact that
path-connectedness is a homotopy invariant.
\end{proof}

\subsection{A certain fibration}  
In this subsection, following \cite{PereraThesis},
we will define a certain Hurewicz fibration 
which will lead to  
the second major component in the definition of the Pre Spectral
Flow Isomorphism. (Hurewicz fibrations are often called ``fibrations" (without
``Hurewicz");  see  \cite{HatcherTopologyBook} bottom of page 375, just before Theorem 4.41; 
see also \cite{Spanier} page 66.)\\

\begin{rmk} \label{rmk:NormedSpaceIsCW} 
In what follows, we will repeatedly use the following result:
Every open subset of a 
normed linear space is homotopy equivalent to a 
CW complex.  
The statement and proof can be found in \cite{LundellWeingram} Chapter 4, Corollary 5.5.   (For the separable case, there is a stronger (and older) result:
Every convex subset of a separably metrizable    
locally convex topological vector space is homotopy equivalent to a CW
complex.  See \cite{MilnorCWHomotopyType} Theorem 1 and \cite{Hu} 
Corollary II.14.2 and Theorem II.3.1.) \\ 
\end{rmk}

\begin{lem} \label{lem:ProIAndU_PCWComplexes}
For a unital C*-algebra $\C$, $GL(\C)$ and $U(\C)$ are homotopy equivalent to CW complexes.
Also, the spaces $\ProI(\C(\B))$,  $GL_{P_0}(Mul(\B))$ and $U_{P_0}(\Mul(\B))$ 
are  homotopy equivalent to CW complexes.
\end{lem}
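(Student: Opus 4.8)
The plan is to reduce every one of the five spaces to Remark \ref{rmk:NormedSpaceIsCW}, i.e.\ to the fact that an open subset of a normed linear space is homotopy equivalent to a CW complex. First I would handle a general unital C*-algebra $\C$. The set of invertibles $GL(\C)$ is a norm-open subset of the normed linear space $\C$ (e.g.\ \cite{WeggeOlsen} Lemma 4.2.1), so Remark \ref{rmk:NormedSpaceIsCW} immediately gives that $GL(\C)$ is homotopy equivalent to a CW complex. For the unitary group, the polar-decomposition map $X \mapsto X|X|^{-1}$ exhibits $U(\C)$ as a deformation retract of $GL(\C)$ (\cite{WeggeOlsen} Lemma 4.2.3), hence $U(\C)$ is homotopy equivalent to $GL(\C)$, and therefore to a CW complex.

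Next I would treat $GL_{P_0}(\Mul(\B))$ and $U_{P_0}(\Mul(\B))$ simply by applying the previous paragraph to a different unital C*-algebra. By Remark \ref{rmk:Aug520238PM}, the set $\Mul(\B)_{P_0}$ is itself a unital C*-algebra under the inherited operations, and $GL_{P_0}(\Mul(\B)) = GL(\Mul(\B)_{P_0})$, $U_{P_0}(\Mul(\B)) = U(\Mul(\B)_{P_0})$. Taking $\C = \Mul(\B)_{P_0}$ in the first paragraph shows that both spaces are homotopy equivalent to CW complexes. (For the unitary case one could alternatively cite Lemma \ref{lem:May320235AM}, which gives directly that $U_{P_0}(\Mul(\B))$ is a deformation retract of $GL_{P_0}(\Mul(\B))$.)

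Finally, for $\ProI(\C(\B))$ I would transport the conclusion across a homotopy equivalence. By Lemma \ref{lem:Aug520239AM}, the map $A \mapsto 1_{\geq 0}(\pi(A))$ is a homotopy equivalence $\Ff_{SA, \infty} \to \ProI(\C(\B))$, so it suffices to show $\Ff_{SA, \infty}$ is homotopy equivalent to a CW complex. By Remark \ref{rmk:Aug5202310PM}, $\Ff_{SA, \infty}$ is a norm-open subset of $\Mul(\B)_{SA}$, the (real) normed linear space of self-adjoint elements of $\Mul(\B)$, so Remark \ref{rmk:NormedSpaceIsCW} applies once more. Hence $\Ff_{SA, \infty}$, and therefore $\ProI(\C(\B))$, is homotopy equivalent to a CW complex.

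There is no real obstacle here: the argument is bookkeeping built on results already proved. The only points needing a little care are (a) recognizing that each space is (up to homotopy equivalence) an open subset of a genuine normed linear space — for which Remark \ref{rmk:Aug520238PM} (that $\Mul(\B)_{P_0}$ is a C*-algebra) and Remark \ref{rmk:Aug5202310PM} (that $\Ff_{SA, \infty}$ is open in the real normed space $\Mul(\B)_{SA}$) do the work — and (b) noting that ``normed linear space'' in Remark \ref{rmk:NormedSpaceIsCW} may be taken to be real, which covers $\Mul(\B)_{SA}$.
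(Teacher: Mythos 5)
Your proposal is correct and follows essentially the same route as the paper's own proof: open subsets of normed linear spaces via Remark \ref{rmk:NormedSpaceIsCW}, the identification $GL_{P_0}(\Mul(\B)) = GL(\Mul(\B)_{P_0})$ from Remark \ref{rmk:Aug520238PM}, and transporting the conclusion to $\ProI(\C(\B))$ through the homotopy equivalence of Lemma \ref{lem:Aug520239AM} together with Remark \ref{rmk:Aug5202310PM}. No gaps.
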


\begin{proof}
We will use the result that an open subset of a normed linear space is homotopy equivalent to a CW complex
(see Remark \ref{rmk:NormedSpaceIsCW}).  

Let $\C$ be a unital C*-algebra.   It is well-known that the general linear group (i.e., group of invertibles)
$GL(\C)$, of $\C$, is an open subset of $\C$ (\cite{WeggeOlsen} Lemma 4.2.1), 
and that $U(\C)$ is a deformation retract
of $GL(\C)$ (\cite{WeggeOlsen} Lemma 4.2.3).   
Since $\C$ is a normed linear space,  both $GL(\C)$ and $U(\C)$ are
homotopy equivalent to CW complexes. 

By Remark \ref{rmk:Aug520238PM},  there is a unital C*-algebra $\Mul(\B)_{P_0}$ for which
$$GL_{P_0}(\Mul(\B)) = GL(\Mul(\B)_{P_0}) \makebox{  and  }
U_{P_0}(\Mul(\B)) = U(\Mul(\B)_{P_0}).$$
Hence, by the previous paragraph, $GL_{P_0}(\Mul(\B))$ and $U_{P_0}(\Mul(\B))$ are both homotopy
equivalent to CW complexes.

By Lemma \ref{lem:Aug520239AM} and Remark \ref{rmk:Aug5202310PM},
$\ProI(\C(\B))$ is homotopy equivalent to an open subset of $\Mul(\B)_{SA}$, which is a (real) normed
linear space.   
Hence, $\ProI(\C(\B))$ is homotopy equivalent to a CW complex.\\  
\end{proof}

\begin{lem} \label{lem:Aug18202310PM}
Fix a Fredholm operator $X \in \Mul(\B)$, 
i.e., $\pi(X) \in GL(\Mul(\B))$. 
Let $E$ be the set of elements of $GL(\M_2 \otimes \Mul(\B))$
that have the form  
\[
\left[ \begin{array}{cc} X +a  & b \\ c &  Y \end{array} \right],
\]
where $a, b, c \in \B$ and $Y \in \Mul(\B)$.  Here,
$E$ is given the restriction of the norm topology from $\M_2 \otimes \Mul(\B)$.   

Then $E$ is homotopy equivalent to a CW complex.  
\end{lem}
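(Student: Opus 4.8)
The plan is to show that $E$ is, up to homotopy, an open subset of a real normed linear space, and then invoke the result cited in Remark \ref{rmk:NormedSpaceIsCW} (open subsets of normed linear spaces are homotopy equivalent to CW complexes). The natural ambient vector space is $V =_{df} \B \oplus \B \oplus \B \oplus \Mul(\B)$ — a real Banach space under the obvious norm — which parametrizes the $2\times 2$ matrices of the prescribed shape via $(a,b,c,Y) \mapsto \left[\begin{smallmatrix} X+a & b \\ c & Y \end{smallmatrix}\right]$. This map is an affine homeomorphism onto its image $W \subseteq \M_2 \otimes \Mul(\B)$ (it is a translate by the fixed matrix $\left[\begin{smallmatrix} X & 0 \\ 0 & 0\end{smallmatrix}\right]$ of a bounded linear injection with bounded inverse onto $W$). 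Under this identification $E$ corresponds to $W \cap GL(\M_2 \otimes \Mul(\B))$, i.e. to the preimage of the open set $GL(\M_2\otimes\Mul(\B))$ (open by \cite{WeggeOlsen} Lemma 4.2.1) under a continuous map, hence to an open subset of $V$.

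First I would set up the parametrization and check it is a homeomorphism onto the subspace $W$: this is a short exercise, since $V$ with its direct-sum norm and $W$ with the subspace norm from $\M_2\otimes\Mul(\B)$ are boundedly isomorphic, the four entry-maps being bounded both ways. Second, I would note that invertibility is an open condition, so $E$ (as a subset of $W$, equivalently of $V$) is open in the normed linear space $V$. Third, I would apply \cite{LundellWeingram} Chapter 4, Corollary 5.5 (as recorded in Remark \ref{rmk:NormedSpaceIsCW}) to conclude $E$ is homotopy equivalent to a CW complex. An alternative, should one prefer to stay in the C*-algebraic framework of Remark \ref{rmk:Aug520238PM} and Lemma \ref{lem:ProIAndU_PCWComplexes}: observe that the set of all $\left[\begin{smallmatrix} X+a & b \\ c & Y\end{smallmatrix}\right]$ with $a,b,c\in\B$, $Y\in\Mul(\B)$ is exactly $\left[\begin{smallmatrix}X&0\\0&0\end{smallmatrix}\right] + \M_2 \otimes \Mul(\B)_{\text{some corner}}$... but this is less clean than the direct normed-space argument, so I would use the first approach.

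The main obstacle — really the only point requiring a moment's care — is checking that the affine parametrization $V \to \M_2\otimes\Mul(\B)$ is a homeomorphism onto its image, so that "open in $W$" transfers to "open in $V$"; this is where one must be slightly careful that the four-entry decomposition of a $2\times 2$ matrix over a C*-algebra gives bounded maps in both directions (it does, with constants depending only on the fixed isomorphism $\M_2\otimes\Mul(\B)\cong M_2(\Mul(\B))$). Everything else is bookkeeping: translation by a fixed element is a homeomorphism, preimages of open sets under continuous maps are open, and the quoted CW result does the rest. I do not expect to need any property of $X$ beyond its being a fixed element (Fredholmness is not used here; it is presumably relevant only in later applications of this lemma).
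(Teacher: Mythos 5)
Your proposal is correct and follows essentially the same route as the paper: identify the ambient matrices as an affine translate of a normed linear space (the paper works with the subspace $\E \subseteq \M_2\otimes\Mul(\B)$ of matrices with $a,b,c\in\B$, $Y\in\Mul(\B)$, which is the same Banach space as your $V$ up to the canonical entry-isomorphism), observe that intersecting with $GL(\M_2\otimes\Mul(\B))$ yields an open subset, and invoke the Lundell--Weingram result from Remark \ref{rmk:NormedSpaceIsCW}. Your observation that the Fredholmness of $X$ is not used in this lemma is also accurate.
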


\begin{proof}
We will again use the result that an open subset of a normed linear space is homotopy-equivalent to a CW complex
(see Remark \ref{rmk:NormedSpaceIsCW}).  

Let $\E$ be the C*-subalgebra of $\M_2 \otimes \Mul(\B)$ given by 
\begin{eqnarray*}
\E & =_{df} &  \left[  \begin{array}{cc}   \B & \B \\
\B & \Mul(\B) \end{array} \right]\\
& = & \left\{ \left[  \begin{array}{cc}   a & b \\
c & Y \end{array} \right] :  a, b, c, \in \B, 
\makebox{ } Y \in \Mul(\B) \right\}.
\end{eqnarray*}

Let $E_1 \subset \M_2 \otimes \Mul(\B)$ be defined by 
\begin{eqnarray*}
E_1 & =_{df} &  \left[  \begin{array}{cc} X+   \B & \B \\
\B & \Mul(\B) \end{array} \right]\\
& = & \left\{ \left[  \begin{array}{cc} X+  a & b \\
c & Y \end{array} \right] :  a, b, c, \in \B,
\makebox{ } Y \in \Mul(\B) \right\}.   
\end{eqnarray*}
We give $E_1$ the restriction of the norm topology from $\M_2 \otimes
\Mul(\B)$.               

Note that the map
$$\Gamma : \E \rightarrow E_1 : \left[  \begin{array}{cc}   a & b \\
c & Y \end{array} \right]  \mapsto 
\left[  \begin{array}{cc} X +  a & b \\
c & Y \end{array} \right]$$
is a (nonlinear) homeomorphism. 
(Recall also that $X$ is fixed.)

Since $GL(\M_2 \otimes \Mul(\B))$ is an (norm-) open subset of 
the C*-algebra $\M_2 \otimes \Mul(\B)$ (see \cite{WeggeOlsen} Lemma
4.2.1),  
$E_1 \cap GL(\M_2 \otimes \Mul(\B))$ is an (norm-) open subset of 
$E_1$. Hence, since $\Gamma$ is a homeomorphism, 
$\Gamma^{-1}(E_1 \cap GL(\M_2 \otimes \Mul(\B)))$ is an open subset of 
$\E$.  Since $\E$ is a normed linear space,
$\Gamma^{-1}(E_1 \cap GL(\M_2 \otimes \Mul(\B)))$ is homotopy-equivalent to
a CW complex (see Remark \ref{rmk:NormedSpaceIsCW}).   
Hence, $E_1 \cap GL(\M_2 \otimes \Mul(\B))$ is homotopy-equivalent 
to a CW complex.  But this completes the proof, since
$$E = E_1 \cap GL(\M_2 \otimes \Mul(\B)).$$\\   
 
\end{proof}

\vspace*{3ex}
\textbf{\emph{Recall, from our standing convention ($\Lambda$) for the whole of (this) Section 4,  that $P_0 \in \Mul(\B)$ is a projection for which 
$P_0 \sim 1 \sim 1 - P_0$ (see the beginning of this section 4).\\\\}}

Following \cite{PereraThesis}, we now define a certain fibration which is the key to the second component
in the definition of the Pre Spectral Flow Isomorphism:    

\begin{df} \label{df:AFibration}
We define the map $\alpha_{P_0} : U(\Mul(\B)) \rightarrow \ProI(\C(\B))$ 
by
$$\alpha_{P_0}(U) =_{df} \pi(U P_0 U^*) \makebox{  for all  } U \in
U(\Mul(\B)).$$\\ 
\end{df}

\begin{thm} \label{thm:AFibration}
The map $\alpha_{P_0} : U(\Mul(\B)) \rightarrow \ProI(\C(\B))$ 
has the homotopy lifting property for all topological spaces (see \cite{HatcherTopologyBook} page 375); i.e., $\alpha_{P_0}$ is a Hurewicz fibration
(see \cite{HatcherTopologyBook} page 377).  In fact, $\alpha_{P_0}$ is 
a principal fiber bundle.   Moreover,
the fiber
$$\alpha_{P_0}^{-1}(\pi(P_0)) = U_{P_0}(\Mul(\B)).$$
\end{thm}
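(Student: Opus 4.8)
The plan is to show $\alpha_{P_0}$ is a principal fiber bundle with structure group $U_{P_0}(\Mul(\B))$, and the fibration property follows since principal bundles have the homotopy lifting property for all spaces (see \cite{HatcherTopologyBook} or \cite{Spanier}). First I would verify surjectivity of $\alpha_{P_0}$: given $q \in \ProI(\C(\B))$, by Lemma \ref{lem:ProIChar} there is a projection $Q \in \Mul(\B)$ with $Q \sim 1 \sim 1 - Q$ and $\pi(Q) = q$; since also $P_0 \sim 1 \sim 1 - P_0$, there is a partial isometry, hence a unitary $U \in U(\Mul(\B))$, with $U P_0 U^* = Q$ (standard: piece together partial isometries implementing $P_0 \sim Q$ and $1 - P_0 \sim 1 - Q$), so $\alpha_{P_0}(U) = \pi(Q) = q$.

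Next I would identify the fiber over the basepoint. We have $U \in \alpha_{P_0}^{-1}(\pi(P_0))$ iff $\pi(U P_0 U^*) = \pi(P_0)$ iff $U P_0 U^* - P_0 \in \B$ iff $U P_0 - P_0 U \in \B$ (multiply by $U$, which preserves $\B$). This is exactly $U \in U_{P_0}(\Mul(\B))$, giving $\alpha_{P_0}^{-1}(\pi(P_0)) = U_{P_0}(\Mul(\B))$. More generally, for any $q$ in the image, the fiber $\alpha_{P_0}^{-1}(q)$ is a right coset $U_0 \cdot U_{P_0}(\Mul(\B))$ for any fixed $U_0$ with $\alpha_{P_0}(U_0) = q$: indeed $\alpha_{P_0}(U_0 V) = \pi(U_0 V P_0 V^* U_0^*) = \pi(U_0 P_0 U_0^*)$ precisely when $V P_0 V^* - P_0 \in \B$. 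Wait — one must be careful: I want $\pi(U_0 (V P_0 V^*) U_0^*) = \pi(U_0 P_0 U_0^*)$, which holds iff $V P_0 V^* - P_0 \in \B$ since $\pi$ is a homomorphism and $U_0$-conjugation descends; so the fiber is $U_0 \cdot U_{P_0}(\Mul(\B))$, confirming the right $U_{P_0}(\Mul(\B))$-action is free and transitive on fibers.

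The main obstacle is local triviality: producing, for each $q_1$ in the image of $\alpha_{P_0}$, an open neighborhood $V \ni q_1$ and a continuous section $\sigma : V \to U(\Mul(\B))$ with $\alpha_{P_0} \circ \sigma = \mathrm{id}_V$, since then $V \times U_{P_0}(\Mul(\B)) \to \alpha_{P_0}^{-1}(V)$, $(q, W) \mapsto \sigma(q) W$, is the required trivializing homeomorphism (continuity of the inverse uses that a fixed unitary $\sigma(q_1)$ conjugates things continuously). To build $\sigma$ I would fix $U_1$ with $\alpha_{P_0}(U_1) = q_1$, set $q_1 = \pi(P_1)$ with $P_1 =_{df} U_1 P_0 U_1^*$, and use the standard small-perturbation fact (\cite{WeggeOlsen} Proposition 5.2.6 / Lemma \ref{lem:Aug520232am}): for $q \in \ProI(\C(\B))$ with $\| q - q_1 \| < 1$, the element $z = q q_1 + (1-q)(1-q_1)$ is invertible in $\C(\B)$ with $z q_1 z^{-1} = q$, and $w =_{df} z |z|^{-1}$ is a unitary in $\C(\B)$, depending norm-continuously on $q$, with $w q_1 w^* = q$ and $w_{q_1} = 1$. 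Lifting: since $\C(\B)$ is properly infinite and $w$ lies in the connected neighborhood of $1$ (so $[w] = 0$ in $K_1(\C(\B))$), by $K_1$-injectivity of $\C(\B)$ (Lemma \ref{lem:CoronaK1InjAndSurj}) $w$ is homotopic to $1$; then by \cite{WeggeOlsen} Corollary 4.3.3 $w$ lifts to a unitary $W \in U(\Mul(\B))$ — but I need this lift to depend continuously on $q$. For that I would instead work locally enough that $\| w - 1 \| < \tfrac12$ (shrink $V$), apply Lemma \ref{lem:May520235PM} to get $W$ with $\| W - 1 \| < \tfrac12$, and note the lift produced there, via $w = e^{ia}$ and $W = e^{iA}$ with a continuous choice of self-adjoint lift $A$ (using a continuous linear section of $\pi$ from the Bartle--Graves theorem as in Lemma \ref{lem:May320232AM}, composed with the continuous functional calculus $\log$), is continuous in $q$. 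Set $\sigma(q) =_{df} W(q) U_1$; then $\alpha_{P_0}(\sigma(q)) = \pi(W(q) P_1 W(q)^*) = w q_1 w^* = q$, as needed. Finally, a principal $G$-bundle is a Hurewicz fibration (the homotopy lifting property for all spaces follows from local triviality plus a partition-of-unity / patching argument over the base, or from the Milnor slide construction), completing the proof.
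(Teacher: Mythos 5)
Your proposal is correct and follows essentially the same route as the paper: establish surjectivity, identify the fiber as $U_{P_0}(\Mul(\B))$, build continuous local cross-sections via the Bartle--Graves selection theorem combined with the small-perturbation conjugating-unitary formula (what the paper cites as \cite{WeggeOlsen} Proposition 5.2.6 and you rederive explicitly with $z = qq_1 + (1-q)(1-q_1)$), conclude $\alpha_{P_0}$ is a principal $U_{P_0}(\Mul(\B))$-bundle, and invoke paracompactness of the metrizable base to upgrade to a Hurewicz fibration. The only implementation differences are minor: the paper verifies openness separately and applies Steenrod's criterion (continuous, open, surjective with local cross-sections $\Rightarrow$ principal bundle) rather than writing out the local trivialization homeomorphism, and it lifts the conjugating unitary via $s_1(\cdot)|s_1(\cdot)|^{-1}$ (polar decomposition of the Bartle--Graves section) rather than your $\log$--$\exp$ argument; both mechanisms serve the same purpose and both are valid.
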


\begin{proof}

We follow and modify the argument of \cite{PereraThesis} Theorem 2.4.1, p45,
which is for the case of type $II_{\infty}$ factors.
See also \cite{PereraPaper}  Theorem 3.9.

Firstly, that $\alpha_{P_0}$ has range in $\ProI(\C(\B))$ and is
(norm-) continuous is clear.\\

\emph{Surjectivity of $\alpha_{P_0}$:}  If $p \in \ProI(\C(\B))$, then
there exists a projection $P \in \Mul(\B)$ with $P \sim 1 \sim 1 - P$ for
which $\pi(P) = p$ (see Lemma \ref{lem:ProIChar}).  Then we can find a unitary $U \in \Mul(\B)$
such that $U P_0 U^* = P$.  Hence, $\alpha_{P_0}(U) = P$.  Since $p$
was arbitrary, $\alpha_{P_0}$ is surjective.\\

\emph{$\alpha_{P_0}$ is an open map:}
Let $U \in \Mul(\B)$ and let $\epsilon >0$ be given.
We want to prove that $\alpha_{P_0}(B_{U(\Mul(\B))}(U, \epsilon))$ is an open
subset of $\ProI(\C(\B))$.
(Recall that $B_{U(\Mul(\B))}(U, \epsilon) =_{df} \{ U' \in U(\Mul(\B)) :
\| U - U' \| < \epsilon \}$ is the open ball about $U$ with radius
$\epsilon$.)

So let $p \in \alpha_{P_0}(B_{U(\Mul(\B))}(U, \epsilon))$ be arbitrary.
Hence, choose $V \in U(\Mul(\B))$ such that $\| V - U \| < \epsilon$
and $p = \pi(V P_0 V^*)$.
Now choose $0<\delta < \frac{1}{2}$ so that
\begin{equation} \label{equ:May520233AM}  \delta + \| V - U \|
< \epsilon. \end{equation}
By \cite{WeggeOlsen}  Proposition 5.2.6 (its proof), let $\delta' > 0$
be such that for any unital C*-algebra $\C$, for any projections
$r, r' \in \C$, if $\| r - r' \| < \delta'$ then there exists a unitary
$x \in \C$ such that $r' = x r x^*$ and $\| x - 1 \| < \delta$.

Now suppose that $q \in \ProI(\C(\B))$ is such that
$$\| q - p \| < \delta'.$$
Then by our choice of $\delta'$, there exists a unitary $w \in \C(\B)$
such that $q = w p w^*$ and $\| w - 1 \| < \delta$.
Then by Lemma \ref{lem:May520235PM},  let $W \in \Mul(\B)$ be a
unitary such that $\| W - 1 \| < \delta$ and $\pi(W) = w$.
Now
$$\alpha_{P_0}(WV) = \pi(WVP_0 V^* W^*) = \pi(W) \pi(V P_0 V^*) \pi(W)^* =
w p w^* = q.$$
Also, since $\| W - 1 \| <
\delta$ and  by (\ref{equ:May520233AM}),
$$\| WV - U \| \leq \| WV - V \| + \| V - U \|
\leq \| W - 1 \| + \| V - U \| < \delta + \| V - U \| < \epsilon.$$
So $WV \in B_{U(\Mul(\B))}(U, \epsilon)$.  So
$q \in \alpha_{P_0}(B_{U(\Mul(\B))}(U, \epsilon))$.
Since $q$ was arbitrary, $B_{\ProI(\C(\B))}(p, \delta') \subset
\alpha_{P_0} (B_{U(\Mul(\B))}(U, \epsilon))$.  Since $p$ was arbitrary, every point
in $\alpha_{P_0} (B_{U(\Mul(\B))}(U, \epsilon))$ is an interior point of
$\alpha_{P_0} (B_{U(\Mul(\B))}(U, \epsilon))$, i.e., $\alpha_{P_0} (B_{U(\Mul(\B))}(U, \epsilon))$ is
open.  Since $U, \epsilon$ were arbitrary, $\alpha_{P_0}$ is an open map.\\

\emph{$\alpha_{P_0}$ has local cross-sections (See \cite{Steenrod} 7.4 on
page 30):} By the Bartle--Graves Selection
theorem  (see \cite{Michael} Proposition 7.2; 
see also the corollary, in the
introduction, to \cite{Michael} Theorem 3.2"), 
let $s_1 : \C(\B) \rightarrow \Mul(\B)$ be a (norm-)
 continuous map such that
$\pi \circ s_1 = id_{\C(\B)}$.
By replacing $s_1$ by $s_1 + b$ for an appropriate $b \in \B$ if necessary,
we may assume that $s_1(1_{\C(\B)}) = 1_{\Mul(\B)}$.
Hence, since $s_1$ is (norm-) continuous, 
we can find a $\delta_1 > 0$ so that 
$s_1(B_{\C(\B)}(1, \delta_1)) \subseteq  B_{\Mul(\B)}(1, \frac{1}{2})$.
Hence, by  \cite{WeggeOlsen} Lemma 4.2.1, every operator in 
$s_1(B_{\C(\B)}(1, \delta_1))$ is an invertible element of $\Mul(\B)$.
Hence,
we have a (norm-) continuous map
$s_2 : B_{\C(\B)}(1, \delta_1) \rightarrow U(\Mul(\B))$ given by
$$s_2(z) = s_1(z) |s_1(z)|^{-1} \makebox{  for all  } 
z \in B_{\C(\B)}\left(1, \delta_1 \right).$$
Moreover, since $\pi \circ s_1 = id_{\C(\B)}$,
for every $u \in U(\C(\B))$ for which $\| u - 1 \| < \delta_1$,
we have that
\begin{equation}  \label{equ:May520238HalfPM}
\pi \circ s_2 (u) = u.
\end{equation}

Let $p \in \ProI(\C(\B))$ be arbitrary.  We want to construct a local cross-section of $\alpha_{P_0}$ which is
defined on an open neighborhood of $p$ in $\ProI(\C(\B))$.
By \cite{WeggeOlsen}  Proposition 5.2.6, there is a $\delta > 0$ and  a
(norm-) continuous map
$$Proj(B_{\C(\B)}(p, \delta)) \rightarrow U(\C(B)) : q \mapsto u_q$$
such that $u_p = 1$ and for every projection $q \in B_{\C(\B)}(p, \delta)$,
$$u_q p u_q^* = q.$$
Note that, as a consequence, $Proj(B_{\C(\B)}(p, \delta)) \subset \ProI(\C(\B))$.
Contracting $\delta$ if necessary, we may assume that for all
$q \in Proj(B_{\C(\B)}(p, \delta))$, $\| u_q - 1 \| < \delta_1$, and hence,
$u_q$ is in the domain of $s_2$.
Also, since we have already proven that $\alpha_{P_0}$ is surjective (see above), let $U \in U(\Mul(\B))$ be such that $\alpha_{P_0}(U) = \pi(U P_0 U^*)
= p$. 
Now define
$$s : Proj(B_{\C(\B)}(p, \delta)) \rightarrow U(\Mul(\B)) : q \mapsto
  s_2(u_q) U$$
which is clearly continuous.  Now let 
$r \in Proj(B_{\C(\B)}(p, \delta))$.
So $u_r \in \C(\B)$ is a unitary such that
$$r = u_r p u_r^* \makebox{  and  } \| u_r - 1 \| < \delta_1.$$
Then
\begin{eqnarray*}
\alpha_{P_0}(s(r)) & = & \alpha_{P_0}(s_2(u_r) U)\\
& = & \pi(s_2(u_r) U P_0 U^* s_2(u_r)^*)\\
& = & \pi(s_2(u_r)) \pi(U P_0 U^*) \pi(s_2(u_r)^*) \\
& = & u_r p u_r^* \makebox{  (by  (\ref{equ:May520238HalfPM}))}\\
& = & r \makebox{  as required.}
\end{eqnarray*}
Hence, $s$ is a local cross-section of $\alpha_{P_0}$ on the
open neighbourhood $Proj(B_{\C(\B)}(p, \delta)) =
B_{\ProI(\C(\B))}(p, \delta)$ of $p$.  Since $p$ was arbitrary, $s$ has
local cross-sections.\\

To summarize the above, $\alpha_{P_0}:   U(\Mul(\B)) \rightarrow 
\ProI(\C(\B))$ is a continuous open surjection
and has local cross-sections.  Hence, by \cite{Steenrod} Sections 7.3 and
7.4 (on pages 30-31), $\alpha_{P_0}$ is a principal fiber bundle. (In fact,
$\alpha_{P_0}$ is a principal $U_{P_0}(\Mul(\B))$-bundle.) 
But by \cite{Engelking} Chapter 5 Section 1 Corollary 1 (on page 211), 
since $\ProI(\C(\B))$ is metrizable (its 
topology being the restriction of the the norm topology from $\C(\B)$), it is a paracompact
topological space. Hence, by \cite{Dold} Theorem 4.8 (see also
\cite{Spanier} Chapter 2 Section 7 Theorem 14),
$\alpha_{P_0}$ has the homotopy lifting property with respect to 
all topological spaces, i.e., it is a Hurewicz fibration.

The expression for the fiber is straightforward.\\    
\end{proof}

To continue, let us recall some more notation.  Let  $X$ be a topological space. For a  continuous map
(or path) $\omega : [0,1] \rightarrow X$,
$\omega^{-1}$ is the path that is ``going in the opposite direction", i.e., 
$$\omega^{-1}(t) =_{df} \omega(1 - t) \makebox{  for all  } t \in [0,1].$$
Also, for $x_0 \in X$, $\Omega_{x_0}(X)$ or $\Omega_{x_0}X$ is the space of loops in $X$ that are based at $x_0$, i.e.,
$$\Omega_{x_0}(X) =_{df} \{ f \in C(S^1, X) : f(1) = x_0 \}  
=   \{ f \in C([0,1], X) : f(0) = f(1) = x_0 \}.$$
$\Omega_{x_0}(X)$ is given the \emph{compact open topology} (e.g., see
\cite{Engelking} Chapter 3, section 3, on page 121).  When $X$ is a metric
space (e.g., a normed space),  the compact open topology on $\Omega_{x_0}(X)$ is the same as the topology of uniform convergence over $S^1$ (see
\cite{Engelking} Theorem 8.2.3; see also \cite{HatcherTopologyBook} page 395
as well as the Appendix of \cite{HatcherTopologyBook} starting on page 529).    

To simplify notation, when the base point $x_0$ is clear, we simply write
$\Omega(X)$ (or $\Omega X$) in place of $\Omega_{x_0}(X)$ 
(or $\Omega_{x_0}X$).\\

We now define a map whose ``homotopy inverse" will be the second  
component of the Pre Spectral Flow Isomorphism.

\begin{df} \label{df:PereraIsomInverseRealSecondComp} 
Let $E$, $B$ be topological spaces with $E$ contractible, and suppose that
$\alpha : E \rightarrow B$ is a Hurewicz fibration.   
Let $b_0 \in B$ and $e_0 \in F =_{df} \alpha^{-1}(b_0)$. 

Since $E$ is contractible, fix a contraction of $E$ to $e_0$
(i.e., a homotopy between
$id_E : E \rightarrow E$ and the constant map $E \rightarrow \{ e_0 \}$). 

We define  
$$\kappa : F \rightarrow \Omega_{b_0} B$$
as follows:  
For each $e \in F$, the contraction gives a continuous path $\omega$ from 
$e$ to $e_0$ (so $\omega(0) = e$ and $\omega(1) = e_0$); and we define  
$$\kappa(e) =_{df} \alpha \circ \omega \in \Omega_{b_0} B.$$\\  
\end{df}

\begin{rmk} \label{rmk:Sept520231AM}      
In Definition \ref{df:PereraIsomInverseRealSecondComp}, the definition of
the map $\kappa(e)$ (for $e \in F$) 
is independent of the choice of the contraction of
$E$, up to homotopy in $\Omega_{b_0} B$.   This is because since $E$ is 
contractible, it is simply connected;  and in a simply connected topological
space, two continuous paths with the same endpoints are homotopic (with 
a homotopy that fixes the endpoints); and hence, any two such paths (from
different contractions) will 
yield homotopic loops in $\Omega_{b_0} B$.\\  
\end{rmk}

Our strategy of argument resembles more the interesting and important work of \cite{PereraPaper}
and \cite{PereraThesis} for the case of type $II_{\infty}$ factors  (see also \cite{AtiyahSingerSkew}) than for
the case of the general multiplier algebra $\Mul(\B)$, and as a consequence, 
our results are stronger, and  
our
computations are more direct, concrete and complete (see the paragraphs before
Definition \ref{df:PereraIsomRealSecondComp}). 
We also note that, for the type $II_{\infty}$ factor case,
\cite{PereraThesis} uses a ``homotopy inverse" of $\kappa$
as opposed to $\kappa$ (as in Definition 
\ref{df:PereraIsomInverseRealSecondComp}). As a result,  
the proof, in \cite{PereraPaper} and 
\cite{PereraThesis}, of (this paper's) Theorem 
\ref{thm:May520237PM} (for the $II_{\infty}$ factor case), 
 is considerably more difficult and actually
missing details which are not easy to fill in (see the paragraphs before
Definition \ref{df:PereraIsomRealSecondComp}).  In contrast, since in this paper, we are 
using $\kappa$ instead, our proof of Theorem \ref{thm:May520237PM} is 
very different and easier.  We are also able to provide
a complete proof.   
In addition, since again the argument in \cite{PereraPaper} and
\cite{PereraThesis} (for the
$II_{\infty}$ factor case) uses a ``homotopy
inverse" of $\kappa$ (as opposed to $\kappa$), it is more difficult to
prove that the induced map $\pi_0(\Omega B) \rightarrow \pi_0(F)$ is a
group homomorphism.  In fact, \cite{PereraPaper} and 
\cite{PereraThesis} \emph{do not} prove that
this map is a group homomorphism (and do not even show that
$\pi_0(\Omega B)$ and $\pi_0(F)$ are groups), 
and this is another significant gap in
their argument.  (See the two paragraphs after Theorem \ref{thm:PereraIsoIsIso}.)

Suppose that we have $E$, $B$, $F$, $\alpha : E \rightarrow B$, $e_0$
and $b_0$ as in Definition \ref{df:PereraIsomInverseRealSecondComp}.
In our context (the context of interest), 
we will additionally have that $F$ is a topological
group.  Also, by \cite{Switzer} Example 2.15 ii), 
$\Omega_{b_0} B$ is an H-group (see \cite{Switzer} 2.13).
Among other things, this implies that $\pi_0(\Omega_{b_0}B)$
and $\pi_0(F)$ are both groups (e.g., see \cite{Switzer} 
Propositon 2.14).  
We will not prove that the the map $\kappa$ (as in Definition 
\ref{df:PereraIsomInverseRealSecondComp}) is an H-homomorphism
(see \cite{Spanier} Chapter 1, Section 5, just
before Theorem 4 (on page 35);  
Spanier uses the terminology ``homomorphism"). 
However, we will prove that the map $\kappa$ will respect the H-group
structures of $\Omega_{b_0} B$ and $F$ so that the induced map
$\kappa_* : \pi_0(F) \rightarrow 
\pi_0(\Omega_{b_0} B)$ is a group homomorphism. 
This will be enough for our purposes of showing that $\kappa$
is a homotopy equivalence and, ultimately, finishing the definition
of the Pre Spectral Flow Isomorphism.\\

\begin{lem}  \label{lem:kappaGivesPi0Hom}       
Let $E, B$ be topological spaces with $E$ contractible, and suppose
that $\alpha : E \rightarrow B$ is a Hurewicz fibration.
Let $b_0 \in B$ and $e_0 \in F =_{df} \alpha^{-1}(b_0)$.

Suppose, in addition, that $E$ is a topological group, $F$ is
a topological subgroup of $E$, $e_0 = 1_E = 1_F$ (the unit of 
the group), and 
for all $e \in E$, $\alpha(e F) = \{ \alpha(e) \}$.  

Let $\kappa : F \rightarrow \Omega_{b_0} B$ be defined as 
in  Definition \ref{df:PereraIsomInverseRealSecondComp}.   

Then for all $f, f' \in F$,
$\kappa(ff')$ is homotopic to $\kappa(f)\kappa(f')$ in $\Omega_{b_0}B$.
As a consequence, the induced map
$$\kappa_* : \pi_0(F) \rightarrow \pi_0(\Omega_{b_0} B)$$
is a group homomorphism. 
\end{lem}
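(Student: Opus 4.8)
The plan is to show directly that $\kappa(ff')$ and $\kappa(f)\kappa(f')$ are homotopic as loops, and then note that passing to $\pi_0$ (where the H-group product on $\Omega_{b_0}B$ agrees, up to homotopy, with concatenation of loops) turns this into the statement that $\kappa_*$ is a group homomorphism. Fix the contraction $H: E \times [0,1] \to E$ with $H(\cdot,0) = \mathrm{id}_E$ and $H(\cdot,1) \equiv e_0$. For $e \in F$ write $\omega_e := H(e,\cdot)$, so $\omega_e$ is a path from $e$ to $e_0$, and $\kappa(e) = \alpha \circ \omega_e$. First I would exploit the group structure: since $E$ is a topological group and $F$ a subgroup with $e_0 = 1$, for $f,f' \in F$ the path $t \mapsto \omega_f(t)\,\omega_{f'}(t)$ runs from $ff'$ to $e_0$, so (using that $E$ is simply connected, via Remark \ref{rmk:Sept520231AM}, the value of $\kappa$ is independent of the choice of contracting path) we may compute $\kappa(ff')$ using \emph{this} path; that is, $\kappa(ff')$ is homotopic to $t \mapsto \alpha(\omega_f(t)\,\omega_{f'}(t))$.

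Next I would use the hypothesis $\alpha(eF) = \{\alpha(e)\}$ for all $e \in E$. Since $\omega_{f'}(t) \in F$ for every $t$ (as $F$ is a subgroup containing $e_0$ and the contraction of $E$ need not stay in $F$ — but here I must be careful; I actually want $\omega_{f'}(t) \in F$, which is \emph{not} automatic). The cleaner route: note $\alpha(\omega_f(t)\,\omega_{f'}(t))$ — I want to relate this to $\alpha(\omega_f(t))$ and $\alpha(\omega_{f'}(t))$. The key identity is that $\alpha(ef) = \alpha(e)$ for $e \in E$, $f \in F$. So if I knew $\omega_{f'}(t) \in F$ I could conclude $\alpha(\omega_f(t)\,\omega_{f'}(t)) = \alpha(\omega_f(t))$ for all $t$, which would give $\kappa(ff') \simeq \kappa(f)$ — wrong. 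So the right move is different: use the contraction only on the \emph{second} factor one step at a time. Concretely, consider the path $\gamma(t) := \omega_f(t)\,f'$ followed by $\omega_{e_0 f'}$; but $\omega_f(t) f'$ need not lie in $F$. The homotopy I actually want is: $\kappa(ff') \simeq (\alpha \circ \omega_f) * (\alpha \circ \omega_{f'})$ where $*$ is concatenation, realized by the path "first drag $f$ to $e_0$ while multiplying on the right by $f'$, landing at $e_0 f' = f'$, then drag $f'$ to $e_0$". The first leg is $t \mapsto \omega_f(t) f'$; applying $\alpha$ and using $\alpha(\omega_f(t) f') = \alpha(\omega_f(t))$ (valid since $f' \in F$ and the identity $\alpha(eF)=\{\alpha(e)\}$) gives exactly $\alpha \circ \omega_f = \kappa(f)$; the second leg is $\omega_{f'}$, giving $\kappa(f')$. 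Thus the concatenated path's $\alpha$-image is $\kappa(f) * \kappa(f')$, and since the concatenated path is homotopic rel endpoints (in the contractible, hence simply connected, space $E$) to $\omega_{ff'}$, applying $\alpha$ yields $\kappa(ff') \simeq \kappa(f) * \kappa(f')$ in $\Omega_{b_0}B$.

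Finally I would pass to $\pi_0$: the H-group structure on $\Omega_{b_0}B$ (\cite{Switzer} Example 2.15 ii)) induces on $\pi_0(\Omega_{b_0}B)$ the usual concatenation product, so $[\kappa(ff')] = [\kappa(f)][\kappa(f')]$, i.e., $\kappa_*$ is a group homomorphism; one should also remark that $\kappa_*$ is well-defined on $\pi_0$ because the construction of $\kappa$ is manifestly continuous in $f$ (or: homotopic elements of $F$ give homotopic loops, again by simple connectedness of $E$ applied fiberwise). The main obstacle is the bookkeeping in the middle paragraph: correctly choosing the contracting path for $ff'$ so that its $\alpha$-image \emph{literally} equals the concatenation $\kappa(f) * \kappa(f')$ — this hinges on the identity $\alpha(eF) = \{\alpha(e)\}$ being applied with the second factor genuinely in $F$, and on invoking simple connectedness of $E$ to replace $\omega_{ff'}$ by the concatenated path. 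Everything else (continuity of the constructions, the $\pi_0$-level translation of the H-product into loop concatenation) is routine.
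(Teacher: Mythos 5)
Your final argument is correct and is essentially the same as the paper's: you construct the concatenated path $t \mapsto \omega_f(t)f'$ followed by $\omega_{f'}$ (the paper's $\omega_1 = (\omega f') * \omega'$), apply $\alpha$ using the hypothesis $\alpha(eF)=\{\alpha(e)\}$ (valid since $f'\in F$ is fixed) to get $\kappa(f)*\kappa(f')$, and invoke simple connectedness of the contractible $E$ to identify this path rel endpoints with $\omega_{ff'}$. The initial detour through $t\mapsto\omega_f(t)\omega_{f'}(t)$ is a dead end, as you yourself recognize and correctly discard.
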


\begin{proof}
Since $E$ is contractible, fix a contraction from $E$ to $1_E$.
(See also Remark \ref{rmk:Sept520231AM}.) 

Let $f, f' \in F$ be arbitrary. 
Let $\omega_0 : [0,1] \rightarrow E$ be the continuous path
with $\omega_0(0) = ff'$ and $\omega_0(1) = 1_E$ given by the 
contraction as in Definition \ref{df:PereraIsomInverseRealSecondComp}.
Let $\omega, \omega' : [0,1] \rightarrow E$ be the continuous paths
with $\omega(0) = f$, $\omega'(0) = f'$ and $\omega(1) = \omega'(1) 
= 1_E$ that are also given by the contraction as in Definition
\ref{df:PereraIsomInverseRealSecondComp}.
(So by Definition \ref{df:PereraIsomInverseRealSecondComp},
$\kappa(ff') =_{df} \alpha \circ \omega_0$, $\kappa(f) =_{df}
 \alpha \circ 
\omega$ and $\kappa(f') =_{df} \alpha \circ \omega'$.)  
     
Now in $\Omega_{b_0} B$, 
$\kappa(f) \kappa(f') = \kappa(f) * \kappa(f')$ 
where $*$ is concatenation of paths (see Definition \ref{df:DfGather} (3)).
So 
\begin{equation} \label{equ:Sept520232AM}
\kappa(f) \kappa(f') = \alpha(\omega)*\alpha(\omega').  
\end{equation}  

Let $\omega_1 : [0,1] \rightarrow E$ be the continuous path that 
is given by 
\[
\omega_1(t) =
\begin{cases}
\omega(2t) f' & t\in [0,\frac{1}{2}]\\
 \omega'(2t - 1) & t \in [\frac{1}{2}, 1],  
\end{cases} 
\]
i.e., $\omega_1$ is the concatenation $(\omega f') * \omega'$.
Hence, 
\[
\alpha \circ \omega_1 = (\alpha(\omega f))* (\alpha( \omega')).
\]

Hence, 
by (\ref{equ:Sept520232AM}) and since we have assumed that
$\alpha(eF) = \{ \alpha(e) \}$ for all $e \in E$, 
we must have that  
\begin{equation} \label{equ:Sept520233AM}
\alpha \circ \omega_1 = \kappa(f)\kappa(f')
\end{equation}

Now $\omega_1$ is a continuous path in $E$ connecting $ff'$ with
$1_E$, i.e., it has the same endpoints as $\omega_0$.  
But since $E$ is contractible, $E$ is simply connected.  So any
two continuous paths in $E$, with the same endpoints, must be homotopic
(with a homotopy fixing the two endpoints).  So 
$\omega_0$ and $\omega_1$ 
are homotopic in $E$ (with a homotopy fixing the endpoints).
So $\kappa(ff') = \alpha \circ \omega_0$ is homotopic to 
$\alpha\circ \omega_1 = \kappa(f) \kappa (f')$ in $\Omega_{b_0}B$.
(See (\ref{equ:Sept520233AM}) and the definition of $\omega_0$.)\\
\end{proof}

\begin{thm} \label{thm:May520237PM}
Let $E =_{df} U(\Mul(\B))$, $B =_{df} \ProI (\C(\B))$, $b_0 =_{df}
\pi(P_0) \in B$, $\alpha =_{df} \alpha_{P_0} : E \rightarrow B$ 
be the Hurewicz fibration
from Definition \ref{df:AFibration} (see also Theorem 
\ref{thm:AFibration}),   
and $e_0 =_{df} 1_{\Mul(\B)} \in F =_{df} \alpha^{-1}(\pi(P_0))
= U_{P_0}(\Mul(\B))$.  

Since $E = U(\Mul(\B))$ is (norm-) contractible (e.g., see
\cite{WeggeOlsen} Theorem 16.8), let
$$\kappa : F \rightarrow \Omega_{b_0} B$$
be defined as in Definition \ref{df:PereraIsomInverseRealSecondComp}.

Then $\kappa$ is a homotopy equivalence between $F$ and 
$\Omega_{b_0} B$.

Moreover, $F$ and $\Omega_{b_0} B$ have obvious H-group structures (see
\cite{Switzer} 2.13)  and 
the induced map
$$\kappa_* : \pi_0(F) \rightarrow \pi_0(\Omega_{b_0} B)$$
is a group isomorphism.  
\end{thm}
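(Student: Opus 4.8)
The plan is to deduce everything from the long exact homotopy sequence of the Hurewicz fibration $\alpha_{P_0}: E \to B$ together with contractibility of $E = U(\Mul(\B))$. First I would recall that for a Hurewicz fibration $\alpha: E \to B$ with fiber $F = \alpha^{-1}(b_0)$, there is a long exact sequence of homotopy groups $\cdots \to \pi_n(F) \to \pi_n(E) \to \pi_n(B) \to \pi_{n-1}(F) \to \cdots$ (see \cite{HatcherTopologyBook} Theorem 4.41 or \cite{Spanier}). Since $E = U(\Mul(\B))$ is norm-contractible (\cite{WeggeOlsen} Theorem 16.8), $\pi_n(E) = 0$ for all $n$, so the connecting maps $\partial: \pi_n(B) \to \pi_{n-1}(F)$ are isomorphisms for all $n \geq 1$. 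The standard fact is that this connecting isomorphism $\pi_n(B,b_0) \xrightarrow{\sim} \pi_{n-1}(F,e_0)$, for $n=1$, is precisely (the inverse of) the map induced on $\pi_0$ by $\kappa$: unwinding the definition of $\partial$ via path-lifting in the fibration, and comparing with Definition \ref{df:PereraIsomInverseRealSecondComp}, one sees that $\kappa$ sends $f \in F$ to the loop obtained by projecting down a contraction path from $f$ to $e_0$, and this is (up to the usual sign/orientation) an inverse to $\partial$. Hence $\kappa_*: \pi_0(F) \to \pi_0(\Omega_{b_0}B) = \pi_1(B,b_0)$ is a bijection.

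To upgrade the bijection on $\pi_0$ to a genuine homotopy equivalence, I would invoke Lemma \ref{lem:ProIAndU_PCWComplexes}: the fiber $F = U_{P_0}(\Mul(\B))$ and the base $B = \ProI(\C(\B))$ are both homotopy equivalent to CW complexes, hence so is the loop space $\Omega_{b_0} B$ (loop spaces of CW complexes have the homotopy type of CW complexes; alternatively $\Omega_{b_0}B$ is an open-subset-related space already handled in the excerpt). Since $E$ is contractible, the fibration sequence gives $\pi_n(F) \cong \pi_n(\Omega_{b_0} B)$ for \emph{all} $n \geq 0$ — for $n \geq 1$ via the connecting isomorphism $\pi_{n+1}(B) \cong \pi_n(F)$ and the identification $\pi_{n+1}(B) = \pi_n(\Omega_{b_0}B)$, and for $n = 0$ by the previous paragraph. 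One must check these isomorphisms are all realized by $\kappa_*$ (compatibility of the connecting map with looping), which is the content of \cite{Switzer} or the standard comparison of the Puppe sequence with $\kappa$. Then, since $F$ and $\Omega_{b_0}B$ both have the homotopy type of CW complexes, Whitehead's theorem (\cite{HatcherTopologyBook} Theorem 4.5) promotes the weak homotopy equivalence $\kappa$ to a homotopy equivalence. A small caveat: Whitehead's theorem requires $\kappa$ itself to be a weak equivalence on each path component and a bijection on $\pi_0$; the $\pi_0$-statement is handled above and the higher homotopy groups are all identified with the corresponding ones of the fiber through $\kappa$, using that $B$ is path-connected (Lemma \ref{lem:Aug520239AM}) so the basepoint issues do not arise.

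For the final sentence — that $F$ and $\Omega_{b_0}B$ carry H-group structures and $\kappa_*$ is a \emph{group} isomorphism — I would note that $F = U_{P_0}(\Mul(\B))$ is a topological group (Remark \ref{rmk:Aug520238PM}), hence an H-group, and $\Omega_{b_0}B$ is an H-group by \cite{Switzer} Example 2.15(ii). The bijectivity of $\kappa_*$ was established above; that it is a homomorphism is exactly Lemma \ref{lem:kappaGivesPi0Hom}, whose hypotheses I must verify for the present $E, B, F$: $E = U(\Mul(\B))$ is a topological group with unit $1_{\Mul(\B)} = e_0$, $F = U_{P_0}(\Mul(\B))$ is a topological subgroup, and for $U \in E$ we have $\alpha_{P_0}(UF) = \alpha_{P_0}(U)$ since for $V \in F = \alpha_{P_0}^{-1}(\pi(P_0))$, $\pi(UVP_0V^*U^*) = \pi(U)\pi(VP_0V^*)\pi(U^*) = \pi(U)\pi(P_0)\pi(U^*) = \alpha_{P_0}(U)$ — using that $V P_0 V^* - P_0 \in \B$ because $V \in U_{P_0}(\Mul(\B))$ and $V$ is a unitary (so $\pi(V P_0 V^*) = \pi(P_0)$). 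Thus Lemma \ref{lem:kappaGivesPi0Hom} applies and $\kappa_*$ is a group homomorphism, hence a group isomorphism.

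The main obstacle I anticipate is the careful identification of $\kappa_*$ with the connecting homomorphism of the fibration (so that one may transport \emph{all} the higher homotopy group isomorphisms, not just $\pi_0$, through $\kappa$), together with cleanly justifying that both $F$ and $\Omega_{b_0}B$ have CW homotopy type so that Whitehead's theorem is applicable; the group-homomorphism assertion, by contrast, is already packaged in Lemma \ref{lem:kappaGivesPi0Hom} and is routine to check.
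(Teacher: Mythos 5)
Your overall strategy mirrors the paper's (long exact sequence of the fibration, contractibility of $E$, Whitehead's theorem via CW homotopy type, and Lemma \ref{lem:kappaGivesPi0Hom} for the group-homomorphism claim), and your verification of the hypotheses of Lemma \ref{lem:kappaGivesPi0Hom} in the last paragraph is exactly right. However, the two steps you describe as ``standard'' or to be handled by ``unwinding'' and ``compatibility of the connecting map with looping'' are precisely the steps the paper treats as nontrivial — indeed, the entire Appendix Subsection \ref{subsect:KappaIsHomotopyEquivalence} exists specifically because these points are not adequately handled in the standard references (the paper explicitly notes a mistake in \cite{HatcherTopologyBook} Proposition 4.66 concerning the $\pi_0$ level).

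Concretely, the gaps are: (i) you assert without proof that, for $n \geq 1$, the isomorphism $\pi_n(F) \cong \pi_{n+1}(B) \cong \pi_n(\Omega_{b_0}B)$ obtained from the connecting map and the loop-space identification is induced by $\kappa$; the paper instead proves this cleanly (Lemma \ref{lem:HatcherPi_nIsom}) by extending $\kappa$ to a morphism of Hurewicz fibrations from $F \to E \xrightarrow{\alpha} B$ to $\Omega_{b_0}B \to P_{b_0}B \xrightarrow{ev_0} B$, so that $\kappa_*$ appears directly in a commuting ladder of long exact sequences and the result follows from contractibility of $E$ and $P_{b_0}B$; and (ii) you assert that $\kappa_*$ on $\pi_0$ is ``(the inverse of)'' the connecting map $\partial : \pi_1(B) \to \pi_0(F)$ — but the long exact sequence at the $\pi_0$ level is only an exact sequence of pointed sets, and the five-lemma style reasoning from (i) does not carry over; the paper instead constructs an explicit $\pi_0$-inverse $\gamma_\lambda$ of $\kappa$ from a path-lifting function and proves $(\gamma_\lambda)_* \circ \kappa_* = \operatorname{id}$ and $\kappa_* \circ (\gamma_\lambda)_* = \operatorname{id}$ directly (Lemma \ref{lem:kappaPi0Bijection}), using the uniqueness-up-to-fiber-path of lifts (Proposition \ref{prop:UniqueLifting}). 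Your ``small caveat'' about basepoints is also slightly misplaced: the relevant concern is that $F$ and $\Omega_{b_0}B$ need not be path-connected, not whether $B$ is; this is handled because the isomorphism on $\pi_n$ is established for \emph{every} basepoint $f \in F$ in Lemma \ref{lem:HatcherPi_nIsom}.
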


\begin{proof}
By Lemmas \ref{lem:HatcherPi_nIsom} and \ref{lem:kappaPi0Bijection},
for all $n \geq 1$, for all $f \in F$, the induced map
$$\kappa_* : \pi_n(F, f) \rightarrow \pi_n(\Omega_{b_0} B , \kappa(f))$$
is a group isomorphism, and
the induced map
$$\kappa_* : \pi_0(F) \rightarrow \pi_0(\Omega_{b_0} B)$$
is a bijection.
But by Lemma \ref{lem:ProIAndU_PCWComplexes} and   
\cite{MilnorCWHomotopyType} Theorem 3,
$F$ and $\Omega_{b_0} B$ are homotopy equivalent to CW complexes.
Hence, by Whitehead's Theorem (\cite{Spanier} Chapter 7, Section 6, Corollary 
24; see also \cite{HatcherTopologyBook} Theorem 4.5 for a special case),
the map
$$\kappa : F \rightarrow \Omega_{b_0} B$$
is a homotopy equivalence.

Now by \cite{Switzer} Examples 2.15, both $F$ and $\Omega_{b_0}B$ have 
obvious H-group structures (in fact, $F$ is already a topological group).
In fact, our $E$, $B$, $\alpha$, $b_0$, $e_0$ and $F$ satisfy the hypotheses
of Lemma \ref{lem:kappaGivesPi0Hom}.  Hence, by Lemma 
\ref{lem:kappaGivesPi0Hom} and since $\kappa$ is a homotopy equivalence
(also can use Lemma \ref{lem:kappaPi0Bijection}),
$$\kappa_* : \pi_0(F) \rightarrow \pi_0(\Omega_{b_0} B)$$
is a group isomorphism.\\   
\end{proof}

As mentioned in the paragraph after Remark \ref{rmk:Sept520231AM},
our construction of the Pre Spectral Flow Isomorphism resembles more
the interesting and important construction in the type $II_{\infty}$ factor case in \cite{PereraPaper}
and \cite{PereraThesis} than the construction for a multiplier algebra 
$\Mul(\B)$ case (with $\B$ having an approximate unit consisting of projections)
in \cite{PereraThesis}.   We note that for the $\Mul(\B)$ case,
\cite{PereraThesis} does not prove that $\kappa$ is a homotopy equivalence
(as in our Theorem \ref{thm:May520237PM})) -- instead, \cite{PereraThesis}
only claims to prove that $\kappa$ induces group isomorphisms
between all homotopy groups $\kappa_* : \pi_k(\Omega B) \cong \pi_k(F)$ (see
\cite{PereraThesis} Theorem 3.3.8 (c)).  
%Moreover, the argument of
%\cite{PereraThesis} crucially 
%appeals to results from 
%Shuang Zhang.  Thus, our results (compared to the $\Mul(\B)$ case in
%\cite{PereraThesis}) are stronger, more concrete, 
%more elementary and more complete.

Our construction more resembles the interesting and important construction of \cite{PereraPaper}
and \cite{PereraThesis} for the case of $II_{\infty}$ factors. 
For $II_{\infty}$ factors, 
because \cite{PereraThesis} uses a ``homotopy inverse" of $\kappa$ (instead of
using $\kappa$), the proof of (our) Theorem \ref{thm:May520237PM} in 
\cite{PereraThesis} (for $II_{\infty}$ factors) is considerably more difficult and has significant 
missing details in it  (see \cite{PereraThesis} page 45, after the proof
of Theorem 2.4.1).    
One example, is that the argument in
\cite{PereraThesis} page 45 (after the proof of Theorem 2.4.1)
 appeals to an exercise in 
\cite{Switzer} (\cite{Switzer} Exercise 4.22) which, at least to an analyst,
is not a straightforward exercise.  (It was also not immediate 
to a homotopy theorist that we consulted.) Moreover, it was also
not immediate to us that  
a solution to \cite{Switzer} Exercise 4.22 would automatically 
fill in the step in the argument of \cite{PereraThesis}.  
In contrast, our paper's proof of Theorem \ref{thm:May520237PM},
which includes the argument of Subsection \ref{subsect:KappaIsHomotopyEquivalence}, is a completely different proof which is elementary, much easier, and
logically complete.\\

\begin{df} \label{df:PereraIsomRealSecondComp}
Suppose that we have the same assumptions as that of
Theorem \ref{thm:May520237PM}.

We define $\gamma : \Omega_{b_0} B \rightarrow F$ to be a homotopy
inverse of $\kappa$.
I.e., $\gamma$ is a continuous map for which   
$$\gamma \circ \kappa \simeq id_F  \makebox{  and  }
\kappa \circ \gamma \simeq id_{\Omega_{b_0} B}$$
where $\simeq$ is the binary relation for two maps being
homotopic.\\

Note that by Theorem \ref{thm:May520237PM},
$$\gamma_* : \pi_0(\Omega_{b_0} B) \rightarrow \pi_0(F)$$
is a group isomorphism.\\       
\end{df}

\subsection{Completing the Pre Spectral Flow Isomorphism}

In this subsection, we will define the last component of the Pre Spectral
Flow Isomorphism  
and thus complete its definition.\\ 

\textbf{Recall also
 the standing assumption ($\Lambda$), from the beginning of
(this) Section 4, that $\B$ is a separable stable 
C*-algebra and $P_0 \in \Mul(\B)$ is a projection for which
$P_0 \sim 1_{\Mul(\B)} \sim 1_{\Mul(\B)} - P_0$.}\\  

The key result of this section is Theorem \ref{thm:LastComponentPerera}, 
where we prove that the map 
$$GL_{P_0}(\Mul(\B)) \rightarrow GL(\C(P_0\B P_0)): 
X \mapsto P_0 X P_0$$ 
is a homotopy equivalence (which induces a group
homomorphism in $\pi_0$).
This result gives the last component of the
Pre Spectral Flow Isomorphism
$PSf$ and allows us to complete the construction of $PSf$.\\ 

We will use the following observation of Carey and Phillips (which we 
here state for the convenience of the reader):

\begin{prop}  \label{prop:CareyPhillipsFibration}
Let $V, W$ be Banach spaces and $\phi : V \rightarrow W$ a continuous linear
surjection.
Let $U \subseteq V$ be an open subset such that $O =_{df} \phi(U) \subseteq W$
is an open subset.  

Then the restricted map $\phi : U \rightarrow O$ is a Hurewicz fibration. 
\end{prop}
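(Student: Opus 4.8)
The plan is to combine the Open Mapping Theorem, the Bartle--Graves selection theorem, and a partition-of-unity argument for the homotopy lifting property. First I would record the two elementary inputs. Since $\phi:V\to W$ is a continuous linear surjection between Banach spaces, the Open Mapping Theorem makes $\phi$ an open map (so that in fact $O=\phi(U)$ is automatically open), and $\phi:U\to O$ is a continuous open surjection. Next, by the Bartle--Graves selection theorem (\cite{Michael}, as used elsewhere in this paper) there is a continuous map $s:W\to V$ with $\phi\circ s=\mathrm{id}_W$, which may be taken positively homogeneous with $\|s(w)\|\le M\|w\|$ for some constant $M$. Then $\Theta:W\times N\to V$, $\Theta(w,n)=_{df}s(w)+n$ (where $N=_{df}\ker\phi$), is a homeomorphism with continuous inverse $v\mapsto(\phi(v),\,v-s(\phi(v)))$, and $\phi\circ\Theta=\mathrm{pr}_W$. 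Transporting $U$ through $\Theta$ and using $\Theta^{-1}(U)=_{df}U'$, $\mathrm{pr}_W(U')=\phi(U)=O$, the statement is reduced to: if $U'\subseteq W\times N$ is open and $O=\mathrm{pr}_W(U')$, then $\mathrm{pr}_W:U'\to O$ is a Hurewicz fibration.

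Next I would verify the homotopy lifting property of $\mathrm{pr}_W:U'\to O$ directly, with respect to an arbitrary space $Y$. So suppose $H:Y\times[0,1]\to O$ is continuous and $\tilde h_0:Y\to U'$ satisfies $\mathrm{pr}_W\circ\tilde h_0=H(\cdot,0)$; write $\tilde h_0(y)=(H(y,0),g_0(y))$ with $g_0:Y\to N$ continuous. The only thing to produce is a continuous $g:Y\times[0,1]\to N$ with $g(\cdot,0)=g_0$ and $(H(y,t),g(y,t))\in U'$ for all $(y,t)$; then $\tilde H(y,t)=_{df}(H(y,t),g(y,t))$ is the desired lift. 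Equivalently, the pullback open set $\widehat U=_{df}\{(y,t,n):(H(y,t),n)\in U'\}\subseteq(Y\times[0,1])\times N$ contains the graph $\{(y,0,g_0(y)):y\in Y\}$ and has all fibres nonempty (since $H$ takes values in $O=\mathrm{pr}_W(U')$), and we must find a continuous section of it over $Y\times[0,1]$ restricting to $g_0$ on $Y\times\{0\}$. Using openness of $U'$ together with the distance-to-the-complement function on $[0,1]\times N$ (which is metrizable, the Banach structure of $W$ and $N$ entering only here), near each point of the graph one has an explicit local model in which the $N$-coordinate is kept inside $U'$ by a constant or affine formula; subdividing $[0,1]$ finitely, running an induction over the subdivision (matching choices at subdivision points by the same construction), and patching over $Y$ with a partition of unity produces $g$, with $g(\cdot,0)=g_0$ because the local model used near $t=0$ starts from $g_0$. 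Transporting back through $\Theta$ gives the homotopy lifting property for $\phi:U\to O$.

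The main obstacle is precisely this last construction: keeping the lifted path inside the open set $U'$ while still lifting the prescribed homotopy, continuously in the parameter $y$. (Note that the naive formula $\tilde h_0(y)+s(H(y,t))-s(H(y,0))$ lifts $H$ and starts correctly but need not land in $U'$, which is why the damping/partition-of-unity bookkeeping is unavoidable.) An alternative to the direct argument is to carry out the local model only over small balls of the base $O$ and then invoke the Hurewicz uniformization theorem (as in the proof of Theorem \ref{thm:AFibration}, via \cite{Dold}; see also \cite{Spanier}), which applies because $O$, being an open subset of a Banach space, is metrizable and hence paracompact, so that every open cover of $O$ is numerable. Either way, once the homotopy lifting property over all spaces is established, $\phi:U\to O$ is by definition a Hurewicz fibration, completing the proof.
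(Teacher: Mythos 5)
The paper's own proof is simply a citation to \cite{CareyPhillipsClifford} Proposition A.14, so your attempt at a direct argument takes a genuinely different route. Your Bartle--Graves reduction to the claim that $\mathrm{pr}_W : U' \to \mathrm{pr}_W(U')$ is a Hurewicz fibration for every open subset $U'$ of a product $W \times N$ of Banach spaces is clean and correct.

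However, that reduced claim --- and hence the proposition exactly as it is printed --- is false, so the step you flag as ``the main obstacle'' cannot be completed by any bookkeeping. Take $W=N=\mathbb{R}$ and $U'=\{(w,n)\in\mathbb{R}^2 : 0<w^2+n^2<4\}$ (an open annulus). Then $O=\mathrm{pr}_W(U')=(-2,2)$ is open, but the fibre of $\mathrm{pr}_W : U' \to O$ over $0$ is $(-2,0)\cup(0,2)$, which is disconnected, while the fibre over $1$ is the interval $(-\sqrt{3},\sqrt{3})$, which is connected. Since the fibres of a Hurewicz fibration over a path-connected base are all of the same homotopy type (\cite{Spanier} Chapter 2, Section 8, Theorem 12; see also \cite{HatcherTopologyBook} Proposition 4.61), $\mathrm{pr}_W : U' \to O$ is not a Hurewicz fibration, and pushing this example backward through your homeomorphism $\Theta$ produces $V,W,\phi,U$ satisfying every stated hypothesis with $\phi : U \to O$ failing to be a fibration. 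Some additional hypothesis (for instance, that each fibre $U \cap \phi^{-1}(y)$ is connected, or convex) must be in force for the conclusion to hold; note that in the paper's single application (Theorem \ref{thm:LastComponentPerera}) the fibres are in fact shown separately to be contractible. Independently of the counterexample, the patching step in your second paragraph is not valid as written: a partition of unity over an arbitrary $Y$ is not available (arbitrary topological spaces need not be paracompact), and the Dold/Hurewicz uniformization alternative would require $\mathrm{pr}_W$ to already be a fibration over each member of an open cover of $O$, which is the same difficulty over a smaller base and is not addressed.
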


\begin{proof}
This is \cite{CareyPhillipsClifford}  Proposition A.14 (in the appendix of
the paper).\\   
\end{proof}

The next result generalizes \cite{CareyPhillipsClifford} Theorem 3.10 and 
Corollary 3.11, which is for the type $II_{\infty}$ factor case.  
(See also \cite{PereraPaper}  Theorem 2.2.1.)

\begin{thm}  \label{thm:LastComponentPerera} 
The map 
$$\Phi : GL_{P_0}(\Mul(\B)) \rightarrow GL(\C(P_0 \B P_0))  : X \mapsto
\pi(P_0 X P_0)$$
is a homotopy equivalence.  
\end{thm}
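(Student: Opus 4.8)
The plan is to factor the map $\Phi$ through a sequence of homotopy equivalences, exhibiting it as a composition of a fibration with contractible fiber, a deformation retraction, and an isomorphism induced by a Bartle--Graves-type section, in the style of the earlier arguments in this section (compare Lemma \ref{lem:May320232AM} and Theorem \ref{thm:AFibration}). First I would observe that $\Phi$ restricts to a map on the C*-algebra $\Mul(\B)_{P_0}$ (Remark \ref{rmk:Aug520238PM}): since $X \in GL_{P_0}(\Mul(\B))$ satisfies $XP_0 - P_0 X \in \B$, we have $P_0 X P_0 \in \Mul(\B)_{P_0}$ and, modulo $\B$, $\pi(X)$ commutes with $\pi(P_0)$, so $\pi(P_0 X P_0)$ is the ``corner'' of an invertible commuting with a projection, hence invertible in $\pi(P_0)\C(\B)\pi(P_0) \cong \C(P_0\B P_0)$. (Here one uses that $P_0 \B P_0$ is again separable and stable, as $P_0 \sim 1_{\Mul(\B)}$, so $\C(P_0\B P_0)$ makes sense and $\pi(P_0)\C(\B)\pi(P_0) \cong \C(P_0\B P_0)$ by standard multiplier-algebra corner identifications.) So the map is well-defined.

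Next I would break $\Phi$ as follows. Consider the intermediate group $G =_{df} \{ X \in GL_{P_0}(\Mul(\B)) : XP_0 = P_0 X \}$ of genuine commutants, which splits as $GL(P_0 \Mul(\B) P_0) \times GL((1-P_0)\Mul(\B)(1-P_0))$, and the map $X \mapsto P_0 X P_0$ from there. The second factor $GL((1-P_0)\Mul(\B)(1-P_0))$ is contractible (Lemma \ref{lem:ProIAndU_PCWComplexes}'s ingredients: invertibles in a unital C*-algebra deformation retract onto unitaries, and $U(\Mul((1-P_0)\B(1-P_0)))$ is contractible by \cite{WeggeOlsen} Theorem 16.8 since $(1-P_0)\B(1-P_0)$ is stable), so projection onto the first factor is a homotopy equivalence. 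Then I would show that the inclusion $G \hookrightarrow GL_{P_0}(\Mul(\B))$ is a homotopy equivalence: this is where I would invoke Proposition \ref{prop:CareyPhillipsFibration} (Carey--Phillips). The map $X \mapsto (XP_0 - P_0 X)$ from $\Mul(\B)_{P_0}$ to $\B$ is continuous and linear; I would realize $GL_{P_0}(\Mul(\B)) \to (\text{something})$ as a Hurewicz fibration whose fiber over the basepoint is (homotopy equivalent to) $G$, the point being that ``straightening out'' the off-diagonal part $XP_0 - P_0X \in \B$ can be done continuously because $\B$ is the kernel of the quotient and invertibles form an open set. Concretely: the continuous linear surjection $\phi:\Mul(\B)_{P_0}\to \B$, $Z \mapsto ZP_0-P_0Z$, restricted to the open set $GL_{P_0}(\Mul(\B))$ maps onto an open subset of $\B$, so by Proposition \ref{prop:CareyPhillipsFibration} it is a Hurewicz fibration; the fiber over $0$ is exactly $G$, and the base is convex (hence contractible), forcing the inclusion of the fiber to be a homotopy equivalence. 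Finally, to pass from $GL(P_0\Mul(\B)P_0)$ to $GL(\C(P_0\B P_0))$, I would reuse the argument of Lemma \ref{lem:May320232AM} verbatim: the quotient $\pi : P_0\Mul(\B)P_0 \to \C(P_0\B P_0)$ admits a continuous (nonlinear) section by Bartle--Graves, giving a section $s$ of $GL(P_0\Mul(\B)P_0) \to GL(\C(P_0\B P_0))$ with $\pi \circ s = \mathrm{id}$, and the straight-line homotopy $F(X,t) = (1-t)X + t\, s\pi(X)$ lands in $GL(P_0\Mul(\B)P_0)$ (since $\pi(F(X,t)) = \pi(X)$ is invertible) and connects the identity to $s\pi$; so $\pi$ is a homotopy equivalence. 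Composing the four homotopy equivalences recovers $\Phi$ up to homotopy, proving the theorem.

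The main obstacle I expect is the middle step: correctly identifying the relevant fibration so that its fiber is $G$ (or a space homotopy equivalent to it) and its base is contractible, and checking that the image of $GL_{P_0}(\Mul(\B))$ under the off-diagonal map is genuinely an open subset of $\B$ (as required by Proposition \ref{prop:CareyPhillipsFibration}). One delicate point is that the naive off-diagonal map need not be surjective onto $\B$ unless one enlarges the target or works with an appropriate open subset; an alternative, which may be cleaner, is to mimic \cite{CareyPhillipsClifford} Theorem 3.10 directly by using the fibration $GL_{P_0}(\Mul(\B)) \to \{$invertible ``$2\times2$ upper-triangular-modulo-$\B$'' matrices$\}$ and peeling off contractible pieces one corner at a time, exactly as in the type $II_\infty$ case; the algebra is identical once one knows $P_0\B P_0$, $(1-P_0)\B(1-P_0)$ are stable and their multiplier unitary groups are contractible. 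A secondary, more bookkeeping obstacle is keeping track of basepoints so that the induced maps on $\pi_0$ are group homomorphisms — but since every space here is either a topological group or of the form $GL(\cdot)$, which is an H-group, and all the constituent maps are (up to homotopy) H-maps, this follows as in Theorem \ref{thm:May520237PM}.
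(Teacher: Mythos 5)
Your route through the commutant $G = \{X \in GL_{P_0}(\Mul(\B)) : XP_0 = P_0 X\}$ cannot work, for a reason that is easy to overlook: $G$ is contractible. Indeed $G \cong GL(P_0\Mul(\B) P_0) \times GL((1-P_0)\Mul(\B)(1-P_0))$, and \emph{both} factors are contractible, not just the second --- $P_0 \Mul(\B) P_0 \cong \Mul(P_0 \B P_0)$ with $P_0 \B P_0 \cong \B$ stable and $\sigma$-unital, so \cite{WeggeOlsen} Theorem 16.8 and Lemma 4.2.3 give contractibility of $GL(P_0\Mul(\B)P_0)$ exactly as for $GL((1-P_0)\Mul(\B)(1-P_0))$. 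So if your chain of claimed homotopy equivalences through $G$ were valid it would force $GL(\C(P_0\B P_0))$ to be contractible; but $\pi_0(GL(\C(P_0\B P_0))) \cong K_1(\C(P_0\B P_0)) \cong K_0(\B)$ by Lemma \ref{lem:CoronaK1InjAndSurj}, which is nontrivial whenever $K_0(\B) \neq 0$ (e.g.\ $\B = \K$). In fact two of your steps fail and the errors happen to cancel. The last step is not a homotopy equivalence: the map $GL(P_0 \Mul(\B) P_0) \to GL(\C(P_0\B P_0))$ is not even surjective (its image is exactly the identity component, by \cite{WeggeOlsen} Corollary 4.3.3), and the straight-line argument of Lemma \ref{lem:May320232AM} does not transfer because there the domain $\Ff_{SA,\infty}$ is defined purely by a condition on the quotient, so $\pi(F(X,t)) = \pi(X)$ keeps $F(X,t)$ in the domain automatically --- whereas here ``$\pi(F(X,t))$ invertible'' only says $F(X,t)$ is \emph{Fredholm}, and the homotopy can and does leave the set of genuine invertibles. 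The middle step fails too: taking $\B = \K$ and writing $\Mul(\B) = \M_2(B(P_0 l_2))$ with $S$ the unilateral shift, the unitary $U = \bigl(\begin{smallmatrix} S & 1-SS^* \\ 0 & S^* \end{smallmatrix}\bigr)$ lies in $GL_{P_0}(B(l_2))$ (the off-diagonal entry $1-SS^*$ is rank one) and $\pi(P_0 U P_0) = \pi(S)$ has nonzero class in $K_1$ of the Calkin algebra, so $U$ cannot be connected to $G$ by a path in $GL_{P_0}(\Mul(\B))$; so the inclusion $G \hookrightarrow GL_{P_0}(\Mul(\B))$ is not a weak equivalence.

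The paper's proof instead applies Proposition \ref{prop:CareyPhillipsFibration} to $\Phi$ itself, producing a Hurewicz fibration $GL_{P_0}(\Mul(\B)) \to GL(\C(P_0 \B P_0))$ whose \emph{base} is the non-contractible target. All the work is then concentrated in Claim 2: showing each fiber $\Phi^{-1}(\Phi(X))$ is contractible. That fiber is very different from your $G$ --- it consists of all invertible elements of the form $\bigl(\begin{smallmatrix} X_0 + a & b \\ c & Y \end{smallmatrix}\bigr)$ with $a,b,c \in \B$ and $Y \in \Mul(\B)$ arbitrary, not commutants of $P_0$ --- and proving its contractibility requires $K_1$-injectivity of $C(K)\otimes\C(\B)$ (Lemma \ref{lem:CoronaK1InjAndSurj}), vanishing of $K_1(C(K)\otimes\Mul(\B))$, and a careful matrix-unit argument using the stability of $\B$. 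That step is where the homotopy type of the base gets transferred to $GL_{P_0}(\Mul(\B))$, and there is no shortcut through the commutant precisely because the commutant carries no $K$-theoretic information.
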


\begin{proof}
Let   
$$\widehat{V} =_{df} P_0 \Mul(\B) P_0 + (1 - P_0) \Mul(\B) (1 - P_0) + \B.$$
Using the Carey--Phillips matrix notation
(\cite{CareyPhillipsClifford}), 
$$\widehat{V} = 
\left\{ \left[\begin{array}{cc}  X' & a \\ b & Y' \end{array} \right] :
X' \in P_0\Mul(\B) P_0, \makebox{ } Y' \in (1- P_0) \Mul(\B) (1 - P_0),
\makebox{  } a \in 
P_0 \B (1 - P_0),
\makebox{ } b \in (1 - P_0) \B P_0 \right\}.$$ 
Note that $\widehat{V}$, with the restriction of the
 norm from $\Mul(\B)$, is a Banach space.

Let 
$$\widehat{W} =_{df} \C(P_0 \B P_0)$$
which is a Banach space, since it is a C*-algebra.
Clearly, the map
$$\Phi : \widehat{V} \rightarrow \widehat{W} : 
X \mapsto \pi(P_0 X P_0)$$
(or, in Carey--Phillips matrix notation,
$\Phi : \left[ \begin{array}{cc}  X' & a \\ b & Y' \end{array} \right] 
\mapsto \pi(X')$) 
is a surjective continuous linear map.

Now, $GL_{P_0}(\Mul(\B)) = \widehat{V} \cap GL(\Mul(\B))$.
Hence, since $GL(\Mul(\B))$ is a (norm-) open subset of $\Mul(\B)$
(see \cite{WeggeOlsen} Lemma 4.2.1), 
$GL_{P_0}(\Mul(\B))$ is open in $\widehat{V}$.
It is also clear that $GL(\C(P_0 \B P_0))$ is an open subset of 
$\widehat{W} = C(P_0 \B P_0)$ (again see \cite{WeggeOlsen} Lemma 4.2.1).\\

\noindent \emph{Claim 1:}   
$\Phi(GL_{P_0}(\Mul(\B))) = GL(\C(P_0 \B P_0)).$\\

\noindent \emph{Proof of Claim 1:}  Now,  
$\pi(GL_{P_0}(\Mul(\B))) \subset   \pi(P_0) \C(\B) \pi(P_0) + \pi(1- P_0) 
\C(\B)\pi(1 - P_0)$.  If $X  \in GL_{P_0}(\Mul(\B))$, then 
$\pi(X)$ is invertible in $\pi(\Mul(\B)) = \C(\B)$;  hence, 
$\pi(P_0 X P_0)$ must be invertible in $\pi(P_0) \C(\B) \pi(P_0)$, i.e., 
$\Phi(X) \in GL(\C(P_0 \B P_0))$.  Hence, $\Phi(GL_{P_0}(\Mul(\B)))
\subseteq GL(\C(P_0 \B P_0))$.

We now prove the reverse inclusion.   
Let $y \in GL(\C(P_0 \B P_0))$ be arbitrary.   
Let $y = v |y|$ be the polar decomposition of $y$ in $\C(P_0 \B P_0)$.  So
$v$ is a unitary in $\C(P_0 \B P_0)$ and $|y|$ is a positive invertible 
element of $\C(P_0 \B P_0)$.  Let $a_v \in P_0 \Mul(\B) P_0$ be a contractive
element such that $\pi(a_v) = v$.  
Since $|y|$ is homotopic to $\pi(P_0)$ in $GL(\C(P_0 \B P_0))$, we can
find an invertible element $\widetilde{y} \in \Mul(P_0 \B P_0)$ such that 
$\pi(\widetilde{y}) = |y|$ (see \cite{WeggeOlsen} Corollary 4.3.3).
In full (nonCarey--Phillips) matrix notation, let 
$$Z_1 =_{df} \left[ \begin{array}{cc} a_v & \sqrt{1 - a_v a_v^*} \\
\sqrt{1 - a_v^* a_v} & - a_v^* \end{array} \right] 
\in U_{P_0}(\Mul(\B)) \subset  GL_{P_0}(\Mul(\B)).$$
(Note that $\sqrt{1 - a_v a_v^*} \in \B$ since $\pi(a_v) = v$ is a unitary.
Similarly, $\sqrt{1 - a_v^* a_v} \in \B$.  Also, since
$P_0 \sim 1 \sim 1 - P_0$, we may view $-a_v^*$ as being an element of
$(1 - P_0) \Mul(\B) (1 - P_0)$.  In fact, we may identify 
$\Mul(\B) \cong \mathbb{M}_2(P_0 \Mul(\B) P_0)$.)  

Let $$Z_2 =_{df} 
\left[ \begin{array}{cc} \widetilde{y} & 0 \\ 0 & 1 - P_0 \end{array}
\right].$$
Then $Z_1 Z_2 \in GL_{P_0}(\Mul(\B))$ and 
$$\pi(P_0 Z_1 Z_2 P_0) = y, \makebox{ i.e., } \Phi(Z_1 Z_2) = y.$$
Since $y$ was arbitrary, this completes the proof of the reverse inclusion
and hence the Claim.\\
\noindent \emph{End of proof of Claim 1.}\\

By Claim 1, we let $\Phi$ also denote the restricted map
$GL_{P_0}(\Mul(\B)) \rightarrow GL(\C(P_0 \B P_0))$.
From Claim 1 and the above, it follows, by 
Proposition \ref{prop:CareyPhillipsFibration}, that the map 
$\Phi : GL_{P_0}(\Mul(\B)) \rightarrow GL(\C(P_0 \B P_0))$
is a Hurewicz fibration.  We denote this statement by ``(!)".

\vspace*{2ex}
\noindent \emph{Claim 2:}  For every $X \in GL_{P_0}(\Mul(\B))$,
the fiber $\Phi^{-1}(\Phi(X))$ (which is a subspace of the total
space $GL_{P_0}(\Mul(\B))$)
is contractible.\\

\noindent \emph{Proof of Claim 2:}   
Fix $X \in GL_{P_0}(\Mul(\B))$.  
Note that $X$ is an invertible element of $\Mul(\B)$ that has the
form (in the Carey--Phillips notation)   
$$X = \left[ \begin{array}{cc} X_0 & b \\ c & Y_0 \end{array} \right]$$
where $X_0 \in P_0 \Mul(\B) P_0$, $b \in P_0 \B (1 - P_0)$,
$c \in (1 - P_0) \B P_0$ and $Y_0 \in (1-P_0) \Mul(\B) (1 - P_0)$.

Towards proving the Claim 2, let us first use a more convenient full
 two-by-two
matrix representation of $\Phi^{-1}(\Phi(X))$ (different from the 
Carey--Phillips matrix notation).  
Now since $P_0 \sim 1 \sim 1 - P_0$, we have an obvious  homeomorphism 
\begin{equation}  \label{equ:May620231AM}
\Phi^{-1}(\Phi(X)) \cong GL\left(\left[ \begin{array}{cc} X_0  + \B & \B \\
\B & \Mul(\B) \end{array} \right]\right)  \subseteq GL(\M_2(\Mul(\B)))
\end{equation} 
(so under the Murray--von Neumann
 equivalence $P_0 \sim 1$, we view $X_0$ as an element
of $\Mul(\B)$).  
For convenience, for the proof of this Claim, we identify 
$\Phi^{-1}(\Phi(X))$ with
the (norm-) closed 
subset of $GL(\M_2(\Mul(\B)))$, with the restriction of the
 norm topology from $\M_2(\Mul(\B))$,
 that is given above.
In other words, we identify $\Phi^{-1}(\Phi(X))$ with all invertible
matrices of the form
$$\left[\begin{array}{cc}  X_0  + a  & b \\
c & Y \end{array} \right]  \in GL(\M_2(\Mul(\B)))
\makebox{ where } a, b,c \in \B, 
\makebox{ and } Y \in \Mul(\B).$$        

By Lemma \ref{lem:Aug18202310PM}, $\Phi^{-1}(\Phi(X))$ is 
homotopy-equivalent to a CW complex.  
Hence, to prove that $\Phi^{-1}(\Phi(X))$ is contractible, it suffices to prove that
for every compact Hausdorff topological space $K$,
$C(K \rightarrow \Phi^{-1}(\Phi(X)))$ is (uniform norm-) path-connected.
(Short summary of the argument:  Suppose that we have proven the above 
path-connectedness property for every compact Hausdorff space $K$.
By \cite{Switzer} Definition 2.60, all homotopy groups vanish, i.e.,
$\pi_n(\Phi^{-1}(\Phi(X))) = 0$ for every $n \geq 0$. Since 
$\Phi^{-1}(\Phi(X))$ has the homotopy type of a CW complex, we can apply 
Whitehead's Theorem (e.g., \cite{HatcherTopologyBook} Theorem 4.5) to a 
constant map $\Phi^{-1}(\Phi(X)) \rightarrow \{ z_1 \}$.)  
We denote the above statements by ``(V)".

So let $K$ be an arbitrary compact Hausdorff space, and let
$X_1, X_2 \in C(K \rightarrow \Phi^{-1}(\Phi(X)))$ be arbitrary.    
We want to show that $X_1$ and $X_2$ are connected by a (uniform norm-)
continuous path in $C(K \rightarrow \Phi^{-1}(\Phi(X)))$.  
Using the identification (\ref{equ:May620231AM}), 
\begin{equation} \label{equ:Aug1920231AM}
X_1 = \left[ \begin{array}{cc} X_0 + a_1 & b_1 \\ 
c_1 & Y_1 \end{array} \right] \makebox{  and  }
X_2 = \left[ \begin{array}{cc} X_0 + a_2 & b_2 \\ 
c_2 & Y_2 \end{array} \right] 
\end{equation}
\noindent where $a_j, b_j, c_j : K \rightarrow \B$ and
$Y_j : K \rightarrow \Mul(\B)$ are all continuous maps ($j =1,2$). 
For $j=1, 2$,
since $X_j \in GL(C(K) \otimes \M_2 \otimes \Mul(\B))$ (by 
the definitions of $\Phi^{-1}(\Phi(X))$ and $X_j$), we have that
$\pi(X_0), \pi(Y_j) 
\in GL(C(K) \otimes \C(\B))$.     
(Here, we identify  $X_0$ with the constant map $1_{C(K)} \otimes X_0$ 
in $C(K) \otimes \Mul(\B)$.)
Hence, for $j = 1,2$, in $K_1(C(K) \otimes \C(\B))$,
\begin{equation} \label{equ:Aug2820231AM}
[\pi(X_j)] = [\pi(X_0)] + [\pi(Y_j)].
\end{equation}
\noindent(Here, we are using the common notation $\pi$ for all the 
the quotient maps 
$\Mul(\B) \rightarrow \C(\B)$, 
$\M_2(\Mul(\B)) \rightarrow \M_2(\C(\B))$, 
$C(K) \otimes \Mul(\B) \rightarrow
C(K) \otimes \C(\B)$, $C(K) \otimes \M_2(\Mul(\B)) \rightarrow C(K)
\otimes \M_2(\C(\B))$, etc.)    
Now by \cite{WeggeOlsen} Corollary 16.7, 
$K_1(C(K) \otimes \Mul(\B)) = 0$.  Hence, by (\ref{equ:Aug2820231AM}),
since $X_j$ is invertible in $C(K) \otimes \M_2 \otimes  \Mul(\B)$,   
we must have that in $K_1(C(K) \otimes \C(\B)))$, 

\[
0 = [\pi(X_j)] = [\pi(X_0)] + [\pi(Y_j)]  \makebox{ }(j= 1, 2).
\]
Hence,
in $K_1(C(K) \otimes \C(\B))$,
\[
[\pi(Y_1)] = - [\pi(X_0)] = [\pi(Y_2)].
\]
Hence, since $C(K) \otimes \C(\B)$ is $K_1$-injective (see 
Lemma \ref{lem:CoronaK1InjAndSurj}), 
$\pi(Y_1)$ and $\pi(Y_2)$ are connected by a (norm-) continuous
path in $GL(C(K) \otimes \C(\B))$.   
Hence, there exist $d'_1, d'_2, ..., d'_m \in C(K) \otimes \C(\B)$
such that 
\begin{equation} \label{equ:Aug1920232AM}
\pi(Y_2) = exp(d'_1) exp(d'_2) \cdot \cdot \cdot exp(d'_m) \pi(Y_1).
\end{equation} 
For $l =1,..., m$, let $d_l \in C(K) \otimes \Mul(\B)$ be such that 
\begin{equation} \label{equ:Aug1920233AM} 
\pi(d_l) = d'_l. 
\end{equation}
Hence, if for all $t \in [0,1]$, we define
$$Z_t =_{df} exp(td_1) exp(td_2) \cdot \cdot \cdot exp(td_m)
\in GL(C(K) \otimes \Mul(\B))$$
and we define
$$X_{1,t} =_{df} \left[ \begin{array}{cc} 1 & 0 \\ 0 & Z_t \end{array}
\right] X_1,$$
then by (\ref{equ:Aug1920231AM}), (\ref{equ:Aug1920232AM}), 
(\ref{equ:Aug1920233AM}) and the definition of 
$\Phi^{-1}(\Phi(X))$, $\{ X_{1,t} \}_{t \in [0,1]}$ is a      
(norm-) continuous path in $C(K \rightarrow \Phi^{-1}(\Phi(X)))$
such that   
$$X_{1,0} = X_1 \makebox{  and  } X_{1,1} - X_2 \in C(K) \otimes 
 \M_2 \otimes \B.$$
We denote the above statement by ``(+)".

From (+), it follows that there are (norm-) continuous maps
$a_3, b_3, c_3, d_3 : K \rightarrow \B$ such that 
the matrix $$A =_{df} \left[ \begin{array}{cc} 1 + a_3 & b_3 \\ c_3 & 1 + d_3
\end{array} \right] \in GL(C(K) \otimes \M_2 \otimes \Mul(\B))$$
and
$$X_2 = X_{1,1} A.$$ 
We denote the above statement by ``(++)".

Now since $\B$ is stable, let $\{ p_j \}_{j=1}^{\infty}$ be a sequence of
pairwise orthogonal projections in $\Mul(\B)$ such that 
$p_j \sim 1_{\Mul(\B)}$ for all $j$ and 
\begin{equation} \label{equ:May620235AM} 1_{\Mul(\B)} 
= \sum_{j=1}^{\infty} p_j \end{equation}  
where the sum converges strictly in $\Mul(\B)$.  
Since $K$ is compact, 
we can find an integer $N \geq 1$ and (norm-) continuous maps
$\widetilde{a}_3, \widetilde{b}_3, \widetilde{c}_3, \widetilde{d}_3 
: K \rightarrow 
\left(\sum_{j=1}^N p_j\right) \B \left( \sum_{j=1}^N p_j \right)$ 
such that if 
$$\widetilde{A} =_{df} 
\left[ \begin{array}{cc} 1_{\Mul(\B)} + \widetilde{a}_3 & 
\widetilde{b}_3 \\
\widetilde{c}_3 & 1_{\Mul(\B)} + \widetilde{d}_3 \end{array} \right],$$
then $\widetilde{A}$ is an invertible element   
in $GL(1_{\M_2 \otimes \Mul(\B)}  
+ C(K) \otimes \M_2 \otimes \B)$ for which
$\widetilde{A}$ is (norm-) close to $A$ and
$\widetilde{A}$ is homotopic to $A$ in 
$GL( 1_{\M_2 \otimes \Mul(\B)}
+ C(K) \otimes \M_2 \otimes \B)$. (Note that      
the ambient C*-algebra is really 
$\mathbb{C}1_{\M_2 \otimes \Mul(\B)}            
+ C(K) \otimes \M_2 \otimes \B$, but all operators (including
inverses) can be made to reside in the (nonC*-algebra)
$1_{\M_2 \otimes \Mul(\B)}            
+ C(K) \otimes \M_2 \otimes \B$.)  
  We denote the above statements 
by ``(*)".

Let $\{ e_{j,k} \}_{1 \leq j,k \leq 2}$ be the standard system of matrix
units for $\M_2$.  
By \cite{WeggeOlsen} Lemma 16.2, let $W \in \M_2 \otimes \Mul(\B)$
be a unitary such that 
\begin{enumerate}
\item $W (\sum_{j=N+2}^{\infty} 1_{\M_2} \otimes p_j) 
= (\sum_{j=N+2}^{\infty} 1_{\M_2} \otimes p_j) W
= \sum_{j=N+2}^{\infty} 1_{\M_2} \otimes p_j$,
\item $W (\sum_{j=1}^N 1_{\M_2} \otimes p_j) W^* = 
e_{2,2} \otimes p_{N+1}$, and 
\item $W (1_{\M_2} \otimes p_{N+1}) W^* = 
e_{1,1} \otimes p_{N+1}  +  
\sum_{j=1}^N 1_{\M_2} \otimes p_j$.   
\end{enumerate} 
(Note that by \cite{WeggeOlsen} Lemma 16.2, all the 
above projections and sums of projections are Murray--von Neumann
equivalent to $1_{\M_2 \otimes \Mul(\B)}$ in 
$\M_2 \otimes \Mul(\B)$.)
Identifying $W$ with the constant function $1_{C(K)} \otimes W$
in $C(K) \otimes \M_2 \otimes \Mul(\B)$ (and identifying 
$1_{\Mul(\B)}$ with
$1_{C(K) \otimes \Mul(\B)}$),
\begin{equation}
W \widetilde{A} W^* = 
\left[ \begin{array}{cc} 1_{\Mul(\B)} & 0 \\
0 & 1_{\Mul(\B)} + f \end{array} \right]     
\end{equation}
where $f \in C(K) \otimes \B$.  
By \cite{WeggeOlsen} Corollary 16.7, 
$U(C(K) \otimes \M_2 \otimes \Mul(\B))$ is 
(norm-) path-connected.  Hence, let $\{ W_t \}_{t \in [0,1]}$ be 
a (norm-) continuous path of unitaries in $U(C(K) \otimes 
\M_2 \otimes \Mul(\B))$
such that $$W_0 = 1  \makebox{  and  } W_1 = W.$$
We have that 
for all $t \in [0,1]$,
$$W_t \widetilde{A} W_t^* \in GL(1_{C(K) \otimes \M_2 \otimes \Mul(\B)}
+ C(K) \otimes \M_2 \otimes \B).$$
Hence, $\widetilde{A}$ is homotopic to 
$\left[\begin{array}{cc} 1_{\Mul(\B)} & 0 \\ 0 & 
1_{\Mul(\B)} + f \end{array} \right]$  in\\ 
$GL(1_{C(K) \otimes \M_2 \otimes \Mul(\B)}
+ C(K) \otimes \M_2 \otimes \B)$.   
From this and (*), $A$ is homotopic to $\left[\begin{array}{cc} 1_{\Mul(\B)} & 0 \\ 0 &
1_{\Mul(\B)} + f \end{array} \right]$  in
$GL(1_{C(K) \otimes \M_2 \otimes \Mul(\B)}
+ C(K) \otimes \M_2 \otimes \B)$.
Noting that by definition (e.g., of $\Phi^{-1}(\Phi(X))$), 
$$C(K \rightarrow \Phi^{-1}(\Phi(X))) 
GL(1_{C(K) \otimes \M_2 \otimes \Mul(\B)}
+ C(K) \otimes \M_2 \otimes \B) \subset C(K \rightarrow
\Phi^{-1}(\Phi(X))),$$ 
it follows by (++) that 
in $C(K \rightarrow \Phi^{-1}(\Phi(X)))$, 
$X_2$ is homotopic to 
$$X_{1,1} \left[ \begin{array}{cc} 1 & 0 \\ 0 & 1 + f 
\end{array} \right].$$ 
We denote the above statement by ``(**)". 

Now by \cite{WeggeOlsen} Corollary 16.7, 
$GL(C(K) \otimes \Mul(\B))$ is (norm-) path-connected. So since 
$1 +f \in GL( C(K) \otimes \Mul(\B))$, $1 +f$ is (norm-) path-connected
to $1$ in $GL(C(K) \otimes \Mul(\B))$.  Hence,
$\left[\begin{array}{cc} 1 & 0 \\ 0 & 1 + f \end{array}\right]$ is
(norm-) path-connected to $\left[\begin{array}{cc} 1 & 0 \\ 0 & 1 
\end{array}\right]$ in 
$$e_{1,1} \otimes 1  + e_{2,2} \otimes 
GL(C(K) \otimes \Mul(\B))  
= \left[\begin{array}{cc} 1 & 0 \\ 0 & GL(C(K) \otimes \Mul(\B)) 
\end{array} \right].$$
So since (by the definition of $\Phi^{-1}(\Phi(X))$),
$$C(K \rightarrow \Phi^{-1}(\Phi(X))) \left[\begin{array}{cc} 1 & 0 \\ 0 & GL(C(K) \otimes \Mul(\B))
\end{array} \right] \subset C(K \rightarrow \Phi^{-1}(\Phi(X))),$$
it follows, by (**), that in $C(K \rightarrow \Phi^{-1}(\Phi(X)))$,
$X_2$ is homotopic to $X_{1,1}$.  Hence, by (+),             
in $C(K \rightarrow \Phi^{-1}(\Phi(X)))$,
$X_2$ is homotopic to $X_{1,0} = X_1$. 

Since $X_1, X_2 \in C(K \rightarrow \Phi^{-1}(\Phi(X)))$ were arbitrary,
$C(K \rightarrow \Phi^{-1}(\Phi(X)))$ is (uniform norm-) path-connected.
Hence, since $K$ was arbitrary, by (V), 
$\Phi^{-1}(\Phi(X))$ is (norm-) contractible.  
Since $X \in GL_{P_0}(\Mul(\B))$ was arbitrary, we have completed
the proof of Claim 2.\\ 
\noindent \emph{End of proof of Claim 2.}\\

Now by statement (!), $\Phi :  GL_{P_0}(\Mul(\B))
\rightarrow GL(\C(P_0 \B P_0))$ is a Hurewicz fibration.
Let $X \in GL_{P_0}(\Mul(\B))$ be arbitrary.  
Hence, by \cite{Spanier} Chapter 7, Section 2, Theorem 10
(see also \cite{HatcherTopologyBook} Theorem 4.41),
there is a long exact sequence of homotopy groups
\begin{equation} \label{equ:May720231AM}
... \rightarrow \pi_n(F, x_0) \rightarrow \pi_n(E, x_0)
\overset{\Phi_*}{\rightarrow} \pi_n(B, b_0) 
\rightarrow \pi_{n-1}(F, x_0) \rightarrow ... 
\rightarrow \pi_0(B, b_0) 
\end{equation}   
where $E = GL_{P_0}(\Mul(\B))$, $B = GL(\C(P_0 \B P_0))$, 
$F = \Phi^{-1}(\Phi(X))$, $x_0 = X$, and
$b_0 = \Phi(X)$.

By Claim 2, we have that 
\begin{equation} \label{equ:Aug2920236PM} 
\pi_n(\Phi^{-1}(\Phi(X)), X) = \pi_n(F, x_0) = 0 \makebox{  for all  } 
n \geq 0.
\end{equation} 
Hence, by (\ref{equ:May720231AM}), for all $n \geq 1$,
\begin{equation} \label{equ:Aug2920237PM}\Phi_* : \pi_n(GL_{P_0}(\Mul(\B)), X) \rightarrow 
\pi_n(GL(\C(P_0 \B P_0)), \Phi(X)) 
\makebox{   is a group isomorphm.}
\end{equation}  

But also, since $E$ and $B$ are topological groups,
$\pi_0(E, x_0)$ and $\pi_0(B, b_0)$ are groups,  
and it is not hard to check that 
$\Phi_* : \pi_0(E, x_0) \rightarrow \pi_0(B, b_0)$ is a  group
homomorphism.  Hence, by 
(\ref{equ:Aug2920236PM}) and (\ref{equ:May720231AM}),    
\begin{equation} \label{equ:Aug2920235PM} 
\Phi_* :  \pi_0(GL_{P_0}(\Mul(\B)), X) \rightarrow
\pi_0(GL(\C(P_0 \B P_0)), \Phi(X)) \end{equation}   
is injective.   But since the map $\Phi :      
GL_{P_0}(\Mul(\B)) \rightarrow GL(\C(P_0 \B P_0))$ is surjective (see Claim 
1), it follows that the $\Phi_*$ in (\ref{equ:Aug2920235PM}) is also
surjective and hence a group isomorphism.

Hence, for all $n \geq 0$, the induced map
$$\Phi_* : \pi_n(GL_{P_0}(\Mul(\B)), X) \rightarrow
\pi_n(GL(\C(P_0 \B P_0)), \Phi(X))$$
is a group isomorphism.
Since $X$ was arbitrary, the above statement is true for all
$X \in GL_{P_0}(\Mul(\B))$.  
But by Lemma \ref{lem:ProIAndU_PCWComplexes}, $GL_{P_0}(\Mul(\B))$ and
$GL(\C(P_0 \B P_0))$ are both homotopy equivalent to CW complexes.
Hence, by Whitehead's Theorem (e.g., see \cite{HatcherTopologyBook}
Theorem 4.5), $\Phi$ is a homotopy equivalence
between $GL_{P_0}(\Mul(\B))$  
and $GL(\C(P_0 \B P_0))$.\\                    
\end{proof}

We are now ready to define the Pre Spectral Flow Isomorphism.  
By Lemma \ref{lem:Aug520239AM}, Theorem \ref{thm:May520237PM}, 
Definition \ref{df:PereraIsomRealSecondComp},  
Lemma \ref{lem:May320235AM} and Theorem \ref{thm:LastComponentPerera}, 
we have a sequence of homotopy
equivalences
\begin{equation}  \label{equ:May720232AM} \end{equation}
\begin{align*} 
\Omega_{2P_0 - 1} \Ff_{SA, \infty}(\Mul(\B)) 
 \rightarrow \Omega_{\pi(P_0)} \ProI(\C(\B)) \rightarrow 
\alpha_{P_0}^{-1}(\pi(P_0)) = U_{P_0}(\Mul(\B))\\  
\rightarrow GL_{P_0}(\Mul(\B)) 
 \rightarrow GL(\C(P_0 \B P_0)).
\end{align*}
This induces bijections of the $\pi_0$ ($0$th homotopy set)
of the above spaces:
\begin{equation} \label{equ:Aug2920238PM} \end{equation}
\begin{align*}
\pi_0(\Omega_{2P_0 - 1} \Ff_{SA, \infty})   
\rightarrow \pi_0(\Omega_{\pi(P_0)} \ProI)  \rightarrow 
\pi_0(U_{P_0}(\Mul(\B))) \rightarrow \pi_0(GL_{P_0}(\Mul(\B)))\\
\rightarrow \pi_0(GL(\C(P_0\B P_0))).
\end{align*}    
Now all the topological spaces in (\ref{equ:May720232AM}) are 
H-groups (see, for example, \cite{Spanier} Chapter 1, Section 5, pages 34-35 (before Theorem 4)
as well as \cite{Spanier} Chapter 1, Section 5, Theorems 1, 4 and 8;  see
also \cite{Switzer} Examples 2.15).
Hence,  all the $\pi_0$ sets in (\ref{equ:Aug2920238PM}) are 
groups ($0$th homotopy groups). We need to check that
 all the maps in (\ref{equ:Aug2920238PM}) are group homomorphisms (and
thus group isomorphisms).  The first map $\Omega \Ff_{SA, \infty}
\rightarrow \Omega \ProI(\C(\B))$, in (\ref{equ:May720232AM}), is
an H-homomorphism    
(see \cite{Spanier} Chapter 1, Section 5, just before Theorem 4 on page 35;
Spanier uses the terminology ``homomorphism"), since this map has the form
$\Omega \sigma$ for some map $\sigma : \Ff_{SA, \infty} \rightarrow  
\ProI(\C(\B))$. Hence, by \cite{Spanier} Chapter 1, Section 5, Theorem 7,
the first induced map in (\ref{equ:Aug2920238PM}) 
is a group homomorphism (and hence isomorphism) of $\pi_0$ groups.  
The third and fourth (last) maps in (\ref{equ:May720232AM})  -- i.e., 
$U_{P_0}(\Mul(\B)) \rightarrow GL_{P_0}(\Mul(\B))$ and
$GL_{P_0}(\Mul(\B)) \rightarrow GL(\C(P_0 \B P_0))$  -- are both group
homomorphisms and hence H-homomorphisms.  Hence, by \cite{Spanier} Chapter 1,
Section 5, Theorem 7 again, the third and fourth induced maps in 
(\ref{equ:Aug2920238PM}) are group homomorphisms (and hence isomorphisms of the $\pi_0$ groups).
Finally,  by Definition \ref{df:PereraIsomRealSecondComp} and 
Theorem \ref{thm:May520237PM},  the second map of (\ref{equ:May720232AM})
is also a group homomorphism (and hence isomorphism).   
Hence, composing all the maps in (\ref{equ:Aug2920238PM}), we have 
a group isomorphism
\begin{equation} \label{equ:Aug2920239PM}   
\pi_0(\Omega_{2P_0 - 1} \Ff_{SA, \infty}(\Mul(\B)))
\rightarrow 
\pi_0(GL(\C(P_0 \B P_0))).\\  
\end{equation}

Note that since $\B$ is stable and since $P_0 \sim 1$,
$P_0 \B P_0 \cong \B$.
Also, since $\B$ is stable,
$\C(\B)$ is both $K_1$-injective and $K_1$-surjective (see 
Lemma \ref{lem:CoronaK1InjAndSurj}).  Hence, 
by the definition of $\pi_0$,  
$$\pi_0(GL(\C(P_0\B P_0))) = \pi_0(GL(\C(\B))) = K_1(\C(\B)).$$
Also, by the paragraph before Definition \ref{df:FredholmIndex}, 
the index map (from the six term
exact sequence) 
\begin{equation}  \label{equ:May720233AM}
\partial:  K_1(\C(\B)) \rightarrow K_0(\B)
\end{equation} is a 
group isomorphism.\\

Finally, note that, by definition (e.g., \cite{Switzer} Definition 2.60),  
\begin{equation} \label{equ:Aug29202311PM}
\pi_1(\Ff_{SA, \infty}(\Mul(\B)), 2P_0 - 1) 
\cong \pi_0(\Omega_{2P_0 - 1} \Ff_{SA, \infty}(\Mul(\B))).  
\end{equation}

\begin{df} \label{df:PereraIsomorphism}
The Pre Spectral Flow Isomorphism 
$$PSf : \pi_1(\Ff_{SA, \infty}, 2P_0 - 1) \rightarrow K_0(\B)$$ 
is obtained by composing the maps 
(\ref{equ:Aug29202311PM}),  
(\ref{equ:Aug2920239PM}) and (\ref{equ:May720233AM}).\\   
\end{df}

The following is immediate from the definition of $PSf$: 

\begin{thm} \label{thm:PereraIsoIsIso}
The Pre Spectral Flow Isomorphism 
$$PSf : \pi_1(\Ff_{SA, \infty}, 2P_0 - 1) \rightarrow K_0(\B)$$
is indeed a group isomorphism.\\  
\end{thm}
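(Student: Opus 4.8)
The plan is to observe that Theorem \ref{thm:PereraIsoIsIso} is essentially a bookkeeping corollary of the chain of homotopy equivalences assembled in the displays preceding it: once each arrow in (\ref{equ:May720232AM}) is known to be a homotopy equivalence, and once each induced map on $\pi_0$ in (\ref{equ:Aug2920238PM}) is known to be a group homomorphism between $\pi_0$ groups of H-groups, the composite $PSf$ is automatically a group isomorphism, being a composite of group isomorphisms. So the proof is simply: unwind the definition of $PSf$ (Definition \ref{df:PereraIsomorphism}) as the composite of (\ref{equ:Aug29202311PM}), (\ref{equ:Aug2920239PM}) and (\ref{equ:May720233AM}), and check that each of these three maps is a group isomorphism.

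First I would note that (\ref{equ:Aug29202311PM}) is the standard natural isomorphism $\pi_1(X,x_0)\cong\pi_0(\Omega_{x_0}X)$ (e.g., \cite{Switzer} Definition 2.60), which is a group isomorphism when $\Omega_{x_0}X$ carries its loop-concatenation H-group structure. Second, (\ref{equ:Aug2920239PM}) is the composite of the four $\pi_0$-maps in (\ref{equ:Aug2920238PM}); the discussion immediately following (\ref{equ:Aug2920238PM}) already establishes that the first map is a group homomorphism (via \cite{Spanier} Chapter 1, Section 5, Theorem 7, since it has the form $\Omega\sigma$), that the third and fourth are group homomorphisms (they are induced by genuine group homomorphisms $U_{P_0}(\Mul(\B))\hookrightarrow GL_{P_0}(\Mul(\B))\to GL(\C(P_0\B P_0))$, hence H-homomorphisms, so again apply \cite{Spanier} Theorem 7), and that the second is a group homomorphism by Theorem \ref{thm:May520237PM} and Definition \ref{df:PereraIsomRealSecondComp} (the map $\gamma$ being a homotopy inverse of $\kappa$, whose induced $\pi_0$-map is a group isomorphism). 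Since each arrow in (\ref{equ:May720232AM}) is a homotopy equivalence (Lemma \ref{lem:Aug520239AM}, Theorem \ref{thm:May520237PM}/Definition \ref{df:PereraIsomRealSecondComp}, Lemma \ref{lem:May320235AM}, Theorem \ref{thm:LastComponentPerera}), each induced $\pi_0$-map is a bijection; being also a group homomorphism, it is a group isomorphism, and so is their composite (\ref{equ:Aug2920239PM}). Third, (\ref{equ:May720233AM}) is the index map $\partial:K_1(\C(\B))\to K_0(\B)$, which is a group isomorphism because $\B$ is stable (so $K_*(\Mul(\B))=0$ by \cite{WeggeOlsen} Theorem 10.2, making $\partial$ an isomorphism in the six-term exact sequence — cited in the excerpt as the paragraph before Definition \ref{df:FredholmIndex}); one also uses here that $\pi_0(GL(\C(P_0\B P_0)))=\pi_0(GL(\C(\B)))=K_1(\C(\B))$, which holds because $P_0\B P_0\cong\B$ (as $\B$ is stable and $P_0\sim 1$) and $\C(\B)$ is $K_1$-injective and $K_1$-surjective (Lemma \ref{lem:CoronaK1InjAndSurj}).

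Putting these together: $PSf$ is the composite of the group isomorphism (\ref{equ:Aug29202311PM}), the group isomorphism (\ref{equ:Aug2920239PM}), and the group isomorphism (\ref{equ:May720233AM}), hence is itself a group isomorphism. I do not expect any real obstacle here — all the substantive work has already been done in Lemma \ref{lem:Aug520239AM}, Theorems \ref{thm:May520237PM} and \ref{thm:LastComponentPerera}, and Lemma \ref{lem:kappaGivesPi0Hom}, together with the paragraph-long discussion setting up (\ref{equ:Aug2920239PM}). The only point requiring a moment's care is making sure the H-group (and hence $\pi_0$-group) structures on all spaces in (\ref{equ:May720232AM}) are the \emph{compatible} ones so that the maps are genuinely H-homomorphisms; but this is exactly what the citations to \cite{Spanier} Chapter 1, Section 5 (Theorems 1, 4, 7, 8) and \cite{Switzer} Examples 2.15 in the preceding discussion take care of, and in particular the two loop spaces $\Omega_{2P_0-1}\Ff_{SA,\infty}$ and $\Omega_{\pi(P_0)}\ProI(\C(\B))$ get their standard loop-concatenation structure while $U_{P_0}(\Mul(\B))$, $GL_{P_0}(\Mul(\B))$ and $GL(\C(P_0\B P_0))$ get their topological-group structure. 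Hence the theorem follows immediately, and the proof can be written in a few lines that simply cite Definition \ref{df:PereraIsomorphism} and the three isomorphism facts above.
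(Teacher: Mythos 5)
Your proposal is correct and matches the paper's approach: the paper states the theorem as ``immediate from the definition of $PSf$'' and carries all the substantive verification (that each $\pi_0$ set in (\ref{equ:Aug2920238PM}) is a group and each map a group homomorphism, via the H-group structure and the cited results of Spanier, Switzer, Theorem \ref{thm:May520237PM}, etc.) in the discussion preceding Definition \ref{df:PereraIsomorphism}, exactly as you unwind it.
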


We belatedly note (though it is already implicit in some arguments above) that for a topological space $K$, the $0$th homotopy set 
$\pi_0(K)$ need \emph{not} be a group; and even if $K$ and $K'$ are 
topological spaces for which $\pi_0(K)$ and $\pi_0(K')$ are groups, 
a continuous map $g : K \rightarrow K'$ \emph{need not} induce a group
homomorphism $g_* : \pi_0(K) \rightarrow \pi_0(K')$.  (This is in contrast
with the \emph{higher homotopy groups} $\pi_n(K)$ and $\pi_n(K')$ for all
$n \geq 1$.)  Thus, it is important, in our construction of the Pre Spectral
Flow Isomorphism $PSf$, 
to check that various $\pi_0$ sets are actually groups and
that the relevant maps between topological spaces yield group homomorphisms
at the level of $\pi_0$.  (E.g., see the paragraph after equation 
(\ref{equ:Aug2920238PM})).  

In the argument of the interesting and important \cite{PereraPaper} and \cite{PereraThesis} (for the
$II_{\infty}$ factor case, which we generalize), the above points  are
not taken into account at all.

Before we end this section, we make two final remarks.  Firstly, from 
Lemma \ref{lem:MakingTrivialPaths}, we see that any two 
invertible elements of $\Ff_{SA, \infty}$ are path connected.  Hence, 
in Theorem \ref{thm:PereraIsoIsIso}, the invertible $2P_0 -1$ can replaced
with any invertible element of $\Ff_{SA, \infty}$.  Secondly,  
composing the homotopy equivalences in (\ref{equ:May720232AM}) gives
us a homotopy equivalence $\Omega_X \Ff_{SA, \infty} (\Mul(\B))
\rightarrow GL(P_0 \C(\B) P_0)$, where $X \in \Ff_{SA, \infty}$
is any invertible.  Hence, we have the following result,
which generalizes a result from \cite{AtiyahSingerSkew}:

\begin{thm}  Let $\B$ be a separable stable C*-algebra and 
$X \in \Ff_{SA, \infty}(\Mul(\B))$ an invertible element.
Then  
$\Omega_X \Ff_{SA, \infty}(\Mul(\B))$ is a classifying space
for the functor $K \rightarrow K_0(C(K) \otimes \B)$. I.e., for
every compact Hausdorff space $K$,
$$[K, \Omega_X \Ff_{SA, \infty}] \cong K_0(C(K) \otimes \B).$$ \\ 
\end{thm}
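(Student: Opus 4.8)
The plan is to reduce the statement, via the chain of homotopy equivalences already assembled in display (\ref{equ:May720232AM}), to a standard fact about the group $GL(P_0 \C(\B) P_0) \cong GL(\C(\B))$ and the functor $K \mapsto K_0(C(K) \otimes \B)$. First I would recall that by the last two remarks before the theorem, composing the homotopy equivalences of (\ref{equ:May720232AM}) yields a single homotopy equivalence
\[
\Omega_X \Ff_{SA, \infty}(\Mul(\B)) \longrightarrow GL(P_0 \C(\B) P_0)
\]
for any invertible $X \in \Ff_{SA, \infty}(\Mul(\B))$; here one uses Lemma \ref{lem:MakingTrivialPaths} to move the base point from $2P_0 - 1$ to $X$, noting that $\Ff_{SA, \infty}$ is path connected (Lemma \ref{lem:Aug520235AM}) so the based loop spaces at different points are homotopy equivalent. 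Since $\B$ is stable and $P_0 \sim 1_{\Mul(\B)}$, we have $P_0 \C(\B) P_0 \cong \C(\B)$, so it suffices to show that for every compact Hausdorff space $K$,
\[
[K, GL(\C(\B))] \cong K_0(C(K) \otimes \B).
\]

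For this I would argue as follows. Since $\B$ is stable, $\C(\B)$ is $K_1$-injective and $K_1$-surjective (Lemma \ref{lem:CoronaK1InjAndSurj}), and more generally $C(K) \otimes \C(\B) \cong \C(C(K) \otimes \B)$ has the same property by the same lemma applied to the stable algebra $C(K)\otimes \B$ (one should check that $C(K) \otimes \B$ is again separable stable and that $\Mul(C(K)\otimes \B) \supseteq C(K, \Mul(\B))$ suffices to carry out the argument; alternatively invoke that $K_1$-injectivity/surjectivity for coronas of stable algebras is what Lemma \ref{lem:CoronaK1InjAndSurj} gives). Consequently the natural map
\[
[K, GL(\C(\B))] \;\longrightarrow\; K_1(C(K)\otimes \C(\B)) \;=\; K_1(\C(C(K)\otimes \B))
\]
sending a map to the class of its image under $GL(\C(\B)) \subseteq GL(C(K)\otimes \C(\B))$ is a bijection: surjectivity uses $K_1$-surjectivity together with the fact that an arbitrary class in $K_1$ of the (unital) algebra $C(K) \otimes \C(\B)$ is represented in $GL$ of the algebra itself rather than only in a matrix amplification (again using stability of $\B$, hence of $\C(\B)$, which absorbs matrix amplifications), and injectivity uses $K_1$-injectivity. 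Finally, the index map (the boundary in the six-term exact sequence of $0 \to C(K)\otimes\B \to \Mul(C(K)\otimes\B) \to \C(C(K)\otimes\B) \to 0$)
\[
\partial : K_1(\C(C(K)\otimes\B)) \;\xrightarrow{\;\cong\;}\; K_0(C(K)\otimes\B)
\]
is an isomorphism, since $K_j(\Mul(C(K)\otimes \B)) = 0$ for $j = 0,1$ by \cite{WeggeOlsen} Theorem 10.2 and Corollary 16.7 (the multiplier algebra of a stable C*-algebra has trivial K-theory). Composing these identifications gives $[K, GL(\C(\B))] \cong K_0(C(K)\otimes\B)$, and combined with the first paragraph this is exactly the assertion $[K, \Omega_X \Ff_{SA,\infty}] \cong K_0(C(K)\otimes\B)$.

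I expect the main obstacle to be the bookkeeping around $K_1$-injectivity and $K_1$-surjectivity for $C(K)\otimes\C(\B)$ and, relatedly, the claim that homotopy classes of maps $K \to GL(\C(\B))$ compute $K_1$ of $C(K)\otimes\C(\B)$ \emph{on the nose} (without passing to $GL_\infty$); this is where stability of $\B$ is doing real work, since $\C(\B)$ then absorbs matrix amplifications and one can represent every $K_1$-class and every homotopy in $GL(\C(\B))$ itself. I would make sure to cite Lemma \ref{lem:CoronaK1InjAndSurj} and Subsection \ref{subsect:CoronaK1InjAndSurj} for these facts, apply them to $C(K)\otimes\B$ in place of $\B$, and remark that $C(K)\otimes\B$ is separable and stable so the hypotheses are met. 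Everything else — the homotopy equivalence chain, the change of base point, the isomorphism $P_0\C(\B)P_0 \cong \C(\B)$, and the index map isomorphism — has already been established in the excerpt, so the proof is essentially an assembly of these pieces.
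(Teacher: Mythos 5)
Your assembly of the homotopy equivalence chain, the change of base point via Lemma \ref{lem:MakingTrivialPaths}, and the reduction to $[K, GL(\C(\B))]$ is the same route the paper takes, and the overall structure is sound. However, the identification $C(K) \otimes \C(\B) \cong \C(C(K) \otimes \B)$ that you assert is false in general: $\Mul(C(K) \otimes \B)$ consists of norm-bounded \emph{strictly} continuous maps $K \to \Mul(\B)$, which strictly contains $C(K) \otimes \Mul(\B) = C(K, \Mul(\B))$ (norm-continuous maps) whenever $\B$ is nonunital and $K$ is infinite, so the corona $\C(C(K) \otimes \B)$ is strictly larger than $C(K) \otimes \C(\B)$. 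Consequently your proposed equality $K_1(C(K)\otimes\C(\B)) = K_1(\C(C(K)\otimes\B))$ is not established, and the index map of the corona sequence for $C(K) \otimes \B$ gives you an isomorphism with $K_1$ of the \emph{wrong} algebra.

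The fix is short and does not require the corona of $C(K)\otimes\B$ at all. The object you actually compute in step one is $[K, GL(\C(\B))] = \pi_0(GL(C(K, \C(\B)))) = \pi_0(GL(C(K) \otimes \C(\B)))$. Lemma \ref{lem:CoronaK1InjAndSurj} is stated precisely for the algebra $C(K) \otimes \C(\B)$, so it applies verbatim (no reduction to a corona of $C(K)\otimes\B$ is needed) and gives $\pi_0(GL(C(K) \otimes \C(\B))) \cong K_1(C(K) \otimes \C(\B))$. Then, instead of the corona sequence of $C(K)\otimes\B$, use the short exact sequence
$$0 \to C(K) \otimes \B \to C(K) \otimes \Mul(\B) \to C(K) \otimes \C(\B) \to 0,$$
whose middle term has trivial $K$-theory by \cite{WeggeOlsen} Corollary 16.7 (a fact the paper already invokes in the proof of Theorem \ref{thm:LastComponentPerera}); the boundary map then yields $K_1(C(K) \otimes \C(\B)) \cong K_0(C(K) \otimes \B)$, completing the argument. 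With that substitution your proof is correct and matches the paper's intended reasoning.
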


\section{The Spectral Flow Isomorphism Theorem and
the Axiomatization of Spectral Flow}

\subsection{The Spectral Flow Isomorphism Theorem}
\label{subsect:SFIsomorphism}

In this subsection, we show that the Pre Spectral Flow Isomorphism $PSf$ is 
induced by spectral flow $Sf$, generalizing a result of 
Atiyah--Patodi--Singer  (see \cite{AtiyahPatodiSinger3} (7.3) and \cite{AtiyahSingerSkew};  see also \cite{Phillips1996} the last theorem which
is stated on page 464,     and also \cite{PhillipsVictoria} Theorem 2.9 for the von Neumann factor case).\\\\

\textbf{Throughout (this) Subsection 5.1, we will follow the same 
conventions as in (the whole of) Section 4:  $\B$ is a separable stable
C*-algebra and $P_0 \in \Mul(\B)$ is a projection for which 
$P_0 \sim 1 \sim 1 - P_0$.  Following Section 4, we will refer
to these as the ``standing conventions ($\Lambda$)". 
(Note that these conventions will no longer be
assumed in Subsection 5.2 and afterwards.)}\\\\

Next, we need to compute a Fredholm index (Lemma \ref{lem:May1020231AM}).
What we need  should follow
from concrete, elementary computations involving essential 
codimension, which can be found in    
\cite{LoreauxNgSutradharEC}.  However, since the 
paper \cite{LoreauxNgSutradharEC} has not 
yet been published, 
we use a short KK theory argument (proof of Lemma \ref{lem:May1020231AM}), 
appealing to the already extent literature in KK theory.  Since the present paper
is meant to use mainly elementary techniques, while we do not 
explain all details,  we will give precise references, and a reader who is willing
to read some definitions and
 accept some statements from the extent and standard literature will be able to follow the
short argument without trouble.     
  
Let $KK_F$ denote the ``Fredholm picture" of KK theory introduced by 
Higson in \cite{HigsonKK} (e.g., see \cite{HigsonKK} Definition 2.2).  
For $\A$ and $\D$ C*-algebras with $\A$ separable and $\D$ $\sigma$-unital,
if we continue to let $KK(\A, \D)$ denote the generalized homomorphism
picture of KK (defined with $KK_h$-cycles) as presented in 
(\ref{equdf:KK_h}), there is 
a group isomorphism 
\begin{equation} \label{equ:Aug29202311:15PM} 
KK(\A, \D) \rightarrow KK_D(\A, \D) : 
[\phi, \psi] \mapsto [\phi, \psi, 1]. 
\end{equation} 
(See \cite{HigsonKK} Lemma 3.6.)

Next, we refer the reader to Subsection \ref{subsect:GeneralizedFredholm} 
to recall
the definitions of (generalized) Fredholm operator and (generalized)
 Fredholm index.

Finally, the reader should recall the definition of essential
codimension from Definition \ref{df:EssentialCodimension}.\\

\begin{lem} \label{lem:May1020231AM}
Let $P, Q \in \Mul(\B)$ be projections such that 
$P - Q \in \B$ and $P \sim 1 \sim 1- P$.

Say that $U \in \Mul(\B)$ is a unitary such that
$U Q U^* = P$.

Then $Q U^* Q$ is a Fredholm operator in $\Mul(Q \B Q)$ and its 
(generalized)  Fredholm index is the essential codimension $[Q:P]$.  
\end{lem}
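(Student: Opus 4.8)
The plan is to separate a routine verification of Fredholmness from the index computation, and to carry out the latter through the index map of the six–term exact sequence together with the $KK$–theoretic definition of essential codimension.

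First I would record the standing facts. Since $P-Q\in\B$ we have $\pi_\B(P)=\pi_\B(Q)=:q$; and since $P\sim 1_{\Mul(\B)}\sim 1_{\Mul(\B)}-P$, Lemma~\ref{lem:AShortComputation} gives $Q\sim 1_{\Mul(\B)}\sim 1_{\Mul(\B)}-Q$. Fixing a partial isometry $V_Q\in\Mul(\B)$ with $V_Q^*V_Q=1_{\Mul(\B)}$ and $V_QV_Q^*=Q$, we get $Q\B Q\cong\B$, $\Mul(Q\B Q)=Q\Mul(\B)Q$, $\C(Q\B Q)\cong q\,\C(\B)\,q$, and the inclusion $Q\B Q\hookrightarrow\B$ induces the canonical isomorphism $K_0(Q\B Q)\cong K_0(\B)$ under which the two sides of the asserted equality are to be compared. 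For Fredholmness, note that $UQU^*=P$ forces $\pi_\B(U)\,q\,\pi_\B(U)^*=\pi_\B(P)=q$, so $\pi_\B(U)$ commutes with $q$; hence $\pi_{Q\B Q}(QU^*Q)=q\,\pi_\B(U)^*\,q$ is unitary in $\C(Q\B Q)$ with inverse $\pi_{Q\B Q}(QUQ)$, and $QUQ$ is an honest parametrix for $QU^*Q$ in $\Mul(Q\B Q)$ (one checks $QUQ\cdot QU^*Q-Q=QPQ-Q=Q(P-Q)Q\in Q\B Q$, and symmetrically, using also $U^*PU=Q$). This proves the first assertion.

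For the index, recall that by the definition in Subsection~\ref{subsect:GeneralizedFredholm} the generalized Fredholm index of $QU^*Q$ is $\partial_{Q\B Q}\big([\,q\,\pi_\B(U)^*\,q\,]\big)\in K_0(Q\B Q)$, where $\partial_{Q\B Q}$ is the index map of $0\to Q\B Q\to Q\Mul(\B)Q\to\C(Q\B Q)\to 0$. The corner inclusions $Q\B Q\hookrightarrow\B$, $Q\Mul(\B)Q\hookrightarrow\Mul(\B)$, $q\,\C(\B)\,q\hookrightarrow\C(\B)$ form a morphism of extensions, so by naturality of the index map this class, transported to $K_0(\B)$, equals $\partial_\B\big([\,q\,\pi_\B(U)^*\,q+(1-q)\,]\big)$, the argument now being a genuine unitary of $\C(\B)$. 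I would then recognize this unitary as a standard isometric–lifting representative of essential codimension: with $V_P:=UV_Q$ one has $V_P^*V_P=1_{\Mul(\B)}$ and $V_PV_P^*=UQU^*=P$, and $V_QV_P^*=V_QV_Q^*U^*=QU^*$, so that $\pi_\B(V_QV_P^*)+(1-q)=q\,\pi_\B(U)^*\,q+(1-q)$. The standard identity $\partial_\B\big([\,\pi_\B(V_QV_P^*)+(1-q)\,]\big)=[Q:P]$ (see \cite{LeeProjLift}, \cite{LoreauxNgSutradharEC}, or derive it from Definition~\ref{df:EssentialCodimension} as indicated below) then yields that the Fredholm index of $QU^*Q$ is $[Q:P]$.

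The step I expect to be the main obstacle is precisely that last identity — that $\partial_\B$ of the unitary $\pi_\B(V_QV_P^*)+(1-q)$ is the $KK$–class $[Q:P]$ of Definition~\ref{df:EssentialCodimension}, with this orientation rather than $[P:Q]=-[Q:P]$. I would prove it by passing through Higson's Fredholm picture $KK_F$ and the isomorphism~(\ref{equ:Aug29202311:15PM}): the $KK_h$–cycle $(\iota_Q,\iota_P)$, with $*$–homomorphisms $\iota_Q,\iota_P:\mathbb{C}\to\Mul(\B)$ given by $\iota_Q(1)=Q$, $\iota_P(1)=P$, corresponds to the $KK_F$–cycle $(\iota_Q,\iota_P,1)$, which by the standard compression move $(\phi,\psi,F)\sim(\phi,\psi,\phi(1)F\psi(1))$ is equivalent to $(\iota_Q,\iota_P,QP)$; identifying its class under $KK_F(\mathbb{C},\B)\cong K_0(\B)$ with the Fredholm index of the associated compression, one arrives at $\partial_\B$ of the displayed unitary. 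A self–contained alternative, bypassing $KK_F$, is to compute $\partial_\B$ directly from the six–term sequence by perturbing within $Q\B Q$ to a polar–type decomposition whose defect projections lie in $Q\B Q$, and to recognize the resulting difference of defect projections as $[Q:P]$ via the functorial properties (EC1)--(EC8) of Proposition~\ref{prop:ECProperties}; either route should close the argument, the $KK_F$ route being shorter given the machinery already set up in this section.
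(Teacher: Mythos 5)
Your argument is correct and rests on the same tools the paper uses: Higson's Fredholm picture $KK_F$, the isomorphism (\ref{equ:Aug29202311:15PM}), the compression homotopy from $(\phi,\psi,1)$ to $(\phi,\psi,\phi(1)\psi(1))$, and a cutdown by isometries $V_Q$, $V_P=UV_Q$ (the paper calls them $S$ and $T$). The paper's proof is somewhat more direct than yours: rather than invoking naturality of $\partial$ under the corner inclusion $Q\B Q\hookrightarrow\B$ and then separately establishing the ``key identity'' $\partial_\B\big([\pi_\B(V_QV_P^*)+(1-q)]\big)=[Q:P]$, it applies Higson's Lemma 2.3 once to cut $(\phi,\psi,PQ)$ down to a scalar cycle $(\iota_1,\iota_1,S^*U^*S)$ with $\iota_1(1)=1$, so that $[Q:P]$ is immediately read off as the Fredholm index of $S^*U^*S$ in $\Mul(\B)$, which is then identified with the index of $QU^*Q$ in $\Mul(Q\B Q)$ via $SS^*=Q$. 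Your route is sound, just slightly longer; the one thing you should make explicit when writing it up is the orientation bookkeeping — the ``key identity'' must come out as $[Q:P]$ and not $[P:Q]=-[Q:P]$, and this depends on the role of $\phi$ versus $\psi$ in Higson's definition of a cycle (the corner $\phi(1)F\psi(1)$ sits between $Q$ and $P$ in a fixed order), so it is worth checking that your choice of $V_Q$ (rather than $V_P$) as the conjugating isometry produces exactly the unitary $q\,\pi_\B(U)^*\,q$ and not its adjoint.
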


\begin{proof}
Firstly, since $UQU^* = P$, $Q U^* = U^* P$.
So since $P - Q \in \B$,
$\pi(Q) \pi(U^*)  = \pi(U^*)\pi(Q)$ and $\pi(Q U^* Q)$ is an invertible
(actually unitary) element of $\C(Q \B Q)$.  Hence, $Q U^* Q$ 
is a Fredholm operator in 
$\Mul(Q \B Q) = Q \Mul(\B) Q$.  
 
Now let $\phi, \psi : \mathbb{C} \rightarrow \Mul(\B)$ be the *-homomorphisms
given by $\phi(1) =_{df} Q$ and $\psi(1) =_{df} P$.
So $[Q:P] =_{df} [\phi, \psi]$ in $KK(\mathbb{C}, \B)$ (see
Definition \ref{df:EssentialCodimension}).
So by the remarks before this lemma,
$[P:Q] = [\phi, \psi, 1]$ in $KK_F(\mathbb{C}, \B)$ (see
(\ref{equ:Aug29202311:15PM})). 
Note that since $P - Q \in \B$,  the (norm-) continuous path  
$$t \in [0,1] \mapsto (\phi, \psi, (1 - t)1 + tPQ)$$
gives a homotopy (see \cite{HigsonKK} Definition 2.1)  between 
$(\phi, \psi, 1)$ and $(\phi, \psi, PQ).$
Hence,
$$[\phi, \psi, 1] = [\phi, \psi, PQ] \makebox{  in  } KK_F(\mathbb{C}, \B).$$

Now since $Q \sim 1$ (see Lemma \ref{lem:AShortComputation}),  
let $S \in \Mul(\B)$ be  
an isometry such that $SS^* = Q$.
Let $T =_{df} US$.  Then $T$ is an isometry and $TT^* = USS^*U^* = UQU^* = P$.
By \cite{HigsonKK} Lemma 2.3,
$$[S^* \phi(.) S, T^* \psi(.) T, T^* S ] 
= [\phi(.), \psi(.), TT^* SS^*] = [\phi(.), \psi(.), PQ] \makebox{  in  } KK_F(
\mathbb{C}, \B).$$    
Noting that 
$$S^*\phi(1) S = S^* Q S = 1 = T^* P T = T^* \psi(1) T,$$
the above implies that
$[Q:P]$ is the Fredholm index of $T^*S = S^* U^* S$ as a Fredholm
operator in $\Mul(\B)$.
Hence, since $SS^* = Q$,  $[Q:P]$ is the Fredholm index of $QU^*Q$ 
as a Fredholm operator in $\Mul(Q \B Q)$.\\  
\end{proof}

We now construct a family of loops in $\Ff_{SA, \infty} = 
\Ff_{SA, \infty}(\Mul(\B))$ which will be 
important for the proof that the Pre Spectral Flow Isomorphism $PSf$ is induced by
spectral flow $Sf$, up to a sign difference.    

\begin{df}  \label{df:ThirdExample} 
Let $P, Q \in \Mul(\B)$ be projections such that 
$P - Q \in \B$ and $P \sim 1 \sim 1 - P$.  
(Recall that, by Lemma \ref{lem:AShortComputation}, 
this implies that $Q \sim 1 \sim 1 - Q$.)

Since $P \sim Q$ and $1-P \sim 1 - Q$, 
let $U \in U(\Mul(\B))$ be such that 
$U(2Q-1)U^* = 2P-1$.  
Since $U(\Mul(\B))$ is (norm-) contractible (by 
\cite{WeggeOlsen} Theorem 16.8), let $\{ U_t \}_{t \in [0,1]}$
be a (norm-) continuous path of unitaries in $U(\Mul(\B))$
 for which $U_0 = 1$ and $U_1
 = U$.    
Let $\{ A_t \}_{t \in [0,1]}$ be the (norm-) continuous path of invertible
(and hence Fredholm) 
operators in $\Mul(\B)$ that is given by $$A_t =_{df} U_t (2Q-1) U_t^* 
\makebox{ for all } t\in [0,1].$$

Let $\omega_{Q, P} : [0, 1] \rightarrow \Ff_{SA, \infty}(\Mul(\B))$ 
(also denoted 
$\{ \omega_{Q, P, t } \}_{t \in [0,1]}$)  be the (norm-) continuous 
loop,
based at  
$2Q-1$,  which is defined by the concatenation of paths (see 
Definition \ref{df:DfGather}):   
\[
\omega_{Q, P} =_{df} \{ A_t \}_{t \in [0,1] } * \{ (1-t) (2P - 1) + t (2 Q - 1) \}_{t
\in [0,1]}.\\   
\]
\end{df}

\vspace*{2ex}

\begin{rmk}  \label{rmk:omegaQPNotWellDefined}
Note that the path $\omega_{Q,P}$, as in Definition \ref{df:ThirdExample},
is not well-defined in the sense that
it is dependent on the choices of the unitary
$U$ and the continuous path of unitaries $\{ U_t \}$.  The path
$\omega_{Q, P}$ is also defined using the operation of concatenation,
which is only well-defined up to homotopy (see 
Definition \ref{df:DfGather}). 
               
However, the important thing for us is that whichever the choices used to
define $\omega_{Q,P}$, the computations of spectral flow and 
the Pre Spectral Flow Isomorphism will always
yield the same quantity. This is the content of the next two lemmas.
This will then show that spectral flow $Sf$ induces the Pre Spectral Flow Isomorphism $PSf$.\\
\end{rmk}

\begin{lem} \label{lem:ThirdExampleSF}
Let $P, Q \in \Mul(\B)$ be projections such that $P - Q \in \B$ and
$P \sim 1 \sim 1 - P$. 

Let $\{ \omega_{Q, P, t}  \}_{t \in [0,1]}$ be the (norm-) continuous loop of 
self-adjoint Fredholm operators, based at $2Q-1$, which is defined as in 
Definition \ref{df:ThirdExample}.    

Then the spectral flow 
$$Sf(\{ \omega_{Q, P, t} \}_{t \in [0,1]}) = [P:Q].$$      
\end{lem}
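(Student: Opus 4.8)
The plan is to compute the spectral flow of the concatenated loop $\omega_{Q,P} = \{A_t\}_{t\in[0,1]} * \{(1-t)(2P-1)+t(2Q-1)\}_{t\in[0,1]}$ by using the fact that $Sf$ satisfies the Concatenation Axiom (SF3), proven in Proposition \ref{prop:SfSatisfiesAxioms}. This splits the computation into two pieces: $Sf(\{A_t\}_{t\in[0,1]})$ and $Sf(\{(1-t)(2P-1)+t(2Q-1)\}_{t\in[0,1]})$. The second piece is exactly the situation of Proposition \ref{prop:SecondExample} (or Proposition \ref{prop:FirstExample}), which gives $Sf(\{(1-t)(2P-1)+t(2Q-1)\}_{t\in[0,1]}) = [P:Q]$. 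So the key point is to show that the first piece vanishes: $Sf(\{A_t\}_{t\in[0,1]}) = 0$ in $K_0(\B)$.

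For that, I would invoke the Triviality Principle (Proposition \ref{prop:TrivialityPrinciple}): since $\{A_t = U_t(2Q-1)U_t^*\}_{t\in[0,1]}$ is a norm-continuous path of \emph{self-adjoint invertible} elements of $\Mul(\B)$ (each $A_t$ is a unitary conjugate of the self-adjoint invertible $2Q-1$, hence self-adjoint invertible), and since $Sf$ satisfies (SF2) and (SF3), Proposition \ref{prop:TrivialityPrinciple} gives $Sf(\{A_t\}_{t\in[0,1]}) = 0$. Combining, $Sf(\omega_{Q,P}) = 0 + [P:Q] = [P:Q]$. One should check carefully that the concatenation is legitimate, i.e., that the endpoints match up ($A_1 = U(2Q-1)U^* = 2P-1$, which is the left endpoint of the second path, and $A_0 = 2Q-1$, the base point) and that both pieces are paths of self-adjoint Fredholm operators with invertible endpoints so that $Sf$ is defined on each via Definition \ref{df:SpectralFlow}; these are all immediate from Definition \ref{df:ThirdExample}.

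The only genuinely delicate point — and I expect it to be the main (minor) obstacle — is that $\omega_{Q,P}$ is only defined up to homotopy and depends on the choices of $U$ and $\{U_t\}$ (Remark \ref{rmk:omegaQPNotWellDefined}); but since $Sf$ is a homotopy invariant by Axiom (SF2), and since the concatenation is well-defined up to homotopy by Definition \ref{df:DfGather}, the value $Sf(\omega_{Q,P})$ is independent of these choices, so the statement makes sense and the computation above is valid for any allowable choice. I would state this explicitly at the end. An alternative route, avoiding the Triviality Principle, would be to observe directly that $\{A_t\}_{t\in[0,1]}$ is homotopic (rel endpoints, through paths of invertibles) to the concatenation of the constant path at $2Q-1$ followed by a path of invertibles, and apply (SF2)+(SF3) by hand; but the Triviality Principle packages exactly this, so the cleaner writeup uses it directly.
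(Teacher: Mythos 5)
Your proposal is correct and follows essentially the same route as the paper's own proof: split the concatenation via (SF3), kill the unitary-conjugation piece by the Triviality Principle (Proposition \ref{prop:TrivialityPrinciple}), and evaluate the linear piece via Proposition \ref{prop:SecondExample}. The extra care you take about matching endpoints and about the well-definedness issue flagged in Remark \ref{rmk:omegaQPNotWellDefined} is sound and consistent with the paper, which handles the latter implicitly via the Homotopy Axiom.
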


\begin{proof}
By Definition \ref{df:ThirdExample}, 
 $\omega_{P, Q}$ is defined as the concatenation 
$$\omega_{P,Q} =_{df} \{ A_t \}_{t \in [0,1]} *
\{ (1 - t)(2P - 1) + t (2Q - 1) \}_{t \in [0,1]}$$
where for all $s \in [0,1]$, $A_s$ is defined as in Definition 
\ref{df:ThirdExample}. (See also 
Remark \ref{rmk:omegaQPNotWellDefined}.)   
Now by the Concatenation Axiom (SF3)  and the Triviality Principle 
(see Propositions \ref{prop:SfSatisfiesAxioms} and \ref{prop:TrivialityPrinciple}), we have that   
\begin{eqnarray*}
Sf(\omega_{Q, P}) & = & Sf(\{ A_t \}_{t \in [0,1]} * 
 \{ (1 - t)(2P - 1) + t (2Q - 1) \}_{t \in [0,1]} ) \\
& = &   Sf(\{ A_{t} \}_{t \in [0,1]})  
+ Sf( \{ (1 - t)(2P - 1) + t (2Q - 1) \}_{t \in [0,1]} )\\
& = & Sf( \{ (1 - t)(2P - 1) + t (2Q - 1) \}_{t \in [0,1]} ).
\end{eqnarray*}
But by Proposition \ref{prop:SecondExample},
$$Sf( \{ (1 - t)(2P - 1) + t (2Q - 1) \}_{t \in [0,1]} ) = [P:Q].$$\\ 
\end{proof}

\begin{lem}  \label{lem:ThirdExamplesfP}
Let $P, Q \in \Mul(\B)$ be projections with $P - Q \in \B$ and
$P \sim 1 \sim 1-P$.

Let $\omega_{Q,P}$ be the (norm-) continuous loop, based at
$2Q - 1$,  defined as in 
Definition \ref{df:ThirdExample}. 
Let $[\omega_{Q,P}]$ be the homotopy class of $\omega_{Q,P}$ 
in the fundamental group $\pi_1(\Ff_{SA, \infty}, 2Q - 1)$.

Then the Pre Spectral Flow Isomorphism applied to $[\omega_{Q,P}]$ is  
$$PSf([\omega_{Q,P}]) = [Q:P] = -[P:Q].$$
\end{lem}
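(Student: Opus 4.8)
The plan is to trace the loop $\omega_{Q,P}$ through the sequence of homotopy equivalences (\ref{equ:May720232AM}) that defines $PSf$, keeping track of what happens at the level of $\pi_0$, and arrive at an explicit invertible element of $\C(P_0 \B P_0)$ whose $K_1$-class, pushed through the index map $\partial : K_1(\C(\B)) \to K_0(\B)$, equals $[Q:P]$. First I would unwind the first map $\Ff_{SA,\infty} \to \ProI(\C(\B))$ of Lemma \ref{lem:Aug520239AM}: the loop $\{\omega_{Q,P,t}\}$ in $\Ff_{SA,\infty}$ based at $2Q-1$ is sent to the loop $\{1_{\geq 0}(\pi(\omega_{Q,P,t}))\}$ in $\ProI(\C(\B))$ based at $\pi(Q)$. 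On the first segment $A_t = U_t(2Q-1)U_t^*$, this is $\pi(U_t Q U_t^*)$; on the second segment $(1-t)(2P-1)+t(2Q-1)$, since $P - Q \in \B$, one has $\pi((1-t)(2P-1)+t(2Q-1)) = \pi(2Q-1)$ is constant, so $1_{\geq 0}$ of it is the constant projection $\pi(Q)$. Hence the image loop is homotopic (rel basepoint) to $\{\pi(U_t Q U_t^*)\}_{t\in[0,1]}$, a loop based at $\pi(Q)$.

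Next I would apply the homotopy inverse $\gamma$ of the map $\kappa$ from Theorem \ref{thm:May520237PM} and Definition \ref{df:PereraIsomRealSecondComp}, to land in $U_{P_0}(\Mul(\B))$. Here I would invoke that, by construction of $\alpha_{P_0}$ and the definition of $\kappa$ via the contraction of $U(\Mul(\B))$, the loop $t \mapsto \pi(U_t Q U_t^*)$ in $\ProI(\C(\B))$ is $\kappa$ of an appropriate element of the fiber: indeed, after conjugating $Q$ to $P_0$ by a fixed unitary (using $Q \sim 1 \sim 1-Q$ from Lemma \ref{lem:AShortComputation}, so $\pi(Q) = \alpha_{P_0}(\text{something})$), the element of $F = U_{P_0}(\Mul(\B))$ is, up to homotopy, a unitary $V$ with $\pi(V P_0 V^*) = \pi(Q)$ realizing the lifted endpoint of the path $\{U_t Q U_t^*\}$. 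The point is that $\gamma$ of a loop of the form $\alpha_{P_0}\circ\omega$ (with $\omega$ a path in $U(\Mul(\B))$ from $V$ to $1$) is homotopic to $V$ itself; this is essentially the statement $\gamma \circ \kappa \simeq \mathrm{id}_F$. Then the composite of the remaining maps $U_{P_0}(\Mul(\B)) \hookrightarrow GL_{P_0}(\Mul(\B)) \xrightarrow{\Phi} GL(\C(P_0 \B P_0))$ sends $V$ to $\pi(P_0 V P_0)$. Finally, applying the index isomorphism $\partial: K_1(\C(P_0\B P_0)) \to K_0(\B)$, I must show $\partial([\pi(P_0 V P_0)]) = [Q:P]$.

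The computation of $\partial([\pi(P_0 V P_0)])$ is exactly where Lemma \ref{lem:May1020231AM} enters, and this is the step I expect to require the most care. The generalized Fredholm index of the Fredholm operator $\pi(P_0 V P_0)$ (or, transporting back via $P_0 \sim Q$, of $QU^*Q$ for a suitable unitary $U$ with $UQU^* = P$) is, by Lemma \ref{lem:May1020231AM}, precisely $[Q:P]$. The main obstacle is bookkeeping: I must carefully match the unitary appearing at the end of the $\kappa$/$\gamma$ step with the unitary $U$ of Lemma \ref{lem:May1020231AM} (which satisfies $UQU^* = P$), accounting for the Murray--von Neumann equivalences $P_0 \sim Q \sim P$ and the direction of the various identifications, and check that the index map $\partial$ in the six-term sequence agrees with the generalized Fredholm index as set up in Subsection \ref{subsect:GeneralizedFredholm}. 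Once these identifications are pinned down, the chain gives $PSf([\omega_{Q,P}]) = [Q:P]$, and $[Q:P] = -[P:Q]$ is immediate from Proposition \ref{prop:ECProperties} (EC5). I would also remark that, by Remark \ref{rmk:omegaQPNotWellDefined} and the fact that $PSf$ is well-defined on homotopy classes while homotopic loops in $\Ff_{SA,\infty}$ arising from different choices of $U$ and $\{U_t\}$ are genuinely homotopic, the answer is independent of those choices, so the statement is meaningful as written.
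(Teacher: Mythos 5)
Your overall strategy is the same as the paper's: push the loop $\omega_{Q,P}$ through the chain of homotopy equivalences, reduce to the endpoint of a unitary lift along the fibration, cut down by the basepoint projection, and invoke Lemma~\ref{lem:May1020231AM} to recognize the generalized Fredholm index as an essential codimension.  You also correctly identify that the second leg of the concatenation dies in the corona (since $\pi(P)=\pi(Q)$), so the image loop in $\ProI(\C(\B))$ is homotopic rel basepoint to $\{\pi(U_tQU_t^*)\}$.  The use of a conjugating unitary $W$ with $WP_0W^*=Q$ is a legitimate alternative to the paper's device of simply replacing $P_0$ by $Q$ throughout; it adds bookkeeping but costs nothing conceptually.

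However, there is a genuine gap, and it sits exactly where you flag that ``the main obstacle is bookkeeping''.  First, a minor slip: the fiber element $V$ you describe belongs to $F=\alpha_{P_0}^{-1}(\pi(P_0))$, so it must satisfy $\pi(VP_0V^*)=\pi(P_0)$, not $\pi(VP_0V^*)=\pi(Q)$ as written.  More seriously, your phrase ``a unitary $V$\ldots realizing the lifted endpoint of the path $\{U_tQU_t^*\}$'' naturally produces $V=U_1$ (or $W^*U_1W$), since the obvious lift of the loop is $t\mapsto U_t$ (or $t\mapsto W^*U_tW$), which \emph{starts} at $1$ and \emph{ends} at $U_1$.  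But your own stated rule --- that $\gamma(\alpha_{P_0}\circ\omega)\simeq V$ when $\omega$ runs \emph{from $V$ to $1$} --- requires a lift with the opposite orientation.  Applied consistently, this forces a lift such as $t\mapsto U_tU_1^*$, which runs from $U_1^*$ to $1$; equivalently, one may observe (as the paper does via $\gamma_\lambda$ and Proposition~\ref{prop:UniqueLifting}) that $\gamma$ lifts the \emph{reversed} loop $\omega_{Q,P}^{-1}$ starting at $1$.  Either way the fiber element produced by $\gamma$ is $U_1^*$, not $U_1$.  This matters: cutting down $U_1$ would give $QU_1Q$ with index $[P:Q]$, while cutting down $U_1^*$ gives $QU_1^*Q$ with index $[Q:P]$ by Lemma~\ref{lem:May1020231AM}; the two differ by a sign.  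Your final sentence asserts, without derivation, that the cutdown is ``$QU^*Q$ for a suitable unitary $U$ with $UQU^*=P$'' --- which is the correct answer --- but this is inconsistent with the $V$ you actually constructed two sentences earlier.  To close the gap you should make explicit that $\gamma$ (or $\gamma_\lambda$) involves the reversed loop, and track the resulting $U_1^*$ through $\Phi$ and the index map, exactly as the paper does.
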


\begin{proof}
We follow the strategy of \cite{PhillipsVictoria} 2.8.

Note that here, when we are applying the Pre Spectral Flow Isomorphism $PSf$,
we are replacing $P_0$ with $Q$ in Definition \ref{df:PereraIsomorphism}.

Following the definition of the Pre Spectral Flow Isomorphism 
in Definition \ref{df:PereraIsomorphism} (see also 
(\ref{equ:May720232AM}), (\ref{equ:Aug2920238PM}), 
(\ref{equ:Aug2920239PM}), (\ref{equ:May720233AM}) and (\ref{equ:Aug29202311PM});  and also
Lemma \ref{lem:Aug520239AM}, Theorem \ref{thm:May520237PM},
Lemma \ref{lem:May320235AM} and 
Theorem \ref{thm:LastComponentPerera}) to 
compute $PSf([\omega_{Q,P}])$, we first
push $\omega_{Q,P}$ to a loop $\{ 1_{\geq 0}(\pi(\omega_{Q,P,t})) \}_{t \in [0,1]}$
in $\ProI(\C(\B))$ which is based at $Q$;     
then, using  Proposition   
\ref{prop:UniqueLifting} and that $\alpha_Q$ is a Hurewicz fibration, 
we need to lift 
$\{ 1_{\geq 0}(\pi(\omega_{Q,P,t}))^{-1} = 1_{\geq 0}(\pi(\omega_{Q, P, t}^{-1}))
 \}_{t \in [0,1]}$
 (along $\alpha_Q$) 
to any (norm-) continuous path in $U(\Mul(\B))$ which
begins (left endpoint $t = 0$) at $1_{\Mul(\B)}$, and evaluate 
at the right endpoint $t = 1$ to get an element $u$ of $U_{Q}(\Mul(\B))$; then
we take the cutdown $QuQ$, which will be Fredholm in $\Mul(Q \B Q)$, and
$PSf(\omega_{Q,P})$ will be the Fredholm index of $Q uQ$.   
Also, note that in  
the definition of the Pre Spectral Flow Isomorphism, we may replace 
$\omega_{Q,P}^{-1}$ by any homotopy equivalent loop based at $2Q - 1$.  

Let $\{ U_t \}_{t \in [0,1]}$ be the (norm-) continuous path of unitaries
in $U(\Mul(\B))$, as in Definition \ref{df:ThirdExample}.         
Up to homotopy equivalence, $\omega_{Q,P}^{-1}$ is the same as the
(norm-) continuous loop $\omega$ in $\Ff_{SA, \infty}$, based at $2Q - 1$, which 
is given by 
\[
\omega_t =_{df} 
\begin{cases}
(1 - 2t)(2Q-1) + 2t (2P-1)  & t \in [0, \frac{1}{2}]\\
U_{2 - 2t} (2Q - 1) U_{2 - 2t}^*  & t \in [\frac{1}{2}, 1]  
\end{cases}
\] 
(Recall, from Definition \ref{df:ThirdExample}, that
$U_0 = 1$ and $U_1 (2Q - 1) U_1^* = 2P - 1$.)
Applying the procedure sketched in the previous paragraph to $\omega$,
firstly, we have that
\[ 
1_{\geq 0}(\pi(\omega_t)) 
= 
\begin{cases}
\pi(Q) & t \in [0, \frac{1}{2}] \\
\pi(U_{2 - 2t}QU_{2 - 2t}^*) = \pi(U_{2 - 2t} U_1^* Q U_1 U_{2 - 2t}^*) & t \in [\frac{1}{2}, 1].
\end{cases}
\]
(Note that $\pi(U_1^* Q U_1) = \pi(U_1^* P U_1) = \pi(Q)$.)
Hence, a lift of $\{ 1_{\geq 0}(\pi(\omega_t)) \}_{t \in [0,1]}$ (along
$\alpha_Q$) is the (norm-) continuous
path $\{ V_t \}_{t \in [0,1]}$, in $U(\Mul(\B))$, given by
\[
V_t =_{df} 
\begin{cases} 
1 & t \in [0, \frac{1}{2}]\\
U_{2 - 2t} U_1^* & t \in [\frac{1}{2}, 1].   
\end{cases}
\]
And finally, $PSf([\omega_{P,Q}])$ is the Fredholm index of
$Q V_1 Q = Q U_0 U_1^*Q = QU_1^*Q$.  (Note that since $U_1Q U_1^* = P$,
$U_1 Q = P U_1$.  Hence, since $P - Q \in \B$, $Q U_1^* Q$ is a Fredholm
operator in $\Mul(Q \B Q)$.)

But by Lemma \ref{lem:May1020231AM}, the Fredholm index of $Q U_1^* Q$
is $[Q : P]$.   
That $[Q:P] = -[P:Q]$ follows from the KK definition of 
essential codimension (Definition \ref{df:EssentialCodimension}) and
\cite{JensenThomsenBook} Proposition 4.1.5.\\ 
\end{proof}

\begin{lem} \label{lem:ExhaustingPi1}  The fundamental group
$$\pi_1(\Ff_{SA, \infty}, 2P_0 - 1) = \{ [\omega_{P_0, P} ] : 
P \in Proj(\Mul(\B)) \makebox{  and  } P - P_0 \in \B \}.$$

Here, for all $P$, $[\omega_{P_0, P}]$ is the homotopy class of
the loop $\omega_{P_0, P}$, based at $2P_0 - 1$, which is defined in 
Definition \ref{df:ThirdExample}.   
\end{lem}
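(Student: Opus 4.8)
The plan is to show that every homotopy class in $\pi_1(\Ff_{SA,\infty}, 2P_0-1)$ is represented by one of the loops $\omega_{P_0,P}$. We already know, from Theorem \ref{thm:PereraIsoIsIso}, that $PSf : \pi_1(\Ff_{SA,\infty}, 2P_0-1) \rightarrow K_0(\B)$ is a group isomorphism, and from Lemma \ref{lem:ThirdExamplesfP} that $PSf([\omega_{P_0,P}]) = [P_0:P] \in K_0(\B)$ for every projection $P \in \Mul(\B)$ with $P - P_0 \in \B$. So it suffices to prove that the essential codimensions $[P_0:P]$, as $P$ ranges over all such projections, exhaust $K_0(\B)$. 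Given that, any class $c \in \pi_1(\Ff_{SA,\infty}, 2P_0-1)$ has $PSf(c) = [P_0:P]$ for some such $P$, and then $PSf(c) = PSf([\omega_{P_0,P}])$ forces $c = [\omega_{P_0,P}]$ by injectivity of $PSf$.

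So the real content is: for every $x \in K_0(\B)$, there exists a projection $P \in \Mul(\B)$ with $P - P_0 \in \B$ and $[P_0:P] = x$. I would argue this using the generalized homomorphism picture of KK (see equation (\ref{equdf:KK_h})) together with the definition of essential codimension (Definition \ref{df:EssentialCodimension}). Recall $K_0(\B) \cong KK(\mathbb{C}, \B)$, and every class in $KK(\mathbb{C},\B)$ is represented by a pair of $*$-homomorphisms $(\phi,\psi) : \mathbb{C} \rightarrow \Mul(\B)$ with $\phi(1) - \psi(1) \in \B$; here $\phi(1), \psi(1)$ are projections in $\Mul(\B)$ differing by an element of $\B$. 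The first subtlety is that we need one leg of the pair to be exactly $P_0$. This is where the stability of $\B$ and the hypothesis $P_0 \sim 1_{\Mul(\B)} \sim 1_{\Mul(\B)} - P_0$ enter: given an arbitrary representative $(P', Q')$ of $x$ (projections with $P' - Q' \in \B$), I would use that $P' \sim P_0$ "modulo $\B$" is not automatic, so instead I would add a degenerate summand and absorb it — use that $P_0$ is properly infinite in a strong sense to conjugate, via a unitary in $\Mul(\B)$ that is trivial modulo $\B$, so that one leg becomes $P_0$. Concretely, given $[\phi,\psi] = x$, I would find a projection $R \in \Mul(\B)$ with $R - P_0 \in \B$ and $[P_0 : R] = x$, by choosing $R$ to be an appropriate perturbation of $P_0$ realizing the $K_0$-class. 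This should follow from EC properties (Proposition \ref{prop:ECProperties}) — specifically additivity (EC6), the isometry invariance (EC7), and the fact (EC3) that for projections in $\B$ the essential codimension is the $K_0$-difference — combined with the observation that $P_0 \B P_0 \cong \B$ (since $P_0 \sim 1$ and $\B$ is stable) so $K_0(P_0 \B P_0) = K_0(\B)$, letting us realize any $K_0$-class by a projection in $P_0 \B P_0$ (after passing to matrix amplifications, which are absorbed by stability) and adding it to, or subtracting it from, $P_0$ within $\Mul(\B)$.

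The main obstacle I anticipate is the bookkeeping in realizing an \emph{arbitrary} $K_0(\B)$-class — including negative classes and classes not represented by an honest projection in $\B$ — as $[P_0:P]$ with $P$ a genuine projection in $\Mul(\B)$ congruent to $P_0$ mod $\B$. For positive classes represented by a projection $p \in \B$ this is easy: take $P = P_0' + p$ where $P_0'$ is a subprojection of $P_0$ Murray--von Neumann equivalent to $P_0$ inside $\Mul(\B)$ with $P_0 - P_0' \sim p$ "available" — then $[P_0 : P] = -[p] $ or $[p]$ by (EC6) and (EC3). The general case requires splitting $x = [p] - [q]$ for projections $p, q$ in matrix algebras over $\B$ and using the infiniteness of $P_0$ to fit two "independent" copies of the stabilization inside $P_0 \Mul(\B) P_0$ and inside $(1-P_0)\Mul(\B)(1-P_0)$ respectively; then $[P_0 : P] = [p] - [q] = x$ by (EC6) and (EC8). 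I would organize this as: (1) invoke $PSf$ isomorphism and Lemma \ref{lem:ThirdExamplesfP} to reduce to surjectivity of $P \mapsto [P_0:P]$ onto $K_0(\B)$; (2) prove that surjectivity using the KK picture and the EC properties, exploiting $P_0 \B P_0 \cong \B$ and the properly-infinite nature of $P_0$; (3) conclude via injectivity of $PSf$. Step (2) is the hard part; everything else is formal.
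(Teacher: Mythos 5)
Your steps (1) and (3) — reducing via Theorem \ref{thm:PereraIsoIsIso} and Lemma \ref{lem:ThirdExamplesfP} to the statement that $P \mapsto [P_0:P]$ is surjective onto $K_0(\B)$, and concluding by injectivity of $PSf$ — match the paper's reduction exactly. The gap is in step (2), and both of the mechanisms you sketch have problems. Conjugating by a unitary that is ``trivial modulo $\B$'' cannot move $Q'$ to $P_0$: if $\pi(U) = 1_{\C(\B)}$ then $\pi(UQ'U^*) = \pi(Q')$, so such a $U$ only works when $\pi(Q') = \pi(P_0)$ already, which is not automatic. The paper instead conjugates by a \emph{general} unitary $U \in U(\Mul(\B))$ with $UQ'U^* = P_0$ (possible after first adding a degenerate summand and invoking Lemma \ref{lem:AShortComputation} to arrange $Q' \sim 1 \sim 1 - Q'$). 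This $U$ does change the cycle; the point is that contractibility of $U(\Mul(\B))$ supplies a norm-continuous path $\{U_t\}_{t\in[0,1]}$ from $1$ to $U$, so that $t \mapsto (U_tQ'U_t^*,\ U_tQU_t^*)$ is a homotopy of $KK_h(\mathbb{C},\B)$-cycles, giving $[Q':Q] = [P_0 : UQU^*]$ in $KK(\mathbb{C},\B) = K_0(\B)$ with $UQU^* - P_0 \in \B$.

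Your fallback — writing $x = [p]-[q]$ with $p,q$ projections in matrix algebras over $\B$, then fitting copies into cutdowns of $P_0$ and $1-P_0$ — fails precisely in the regime the paper is designed to cover: stably projectionless $\B$. As noted in Remark \ref{rmk:SfIntuitionAndOrientation2}, there exist stable, stably projectionless $\B$ with $K_0(\B) = \mathbb{Z}$; then $M_n(\B) \cong \B$ has no nonzero projections whatsoever, so no nonzero $x \in K_0(\B)$ can be written as $[p]-[q]$ with $p,q$ projections in (matrix algebras over) $\B$. This is exactly why the paper represents $K_0(\B)$-classes using pairs of projections in $\Mul(\B)$ (via the $KK_h$ picture of Definition \ref{df:EssentialCodimension}) rather than projections in $\B$, and why the unitary-conjugation-plus-contractibility argument, which never leaves $\Mul(\B)$, is the correct mechanism here.
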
 

\begin{proof}

Note that every element of $K_0(\B) = KK(\mathbb{C}, \B)$ is realized
as an essential codimension
$[Q':Q]$  where  $Q, Q' \in \Mul(\B)$ are projections such that 
$Q' - Q \in \B$ (this follows immediately from the KK definition of
essential codimension and the generalized 
homomorphism picture of $KK(\mathbb{C}, \B)$; e.g.,
 see (\ref{equdf:KK_h}) and Definition \ref{df:EssentialCodimension}; see also \cite{JensenThomsenBook} Chapter 4).  
Moreover, we can choose $Q, Q'$ so that 
$Q \sim 1 \sim 1-Q$ and as a consequence, by Lemma 
\ref{lem:AShortComputation},  
$Q' \sim 1 \sim 1-Q'$ (use \cite{JensenThomsenBook} Lemma 4.1.4 and
the definition of addition after it; use also \cite{WeggeOlsen} Lemma
16.2).    
Moreover, for every such pair $(Q', Q)$, 
we can find a unitary $U \in \Mul(\B)$
such that $P_0 = UQ'U^*$.
Since $U(\Mul(\B))$ is (norm-) contractible (\cite{WeggeOlsen} Theorem 
16.8), $(Q', Q)$ and $(U Q' U^*, U Q U^*) = (P_0, U Q U^*)$ correspond
to 
homotopic $KK_h(\mathbb{C}, \B)$-cycles  (see the paragraph before
Definition \ref{df:EssentialCodimension}). 
Hence, $[Q':Q] = [P_0,UQU^*]$ in $KK(\mathbb{C}, \B) = K_0(\B)$.
Since $(Q', Q)$ is arbitrary, 
every element in $K_0(\B)$ can be realized by an essential
codimension of the form $[P_0, Q'']$.

Hence, by Theorem \ref{thm:PereraIsoIsIso}   
and Lemma \ref{lem:ThirdExamplesfP},
$$\pi_1(\Ff_{SA, \infty}, 2P_0 - 1) = 
\{ [\omega_{P_0, P}] : P \in Proj(\Mul(\B)) \makebox{  and  }
P - P_0 \in \B \}.$$\\  
\end{proof}

\begin{thm}  \label{thm:Sf*=sfP}  
Let 
$$Sf_* : \pi_1(\Ff_{SA, \infty}(\Mul(\B)), 2P_0 - 1) \rightarrow K_0(\B)$$
be the group homomorphism induced by the spectral flow $Sf$.

Then $Sf_*$ is a group isomorphism.
In fact,
\begin{equation} \label{equ:Sf*=sfPExceptSign}
 Sf_* = -PSf. \end{equation}  
\end{thm}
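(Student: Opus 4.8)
The plan is to show that the two group homomorphisms $Sf_*$ and $-PSf$ from $\pi_1(\Ff_{SA,\infty}(\Mul(\B)), 2P_0-1)$ to $K_0(\B)$ agree by checking that they agree on a generating set. First I would invoke Lemma \ref{lem:ExhaustingPi1}, which tells us that every element of $\pi_1(\Ff_{SA,\infty}, 2P_0-1)$ has the form $[\omega_{P_0,P}]$ for some projection $P \in \Mul(\B)$ with $P - P_0 \in \B$ (and, by Lemma \ref{lem:AShortComputation}, such a $P$ automatically satisfies $P \sim 1 \sim 1-P$). So it suffices to verify equation (\ref{equ:Sf*=sfPExceptSign}) on each class $[\omega_{P_0,P}]$.

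Next I would compute both sides on such a class. On the one hand, by Lemma \ref{lem:ThirdExampleSF} (with $Q = P_0$), we have $Sf(\{\omega_{P_0,P,t}\}_{t\in[0,1]}) = [P:P_0]$, so $Sf_*([\omega_{P_0,P}]) = [P:P_0]$ in $K_0(\B)$. On the other hand, by Lemma \ref{lem:ThirdExamplesfP} (again with $Q = P_0$), $PSf([\omega_{P_0,P}]) = [P_0:P] = -[P:P_0]$. Hence $Sf_*([\omega_{P_0,P}]) = [P:P_0] = -[P_0:P] = -PSf([\omega_{P_0,P}])$. Since these classes exhaust $\pi_1(\Ff_{SA,\infty}, 2P_0-1)$ and both $Sf_*$ and $PSf$ are group homomorphisms, we conclude $Sf_* = -PSf$ as maps on all of $\pi_1$. (I should note that $Sf_*$ is indeed a well-defined group homomorphism: $Sf$ is homotopy invariant by Axiom (SF2) of Proposition \ref{prop:SfSatisfiesAxioms}, and it is additive under concatenation by Axiom (SF3), which is precisely the group operation on $\pi_1$; for loops, concatenation of loops induces the product in the fundamental group. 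The Triviality Principle, Proposition \ref{prop:TrivialityPrinciple}, guarantees that the constant loop maps to $0$.)

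Finally, that $Sf_*$ is a group isomorphism is then immediate: $PSf$ is a group isomorphism by Theorem \ref{thm:PereraIsoIsIso}, and $-PSf$ (composition of $PSf$ with the inversion automorphism $x \mapsto -x$ of the abelian group $K_0(\B)$) is therefore also a group isomorphism; so $Sf_* = -PSf$ is a group isomorphism.

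The only genuine subtlety — and the step I would treat most carefully — is the compatibility of base points and parametrizations between the two sides. The loop $\omega_{P_0,P}$ is only well-defined up to homotopy (it depends on choices of a connecting unitary $U$, a path $\{U_t\}$ contracting $U(\Mul(\B))$, and on the concatenation reparametrization; see Remark \ref{rmk:omegaQPNotWellDefined}), so I must be sure that Lemmas \ref{lem:ThirdExampleSF} and \ref{lem:ThirdExamplesfP} are both computing with the \emph{same} homotopy class — which they are, since both are stated for the class $[\omega_{P_0,P}]$ and both $Sf$ and $PSf$ are homotopy invariants. I would also double-check the orientation/sign bookkeeping in Lemma \ref{lem:ThirdExamplesfP}: there the computation goes through $\omega_{P_0,P}^{-1}$ (the reversed loop), which is why the Fredholm index that emerges is $[P_0:P] = -[P:P_0]$ rather than $[P:P_0]$, and this reversed orientation is exactly the source of the overall minus sign in (\ref{equ:Sf*=sfPExceptSign}). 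Nothing here requires new machinery; it is an assembly of the preceding three lemmas plus the isomorphism statement.
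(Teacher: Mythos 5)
Your proposal is correct and follows essentially the same route as the paper: it invokes Lemma \ref{lem:ExhaustingPi1} to reduce to loops of the form $\omega_{P_0,P}$, evaluates both $Sf_*$ and $PSf$ on these via Lemmas \ref{lem:ThirdExampleSF} and \ref{lem:ThirdExamplesfP}, and concludes the isomorphism from Theorem \ref{thm:PereraIsoIsIso}. Your additional remarks — that $P \sim 1 \sim 1-P$ follows from Lemma \ref{lem:AShortComputation}, that (SF2) gives well-definedness and (SF3) gives the homomorphism property, and that the sign traces back to the loop reversal in Lemma \ref{lem:ThirdExamplesfP} — are all correct and simply spell out details that the paper's one-paragraph proof leaves implicit.
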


\begin{proof}
Since spectral flow  $Sf$ satisfies the Homotopy Axiom (SF2)
(by Proposition \ref{prop:SfSatisfiesAxioms}), 
$Sf$ induces a well-defined map
$Sf_*$ on $\pi_1(\Ff_{SA, \infty}, 2P_0 - 1)$. 

Hence, since the  $PSf : \pi_1(\Ff_{SA, \infty},  2P_0 -1)
\rightarrow K_0(\B)$ is a group isomorphism (see Theorem 
\ref{thm:PereraIsoIsIso}) and by Lemmas \ref{lem:ExhaustingPi1},
 \ref{lem:ThirdExampleSF} and
\ref{lem:ThirdExamplesfP}, 
$$Sf_* = -PSf.$$\\     
\end{proof}

\begin{rmk}
The reason for the negative sign in (\ref{equ:Sf*=sfPExceptSign})
is because of the difference in ``orientation" of our version of
spectral flow and the versions of spectral flow from \cite{AtiyahPatodiSinger3}
and others like \cite{BenPhillEtAl}.   

Recall that, intuitively, our spectral flow measures the ``net mass" of the part
spectrum that passes through zero in the \emph{negative} direction.
In contrast, the spectral flow of Atiyah--Patodi--Singer and others
measures the ``net mass" of the part of the spectrum that passes through zero in
the \emph{positive} direction.  See Remarks 
\ref{rmk:SfIntuitionAndOrientation1} 
and \ref{rmk:SfIntuitionAndOrientation2}.\\  
\end{rmk}

Towards loosening the restriction on the basepoint in 
Theorem \ref{thm:Sf*=sfP}, we require the following technical result:

\begin{lem} \label{lem:MakingTrivialPaths}
Any two invertible elements of $\Ff_{SA, \infty}(\Mul(\B))$
are connected by a (norm-) continuous path of invertible elements
of $\Ff_{SA, \infty}(\Mul(\B))$. 
\end{lem}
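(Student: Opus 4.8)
The plan is to reduce the statement to the path-connectedness of the relevant spaces of invertibles, which we have essentially already established. First I would recall that if $X \in \Ff_{SA, \infty}(\Mul(\B))$ is an invertible element of $\Mul(\B)$, then $X \in GL(\Mul(\B))_{SA}$, and moreover $1_{\geq 0}(X)$ is a projection in $\Mul(\B)$ with $\pi(1_{\geq 0}(X)) = 1_{\geq 0}(\pi(X))$. Since $X \in \Ff_{SA, \infty}$, we have $1_{\geq 0}(\pi(X)) \sim 1_{\C(\B)} \sim 1_{\C(\B)} - 1_{\geq 0}(\pi(X))$ in $\C(\B)$. By Lemma \ref{lem:Aug520233pm} (or the characterization in Lemma \ref{lem:ProIChar}), this lifts back to the statement that $1_{\geq 0}(X) \sim 1_{\Mul(\B)} \sim 1_{\Mul(\B)} - 1_{\geq 0}(X)$ in $\Mul(\B)$. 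Hence $X \in GL(\Mul(\B))_{SA, \infty}$. Conversely, any element of $GL(\Mul(\B))_{SA, \infty}$ lies in $\Ff_{SA, \infty}(\Mul(\B))$ and is invertible. So the set of invertible elements of $\Ff_{SA, \infty}(\Mul(\B))$ is exactly $GL(\Mul(\B))_{SA, \infty}$.

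Next I would invoke Lemma \ref{lem:May220231AM}, whose last paragraph states precisely that $GL(\C)_{SA, \infty}$ is a (norm-) path-connected topological space for $\C = \C(\B)$; but here I need the statement for $\C = \Mul(\B)$. The argument given in the proof of Lemma \ref{lem:May220231AM} for the corona case uses only that $\C(\B)$ is properly infinite and $K_1$-injective and that $\B$ (hence $\C$) has suitable comparison of projections — all of which also hold for $\Mul(\B)$ since $\B$ is stable: $1_{\Mul(\B)}$ is properly infinite, $K_1(\Mul(\B)) = 0$ (by \cite{WeggeOlsen} Theorem 16.8, $\Mul(\B)$ has trivial $K_1$ as its unitary group is contractible, hence is $K_1$-injective), and any two projections $P, Q$ with $P \sim 1 \sim 1 - P$ and $Q \sim 1 \sim 1 - Q$ are conjugate by a unitary in $\Mul(\B)$. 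Thus the same chain of reductions — pass via the continuous functional calculus from $X$ to $1_{\geq 0}(X) - (1 - 1_{\geq 0}(X))$, conjugate one symmetry to the other by a unitary built from partial isometries implementing the equivalences, and absorb the resulting $K_1$-obstruction (which vanishes since $K_1(\Mul(\B)) = 0$) to get a unitary homotopic to $1$ — shows $GL(\Mul(\B))_{SA, \infty}$ is path-connected. Given two invertibles $X, Y \in \Ff_{SA, \infty}(\Mul(\B))$, this path lies entirely inside $GL(\Mul(\B))_{SA, \infty} \subseteq \Ff_{SA, \infty}(\Mul(\B))$, which is what we want.

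The main obstacle is simply making sure the ingredients of the proof of Lemma \ref{lem:May220231AM} (third paragraph) genuinely transfer from $\C(\B)$ to $\Mul(\B)$: the proof there cites $K_1$-injectivity of $\C(\B)$, and I must replace this with $K_1$-injectivity of $\Mul(\B)$, which is immediate from the contractibility of $U(\Mul(\B))$ (\cite{WeggeOlsen} Theorem 16.8) since a contractible unitary group forces every $U$ with $[U] = 0$ in $K_1$ to be homotopic to $1$ (indeed every unitary is). In fact this makes the $\Mul(\B)$ case strictly easier than the $\C(\B)$ case, because there is no $K_1$-obstruction to absorb at all. So the cleanest write-up would be: observe the set of invertible elements of $\Ff_{SA, \infty}$ equals $GL(\Mul(\B))_{SA, \infty}$; then repeat (with simplifications) the path-connectedness argument from Lemma \ref{lem:May220231AM}, using contractibility of $U(\Mul(\B))$ in place of $K_1$-injectivity of $\C(\B)$; conclude.
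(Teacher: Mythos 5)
Your proof is correct and follows essentially the same strategy as the paper: both reduce to the support projections $P_j = 1_{\geq 0}(X_j)$, lift the corona-level equivalences to $P_j \sim 1 \sim 1 - P_j$ in $\Mul(\B)$, and then exploit the (norm-) contractibility of $U(\Mul(\B))$ to connect $X_1$ to $X_2$. The one small bookkeeping point to tighten: Lemma \ref{lem:Aug520233pm} by itself only produces \emph{some} projection $P' \in \Mul(\B)$ with $\pi(P') = \pi(1_{\geq 0}(X))$ and $P' \sim 1 \sim 1 - P'$; to transfer that property to $1_{\geq 0}(X)$ itself (noting $1_{\geq 0}(X) - P' \in \B$) you also need Lemma \ref{lem:AShortComputation}, which the paper invokes explicitly and your write-up should cite as well. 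Beyond that your route via identifying the invertible elements of $\Ff_{SA,\infty}(\Mul(\B))$ with $GL(\Mul(\B))_{SA,\infty}$ and re-running the third paragraph of Lemma \ref{lem:May220231AM} (now trivialized since $U(\Mul(\B))$ is contractible, so no $K_1$-obstruction arises) is a slightly more modular packaging of the same underlying construction that the paper writes out directly as an explicit two-segment path $Y_t$.
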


\begin{proof}
Let $X_1, X_2 \in \Ff_{SA, \infty}(\Mul(\B))$ both be invertible.
For $j = 1,2$, since $X_j$ is invertible and self-adjoint,
let $P_j =_{df} 1_{\geq 0}(X_j) \in Proj(\Mul(\B))$ (which implies that $1- P_j =  
1_{\geq 0}(-X_j)$).
From the definition of $\Ff_{SA, \infty}$, it follows that 
for $j = 1,2$,
$$\pi(P_j) = 1_{\geq 0}(\pi(X_j)) \sim 1 \sim 1_{\geq 0}(-\pi(X_j))
= \pi(1 - P_j) = 1 - \pi(P_j).$$
By Lemma \ref{lem:Aug520233pm}, for $j = 1,2$, we can find a projection
$Q_j \in \Mul(\B)$ such that 
$$\pi(Q_j) = \pi(P_j) \makebox{  and  }
Q_j \sim 1 \sim 1 - Q_j.$$
Hence, for $j = 1,2$, $Q_j - P_j \in \B$.
It follows, by Lemma \ref{lem:AShortComputation}, that for $j = 1,2$,
$$P_j \sim 1 \sim 1 - P_j.$$
Hence, let $U \in \Mul(\B)$ be a unitary such that 
$$U P_1 U^* = P_2.$$
(As a consequence, $U(1 - P_1) U^* = 1 - P_2$.)
Among other things, note that
$$1_{\geq 0}(UX_1U^*) = P_2 =1_{\geq 0}(X_2) 
\makebox{  and  } 1_{\geq 0}(-UX_1U^*) = 1- P_2 =1_{\geq 0}(-X_2).$$ 

Since $U(\Mul(\B))$ is contractible (by \cite{WeggeOlsen} Theorem 16.8),
let $\{ U_t \}_{t \in [0,\frac{1}{2}]}$ be a (norm-) continuous path
of unitaries in $U(\Mul(\B))$ such that
$$U_0 = 1 \makebox{  and  } U_{\frac{1}{2}} = U.$$
 
Let $\{ Y_t \}_{t \in [0,1]}$ be the (norm-) continuous path of 
invertible elements in $\Ff_{SA, \infty}$ that is given by 
\[
Y_t =_{df} \begin{cases} 
U_t X_1 U_t^* & t \in [0, \frac{1}{2}] \\
(2 - 2t) U X_1 U^* +  (2t -1) X_2  & t \in [\frac{1}{2}, 1]. 
\end{cases}  
\]
Then $$Y_0 = X_1 \makebox{  and  } Y_1 = X_2.$$\\ 
\end{proof}

\begin{thm}  \label{thm:GeneralSFIsomorphism}
Let $X \in \Ff_{SA, \infty}$ be an arbitrary invertible element.

Then the spectral flow $Sf$ induces a group isomorphism
$$Sf_* : \pi_1 (\Ff_{SA, \infty}, X) \rightarrow K_0(\B).$$
\end{thm}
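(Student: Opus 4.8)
\textbf{Proof plan for Theorem \ref{thm:GeneralSFIsomorphism}.}
The plan is to reduce the case of an arbitrary invertible basepoint $X \in \Ff_{SA, \infty}$ to the already-proven case $X = 2P_0 - 1$ of Theorem \ref{thm:Sf*=sfP}, using a change-of-basepoint isomorphism for fundamental groups together with the Homotopy Axiom (SF2) and the Concatenation Axiom (SF3) for $Sf$. First I would invoke Lemma \ref{lem:MakingTrivialPaths} to fix a (norm-) continuous path $\{ C_t \}_{t \in [0,1]}$ of invertible elements of $\Ff_{SA, \infty}$ with $C_0 = 2P_0 - 1$ and $C_1 = X$. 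This path induces the standard change-of-basepoint group isomorphism $\beta_C : \pi_1(\Ff_{SA, \infty}, 2P_0 - 1) \rightarrow \pi_1(\Ff_{SA, \infty}, X)$ sending a loop class $[\omega]$ based at $2P_0 - 1$ to $[C^{-1} * \omega * C]$ based at $X$ (here $C^{-1}$ is the reverse path and $*$ is concatenation of paths, as in Definition \ref{df:DfGather}).

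The key step is to check that $Sf_*$ at the basepoint $X$ is compatible with $Sf_*$ at the basepoint $2P_0 - 1$ via $\beta_C$, i.e. that the diagram
\begin{equation*}
Sf_*^{X} \circ \beta_C = Sf_*^{2P_0 - 1}
\end{equation*}
commutes as a statement about maps $\pi_1(\Ff_{SA, \infty}, 2P_0-1) \to K_0(\B)$. For a loop $\omega$ based at $2P_0-1$, one computes $Sf_*^X(\beta_C[\omega]) = Sf(C^{-1} * \omega * C)$, which by the Concatenation Axiom (SF3) equals $Sf(C^{-1}) + Sf(\omega) + Sf(C)$. Now $C$ and $C^{-1}$ are paths of \emph{invertible} self-adjoint Fredholm operators, so by the Triviality Principle (Proposition \ref{prop:TrivialityPrinciple}, which applies since $Sf$ satisfies (SF2) and (SF3) by Proposition \ref{prop:SfSatisfiesAxioms}) we have $Sf(C) = 0 = Sf(C^{-1})$ in $K_0(\B)$. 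Hence $Sf_*^X(\beta_C[\omega]) = Sf(\omega) = Sf_*^{2P_0-1}[\omega]$, establishing the compatibility. (One must also note that $\beta_C[\omega]$ is a loop \emph{with invertible endpoints} — indeed constant endpoint $X$ — so $Sf$ is defined on it.)

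Finally, $\beta_C$ is a group isomorphism (the standard change-of-basepoint map, and $\Ff_{SA, \infty}$ is path-connected by Lemma \ref{lem:Aug520235AM}, so this is well-defined), and by Theorem \ref{thm:Sf*=sfP} the map $Sf_*^{2P_0 - 1} = -PSf$ is a group isomorphism onto $K_0(\B)$. Since $Sf_*^X \circ \beta_C = Sf_*^{2P_0 - 1}$ with both $\beta_C$ and $Sf_*^{2P_0-1}$ isomorphisms, it follows that $Sf_*^X = Sf_*^{2P_0-1} \circ \beta_C^{-1}$ is a composition of group isomorphisms, hence itself a group isomorphism $\pi_1(\Ff_{SA, \infty}, X) \rightarrow K_0(\B)$. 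The main obstacle — though it is mild — is bookkeeping the endpoints: one needs that all the paths involved ($C$, $C^{-1}$, $\omega$, and their concatenations) have invertible endpoints so that $Sf$ is defined on them and so that (SF3) applies, and that the change-of-basepoint loop is genuinely a loop at $X$ (forcing invertible, indeed constant, endpoints), which is exactly why Lemma \ref{lem:MakingTrivialPaths} was isolated beforehand. Beyond that, the argument is a routine application of the axiomatic properties of $Sf$ already established.
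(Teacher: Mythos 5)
Your proposal is correct and follows essentially the same route as the paper: pick a path of invertible elements between $X$ and $2P_0 - 1$ via Lemma \ref{lem:MakingTrivialPaths}, conjugate by it to get a change-of-basepoint isomorphism, and use (SF3) together with the Triviality Principle to show $Sf_*$ is compatible across the two basepoints, reducing to Theorem \ref{thm:Sf*=sfP}. The only (inessential) difference is that you orient the change-of-basepoint map $\pi_1(\Ff_{SA,\infty}, 2P_0-1) \to \pi_1(\Ff_{SA,\infty}, X)$, whereas the paper uses the inverse map $\Phi : \pi_1(\Ff_{SA,\infty}, X) \to \pi_1(\Ff_{SA,\infty}, 2P_0-1)$.
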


\begin{proof}
Firstly, since spectral flow $Sf$ satisfies the Homotopy Axiom (SF2) (see
Proposition \ref{prop:SfSatisfiesAxioms}),
it induces well-defined maps $Sf_*$ on $\pi_1(\Ff_{SA, \infty}, X)$
and $\pi_1(\Ff_{SA, \infty}, 2P_0 - 1)$.   

By Lemma \ref{lem:MakingTrivialPaths}, let $\omega_X :[0,1] 
\rightarrow \Ff_{SA, \infty}$ be a (norm-) continuous path, consisting
of \emph{invertible} elements, such that 
$$\omega_X(0) = X \makebox{  and  } \omega_X(1) = 2 P_0 - 1.$$

Then we have a group isomorphism
\begin{equation}  \label{equ:Aug29202311:59PM}
\Phi : \pi_1(\Ff_{SA, \infty}, X) \rightarrow \pi_1(\Ff_{SA, \infty},
2P_0 - 1) : [\omega] \mapsto [\omega^{-1}_X  *  \omega * \omega_X].
\end{equation} 
(Recall that $*$ is the concatenation operation; see Definition 
\ref{df:DfGather}.   The inverse to the above map would be 
$[\omega'] \mapsto [\omega_X * \omega' * \omega^{-1}_X]$.)  

Since spectral flow $Sf$ satisfies the Concatenation Axiom (SF3) and the
Triviality Principle (see Propositions \ref{prop:SfSatisfiesAxioms} and
\ref{prop:TrivialityPrinciple}),
for all $\omega \in \pi_1(\Ff_{SA, \infty}, X)$,
\begin{eqnarray*}
Sf_*(\Phi([\omega])) & = & Sf_*([\omega^{-1}_X * \omega * \omega_X])\\
& = & Sf_*([\omega^{-1}_X]) + Sf_*([\omega]) + Sf([\omega_X])\\
& = & Sf_*([\omega]).  
\end{eqnarray*}
Hence,
\begin{equation} \label{equ:Aug29202311:5999}  
Sf_* \circ \Phi = Sf_* \end{equation} 
as maps from $\pi_1(\Ff_{SA, \infty}, X)$ to $K_0(\B)$.  
Note that the ``$Sf_*$" on the left of (\ref{equ:Aug29202311:5999})
is the map $\pi(\Ff_{SA, \infty}, 2P_0 - 1) \rightarrow K_0(\B)$ 
(which, by Theorem \ref{thm:Sf*=sfP}, is a group isomorphism), and the
``$Sf_*$" on the right of (\ref{equ:Aug29202311:5999}) is the map
$\pi_1(\Ff_{SA, \infty}, X) \rightarrow K_0(\B)$.

Now since $\Phi$ is a group isomorphism, by Theorem \ref{thm:Sf*=sfP},
$Sf_* \circ \Phi$ (the map on the left in (\ref{equ:Aug29202311:5999}))
is a group isomorphism.  Hence, by (\ref{equ:Aug29202311:5999}),
the map $Sf_* : \pi_1(\Ff_{SA, \infty}, X) \rightarrow K_0(\B)$ (on the
right in (\ref{equ:Aug29202311:5999})) is a group isomorphism.\\ 
\end{proof}

\subsection{Axiomatization}

\label{subsect:Axiomatization}
\makebox{ }\\

\vspace*{2ex}
\textbf{Recall that from this subsection on, we are NO longer assuming
the standing conventions ($\Lambda$) from Subsection 5.1.}\\ 

Also, we point out to the the reader that, in this subsection, we use ``Sf" (upper case S) to denote spectral flow, as defined
in Definition \ref{df:SpectralFlow}; and we use ``sf" (lower case s) to
denote an arbitrary functor.  We also refer the reader to Section
\ref{sec:FunctorialAxioms} for the definitions of various functorial
properties or axioms.\\\\

We begin by restricting the class of (norm-) continuous paths of Fredholm 
operators to one for which our axiomatization would hold.

\begin{df} \label{df:Pg}  For a separable stable C*-algebra $\B$, let  
$\Pg_{SA, \infty}(\Mul(\B))$  
      denote the collection of (norm-) continuous paths $\{ A_t \}_{t \in [0,1]}$
in  
$\Ff_{SA}$ such that 
$A_0$ and $A_1$ are both invertible elements of $\Ff_{SA, \infty}$.

Often, since the context is clear, we drop the ``$\Mul(B)$" and
simply write $\Pg_{SA, \infty}$ instead of $\Pg_{SA, \infty}(\Mul(\B))$.\\
\end{df}

\begin{rmk} \label{rmk:Aug3120231AM} 
Since $\Ff_{SA, \infty}(\Mul(\B))$ is a path-connected and clopen
subset of $\Ff_{SA}(\Mul(\B))$ (by Lemma \ref{lem:Aug520235AM}),  
$\Pg_{SA, \infty}(\Mul(\B))$ is exactly the class of 
(norm-) continuous paths in 
$\Ff_{SA, \infty}(\Mul(\B))$ with invertible endpoints.\\ 
\end{rmk}

\begin{prop}  \label{prop:SFSatisfiesAxiomsOnPg}
For every separable stable C*-algebra $\B$,
the spectral flow
$Sf$ satisfies Axioms (SF1) to (SF4) on $\Pg_{SA, \infty}(\Mul(\B))$.
\end{prop}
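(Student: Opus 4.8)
The plan is to observe that Proposition \ref{prop:SFSatisfiesAxiomsOnPg} is almost immediate from Proposition \ref{prop:SfSatisfiesAxioms}, which already establishes that $Sf$ satisfies Axioms (SF1)--(SF4) on the larger class of all norm-continuous paths of self-adjoint Fredholm operators with invertible endpoints. Since, by Remark \ref{rmk:Aug3120231AM}, $\Pg_{SA, \infty}(\Mul(\B))$ is precisely the class of norm-continuous paths lying in $\Ff_{SA, \infty}(\Mul(\B))$ with invertible endpoints, the real content is just to check that each axiom, when its hypotheses are restricted to paths in $\Pg_{SA, \infty}$, still has its conclusion inside $\Pg_{SA, \infty}$ (i.e., that the relevant operations preserve the subclass) so that the restriction of $Sf$ genuinely ``satisfies the axiom on $\Pg_{SA, \infty}$''. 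So the work is a closure/stability check, axiom by axiom.

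First I would handle (SF2) Homotopy and (SF3) Concatenation, which are internal to a fixed $\Mul(\B)$. For (SF2): if $\{A_{0,t}\}$ and $\{A_{1,t}\}$ are homotopic paths in $\Pg_{SA, \infty}$, the homotopy $\{A_{s,t}\}$ a priori only runs through $\Ff_{SA}$; but since $\Ff_{SA, \infty}$ is clopen in $\Ff_{SA}$ (Lemma \ref{lem:Aug520235AM}) and the parameter square $[0,1]\times[0,1]$ is connected, and since $A_{0,0} \in \Ff_{SA, \infty}$, the whole homotopy in fact stays in $\Ff_{SA, \infty}$; hence the hypotheses of (SF2) for the restricted class are met and the equality $Sf(\{A_{0,t}\}) = Sf(\{A_{1,t}\})$ follows from Proposition \ref{prop:SfSatisfiesAxioms}. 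For (SF3): a concatenation of two paths in $\Pg_{SA, \infty}$ with matching endpoint $A_1 = B_0 \in \Ff_{SA, \infty}$ again has image in $\Ff_{SA, \infty}$ (it is just the set-theoretic union of the two images, reparametrized), with invertible endpoints $A_0, B_1 \in \Ff_{SA, \infty}$, so it again lies in $\Pg_{SA, \infty}$, and the additivity is inherited from Proposition \ref{prop:SfSatisfiesAxioms}.

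Next, (SF4) Normalization: the hypotheses there already force $P \sim 1 \sim 1-P$ (and hence $Q \sim 1 \sim 1 - Q$ by Lemma \ref{lem:AShortComputation}), so $2P - 1$ and $2Q - 1$ are invertible self-adjoint elements with $1_{\geq 0}(\pm \pi(2P-1)) = \pi(P), \pi(1-P) \sim 1_{\C(\B)}$, i.e.\ both endpoints lie in $\Ff_{SA, \infty}$, and along the path $(1-t)(2P-1) + t(2Q-1)$ the image under $\pi$ is the constant $2\pi(P) - 1$ (since $P - Q \in \B$), so the whole path lies in $\Ff_{SA, \infty}$; thus this path is in $\Pg_{SA, \infty}$ and the identity $Sf(\cdot) = [P:Q]$ is exactly (SF4) from Proposition \ref{prop:SfSatisfiesAxioms}. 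Finally, (SF1) Functoriality: given $\phi : \B \to \D$ bringing approximate units to approximate units and $\{A_t\} \in \Pg_{SA, \infty}(\Mul(\B))$, one checks via the induced strictly continuous unital $\phi : \Mul(\B) \to \Mul(\D)$ and $\overline{\phi} : \C(\B) \to \C(\D)$ that $\phi$ carries Murray--von Neumann subequivalences $1_{\C(\B)} \preceq 1_{\geq 0}(\pm\pi(A_j))$ to $1_{\C(\D)} \preceq 1_{\geq 0}(\pm\pi(\phi(A_j)))$ (since $\overline{\phi}$ is unital and carries partial isometries to partial isometries), so $\phi(A_0), \phi(A_1) \in \Ff_{SA, \infty}(\Mul(\D))$; since the whole image path stays in $\Ff_{SA, \infty}(\Mul(\D))$ by the same reasoning applied pointwise together with clopenness, $\{\phi(A_t)\} \in \Pg_{SA, \infty}(\Mul(\D))$, and the identity $[\phi]\circ Sf(\{A_t\}) = Sf(\{\phi(A_t)\})$ is (SF1) from Proposition \ref{prop:SfSatisfiesAxioms}. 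I expect no serious obstacle here; the only mildly delicate point is verifying that $\overline{\phi}$ preserves the subequivalences $\preceq 1_{\C(\B)}$ needed to land in $\Ff_{SA, \infty}$ rather than merely $\Ff_{SA, *}$, which is a short partial-isometry argument.
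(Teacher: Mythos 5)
Your proof is correct and matches the paper's approach: the paper's own proof simply cites Proposition \ref{prop:SfSatisfiesAxioms} and remarks that the paths appearing in the Normalization Axiom (SF4) already lie in $\Pg_{SA,\infty}$. Your more careful axiom-by-axiom closure verification (notably that the induced unital $\phi:\Mul(\B)\to\Mul(\D)$ carries $\Ff_{SA,\infty}(\Mul(\B))$ into $\Ff_{SA,\infty}(\Mul(\D))$, so that (SF1) restricts properly) usefully fills in what the paper leaves implicit, though the closure check for (SF2) is not strictly required, since that axiom's hypothesis only asks for a homotopy through $\Ff_{SA}$ with invertible endpoints, not through $\Ff_{SA,\infty}$.
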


\begin{proof}
This follows immediately from Proposition \ref{prop:SfSatisfiesAxioms}.  
(We are just taking Proposition \ref{prop:SfSatisfiesAxioms} 
and restricting the paths to be in $\Pg_{SA, \infty}$. Note that
the paths in the Normalization Axiom (SF4) are already elements
of $\Pg_{SA, \infty}$.)\\ 
\end{proof}

\begin{prop} (Triviality Principle for $\Pg_{SA, \infty}(\Mul(\B)$)  
\label{prop:RestrictedTrivialityPrinciple}
Suppose that for every separable stable C*-algebra $\B$,
we have a map
$$sf : \Pg_{SA, \infty} \rightarrow K_0(\B)$$
such that on $\Pg_{SA, \infty}$,  $sf$ satisfies
Axioms  (SF2) and (SF3).

Then for any separable stable C*-algebra $\B$, for any norm continuous
path $\{ A_t \}_{t \in [0,1]}$ of invertible elements in 
$\F_{SA, \infty}$,  
$$sf(\{ A_t \}_{t \in [0,1]}) = 0.$$
(See also Remark \ref{rmk:Aug3120231AM}.)  
\end{prop}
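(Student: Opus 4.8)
The plan is to reduce the statement to the "constant path" case exactly as in Proposition \ref{prop:TrivialityPrinciple} (the general Triviality Principle), but now staying inside $\Pg_{SA,\infty}$. The only subtlety compared to Proposition \ref{prop:TrivialityPrinciple} is that all the auxiliary paths we construct (the constant path, the homotopy realizing homotopy-equivalence of paths, and the concatenation of a path with itself) must be shown to lie in the restricted class $\Pg_{SA,\infty}$, i.e., they must be norm-continuous paths in $\Ff_{SA,\infty}$ with invertible endpoints (see Remark \ref{rmk:Aug3120231AM}). Since $\{A_t\}_{t\in[0,1]}$ is assumed to consist entirely of invertible elements of $\Ff_{SA,\infty}$, all endpoints that arise will be invertible elements of $\Ff_{SA,\infty}$, so this bookkeeping is routine.

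First I would handle the constant case: suppose $A_t = A_0$ for all $t$, with $A_0$ an invertible element of $\Ff_{SA,\infty}$. Then the constant path is its own concatenation (and this is independent of the choices in the concatenation, since everything is constant), so by Axiom (SF3) applied within $\Pg_{SA,\infty}$ we get $sf(\{A_t\}) = sf(\{A_t\}) + sf(\{A_t\})$ in $K_0(\B)$, hence $sf(\{A_t\}) = 0$. Here I must note that the constant path at an invertible element of $\Ff_{SA,\infty}$ is indeed in $\Pg_{SA,\infty}$, and that the concatenation of it with itself, being again constant at the same element, is in $\Pg_{SA,\infty}$ as well, so (SF3) is applicable.

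Next, for the general case: given an arbitrary norm-continuous path $\{A_t\}_{t\in[0,1]}$ of invertible elements of $\Ff_{SA,\infty}$, let $\{D_t\}_{t\in[0,1]}$ be the constant path $D_t =_{df} A_0$. I claim $\{A_t\}$ and $\{D_t\}$ are homotopic in the sense of Definition \ref{df:DfGather}(2) through paths with invertible endpoints; indeed, the homotopy $A_{s,t} =_{df} A_{(1-s)t}$ (i.e., linearly rescaling the parameter interval $[0,t]$ down to $[0,(1-s)t]$, equivalently $A_{s,t} = A_{\min\{t,\,(1-s)\}}$ — more cleanly, $A_{s,t} =_{df} A_{(1-s)t}$) is norm-continuous in $(s,t)$, has values in $\Ff_{SA,\infty}$, and has $A_{s,0} = A_0$ and $A_{s,1} = A_{1-s}$ both invertible for every $s$, with $A_{0,t} = A_t$ and $A_{1,t} = A_0 = D_t$. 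Thus $\{A_t\} \sim_h \{D_t\}$ within $\Pg_{SA,\infty}$. Then by Axiom (SF2) and the constant case, $sf(\{A_t\}) = sf(\{D_t\}) = 0$ in $K_0(\B)$.

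The main obstacle — really the only thing requiring care — is verifying that the homotopy $\{A_{s,t}\}$ stays inside $\Ff_{SA,\infty}$ with invertible endpoints for all $s$, so that Axiom (SF2) is legitimately applicable on $\Pg_{SA,\infty}$: this is where we use that $\{A_t\}$ consists \emph{entirely} of invertible elements of $\Ff_{SA,\infty}$ (so $A_{s,1} = A_{1-s}$ is invertible and in $\Ff_{SA,\infty}$), not merely that the endpoints of $\{A_t\}$ are invertible. Once that is checked, the argument is a verbatim adaptation of Proposition \ref{prop:TrivialityPrinciple}, and I would simply remark that the proof is identical to that one, restricted to $\Pg_{SA,\infty}$, citing Remark \ref{rmk:Aug3120231AM} for the identification of $\Pg_{SA,\infty}$ with norm-continuous paths in $\Ff_{SA,\infty}$ having invertible endpoints.
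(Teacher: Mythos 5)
Your proof is correct and takes essentially the same route as the paper: reduce to the constant path case via (SF3), then reduce the general case to the constant case via (SF2), observing throughout that all paths (and the homotopy $A_{s,t}=A_{(1-s)t}$) remain in $\Ff_{SA,\infty}$ with invertible endpoints because every $A_t$ is invertible. The paper simply cites the proof of Proposition \ref{prop:TrivialityPrinciple} verbatim with that same observation, so you have merely spelled out details the paper leaves implicit.
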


\begin{proof}
The proof is exactly the same as that of 
Proposition \ref{prop:TrivialityPrinciple}, except with the observation that
all the paths take values in $\Ff_{SA, \infty}$.\\     
\end{proof}

We are now ready to present our axiomatization of spectral flow $Sf$.

\begin{thm}   \label{thm:LaQuinta}
Suppose that for every separable stable C*-algebra $\B$,
we have a map
$$sf : \Pg_{SA, \infty}(\Mul(\B)) \rightarrow K_0(\B)$$
such that on $\Pg_{SA, \infty}(\Mul(\B))$,  $sf$ satisfies 
Axioms (SF1) to (SF4). 

Then for every separable stable C*-algebra $\B$,
$$Sf = sf$$
where $Sf$ is spectral flow $Sf$ (as in Definition \ref{df:SpectralFlow}) 
restricted to 
$\Pg_{SA, \infty}(\Mul(\B))$.   
\end{thm}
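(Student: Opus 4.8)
The plan is to show that any functor $sf$ satisfying Axioms (SF1)--(SF4) on $\Pg_{SA,\infty}$ must agree with $Sf$ by reducing an arbitrary path to a ``normal form'' that $sf$ is forced to evaluate correctly. First I would fix a separable stable $\B$ and a path $\{A_t\}_{t\in[0,1]}\in\Pg_{SA,\infty}(\Mul(\B))$. Since $A_0$ and $A_1$ are invertible elements of $\Ff_{SA,\infty}$, I would use Lemma~\ref{lem:MakingTrivialPaths} to pick norm-continuous paths $\{\alpha_t\}$ and $\{\beta_t\}$ of \emph{invertible} elements of $\Ff_{SA,\infty}$ joining $A_0$ (resp.\ $A_1$) to $2P_0-1$, where $P_0\in\Mul(\B)$ is a fixed projection with $P_0\sim 1\sim 1-P_0$ (such $P_0$ exists since $\B$ is stable). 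Then the concatenation $\{\alpha_t^{-1}\}*\{A_t\}*\{\beta_t\}$ is a loop in $\Ff_{SA,\infty}$ based at $2P_0-1$, i.e.\ an element of $\Pg_{SA,\infty}$. By the Concatenation Axiom (SF3) together with the Triviality Principle (Proposition~\ref{prop:RestrictedTrivialityPrinciple}, which applies to $sf$ since $sf$ satisfies (SF2) and (SF3)), we get
\begin{equation*}
sf(\{\alpha_t^{-1}\}*\{A_t\}*\{\beta_t\}) = sf(\{\alpha_t^{-1}\}) + sf(\{A_t\}) + sf(\{\beta_t\}) = sf(\{A_t\}),
\end{equation*}
and the identical computation holds for $Sf$ (which satisfies (SF1)--(SF4) by Proposition~\ref{prop:SFSatisfiesAxiomsOnPg}). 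So it suffices to prove $sf=Sf$ on loops based at $2P_0-1$, i.e.\ on $\pi_1(\Ff_{SA,\infty},2P_0-1)$, after checking that both descend to the fundamental group (they do, by the Homotopy Axiom (SF2)).

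Next I would invoke Lemma~\ref{lem:ExhaustingPi1}: every homotopy class in $\pi_1(\Ff_{SA,\infty},2P_0-1)$ is represented by a loop $\omega_{P_0,P}$ (from Definition~\ref{df:ThirdExample}) for some projection $P\in\Mul(\B)$ with $P-P_0\in\B$. So it is enough to show $sf(\omega_{P_0,P}) = Sf(\omega_{P_0,P})$ for every such $P$. For the right-hand side, Lemma~\ref{lem:ThirdExampleSF} (together with the fact that the argument there used only Axioms (SF3), the Triviality Principle, and (SF4) via Proposition~\ref{prop:SecondExample}) gives $Sf(\omega_{P_0,P}) = [P:P_0]$. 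The key point is to run exactly the same argument for $sf$: since $sf$ satisfies (SF3) and the Triviality Principle, and since $\omega_{P_0,P}$ is a concatenation of a path of invertibles with $\{(1-t)(2P-1)+t(2P_0-1)\}$, we get $sf(\omega_{P_0,P}) = sf(\{(1-t)(2P-1)+t(2P_0-1)\}_{t\in[0,1]})$; and this last quantity equals $[P:P_0]$ by the Normalization Axiom (SF4) applied with the projections $P$ and $P_0$ (valid because $P_0\sim 1\sim 1-P_0$ and $P-P_0\in\B$, so $P\sim 1\sim 1-P$ by Lemma~\ref{lem:AShortComputation}). Hence $sf(\omega_{P_0,P}) = [P:P_0] = Sf(\omega_{P_0,P})$ for all $P$, so $sf = Sf$ on $\pi_1(\Ff_{SA,\infty},2P_0-1)$, and by the first paragraph $sf = Sf$ on all of $\Pg_{SA,\infty}(\Mul(\B))$.

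The main subtlety I expect is bookkeeping rather than a deep obstacle: I must be careful that the reduction in the first paragraph really lands inside $\Pg_{SA,\infty}$ (the conjugating paths $\alpha_t,\beta_t$ must consist of invertibles in $\Ff_{SA,\infty}$, which is precisely what Lemma~\ref{lem:MakingTrivialPaths} guarantees), and that the Triviality Principle is being applied only to paths of invertible elements of $\Ff_{SA,\infty}$ (Proposition~\ref{prop:RestrictedTrivialityPrinciple}). A second point to verify is that Lemma~\ref{lem:ExhaustingPi1}, which as stated uses Theorem~\ref{thm:PereraIsoIsIso} and Lemmas~\ref{lem:ThirdExampleSF} and \ref{lem:ThirdExamplesfP} about $Sf$ and $PSf$, is legitimately available here — it is, since it is a prior result in the paper about the structure of $\pi_1(\Ff_{SA,\infty},2P_0-1)$ and does not reference $sf$. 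Once these are in place, the argument is essentially the Lesch-style uniqueness proof (\cite{LeschUniqueness} Theorem 5.4): reduce to loops, exhaust the fundamental group by explicit normalization loops, and pin down the value of each normalization loop using (SF3), triviality, and (SF4). No further homotopy theory is needed beyond what has already been established.
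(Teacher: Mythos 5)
Your proposal is correct and follows essentially the same route as the paper's proof: reduce an arbitrary path to a loop at $2P_0-1$ via Lemma~\ref{lem:MakingTrivialPaths}, Concatenation, and the Triviality Principle; exhaust $\pi_1(\Ff_{SA,\infty},2P_0-1)$ by the loops $\omega_{P_0,P}$ via Lemma~\ref{lem:ExhaustingPi1}; and then evaluate both $Sf$ and $sf$ on $\omega_{P_0,P}$ as $[P:P_0]$ using (SF3), Triviality, and (SF4), with Lemma~\ref{lem:AShortComputation} supplying the $P\sim 1\sim 1-P$ hypothesis needed for (SF4). The only cosmetic difference is notational (your $\{\alpha_t^{-1}\}*\{A_t\}*\{\beta_t\}$ versus the paper's $\{B_t\}*\{A_t\}*\{C_t\}$); the bookkeeping concerns you flag at the end are exactly the right ones and are addressed correctly in your argument.
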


\begin{proof}
We follow the argument of \cite{LeschUniqueness}  Theorem 5.4, which was for the $\mathbb{B}(l_2)$ case and considered
a ``folklore result". 

By Proposition \ref{prop:SFSatisfiesAxiomsOnPg}, the spectral flow
 $Sf$ satisfies
Axioms (SF1) to (SF4) on $\Pg_{SA, \infty}$ for every separable
stable C*-algebra $\B$.

Let $\B$ be a separable
stable C*-algebra, and let $\{ A_t \}_{t \in [0,1]}$ be an arbitrary element of 
$\Pg_{SA, \infty}$.  
We will prove that $Sf(\{ A_t \}_{t \in [0,1]}) = sf(\{ A_t \}_{t \in [0,1]})$.

Let $P_0 \in \Mul(\B)$ be a projection such that $P_0 \sim 1 \sim 1 - P_0$.
By Lemma \ref{lem:MakingTrivialPaths}, let $\{ B_t \}_{t \in [0,1]}$ and
$\{ C_t \}_{t \in [0,1]}$ be (norm-) continuous paths of invertible elements
of $\Ff_{SA, \infty}$ such that 
$$B_0 = C_1 = 2 P_0 - 1, \makebox{ } B_1 = A_0 \makebox{  and  }
C_0 = A_1.$$

Consider the (norm-) continuous loop $\{ D_t \}_{t \in [0,1]}$
in $\Pg_{SA, \infty}$, 
based at $2 P_0 - 1$, given by the concatenation
$$\{ D_t \}_{t \in [0,1]} = \{ B_t \}_{t \in [0,1]} * \{ A_t \}_{t \in [0,1]}
* \{ C_t \}_{t \in [0,1]}.$$

By the Triviality Principle on $\Pg_{SA, \infty}$ (Proposition
\ref{prop:RestrictedTrivialityPrinciple}),
$$Sf(\{ B_t \}_{t \in [0,1]}) = Sf(\{ C_t \}_{t \in [0,1]})
= sf(\{ B_t \}_{t \in [0,1]}) = sf(\{ C_t \}_{t \in [0,1]}) = 0.$$
Hence, since both $Sf$ and $sf$ satisfy the Concatenation Axiom (SF3) on $\Pg_{SA, \infty}$,  
$$Sf(\{ D_t \}_{t \in [0,1]}) = 
Sf(\{ B_t \}_{t \in [0,1]}) + Sf(\{ A_t \}_{t \in [0,1]}) + 
Sf(\{ C_t \}_{t \in [0,1]}) = Sf(\{ A_t \}_{t \in [0,1]})$$    
and 
$$sf(\{ D_t \}_{t \in [0,1]}) = 
sf(\{ B_t \}_{t \in [0,1]}) + sf(\{ A_t \}_{t \in [0,1]}) +
sf(\{ C_t \}_{t \in [0,1]}) = sf(\{ A_t \}_{t \in [0,1]}).$$
Hence, to prove that $Sf(\{ A_t \}_{t \in [0,1]}) = sf(\{ A_t \}_{t \in [0,1]})$,
it suffices to prove that $Sf(\{ D_t \}_{t \in [0,1]}) = sf(\{ D_t \}_{t \in [0,1]})$.
By Lemma \ref{lem:ExhaustingPi1},  there exists a projection
$P \in \Mul(\B)$ with $P - P_0 \in \B$ and  
$\{ D_t \}_{t \in [0,1]}$ homotopic to $\omega_{P_0, P}$.
Hence, since both $Sf$ and $sf$ satisfy the Homotopy Axiom (SF2) on $\Pg_{SA, \infty}$,
it suffices to prove that 
$Sf(\omega_{P_0, P}) = sf(\omega_{P_0, P})$.  
(Note that, by definition, 
$\omega_{P_0, P} \in \Pg_{SA, \infty}$.)  

By Definition \ref{df:ThirdExample},  there exists a (norm-) continuous
path $\{ E_t \}_{t \in [0,1]}$, of invertible elements of 
$\F_{SA, \infty}$, with $E_0 = 2P_0 - 1$ and $E_1 = 2P - 1$ 
 such that $\omega_{P_0, P}$ is the concatenation
$$\omega_{P_0, P} = \{ E_t \}_{t \in [0,1]} * 
\{ (1 - t) (2P - 1) + t (2 P_0 - 1) \}_{t \in [0,1]}.$$
Hence, since $sf$ satisfies the Trivilialty Principle on 
$\Pg_{SA, \infty}$ (see Proposition \ref{prop:RestrictedTrivialityPrinciple}),
and since $sf$ satisfies the Concatenation Axiom (SF3) and the Normalization Axiom
(SF4) on $\Pg_{SA, \infty}$,
\begin{eqnarray*}
sf(\omega_{P, P_0}) & = & sf(\{ E_t \}_{t \in [0,1]}) +
sf(\{ (1 - t) (2P - 1) + t (2 P_0 - 1) \}_{t \in [0,1]}) \\
& = &   sf(\{ (1 - t) (2P - 1) + t (2 P_0 - 1) \}_{t \in [0,1]}) \\
& = & [P: P_0].
\end{eqnarray*}
But by by Lemma \ref{lem:ThirdExampleSF},  
$$Sf(\omega_{P, P_0}) = [P: P_0] = sf(\omega_{P,P_0}).$$
Hence, from the above,
$$Sf(\{ A_t \}_{t \in [0,1]}) = sf(\{ A_t \}_{t \in [0,1]}).$$
Hence, since $\{ A_t \}_{t \in [0,1]}$ was arbitrary,
$Sf = sf$ on  $\Pg_{SA, \infty}$.
Since $\B$ was arbitrary, we are done.\\  
\end{proof}

Axiomatization of spectral flow $Sf$ may also be achieved with some 
variation on the axioms, and we now give an example of this.
Towards this, 
we first define a weaker version of the Homotopy Axiom:

\begin{df} \label{df:WeakHomotopyAxiom}                            
Suppose that for every separable stable C*-algebra $\B$, we have a map
$$sf : \Pg_{SA, \infty}(\Mul(\B))  \rightarrow K_0(\B).$$

Then $sf$ is said to 
 satisfy the \emph{Weak Homotopy Axiom} or \emph{Axiom (SF5)} 
if it satisfies the following restricted homotopy invariance:\\ 

Let $\{ B_t \}_{t \in [0,1]}$ and $\{ C_t \}_{t\in [0,1]}$ be two 
elements of $\Pg_{SA, \infty}(\Mul(\B))$ such that   
$B_0 = C_0$ and $B_1 = C_1$.

Suppose that there is a (norm-) continuous family 
$\{ B_{s,t} \}_{(s,t) \in [0,1]\times [0,1]}$ in $\Ff_{SA, \infty}$\
such that 
$$B_{0, t} = B_t \makebox{  and  } B_{1, t} = C_t \makebox{ for all  }
t \in [0,1]$$
and 
$$B_{s,0} = B_0 = C_0 \makebox{   and  } B_{s,1} = B_1 = C_1 
\makebox{  for all  } s \in [0,1].$$

Then $$sf(\{ B_t \}_{t \in [0,1]}) = sf(\{ C_t \}_{t \in [0,1]}).$$\\
\end{df}   

In our axiomatiation of spectral flow $Sf$, we can replace the 
Homotopy Axiom (SF2) with the Weak Homotopy Principle (SF5) and
the Restricted Triviality Principle.  The proof the next result is very similar to that of 
Theorem \ref{thm:LaQuinta}, and we leave it as an exercise for the reader.

\begin{thm}
Suppose that for every separable stable C*-algebra $\B$, we have a map
$$sf : \Pg_{SA, \infty} \rightarrow K_0(\B)$$
such that on $\Pg_{SA, \infty}$, $sf$ satisfies 
Axioms (SF1), (SF3), (SF4), (SF5) and the Restricted Triviality 
Principle (see Proposition
\ref{prop:RestrictedTrivialityPrinciple}).

Then 
$$Sf = sf.$$\\  
\end{thm}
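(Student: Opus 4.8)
The plan is to follow the proof of Theorem \ref{thm:LaQuinta} almost verbatim, replacing the single use of the full Homotopy Axiom (SF2) with the Weak Homotopy Axiom (SF5). First I would observe that, by Proposition \ref{prop:SFSatisfiesAxiomsOnPg} together with Proposition \ref{prop:SfSatisfiesAxioms}, the spectral flow $Sf$ satisfies (SF1), (SF3), (SF4); and that $Sf$ also satisfies (SF5), since (SF5) is a special case of (SF2) which $Sf$ satisfies. (Strictly, one should check that $Sf$ satisfies the Restricted Triviality Principle, but this is exactly Proposition \ref{prop:RestrictedTrivialityPrinciple} applied to $Sf$, whose hypotheses — (SF2) and (SF3) — hold for $Sf$; alternatively note that (SF5) together with (SF3) already yields the Restricted Triviality Principle by the argument of Proposition \ref{prop:TrivialityPrinciple}, since the constant-path contraction used there is a homotopy fixing endpoints.) Hence $sf$ and $Sf$ satisfy the same list of properties, and it suffices to show they agree on an arbitrary $\{ A_t \}_{t \in [0,1]} \in \Pg_{SA, \infty}(\Mul(\B))$.

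Next I would run the reduction steps of Theorem \ref{thm:LaQuinta}. Pick a projection $P_0 \in \Mul(\B)$ with $P_0 \sim 1 \sim 1 - P_0$ and, by Lemma \ref{lem:MakingTrivialPaths}, choose norm-continuous paths $\{ B_t \}$ and $\{ C_t \}$ of invertible elements of $\Ff_{SA, \infty}$ with $B_0 = C_1 = 2P_0 - 1$, $B_1 = A_0$, $C_0 = A_1$. Form the loop $\{ D_t \} = \{ B_t \} * \{ A_t \} * \{ C_t \}$ based at $2P_0 - 1$. By the Restricted Triviality Principle, $sf(\{ B_t \}) = sf(\{ C_t \}) = 0$ and likewise for $Sf$, so by (SF3), $sf(\{ D_t \}) = sf(\{ A_t \})$ and $Sf(\{ D_t \}) = Sf(\{ A_t \})$; thus it is enough to show $sf(\{ D_t \}) = Sf(\{ D_t \})$. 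The key point where the weakening matters is the next reduction: by Lemma \ref{lem:ExhaustingPi1}, $\{ D_t \}$ is homotopic, as a loop based at $2P_0 - 1$, to some $\omega_{P_0, P}$ with $P - P_0 \in \B$. This homotopy is a homotopy of loops, i.e.\ it fixes the common basepoint $2P_0 - 1$ at both ends; hence it is exactly the kind of homotopy covered by (SF5) (the family $\{ B_{s,t} \}$ is fixed at $t = 0$ and $t = 1$). Therefore both $sf$ and $Sf$ assign the same value to $\{ D_t \}$ and to $\omega_{P_0, P}$.

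Finally, compute $sf(\omega_{P_0,P})$ and $Sf(\omega_{P_0,P})$ exactly as in Theorem \ref{thm:LaQuinta}: by Definition \ref{df:ThirdExample}, $\omega_{P_0,P} = \{ E_t \} * \{ (1-t)(2P-1) + t(2P_0 - 1) \}$ with $\{ E_t \}$ a path of invertibles in $\Ff_{SA,\infty}$; the Restricted Triviality Principle kills the $\{ E_t \}$ term, (SF3) splits the concatenation, and (SF4) (Normalization) evaluates the remaining affine path to $[P : P_0]$. So $sf(\omega_{P_0,P}) = [P:P_0]$, and by Lemma \ref{lem:ThirdExampleSF}, $Sf(\omega_{P_0,P}) = [P:P_0]$ as well. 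Chaining the equalities gives $Sf(\{ A_t \}) = sf(\{ A_t \})$; since $\{ A_t \}$ and $\B$ were arbitrary, $Sf = sf$. The only genuinely delicate point — and the one the statement already flags as "very similar" — is making sure that every homotopy invoked (the one from Lemma \ref{lem:ExhaustingPi1}, and implicitly those in Lemma \ref{lem:MakingTrivialPaths}) either fixes endpoints so that (SF5) applies, or is absorbed into a triviality/concatenation step where the full (SF2) is not needed; this bookkeeping is the main obstacle, but it is routine given the structure of the original proof.
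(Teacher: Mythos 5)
Your proof is correct in its essentials and follows exactly the route the paper intends: the paper leaves this as an exercise, remarking only that it is ``very similar'' to Theorem \ref{thm:LaQuinta}, and your adaptation is the right one. The central observation — that the one place Theorem \ref{thm:LaQuinta} invokes the full Homotopy Axiom (SF2) is the reduction of $\{D_t\}$ to $\omega_{P_0,P}$ via Lemma \ref{lem:ExhaustingPi1}, and that this is a basepoint-preserving loop homotopy in $\Ff_{SA,\infty}$ and hence falls under the Weak Homotopy Axiom (SF5) — is precisely the point, and you state it clearly. The remaining uses of Lemma \ref{lem:MakingTrivialPaths} are indeed absorbed by the (now-hypothesized) Restricted Triviality Principle and by the Concatenation Axiom (SF3), as you say.

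However, one parenthetical remark in your first paragraph is wrong and should be deleted: you claim that ``(SF5) together with (SF3) already yields the Restricted Triviality Principle by the argument of Proposition \ref{prop:TrivialityPrinciple}, since the constant-path contraction used there is a homotopy fixing endpoints.'' That contraction does \emph{not} fix endpoints. In Case 2 of Proposition \ref{prop:TrivialityPrinciple}, one deforms $\{C_t\}_{t\in[0,1]}$ to the constant path $\{D_t \equiv C_0\}$ via $A_{s,t} = C_{(1-s)t}$ (or some variant), and the right endpoint $A_{s,1} = C_{1-s}$ moves from $C_1$ to $C_0$ as $s$ runs over $[0,1]$. The two paths $\{C_t\}$ and $\{D_t\}$ have different endpoints unless $\{C_t\}$ happens to be a loop, so (SF5) — which requires $B_0 = C_0$, $B_1 = C_1$, and a homotopy fixing both endpoints — simply does not apply. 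This is exactly why the Restricted Triviality Principle must appear as a separate hypothesis in the statement of this theorem; if it followed from (SF5) and (SF3), the authors would have had no reason to list it. Your first (correct) clause — that $Sf$ itself satisfies the Restricted Triviality Principle because $Sf$ satisfies the stronger (SF2) — is all that is needed, and the erroneous ``alternative'' should be struck.
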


\section{Appendix:  Some miscellaneous and useful computations, and
the DZWLP Spectral Flow}

In this last section, we present some useful items from operator theory
which are elementary, but which are not immediately trivial for a 
beginning analyst and are not so easily found in the standard literature.
We also present our definition of a (generalized) 
Fredholm operator and the (generalized) Fredholm index, for the convenience
of the beginning reader. We also present results from homotopy theory
used in this paper, which go beyond a first graduate course in North American
universities, and for which we cannot find a good reference.   Finally, we briefly summarize the approach
to spectral flow in \cite{Wu} and \cite{LeichtnamPiazzaJFA}.  \\

\subsection{Generalized Fredholm operators and Fredholm index}
\label{subsect:GeneralizedFredholm}

\begin{df} \label{df:FredholmOperator}
Let $\B$ be a nonunital C*-algebra.
An element $X \in \Mul(\B)$ is said to be \emph{Fredholm} 
if $\pi(X)$ is an invertible element of $\C(\B)$.
\end{df}

Let $\B$ be a nonunital C*-algebra. 
Recall that the exact sequence 
$$0 \rightarrow \B \rightarrow \Mul(\B) \rightarrow \C(\B) \rightarrow 0$$
induces a six term exact sequence in K theory (e.g.,
see \cite{WeggeOlsen} Theorem 9.3.2):
\begin{equation}  \label{equ:SixTerm} 
\begin{array}{ccccc}
K_0(\B) & \rightarrow & K_0(\Mul(\B)) & \rightarrow & K_0(\C(\B)) \\
\partial \uparrow &            &               &             & 
\downarrow \partial_1 \\
K_1(\C(\B)) & \leftarrow &  K_1(\Mul(\B)) & \leftarrow & K_1(\B) \\
\end{array}
\end{equation} 

The map $\partial : K_1(\C(\B)) \rightarrow K_0(\B)$ is a generalization
of the Fredholm index from the $\mathbb{B}(l_2)$ case (e.g.,
see \cite{WeggeOlsen} Remark 8.1.4) and is often called the \emph{index
map}.  The map $\partial_1 : K_0(\C(\B)) \rightarrow K_1(\B)$ is called
the \emph{exponential map} because it can be realized as a certain type 
of exponential (see \cite{WeggeOlsen} Exercise 9E).  $\partial_1$ is sometimes
considered an index map, because it is actually the composition of $\partial$
with the Bott map and a similar map (see \cite{WeggeOlsen} Definition
9.3.1).   Indeed, the vanishing index characterizations for the existence
of spectral sections, in  \cite{Wu}
and \cite{LeichtnamPiazzaJFA},  involve the map $\partial_1$.  (See also Definition \ref{df:K1Index} and Subsection 
\ref{subsect:LiftingProjectionsCondition} in general.)

Note additionally that if $\B$ is $\sigma$-unital and stable then
$K_0(\Mul(\B)) = K_1(\Mul(\B)) = 0$  (see \cite{WeggeOlsen} Theorem
16.8), and so by the exactness of (\ref{equ:SixTerm}),
the index map $\partial : K_1(\C(\B)) \rightarrow K_0(\B)$ and the
exponential map $\partial_1 : K_0(\C(\B)) 
\rightarrow K_1(\B)$ are both group isomorphisms.\\

\begin{df} \label{df:FredholmIndex}  
Let $\B$ be a nonunital C*-algebra, and let $\partial : K_1(\C(\B))
\rightarrow K_0(\B)$ be the index map from (\ref{equ:SixTerm}).

Let $X \in \Mul(\B)$ be a Fredholm operator (i.e., $\pi(X)$ is an 
invertible element of $\C(\B)$).
Then the \emph{Fredholm index} of $X$ is defined to be
$$\partial([\pi(X)]) \in K_0(\B).$$\\  
\end{df}

\subsection{$K_1$-injectivity and $K_1$-surjectivity}  

\label{subsect:CoronaK1InjAndSurj}

We will need some results concerning the $K_1$-injectivity and 
surjectivity of C*-algebras.  Some references for this material
are \cite{BlanchardRohdeRordam} and \cite{RohdeThesis}.

Recall that for a unital C*-algebra $\C$, $U(\C)$ is the unitary
group of $\C$ and $U(\C)_0$ is the path-connected component of $1$
in $U(\C)$.\\

\begin{df}
Let $\C$ be unital C*-algebra.
\begin{enumerate}
\item $\C$ is said to be \emph{$K_1$-injective} if the usual map
$$U(\C)/U(\C)_0 \rightarrow K_1(\C)$$
is injective.
\item  $\C$ is said to be \emph{$K_1$-surjective} if the usual map
$$U(\C)/U(\C)_0 \rightarrow K_1(\C)$$
is surjective.\\ 
\end{enumerate}
\end{df}

The next lemma concerns the $K_1$-injectivity (and surjectivity) 
of certain properly
infinite C*-algebras.  
This result should be known, but we have not found a precise reference and hence, provide a
proof for the convenience of the reader.
We further note that it is an interesting open
problem, of Blanchard, Rohde and Roerdam, whether every properly
infinite C*-algebra is 
$K_1$-injective (\cite{BlanchardRohdeRordam}).  
For example, $K_1$-injectivity
of Paschke dual algebras (which are properly infinite) would imply
some interesting KK uniqueness theorems.  (See, for example,
\cite{LoreauxNgSutradhar}.)

\begin{lem}
Let $\B$ be a separable stable C*-algebra and let $K$ be a compact 
Hausdorff topological space.

Then $C(K) \otimes \C(\B)$ is both $K_1$-injective and $K_1$-surjective. 
\label{lem:CoronaK1InjAndSurj}  
\end{lem}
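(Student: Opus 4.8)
The plan is to reduce the claim to two more basic facts about the corona algebra $\C(\B)$: first, that $\C(\B)$ is properly infinite (indeed, it contains a copy of the Cuntz algebra-like structure coming from stability of $\B$, so $1_{\C(\B)}$ is a properly infinite projection); and second, that $C(K) \otimes \C(\B)$ has a unital embedding of a properly infinite, suitably "halving" object that lets us absorb $C(K)$. Concretely, since $\B$ is stable, $\Mul(\B)$ contains a sequence of isometries $\{S_j\}$ with orthogonal ranges summing strictly to $1$, so $\C(\B)$ admits unital *-homomorphisms from $\mathcal{O}_\infty$ (or at least from $\mathcal{O}_2$), and the same is then true of $C(K)\otimes\C(\B)$ via the constant-function embedding. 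The strategy is to invoke the known result (e.g.\ from \cite{BlanchardRohdeRordam} or \cite{RohdeThesis}) that a unital C*-algebra which is properly infinite and "$\mathcal{O}_\infty$-absorbing in the weak sense of admitting a unital *-homomorphism from $\mathcal{O}_\infty$" — or more simply, any unital C*-algebra $\mathcal D$ that admits a unital embedding of $\mathcal{O}_2$, or for which $\mathcal D \cong M_2(\mathcal D)$ unitally — is $K_1$-injective and $K_1$-surjective.

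First I would establish $K_1$-surjectivity, which is the easier half. Given $[u] \in K_1(C(K)\otimes\C(\B))$, represented by a unitary $u \in M_n(C(K)\otimes\C(\B))$, I would use the fact that $1_{\C(\B)} \sim 1_{M_n(\C(\B))}$ in $\C(\B)$ (a consequence of stability of $\B$: $\B \cong M_n \otimes \B$, hence $\C(\B) \cong M_n(\C(\B))$ via an isomorphism sending $1$ to $1$, or directly via Murray--von Neumann equivalence of $1$ with $1_{M_n}$ inside $\C(\B)$ using the $\{S_j\}$). Tensoring with $C(K)$ preserves this, so $u$ is Murray--von Neumann conjugate to a unitary $v \in C(K)\otimes\C(\B)$ with $[v]=[u]$ in $K_1$; this gives surjectivity of $U(C(K)\otimes\C(\B))/U_0 \to K_1$.

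Next I would handle $K_1$-injectivity, which is the main obstacle. Suppose $v \in U(C(K)\otimes\C(\B))$ has $[v] = 0$ in $K_1(C(K)\otimes\C(\B))$. I want to show $v \in U_0$. The cleanest route is: since $\C(\B)$ admits a unital embedding of $\mathcal{O}_2$ (equivalently, there is a unital *-homomorphism $\mathcal{O}_2 \to \C(\B)$; this follows from $1_{\C(\B)}$ being properly infinite together with $1 \sim 1 \oplus 1$, giving isometries $v_1, v_2$ with $v_1 v_1^* + v_2 v_2^* = 1$), the same holds for $C(K)\otimes\C(\B)$. Now I would appeal to the result of Cuntz / Blanchard--Rohde--R\o rdam that any unital C*-algebra $\mathcal D$ receiving a unital *-homomorphism from $\mathcal{O}_2$ is $K_1$-injective — the point being that in such a $\mathcal D$, for any unitary $w$ one has $w \oplus 1 \sim_h 1 \oplus 1$ via a rotation, and the $\mathcal{O}_2$-structure lets one "fold" the stabilization back into $\mathcal D$ itself; combined with $[w] = 0$ one gets $w \in U_0(\mathcal D)$. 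Alternatively, if a direct citation for "$\mathcal{O}_2$-unital-embedding $\Rightarrow$ $K_1$-injective" is not clean, I would instead cite that $\C(\B)$ for $\B$ separable stable is $K_1$-injective (this is essentially in the literature on corona algebras and multiplier algebras), and then observe that $C(K)\otimes\C(\B)$ inherits $K_1$-injectivity because $C(K)$ is a commutative unital C*-algebra and one can run a partition-of-unity / local triviality argument, or — more robustly — because $C(K)\otimes\C(\B) = \C(C(K)\otimes\B)$ when $K$ has a basepoint (reduce to unpointed $K$ by adjoining a point, which only helps), reducing the general statement to the already-known $K_1$-injectivity of corona algebras of separable stable C*-algebras.

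The hard part will be pinning down the precise form of the cited result and the reduction $C(K)\otimes\C(\B) \cong \C(C(K)\otimes\B)$ (this identification holds because $C(K)$ is unital nuclear and the multiplier algebra and quotient behave well under tensoring with a unital C*-algebra: $\Mul(C(K)\otimes\B) = C(K,\Mul(\B))$ and the corona is $C(K,\Mul(\B))/C(K,\B)$, which surjects onto but need not equal $C(K)\otimes\C(\B)$ in general — so some care is genuinely needed here). If that identification is not exact, I would fall back on the $\mathcal{O}_2$-embedding argument, which is self-contained: exhibit the unital copy of $\mathcal{O}_2$ in $C(K)\otimes\C(\B)$ using stability of $\B$, then quote \cite{BlanchardRohdeRordam} (or \cite{RohdeThesis}) for the implication that such algebras are both $K_1$-injective and $K_1$-surjective. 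I would present the proof in this second form, as it avoids the delicate tensor-product-of-corona issue entirely and keeps the argument within the elementary operator-theoretic framework of the paper.
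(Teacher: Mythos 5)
Your $K_1$-surjectivity argument is fine (properly infinite implies $K_1$-surjective, as the paper also notes via \cite{CuntzKTh}), but your treatment of $K_1$-injectivity has a genuine gap, and neither of your two fallback routes closes it.

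Route one relies on a purported theorem that ``any unital C*-algebra admitting a unital $*$-homomorphism from $\mathcal{O}_2$ is $K_1$-injective.'' No such theorem exists. Such an embedding gives $\mathcal{D} \cong M_2(\mathcal{D})$ unitally, and the ``folding'' step you describe -- bringing $u \oplus 1_{n-1} \sim_h 1_n$ down to $u \sim_h 1$ -- requires knowing that $s u s^* + (1 - s s^*)$ is homotopic to $u$ in $U(\mathcal{D})$ for an isometry $s$. That is exactly the content of $K_1$-injectivity, not a consequence of proper infiniteness or $M_2$-absorption. Indeed the paper explicitly remarks (Subsection \ref{subsect:CoronaK1InjAndSurj}) that it is an open problem of Blanchard--Rohde--R\o rdam whether every properly infinite C*-algebra is $K_1$-injective; your route one would resolve that problem and cannot be taken off the shelf. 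Route two rests on the identification $C(K) \otimes \C(\B) \cong \C(C(K) \otimes \B)$, which is false: tensoring the multiplier extension with $C(K)$ (nuclear, so exact) gives $C(K) \otimes \C(\B) = C(K, \Mul(\B))/C(K,\B)$, whereas $\C(C(K)\otimes\B) = C_{\mathrm{strict}}(K, \Mul(\B))/C(K,\B)$ with \emph{strictly} continuous functions in the numerator; the former is a proper subalgebra of the latter. You flag this caveat yourself, but your fallback is route one, which does not work.

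The paper's actual proof is a concrete, hands-on argument that supplies precisely the input that a citation cannot. Given two full properly infinite projections $p, q \in C(K)\otimes\C(\B)$, one lifts them to positive elements in $C(K)\otimes\Mul(\B)$, then uses the \emph{stability} of $\B$ (the approximate unit of pairwise orthogonal projections, plus an isometry $V\in\Mul(\B)$ that compresses a remainder to norm $< \tfrac{1}{10}$) to perturb to genuine projections $R, S \in C(K)\otimes\Mul(\B)$ with $R\sim 1 \sim 1-R$ and $S \sim 1 \sim 1-S$ and $\pi(R)\le p$, $\pi(S)\le q$ full and properly infinite. Since $U(C(K)\otimes\Mul(\B))$ is path-connected (\cite{WeggeOlsen} Corollary 16.7), $R \sim_h S$, hence $\pi(R) \sim_h \pi(S)$ in $\C(\B)\otimes C(K)$. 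Then \cite{BlanchardRohdeRordam} Proposition 5.1 -- a criterion stating that a unital properly infinite algebra is $K_1$-injective provided any two full properly infinite projections have homotopic full properly infinite subprojections -- converts this into $K_1$-injectivity. The two steps you would need to add to your proposal are (i) the stability-based perturbation argument producing the homotopic subprojections, and (ii) the correct citation (BRR Proposition 5.1, not an $\mathcal{O}_2$-embedding criterion).
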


\begin{proof}  It is not hard to see that a unital properly infinite
C*-algebra is always $K_1$-surjective (e.g., see \cite{CuntzKTh}).
Hence, $C(K) \otimes \C(\B)$ is $K_1$-surjective.

The proof that $C(K) \otimes \C(\B)$ is $K_1$-injective is a variation
on the argument of \cite{ChandNgSutradhar} Theorem 4.9 (7).  We here
provide the argument for the convenience of the reader.  Suppose
that $p, q \in C(K) \otimes \C(\B)$ are both full properly
infinite projections.
Hence, let $x \in C(K) \otimes \C(\B)$ for which 
$$x p x^* = 1_{C(K) \otimes \C(\B)}.$$  Let $X, A \in C(K) \otimes \Mul(\B)$ with $A \geq 0$
be such that
$\pi(X) = x$ and $\pi(A) = p$.  
Hence, $$X A X^* - 1_{C(K) \otimes \Mul(\B)} = c \in C(K) \otimes \B.$$
Since $\B$ is stable, we can find an isometry $V \in \Mul(\B)$ such that
if we define $Y =_{df} 1_{C(K)} \otimes V$ then
$$\| Y^* c Y \| < \frac{1}{10}.$$
Hence, $Y^*  X A X^* Y$ is within $\frac{1}{10}$ of $1_{C(K) \otimes
\Mul(\B)}$.  Hence, we can find $Y' \in C(K) \otimes \Mul(\B)$ for which
$$Y' Y^* X A X^* Y (Y')^* = 1_{C(K) \otimes \Mul(\B)}.$$
Hence, we can find a projection $R \in \overline{A (C(K) \otimes \Mul(\B))
A}$ such that $R \sim 1_{C(K) \otimes \Mul(\B)}$.  Moreover, since
$1_{C(K) \otimes \Mul(\B)}$ is properly infinite, we may assume that
$1_{C(K) \otimes \Mul(\B)} - R \sim 1_{C(K) \otimes \Mul(\B)}$
(e.g., see \cite{WeggeOlsen} Lemma 16.2).
Similarly, we can find $B \in C(K) \otimes 
\Mul(\B)$ with $B \geq 0$ and a projection
$S \in \overline{B(C(K) \otimes \Mul(\B))B}$ such that 
$\pi(B) = q$ and 
$$S \sim 1_{C(K) \otimes \Mul(\B)} \sim 1_{C(K) \otimes \Mul(\B)} - S.$$ 
Hence, we can find a unitary $U \in C(K) \otimes \Mul(\B)$ such that
$$U R U^* = S.$$
By \cite{WeggeOlsen} Corollary 16.7, the unitary group
$U(C(K) \otimes \Mul(\B))$ is path-connected.  Hence, 
$R$ is homotopic to $S$ in $Proj(C(K) \otimes \Mul(\B))$.
Hence, $\pi(R)$ and $\pi(S)$ are full properly infinite
subprojections of $p$ and $q$ respectively such that
$\pi(R)$ is homotopic to $\pi(S)$ in $Proj(C(K) \otimes \C(\B))$.
Since $p, q$ were arbitrary, by \cite{BlanchardRohdeRordam} Proposition
5.1, $C(K) \otimes \C(\B)$ is $K_1$-injective.\\ 
\end{proof}

\subsection{Two short, elementary computations in operator theory}

The next two computations are useful and elementary, but  
they are not as well-known to beginning students
of the subject as they should be. Hence, for the convenience of the reader,
 we present their short proofs.

\begin{lem}  \label{lem:Aug520233pm}  Let $\B$ be a separable, stable C*-algebra.
Say that $P \in \C(\B)$ is a projection such that $$P \sim 1_{\C(\B)} \sim 1_{\C(\B)} - P.$$

Then there exists a projection $P' \in \Mul(\B)$ such that $\pi(P') = P$ and
$$P' \sim 1_{\Mul(\B)} \sim 1_{\Mul(\B)} - P'.$$
\end{lem}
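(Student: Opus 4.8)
The plan is to mirror the argument used for the implication (5) $\Rightarrow$ (3) inside the proof of Theorem \ref{thm:ProjectionLiftingCondition}: instead of producing an arbitrary projection lift of $P$ and then trying to arrange Murray--von Neumann equivalences afterwards, I will build the lift directly in the form $U Q_0 U^*$ for a carefully chosen unitary $U \in \Mul(\B)$ and a carefully chosen projection $Q_0 \in \Mul(\B)$.

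First I would use stability of $\B$ to fix a projection $Q_0 \in \Mul(\B)$ with $Q_0 \sim 1_{\Mul(\B)} \sim 1_{\Mul(\B)} - Q_0$ (for instance, write $1_{\Mul(\B)} = \sum_j p_j$ with the $p_j$ pairwise orthogonal and each Murray--von Neumann equivalent to $1_{\Mul(\B)}$, and let $Q_0$ be the strict sum of the $p_j$ with $j$ odd; cf.\ \cite{WeggeOlsen} Lemma 16.2). Applying $\pi$ gives $\pi(Q_0) \sim 1_{\C(\B)} \sim 1_{\C(\B)} - \pi(Q_0)$, and $1_{\C(\B)}$ is properly infinite. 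Combining this with the hypothesis $P \sim 1_{\C(\B)} \sim 1_{\C(\B)} - P$ yields $P \sim \pi(Q_0)$ and $1_{\C(\B)} - P \sim 1_{\C(\B)} - \pi(Q_0)$; choosing partial isometries $v_1, v_2 \in \C(\B)$ implementing these two equivalences and setting $u =_{df} v_1 + v_2$ produces a unitary $u \in \C(\B)$ with $u\,\pi(Q_0)\,u^* = P$.

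Next I would correct the $K_1$-class of $u$. Since $\pi(Q_0) \sim 1_{\C(\B)}$, the corner $\pi(Q_0)\,\C(\B)\,\pi(Q_0)$ is $*$-isomorphic to $\C(\B)$ and is a full hereditary subalgebra, so the inclusion induces a surjection $K_1(\pi(Q_0)\,\C(\B)\,\pi(Q_0)) \to K_1(\C(\B))$; hence I can choose a unitary $w \in \pi(Q_0)\,\C(\B)\,\pi(Q_0)$ with $[\,w + (1_{\C(\B)} - \pi(Q_0))\,] = -[u]$ in $K_1(\C(\B))$. Put $u' =_{df} u\,(w + 1_{\C(\B)} - \pi(Q_0))$. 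Then $[u'] = 0$ in $K_1(\C(\B))$, while, because $w$ lies in the corner, the unitary $w + 1_{\C(\B)} - \pi(Q_0)$ commutes with $\pi(Q_0)$ and fixes it under conjugation, so we still have $u'\,\pi(Q_0)\,(u')^* = P$. Now $\C(\B)$ is $K_1$-injective (Lemma \ref{lem:CoronaK1InjAndSurj} with $K$ a one-point space), so $u'$ is homotopic to $1_{\C(\B)}$ in $U(\C(\B))$, and therefore, by \cite{WeggeOlsen} Corollary 4.3.3, $u'$ lifts to a unitary $U \in \Mul(\B)$ with $\pi(U) = u'$.

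Finally I would set $P' =_{df} U Q_0 U^*$. Then $\pi(P') = u'\,\pi(Q_0)\,(u')^* = P$, and since conjugation by a unitary preserves Murray--von Neumann equivalence, $P' \sim Q_0 \sim 1_{\Mul(\B)}$ and $1_{\Mul(\B)} - P' = U(1_{\Mul(\B)} - Q_0)U^* \sim 1_{\Mul(\B)} - Q_0 \sim 1_{\Mul(\B)}$, which is what is wanted. The only step needing any genuine care is the $K_1$-adjustment of the third paragraph together with the appeal to $K_1$-injectivity of $\C(\B)$; the remaining steps are routine comparison theory for properly infinite projections, and the whole mechanism is essentially the one already employed in the proof of Theorem \ref{thm:ProjectionLiftingCondition}.
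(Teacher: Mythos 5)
Your proposal is correct and follows essentially the same route as the paper's own proof: start from a projection $Q_0\in\Mul(\B)$ with $Q_0\sim 1\sim 1-Q_0$ given by stability, build a unitary $u\in\C(\B)$ conjugating $\pi(Q_0)$ to $P$ from two partial isometries, correct its $K_1$-class by a unitary in the corner $\pi(Q_0)\,\C(\B)\,\pi(Q_0)$, invoke $K_1$-injectivity of $\C(\B)$ to lift the corrected unitary to $\Mul(\B)$, and conjugate $Q_0$ by the lift. The only cosmetic difference is that you make the appeal to the $K_1$-surjectivity of the corner slightly more explicit than the paper does.
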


\begin{proof}
Since $\B$ is stable, we can find a projection $P_0 \in \Mul(\B)$ for which
$$P_0 \sim 1_{\Mul(\B)} \sim 1_{\Mul(\B)} - P_0.$$
Hence, by hypothesis,
$$\pi(P_0) \sim P \makebox{  and  } 1 - \pi(P_0) \sim 1 - P.$$
Hence, choose partial isometries $V, W \in \C(\B)$ for which
$V^* V = \pi(P_0)$, $VV^* = P$, $W^*W = 1 - \pi(P_0)$ and
$W W^* = 1 - P$.  Let $U' \in \C(\B)$ be the unitary given by 
$U' =_{df}  V + W$.  So $U' \pi(P_0) (U')^* = P$.
Since $\pi(P_0) \sim 1$, let $u \in \pi(P_0) \C(\B) \pi(P_0)$ be a unitary such that
$[u^*] = [U']$ in $K_0(\C(\B))$.  Let $U =_{df} U' (u + (1 - \pi(P_0)))$.  Then 
$U \in \C(\B)$ is a unitary such that $[U] = 0$ in $K_1(\C(\B))$.  
Since $\B$ is stable,
$\C(\B)$ is $K_1$-injective (see Lemma \ref{lem:CoronaK1InjAndSurj}).
Hence, $U$ is (norm-) path-connected to $1$ in $U(\C(\B))$.
Hence, $U$ is liftable, i.e., let $U_0 \in \Mul(\B)$ be a unitary such that 
$\pi(U_0) = U$  (see \cite{WeggeOlsen} Corollary 4.3.3).
Hence,
$$\pi(U_0 P_0 U_0^*) = 
U \pi(P_0) U^* = U'(u + (1 - \pi(P_0)) \pi(P_0) (u^* + (1 - \pi(P_0)) {U'}^* =   U' \pi(P_0) {U'}^* = P.$$
So let $$P' =_{df} U_0  P_0 U_0^*.$$\\
\end{proof}

\begin{lem}  \label{lem:AShortComputation}
Let $\B$ be a separable stable C*-algebra. If $P, Q \in \Mul(\B)$ are
projections such that $P \sim 1$ and $P - Q \in \B$, then
$Q \sim 1$.
\end{lem}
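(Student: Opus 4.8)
The statement to prove is Lemma \ref{lem:AShortComputation}: if $\B$ is separable stable, $P, Q \in \Mul(\B)$ are projections with $P \sim 1$ and $P - Q \in \B$, then $Q \sim 1$. The plan is to reduce this to a $K$-theoretic computation using the fact that $K_0(\Mul(\B)) = 0$ together with a suitable pair of Murray--von Neumann comparison facts. First I would observe that since $P - Q \in \B$, we have $\pi(P) = \pi(Q)$ in $\C(\B)$; this is the crucial consequence we will exploit throughout.

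The main line of argument: since $\B$ is stable, $1_{\Mul(\B)}$ is properly infinite, and $P \sim 1$ forces $P$ to be properly infinite and full as well; moreover $1 - P$, being the complement of a projection equivalent to $1$ inside a stable multiplier algebra, satisfies $1 - P \sim 1$ too (one can arrange $P_0 \sim 1 \sim 1 - P_0$ with $P_0 \sim P$ using \cite{WeggeOlsen} Lemma 16.2, then transport). Now I want to compare $Q$ with $1$. Because $\pi(Q) = \pi(P) \sim 1_{\C(\B)} \sim 1_{\C(\B)} - \pi(P) = \pi(1 - P)$, Lemma \ref{lem:Aug520233pm} applies directly to the projection $\pi(Q) \in \C(\B)$: it lifts to a projection $Q' \in \Mul(\B)$ with $Q' \sim 1 \sim 1 - Q'$ and $\pi(Q') = \pi(Q)$. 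Then $Q - Q' \in \B$, so $Q$ and $Q'$ are two projections in $\Mul(\B)$ that are equal modulo $\B$, with $Q' \sim 1 \sim 1 - Q'$.

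It remains to pass from $Q' \sim 1$ to $Q \sim 1$ knowing $Q - Q' \in \B$. The plan here is the standard argument: since $Q - Q' \in \B$, the essential codimension $[Q : Q'] \in K_0(\B)$ is defined, and by (EC3)-type reasoning (or directly: $Q' \sim 1$, $1 - Q' \sim 1$, so both $Q'$ and $1 - Q'$ are properly infinite and full, hence $Q \oplus (1-Q')$ absorbs and one can build the partial isometry) one shows $Q \sim 1$. Concretely, I would instead argue: $Q - Q' \in \B$ means $Q$ and $Q'$ are close modulo the ideal, so by the usual perturbation-of-projections technique one finds $Q'' \le Q$ with $Q'' \sim Q'$ and $Q - Q''$ a projection in $\B$; since $Q' \sim 1$ and $1$ is properly infinite in $\Mul(\B)$, $Q''$ absorbs the finite-rank-like piece $Q - Q''$, giving $Q \sim Q'' \sim Q' \sim 1$. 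The main obstacle will be making this last absorption step clean without circular appeal to $K$-theory; I expect the cleanest route is to note $1 - Q' \sim 1$, pick an isometry $V \in \Mul(\B)$ with $VV^* \le 1 - Q'$, and use $V$ to "hide" the projection $Q - Q'' \in \B$ inside the complement, then recombine — this is elementary Cuntz-algebra-style bookkeeping and should present no real difficulty once set up.
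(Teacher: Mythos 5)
There is a genuine gap, and it occurs right at the start. You assert that $P \sim 1$ forces $1-P \sim 1$ in $\Mul(\B)$, justifying this by producing some $P_0$ with $P_0 \sim 1 \sim 1-P_0$ and $P_0 \sim P$ and then ``transporting.'' But Murray--von Neumann equivalence of $P$ and $P_0$ says nothing about $1-P$ versus $1-P_0$: in $\mathbb{B}(l_2) = \Mul(\K)$, take $P$ an infinite-rank projection of finite nonzero corank; then $P \sim 1$ while $1-P$ has finite rank, so $1-P \not\sim 1$. In that same example $\pi(Q) = \pi(P) = 1_{\C(\B)}$, so $1_{\C(\B)} - \pi(Q) = 0$, and the hypothesis $\pi(Q) \sim 1 \sim 1 - \pi(Q)$ needed to apply Lemma~\ref{lem:Aug520233pm} fails; your lift $Q'$ need not exist. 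Note that the lemma as stated carries no assumption on $1-P$, and its conclusion does hold in this example (here $Q$ has finite corank, hence infinite rank, hence $Q \sim 1$), so it is your reduction that breaks, not the statement.

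There is also a second, independent problem. Even if one grants the existence of $Q'$ with $Q' \sim 1 \sim 1-Q'$ and $Q - Q' \in \B$, your remaining task --- to deduce $Q \sim 1$ from $Q' \sim 1$ and $Q - Q' \in \B$ --- is literally another instance of the lemma being proved, with $Q'$ in the role of $P$. You propose to settle it with ``the usual perturbation-of-projections technique,'' but that requires $\|Q - Q'\|$ to be small, and $Q - Q' \in \B$ imposes no norm bound whatsoever. As written this step does not close, and the apparent progress is circular.

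The paper's proof sidesteps both issues and is shorter. Writing $Q = P + b$ with $b \in \B$, stability gives a strictly convergent decomposition $P = \sum_{n=1}^{\infty} P_n$ with the $P_n$ pairwise orthogonal and each $P_n \sim 1$. The key observation is that $\sum_{n \ge N} P_n \to 0$ strictly, so $\bigl\|\bigl(\sum_{n \ge N} P_n\bigr) b\bigr\|$ can be made arbitrarily small; taking a partial isometry $V$ with $V^*V = \sum_{n \ge N} P_n \le P$ and $VV^* = 1$, one gets $VQV^* = VPV^* + VbV^*$ within $\tfrac{1}{10}$ of $1$ in norm, hence positive and invertible. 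One then manufactures $W$ with $WW^* = 1$ and $W^*W \le Q$, so $1 \preceq Q$, and \cite{WeggeOlsen} Lemma 16.2 then gives $Q \sim 1$. This is more elementary (no $K_1$-injectivity, no Lemma~\ref{lem:Aug520233pm}) and strictly more general (no hypothesis on $1-P$). If you want to salvage the spirit of your ``hide it in the complement'' idea, the right place to hide the ideal perturbation is inside a far-out piece of $P$ itself, exploiting the strict topology; that is precisely the paper's move.
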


\begin{proof}
Since $P - Q \in \B$, let $b \in \B$ be such that $Q = P + b$.
Since $P \sim 1$ and since $\B$ is stable, let $\{ P_n \}_{n=1}^{\infty}$
be a sequence of pairwise orthogonal projections in $\Mul(\B)$, with
$P_n \sim 1$ for all $n$, such that $P = \sum_{n=1}^{\infty} P_n$, where
the sum converges strictly in $\Mul(\B)$.   
Choose $N \geq 1$ such that 
$$\left\|\sum_{n=N}^{\infty} P_n b \right\| < \frac{1}{10}.$$ 
By \cite{WeggeOlsen} Lemma 16.2, $\sum_{n=N}^{\infty} P_n \sim 1$.
Hence, let $V \in \Mul(\B)$ be a partial isometry such that 
$V^*V =  \sum_{n=N}^{\infty} P_n$ and $VV^* = 1$. 
So 
$$V Q V^* = V (P + b) V^* \approx_{\frac{1}{10}} V P V^* = 1.$$
Since $V Q V^*$ is norm witin $\frac{1}{10}$ of $1$, it is a positive
invertible. Hence, there exists $X \in \Mul(\B)$ such that
$X V Q V^* X^* = 1$.  Hence, $W =_{df} X V Q \in \Mul(\B)$ is such that
$W W^* = 1$ and $W^* W = Q V^* X^* X V Q \leq Q$.  (Note that
$\| W \|= 1$ since $WW^* = 1$.)    Hence, $1$ is Murray--von Neumann
subequivalent to $Q$.  Hence, by \cite{WeggeOlsen} Lemma 16.2,
$Q \sim 1$.
\end{proof}

\subsection{A weak homotopy equivalence} 
\label{subsect:KappaIsHomotopyEquivalence}
In this subsection, we provide some details from topology (especially homotopy
theory) that is used to prove that $\kappa$ (as  in Definition
\ref{df:PereraIsomInverseRealSecondComp}) is a homotopy equivalence.  These
details should be known to experts in homotopy, but we could not find a place
where it is explicitly and clearly presented in an elementary text, and in 
fact, we even found a mistake in the literature.  Thus, since this paper is
directed to analysts, we will provide explicit, detailed computations and 
full references.\\

The first result is essentially \cite{HatcherTopologyBook} Proposition
4.66.  However, there is a mistake in their statement (as $\pi_0$
need not be a group, and (even if $\pi_0$ is a group)  
their computation does not even  give a bijection
at the level of $\pi_0$).  For the convenience of the reader, we here
provide the corrected statement as well as (a slightly more detailed version 
of) the short proof.

\begin{lem}  \label{lem:HatcherPi_nIsom}
Let $E, B$ be topological spaces and $\alpha : E \rightarrow B$ a 
Hurewicz fibration.  Suppose that $b_0 \in B$,
and $e_0 \in F =_{df} \alpha^{-1}(b_0)$.

Suppose, in addition, that $E$ is contractible. 
Let $\kappa : F \rightarrow \Omega_{b_0} B$ be defined as in 
Definition \ref{df:PereraIsomInverseRealSecondComp}.

Then for all $n \geq 1$ and for all $f \in F$, the induced map
$$\kappa_* : \pi_n(F, f) \rightarrow \pi_n(\Omega_{b_0} B, \kappa(f))$$
is a group isomorphism.
\end{lem}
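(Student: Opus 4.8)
The plan is to exploit the long exact sequence of homotopy groups of the Hurewicz fibration $\alpha : E \to B$ together with the hypothesis that $E$ is contractible, and then to identify the boundary map of that sequence with $\kappa_*$. Since $E$ is contractible, $\pi_n(E, e) = 0$ for all $n \geq 0$ and all basepoints $e$, so the long exact sequence (e.g. \cite{Spanier} Chapter 7, Section 2, Theorem 10, or \cite{HatcherTopologyBook} Theorem 4.41)
\[
\cdots \to \pi_n(E, f) \to \pi_n(B, b_0) \xrightarrow{\partial} \pi_{n-1}(F, f) \to \pi_{n-1}(E, f) \to \cdots
\]
forces the boundary map $\partial : \pi_n(B, b_0) \to \pi_{n-1}(F, f)$ to be a group isomorphism for every $n \geq 1$ (with $f \in F$ as basepoint). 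Re-indexing via $\pi_n(\Omega_{b_0}B, \omega_0) \cong \pi_{n+1}(B, b_0)$ (where $\omega_0 = \kappa(f)$; see \cite{Switzer} Definition 2.60 or \cite{HatcherTopologyBook} for the loop-space/suspension adjunction), this reads: for every $n \geq 1$ there is a canonical isomorphism $\pi_n(\Omega_{b_0}B, \kappa(f)) \cong \pi_n(F, f)$. The content of the lemma is that this canonical isomorphism is precisely $\kappa_*^{-1}$, i.e. that $\kappa_*$ realizes the inverse of the boundary map.

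First I would fix $f \in F$ and set up the comparison carefully. Recall from Definition \ref{df:PereraIsomInverseRealSecondComp} that $\kappa(f) = \alpha \circ \omega$ where $\omega : [0,1] \to E$ is the path, supplied by the chosen contraction of $E$, with $\omega(0) = f$ and $\omega(1) = e_0$. The key diagram-chase is the standard description of the connecting homomorphism $\partial$ of a fibration: given a based loop (or, inductively, an $n$-sphere) in $B$ at $b_0$, one lifts it through $\alpha$ using the homotopy lifting property, starting at $e_0$, and the endpoint of the lift lands in $F$; this endpoint map is $\partial$. When $E$ is contractible, the contraction itself provides a canonical null-homotopy that makes these lifts explicit. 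Following \cite{HatcherTopologyBook} Proposition 4.66 (whose statement I would correct as flagged in the excerpt — the $n = 0$ case gives only a bijection, not an isomorphism, since $\pi_0$ need not carry a group structure), one checks directly that $\kappa_*$ and $\partial$ are mutually inverse on $\pi_n$ for $n \geq 1$. Concretely: an element of $\pi_n(F, f)$ is represented by a map $g : (S^n, \mathrm{pt}) \to (F, f)$; applying $\kappa$ and sweeping out the contraction produces a based map $S^n \to \Omega_{b_0}B$; and composing with the loop-space adjunction this is an $(n+1)$-sphere in $B$ whose $\partial$-image recovers $[g]$ up to a homotopy that the contraction of $E$ itself provides. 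The naturality of all constructions in the basepoint $f$ is what lets us run this for arbitrary $f \in F$, not just $f = e_0$.

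The main obstacle I anticipate is purely bookkeeping rather than conceptual: making the identification of $\kappa_*$ with the (inverse of the) fibration boundary map honest requires keeping track of basepoints and of the several homotopies involved (the contraction of $E$, the path-lifting homotopies, the loop-space adjunction), and verifying that $\kappa_*$ is a \emph{group} homomorphism on $\pi_n$ for $n \geq 1$ — this last point is automatic from the fact that, for $n \geq 1$, $\kappa_*$ is a map of homotopy groups induced by a continuous map, but one should still be careful that the H-space multiplication on $\Omega_{b_0}B$ used elsewhere in the paper is compatible with the group structure on $\pi_n(\Omega_{b_0}B)$ coming from the sphere. I would organize the write-up so that the genuinely delicate diagram identification is quoted from \cite{HatcherTopologyBook} Proposition 4.66 (with the correction), and only the basepoint-independence and the explicit form of $\kappa$ via the contraction are spelled out, since those are the parts specific to our Definition \ref{df:PereraIsomInverseRealSecondComp}. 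The companion statement about $\pi_0$ being a \emph{bijection} (needed for Theorem \ref{thm:May520237PM}) is handled separately in the next lemma of the paper, so it need not be addressed here.
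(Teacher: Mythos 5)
Your proposal is essentially correct and rests on the same ingredients as the paper's proof (the fibration long exact sequence, contractibility of $E$, and \cite{HatcherTopologyBook} Proposition 4.66 with its correction), but the packaging differs in a way worth noting. You propose to re-index via the loop-space adjunction $\pi_n(\Omega_{b_0}B,\kappa(f)) \cong \pi_{n+1}(B,b_0)$ and then identify $\kappa_*$ with (the inverse of) the connecting homomorphism $\partial$ — a matching you yourself flag as requiring careful bookkeeping of basepoints, lifting homotopies, and the adjunction. The paper sidesteps this identification entirely: it extends $\kappa$ to a map $\overline\kappa : E \to P_{b_0}B$ into the path space of paths ending at $b_0$, obtaining a \emph{morphism of Hurewicz fibrations} from $F \to E \xrightarrow{\alpha} B$ to $\Omega_{b_0}B \to P_{b_0}B \xrightarrow{ev_0} B$ which is the identity on the base. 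Since both $E$ and $P_{b_0}B$ are contractible, comparing the two long exact sequences immediately forces $\kappa_*$ to be an isomorphism on $\pi_n$ for $n \geq 1$, with no need to chase the boundary map or invoke the adjunction at all. Two subtleties your route would have to address that the paper's route handles automatically: (i) the standard adjunction $\pi_n(\Omega_{b_0}B) \cong \pi_{n+1}(B)$ is stated at the constant-loop basepoint, whereas you need it at the basepoint $\kappa(f)$, which is generally not constant (this is where H-group homogeneity has to enter); and (ii) "mutually inverse" arguments require verifying two compositions, whereas the five-lemma-style diagram chase gives the isomorphism in one stroke. Both approaches are valid; the paper's is cleaner precisely because it converts the bookkeeping you anticipate into an exact-sequence comparison.
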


\begin{proof}
Let $P_{b_0} B$ be the set of 
 continous paths in $B$ with right endpoint $b_0$, i.e.,
$$P_{b_0} B =_{df} \{ \omega \in C([0,1], B) : \omega(1) = b_0 \}.$$ 

Since $E$ is contractible, fix a contraction of $E$ to $e_0$ (i.e., a homotopy
between the identity map $id_E : E \rightarrow E$ and the constant map
$E \rightarrow \{ e_0 \}$).  
Define a map
$$\overline{\kappa} : E \rightarrow P_{b_0}(B)$$
in the following manner:  For each $e \in E$, the contraction induces
a continuous path $\widetilde{\omega}$ from $e$ to $e_0$ (so 
$\widetilde{\omega}(0) = e$ and $\widetilde{\omega}(1) = e_0$); and
we defined 
$$\overline{\kappa}(e) =_{df} \alpha \circ \widetilde{\omega}.$$
Note that $$\overline{\kappa}|_F = \kappa.$$
(See Remark \ref{rmk:Sept520231AM}.  Note that the same argument also shows
that for any $e \in E$, $\overline{\kappa}(e)$ is also well-defined up to
homotopy in $\Omega_{b_0}B$.)   

We then have the following commuting diagram of continuous maps, which is a
morphism of Hurewicz fibrations  (see \cite{Switzer} Proposition 4.3):
\begin{equation}  \label{equ:Sept620231AM}
\begin{array}{ccccc}
F & \rightarrow & E & \stackrel{\alpha}{\rightarrow} & B \\
\kappa \downarrow & &  \overline{\kappa} \downarrow & & ||\\
\Omega_{b_0} B & \rightarrow & P_{b_0} B & \stackrel{ev_0}{\rightarrow} & B
\end{array}
\end{equation}
where $ev_0$ is the evaluation at $0$ map, i.e., for all $\omega \in 
P_{b_0}B$, $ev_0(\omega) =_{df} \omega(0)$.  

Let $f \in F$ be arbitrary. Then  (\ref{equ:Sept620231AM}) induces a map
from the long exact sequence of homotopy groups (corresponding to
$f \in F$) for the
Hurewicz fibration $F \rightarrow E \stackrel{\alpha}{\rightarrow} B$
to the long exact sequence of homotopy groups (corresponding to 
$\kappa(f) \in \Omega_{b_0} B$) for the Hurewicz   
fibration $\Omega_{b_0} B \rightarrow P_{b_0} B \rightarrow B$ (e.g., see 
\cite{Spanier} Chapter 7, Section 2, Theorem 10): 
\begin{equation} \label{equ:Sept620232AM}
\begin{array}{cccccccccc}
\rightarrow & \pi_n(E, f) & \rightarrow & \pi_n(B, b_0) & 
\rightarrow & \pi_{n-1}(F, f) & \rightarrow & \pi_{n-1}(E, f) & \rightarrow &
\cdot \cdot \cdot  \\ 
& \downarrow & & || & & \kappa_* \downarrow & & \downarrow & & \cdot \cdot  
\cdot  \\
\rightarrow & \pi_n(P_{b_0}, \overline{\kappa}(f)) & \rightarrow & 
\pi_n(B, b_0) &
\rightarrow & \pi_{n-1}(\Omega_{b_0} B, \kappa(f)) & \rightarrow & 
\pi_{n-1}(P_{b_0}B, \overline{\kappa}(f)) & \rightarrow &
\cdot \cdot \cdot  \\
\end{array}
\end{equation}
The above is a commuting diagram where the horozontal rows are
exact sequences.  
By hypothesis, $E$ is contractible, and by \cite{Switzer} Proposition 4.4,
$P_{b_0} B$ is also contractible.  Hence, for all $m \geq 0$,
$$\pi_m(E, f) = \pi_m(P_{b_0} B, \overline{\kappa}(f)) = 0.$$
From this and the diagram (\ref{equ:Sept620232AM}),  for all $m \geq 1$,
the map
$$\kappa_* : \pi_m(F, f) \rightarrow \pi_m(\Omega_{b_0} B, \kappa(f))$$
is a group isomorphism.
Since $f \in F$ was arbitrary, we are done.
(Note that the case of $\pi_0$ is problematic because $\pi_0$ need not be 
a group, and (even if $\pi_0$ is a group) the sequence at the level of
$\pi_0$ may only be an exact sequence of pointed sets (e.g., 
see \cite{Switzer} 2.29).)\\
\end{proof}

Note that by Whitehead's Theorem (\cite{Spanier} Chapter 7, Section 6, 
Corollary 24;  see also \cite{HatcherTopologyBook} Theorem 4.5), and by 
Lemma \ref{lem:HatcherPi_nIsom}, to show that $\kappa$ is a 
weak homotopy equivalence, 
it suffices to show that $\kappa_* : \pi_0(F) \rightarrow \pi_0(\Omega_{b_0}B)$
is a bijection.  
The rest of this subsection will be a proof of this (that $\kappa_*$
induces a bijection at the level of $\pi_0$). Towards this, we will take a 
detour through path-lifting functions and construct a map which will have
the same $\pi_0$ as $\gamma$ (a homotopy inverse of $\kappa$, which
we do not know yet exists at this point in the paper).\\

\begin{thm} \label{thm:PathLiftingFunctionExists}
Let $E$ and $B$ be topological spaces,  let
 $\alpha : E \rightarrow B$ be a continuous map, and
let 
\begin{equation}  \label{equ:Sept620237AM}
Z =_{df} \{ (e, \omega) \in E \times C([0,1], B) : \omega(0) = \alpha(e) \}.
\end{equation} 
     
Then $\alpha$ is a Hurewicz fibration if and only if there exists a continuous map
$\lambda : Z \rightarrow C([0,1], E)$ such that
\begin{equation} \label{equ:Sept120231AM} 
\lambda(e, \omega)(0) = e \makebox{  and  } \alpha \circ \lambda(e, \omega) = 
\omega \makebox{ for all  } (e, \omega) \in Z. \end{equation} 

In the above, $\lambda$ is called a \emph{path lifting function} for $\alpha$.
\end{thm}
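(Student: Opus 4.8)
The plan is to prove the two directions separately, with the forward direction (fibration $\Rightarrow$ existence of a path lifting function) being the substantive one. First I would set up the notation: recall that a Hurewicz fibration is a map with the homotopy lifting property with respect to all topological spaces, i.e., given any space $Y$, any map $h : Y \to E$, and any homotopy $H : Y \times [0,1] \to B$ with $H(\cdot, 0) = \alpha \circ h$, there is a homotopy $\widetilde{H} : Y \times [0,1] \to E$ lifting $H$ with $\widetilde{H}(\cdot, 0) = h$. The key idea for the forward direction is to apply the homotopy lifting property to the ``universal'' test space, namely $Y = Z$ itself, equipped with its natural maps into $E$ and into the homotopies of $B$.

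For the forward direction, I would proceed as follows. Let $Z$ be as in \eqref{equ:Sept620237AM}, given the subspace topology from $E \times C([0,1], B)$ (with $C([0,1],B)$ carrying the compact-open topology). Define $h : Z \to E$ by $h(e, \omega) =_{df} e$ (the projection onto the first coordinate), which is continuous. Define $H : Z \times [0,1] \to B$ by $H((e,\omega), t) =_{df} \omega(t)$; continuity of $H$ is the standard fact that the evaluation map $C([0,1], B) \times [0,1] \to B$ is continuous (this uses that $[0,1]$ is locally compact Hausdorff; e.g., see \cite{Engelking} or \cite{HatcherTopologyBook} the Appendix). Note that $H((e,\omega), 0) = \omega(0) = \alpha(e) = \alpha(h(e,\omega))$, so $H$ is a homotopy starting at $\alpha \circ h$. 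Since $\alpha$ is a Hurewicz fibration, there is a lift $\widetilde{H} : Z \times [0,1] \to E$ with $\widetilde{H}(\cdot, 0) = h$ and $\alpha \circ \widetilde{H} = H$. Now define $\lambda : Z \to C([0,1], E)$ to be the adjoint of $\widetilde{H}$, i.e., $\lambda(e,\omega)(t) =_{df} \widetilde{H}((e,\omega), t)$; continuity of $\lambda$ (as a map into the compact-open topology) follows from the exponential law / adjunction for the compact-open topology, since $[0,1]$ is locally compact Hausdorff (again see \cite{HatcherTopologyBook} Appendix or \cite{Spanier} Chapter 1). Then $\lambda(e,\omega)(0) = \widetilde{H}((e,\omega),0) = h(e,\omega) = e$ and $\alpha \circ \lambda(e,\omega) = \alpha \circ \widetilde{H}((e,\omega), \cdot) = H((e,\omega), \cdot) = \omega$, which is exactly \eqref{equ:Sept120231AM}.

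For the reverse direction, suppose $\lambda : Z \to C([0,1], E)$ is a path lifting function satisfying \eqref{equ:Sept120231AM}. Let $Y$ be an arbitrary topological space, $h : Y \to E$ a map, and $H : Y \times [0,1] \to B$ a homotopy with $H(\cdot, 0) = \alpha \circ h$. For each $y \in Y$, let $H_y : [0,1] \to B$ be the path $t \mapsto H(y,t)$; then $H_y(0) = \alpha(h(y))$, so $(h(y), H_y) \in Z$. Define $\widetilde{H} : Y \times [0,1] \to E$ by $\widetilde{H}(y,t) =_{df} \lambda(h(y), H_y)(t)$. One checks continuity of $y \mapsto (h(y), H_y) \in Z$ (the second coordinate uses the adjunction for the compact-open topology applied to $H$) and hence continuity of $\widetilde{H}$ (composing with $\lambda$ and then with evaluation). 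Then $\widetilde{H}(y,0) = \lambda(h(y), H_y)(0) = h(y)$ and $\alpha \circ \widetilde{H}(y, \cdot) = \alpha \circ \lambda(h(y), H_y) = H_y = H(y, \cdot)$, so $\widetilde{H}$ is the required lift. Since $Y$ was arbitrary, $\alpha$ has the homotopy lifting property with respect to all spaces, i.e., it is a Hurewicz fibration.

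\textbf{Main obstacle.} The only genuinely delicate points are the topological bookkeeping around the compact-open topology: namely that evaluation $C([0,1],B) \times [0,1] \to B$ is continuous and that the exponential adjunction between maps $Y \times [0,1] \to E$ and maps $Y \to C([0,1],E)$ is a homeomorphism (or at least that continuity transfers in the directions we need). Both facts hold because $[0,1]$ is compact (hence locally compact) Hausdorff, and I would simply cite the standard references (\cite{Engelking} Chapter 3; \cite{HatcherTopologyBook} Appendix; \cite{Spanier} Chapter 1) rather than reprove them. This theorem is itself essentially classical (it appears in \cite{Spanier} Chapter 2 and \cite{HatcherTopologyBook}), so the proof is really just a careful unwinding of the definition using $Z$ as the universal test object; no substantial new difficulty arises.
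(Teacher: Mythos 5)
Your proof is correct and is exactly the classical argument: the paper itself does not write out a proof but simply cites \cite{Spanier} Chapter 2 Theorem 8 (and \cite{Switzer} Exercise 4.22), and your ``universal test space'' derivation, taking $Y = Z$ with $h$ the first projection and $H$ the evaluation map and then currying via the exponential law, is precisely the proof given in Spanier. The continuity bookkeeping you flag (evaluation and the exponential adjunction for the compact-open topology, using that $[0,1]$ is locally compact Hausdorff) is indeed the only subtle point and is handled correctly.
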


\begin{proof}
This is \cite{Spanier} Chapter 2 Theorem 8.  See also 
\cite{Switzer} Exercise 4.22.\\
\end{proof}

\begin{df} \label{df:PereraIsom2ndPart}
Let $E$ and $B$ be topological spaces, and 
suppose that $\alpha : E \rightarrow B$ is a Hurewicz fibration.
Let $b_0 \in B$ and $e_0 \in F =_{df} \alpha^{-1}(b_0)$.   

Let $\lambda : Z \rightarrow C([0,1], E)$ be path lifting function for $\alpha$
as in Theorem \ref{thm:PathLiftingFunctionExists}.

We define a map 
$\gamma_{\lambda} : \Omega_{b_0} B \rightarrow F$ by 
$$\gamma_{\lambda}(\omega) =_{df} (\lambda(e_0, \omega^{-1}))(1) 
\makebox{  for all  } \omega \in \Omega B.$$\\
\end{df}

We will show that, in our setting, $\gamma_{\lambda}$, as in 
Definition \ref{df:PereraIsom2ndPart}, gives a ``$\pi_0$ inverse" 
of $\kappa$ (as in Definition 
\ref{df:PereraIsomInverseRealSecondComp}).  
This will then show that $\kappa$ induces a bijection (actually
a group isomorphism)  in $\pi_0$.   
Combined with Lemma \ref{lem:HatcherPi_nIsom}, we will have that, in our
setting,  $\kappa :
F \rightarrow \Omega_{b_0} B$ is a weak homotopy equivalence.\\

\begin{prop}  \label{prop:UniqueLifting}
Let $\B$ be a separable stable C*-algebra, let $P_0 \in \Mul(\B)$ be a 
projection such that $P_0 \sim 1_{\Mul(\B)} \sim 1_{\Mul(\B)} - P_0$, and
let $\alpha_{P_0} : U(\Mul(\B)) \rightarrow \ProI (\C(\B))$ be the Hurewicz
fibration given by Definition \ref{df:AFibration}  (see also Theorem 
\ref{thm:AFibration}).

Suppose that $\omega : [0,1] \rightarrow \ProI(\C(\B))$
is a (norm-) continuous path such that 
$\omega(0) = \omega(1) = \pi(P_0)$.   

Suppose that 
 $\widetilde{\omega}_0, \widetilde{\omega}_1 : 
[0,1]   \rightarrow U(\Mul(\B))$ are  (norm-) continuous
paths such that
$$\widetilde{\omega}_0(0) = \widetilde{\omega}_1(0) = 1$$
and
$$\alpha_{P_0} \circ \widetilde{\omega}_0 = \omega = 
\alpha_{P_0} \circ \widetilde{\omega}_1.$$

Then $\widetilde{\omega}_0(1)$ and $\widetilde{\omega}_1(1)$
are both connected by a (norm-) continuous path in 
$U_{P_0}(\Mul(\B)) = \alpha_{P_0}^{-1}(\pi(P_0))$. 
(See Definition \ref{df:AFibration}.)
\end{prop}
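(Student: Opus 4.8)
The plan is to give a direct and elementary argument which, in fact, barely uses the fibration property of $\alpha_{P_0}$ beyond the identification of its fiber. First I would observe that since $\omega(1) = \pi(P_0)$, both endpoints $\widetilde{\omega}_0(1)$ and $\widetilde{\omega}_1(1)$ lie in $\alpha_{P_0}^{-1}(\pi(P_0)) = U_{P_0}(\Mul(\B))$ by Theorem \ref{thm:AFibration}. So the task reduces to connecting these two points by a norm-continuous path that stays inside $U_{P_0}(\Mul(\B))$.

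The key device is the norm-continuous path $g : [0,1] \rightarrow U(\Mul(\B))$ defined by $g(t) =_{df} \widetilde{\omega}_0(t)^* \widetilde{\omega}_1(t)$. This is manifestly a path of unitaries in $\Mul(\B)$, with $g(0) = 1$ (since $\widetilde{\omega}_0(0) = \widetilde{\omega}_1(0) = 1$), and it satisfies $\widetilde{\omega}_1(t) = \widetilde{\omega}_0(t) g(t)$ for all $t$. The only real computation in the proof is the verification that $g(t) \in U_{P_0}(\Mul(\B))$ for every $t$, i.e. that $g(t) P_0 - P_0 g(t) \in \B$. I would argue this as follows: since $\alpha_{P_0}(\widetilde{\omega}_0(t)) = \omega(t) = \alpha_{P_0}(\widetilde{\omega}_1(t))$, we have $\widetilde{\omega}_0(t) P_0 \widetilde{\omega}_0(t)^* - \widetilde{\omega}_1(t) P_0 \widetilde{\omega}_1(t)^* \in \B$; conjugating this element by the multiplier $\widetilde{\omega}_0(t)$ (and using that $\B$ is an ideal of $\Mul(\B)$) gives $P_0 - g(t) P_0 g(t)^* \in \B$, and right-multiplying by the unitary $g(t)$ then gives $P_0 g(t) - g(t) P_0 \in \B$, whence $g(t) P_0 - P_0 g(t) \in \B$ as required.

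Finally, I would set $h(t) =_{df} \widetilde{\omega}_0(1) g(t)$ for $t \in [0,1]$. Since $\widetilde{\omega}_0(1) \in U_{P_0}(\Mul(\B))$, $g(t) \in U_{P_0}(\Mul(\B))$, and $U_{P_0}(\Mul(\B))$ is a topological group, $h$ is a norm-continuous path in $U_{P_0}(\Mul(\B))$ with $h(0) = \widetilde{\omega}_0(1) g(0) = \widetilde{\omega}_0(1)$ and $h(1) = \widetilde{\omega}_0(1) g(1) = \widetilde{\omega}_1(1)$, which completes the proof. I do not expect a genuine obstacle here; the only point requiring care is the ideal-theoretic bookkeeping in the step showing that $g(t)$ commutes with $P_0$ modulo $\B$, and one should also be mindful to invoke the closure of $U_{P_0}(\Mul(\B))$ under multiplication (equivalently, that $[UV,P_0] = U[V,P_0] + [U,P_0]V \in \B$ when $[U,P_0],[V,P_0]\in\B$) when asserting that $h$ stays in the fiber.
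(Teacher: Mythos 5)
Your argument is correct and is essentially the same as the paper's: both observe that $\widetilde{\omega}_0(t)^*\widetilde{\omega}_1(t)$ lies in the fiber $U_{P_0}(\Mul(\B))$ for all $t$ and then left-translate by a fixed fiber element to produce the desired path. The only difference is cosmetic — the paper passes to the coset space $G/F$ via Steenrod to see immediately that $\widetilde{\omega}_1(t)^{-1}\widetilde{\omega}_0(t)\in F$, whereas you verify the commutator condition $[g(t),P_0]\in\B$ by a direct computation (conjugating by $\widetilde{\omega}_0(t)^*$, not $\widetilde{\omega}_0(t)$ as written, is what yields $P_0 - g(t)P_0g(t)^*\in\B$), which is equally valid and slightly more self-contained.
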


\begin{proof}
Let $G =_{df} U(\Mul(\B))$ and $F =_{df} U_{P_0}(\Mul(\B)) 
= \alpha_{P_0}^{-1}(\pi(P_0))$, with both given the restriction of
the norm topology from $\Mul(\B)$ (and thus both $G$ and $F$ are
topological groups). In fact, $F$ is a closed subgroup of $G$.   
Recall, from Theorem \ref{thm:AFibration}, that 
$\alpha_{P_0} : G \rightarrow \ProI(\C(\B))$ is a principal
$F$-bundle. 
By \cite{Steenrod} pages 30-31, sections 7.3 and 7.4 (e.g., see Theorem 7.3),
we can replace the principal F-bundle  
$\alpha_{P_0} : G \rightarrow \ProI(\C(\B))$ with the 
quotient map $\alpha : G \rightarrow G/F$,    
which is also a principal F-bundle (i.e., the obvious map
$G/F \rightarrow \ProI$ is a homeomorphism, and
$\alpha_{P_0}$ and $\alpha$ 
are topologically equivalent).  Here,  $G/F$  
is the left coset space of $F = U_{P_0}(\Mul(\B))$.  
We then replace $\alpha_{P_0}(1) = \pi(P_0)$ with
the coset $\alpha(1) = 1F = F$ in $G/F$. 
(Here, $1$ is the unit of $G$ and hence, of $F$.)

So let $\omega : [0,1] \rightarrow G/F$ be a (norm-) continuous map
with $\omega(0) = \omega(1) = 1F = F$.
Suppose that $\widetilde{\omega}_0, \widetilde{\omega}_1 :
[0,1]   \rightarrow G$ are  (norm-) continuous
paths such that
$$\widetilde{\omega}_0(0) = \widetilde{\omega}_1(0) = 1$$
and
\begin{equation} \label{equ:May820231AM}   
\alpha \circ \widetilde{\omega}_0 = \omega =
\alpha \circ \widetilde{\omega}_1.
\end{equation}   
Note that $\widetilde{\omega}_0(1), \widetilde{\omega}_1(1) \in 
\alpha^{-1}(1F) = F$. 

By (\ref{equ:May820231AM}),
for all $t \in [0,1]$,
$$\widetilde{\omega}_1(t)^{-1} \widetilde{\omega}_0(t) \in F.$$ 
Hence, let 
$f : [0,1] \rightarrow F$ be the (norm-) continuous path such that
$$f(t) =_{df} \widetilde{\omega}_1(1) \widetilde{\omega}_1(t)^{-1} \widetilde{\omega}_0(t) 
\in F \makebox{  for all  } t \in [0,1].$$
Note that $f(0) = \widetilde{\omega}_1(1)$ and 
$f(1) =  \widetilde{\omega}_1(1) \widetilde{\omega}_1(1)^{-1} \widetilde{\omega}_0(1) = 
\widetilde{\omega}_0(1)$.
Hence, 
$$f: [0,1] \rightarrow F$$       
is a (norm-) continuous path in $F$ that connects
$\widetilde{\omega}_1(1)$ to $\widetilde{\omega}_0(1)$.\\ 
\end{proof}

\begin{lem} \label{lem:kappaPi0Bijection}
Let $\B$ be a separable stable C*-algebra, let $P_0 \in \Mul(\B)$ be a 
projection such that $P_0 \sim 1_{\Mul(\B)} \sim 1_{\Mul(\B)} - P_0$, and
let $\alpha_{P_0} : U(\Mul(\B)) \rightarrow \ProI(\C(\B))$ be the Hurewicz
fibration defined in Definition \ref{df:AFibration}  (see also Theorem
\ref{thm:AFibration}).

Let $E =_{df} U(\Mul(\B))$, $B =_{df} \ProI(\C(\B))$, $b_0 =_{df} \pi(P_0)
\in B$, $\alpha =_{df} \alpha_{P_0}$, $e_0 =_{df} 1_{\Mul(\B)}
\in F =_{df} \alpha^{-1}(\pi(P_0)) = U_{P_0}(\Mul(\B))$.

Since $U(\Mul(\B))$ is (norm-) contractible 
(e.g., see \cite{WeggeOlsen} Theorem 16.8), let 
$$\kappa : F \rightarrow \Omega_{b_0} B$$
be defined as in Definition \ref{df:PereraIsomInverseRealSecondComp}.   

Since $\alpha =_{df} \alpha_{P_0}$ is a Hurewicz fibration, let
$\gamma_{\lambda} : \Omega_{b_0} B \rightarrow F$  (for a given
path-lifting function $\lambda$) be defined as in Definition
\ref{df:PereraIsom2ndPart}.

Now consider the induced maps $\kappa_* : \pi_0(F) \rightarrow
\pi_0(\Omega_{b_0} B)$ and $(\gamma_{\lambda})_* : \pi(\Omega_{b_0} B)
\rightarrow \pi(F)$.  

We then have the following: 
$$(\gamma_{\lambda})_* \circ \kappa_* = id_{\pi_0(F)} 
\makebox{  and  }  
\kappa_* \circ (\gamma_{\lambda})_* = id_{\Omega_{b_0}B}.$$

As a consequence, 
$$\kappa_* : \pi_0(F) \rightarrow \pi_0(\Omega_{b_0} B)$$
is a bijection.   
\end{lem}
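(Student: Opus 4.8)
The plan is to produce an explicit two-sided inverse of $\kappa_*$ on $\pi_0$, namely $(\gamma_{\lambda})_*$, by manipulating concrete paths and invoking Proposition \ref{prop:UniqueLifting} (uniqueness of lifts along $\alpha_{P_0}$ up to a path in the fibre) together with the fact that $E = U(\Mul(\B))$, being contractible, is simply connected. Before the two main computations I would record three preliminary observations. First, for $f \in F = U_{P_0}(\Mul(\B))$ one has $\alpha_{P_0}(f) = \pi(fP_0f^*) = \pi(P_0) = b_0$ (because $\pi(f)$ commutes with $\pi(P_0)$), so the contraction path $\omega_f$ from $f$ to $e_0 = 1_{\Mul(\B)}$ used in Definition \ref{df:PereraIsomInverseRealSecondComp} projects under $\alpha_{P_0}$ to an honest loop $\kappa(f) = \alpha_{P_0}\circ\omega_f$ based at $b_0$; similarly $\gamma_{\lambda}(\omega) = \lambda(e_0,\omega^{-1})(1)$ always lands in $F$ since $\alpha_{P_0}(\lambda(e_0,\omega^{-1})(1)) = \omega^{-1}(1) = \omega(0) = b_0$. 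Second, $\kappa$ and $\gamma_{\lambda}$ are continuous (the contraction $E\times[0,1]\to E$ is jointly continuous, and $\omega\mapsto\omega^{-1}\mapsto\lambda(e_0,\omega^{-1})\mapsto\lambda(e_0,\omega^{-1})(1)$ is a composite of continuous maps), so $\kappa_*$ and $(\gamma_{\lambda})_*$ are well defined on path components. Third, reversal commutes with $\alpha_{P_0}$: $(\alpha_{P_0}\circ\omega)^{-1} = \alpha_{P_0}\circ\omega^{-1}$.

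For $(\gamma_{\lambda})_*\circ\kappa_* = \mathrm{id}_{\pi_0(F)}$ I would fix $f\in F$, let $\omega_f$ be the contraction path from $f$ to $1$, so $\kappa(f) = \alpha_{P_0}\circ\omega_f$. Then $\kappa(f)^{-1} = \alpha_{P_0}\circ\omega_f^{-1}$, and $\omega_f^{-1}$ is a lift of $\kappa(f)^{-1}$ starting at $\omega_f^{-1}(0) = \omega_f(1) = 1 = e_0$, while $\lambda(e_0,\kappa(f)^{-1})$ is, by Theorem \ref{thm:PathLiftingFunctionExists}, another lift of $\kappa(f)^{-1}$ starting at $e_0$. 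Since $\kappa(f)^{-1}$ is a loop at $b_0 = \pi(P_0)$ and both lifts start at $1_{\Mul(\B)}$, Proposition \ref{prop:UniqueLifting} supplies a norm-continuous path in $F = \alpha_{P_0}^{-1}(\pi(P_0))$ joining the endpoints $\lambda(e_0,\kappa(f)^{-1})(1) = \gamma_{\lambda}(\kappa(f))$ and $\omega_f^{-1}(1) = \omega_f(0) = f$. Hence $[\gamma_{\lambda}(\kappa(f))] = [f]$ in $\pi_0(F)$.

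For $\kappa_*\circ(\gamma_{\lambda})_* = \mathrm{id}_{\pi_0(\Omega_{b_0}B)}$ I would take a loop $\omega$ based at $b_0$, set $\mu = \lambda(e_0,\omega^{-1})$ — a path in $E$ from $1$ to $g := \gamma_{\lambda}(\omega)\in F$ with $\alpha_{P_0}\circ\mu = \omega^{-1}$ — and let $\sigma$ be the contraction path from $g$ to $1$, so $\kappa(g) = \alpha_{P_0}\circ\sigma$. Both $\mu^{-1}$ and $\sigma$ are paths in $E$ from $g$ to $1$; since $E$ is contractible, hence simply connected, $\mu^{-1}$ and $\sigma$ are homotopic rel endpoints in $E$, and applying $\alpha_{P_0}$ shows $\omega = \alpha_{P_0}\circ\mu^{-1}$ and $\kappa(g) = \alpha_{P_0}\circ\sigma$ are homotopic rel endpoints in $B$, in particular in the same path component of $\Omega_{b_0}B$. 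Thus $\kappa_*((\gamma_{\lambda})_*[\omega]) = [\kappa(g)] = [\omega]$. Combining the two displayed identities, $\kappa_*$ has a two-sided inverse and is therefore a bijection, which is the assertion; together with Lemma \ref{lem:HatcherPi_nIsom} this is exactly what is needed to conclude (in Theorem \ref{thm:May520237PM}) that $\kappa$ is a weak homotopy equivalence.

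The step I expect to be the main obstacle is the bookkeeping in the second and third paragraphs: one must track the reversals $\omega\mapsto\omega^{-1}$ and the base points so that Proposition \ref{prop:UniqueLifting} is applied precisely in the form in which it is stated (a loop at $\pi(P_0)$, two lifts both beginning at $1_{\Mul(\B)}$), and one must repeatedly use the identity $\alpha_{P_0}(f) = \pi(P_0)$ for $f\in F$ to certify that every path produced by the contraction or by the path-lifting function $\lambda$ really does sit over a loop based at $b_0$. Once those matchings are set up correctly, the remainder is formal.
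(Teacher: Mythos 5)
Your proof is correct and follows essentially the same route as the paper: the first identity is obtained by feeding the two lifts $\omega_f^{-1}$ and $\lambda(e_0,\kappa(f)^{-1})$ of $\kappa(f)^{-1}$ into Proposition \ref{prop:UniqueLifting}, and the second by comparing the reversed path $\mu^{-1}$ with the contraction path $\sigma$ inside the simply connected $E$ and pushing forward by $\alpha_{P_0}$. Only the notation ($\mu$, $\sigma$, $g$ versus the paper's $\lambda(\cdot)$, $\widetilde{\omega}$) and a few extra preliminary sanity checks differ.
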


\begin{proof}

Let 
$Z$ and the path-lifting function 
$\lambda : Z \rightarrow C([0,1], E)$ be as
in  Theorem \ref{thm:PathLiftingFunctionExists} such that $\lambda$ induces the 
map $\gamma_{\lambda}$ (see Definition \ref{df:PereraIsom2ndPart}).
    
Also, since $E = U(\Mul(\B))$ is contractible (see 
\cite{WeggeOlsen} Theorem 16.8), fix a contraction from $E$ to 
$e_0 = 1_E = 1_F  = 1_{\Mul(\B)}$
(i.e., a homotopy from $id_E : E \rightarrow E$ to the constant map
$E \rightarrow \{ e_0 \}$) such that this contraction induces $\kappa$
(see Definition \ref{df:PereraIsomInverseRealSecondComp}). 

Let us first prove that $(\gamma_{\lambda})_* \circ \kappa_*  = id_{\pi_0(F)}$.
Let $f \in F$ be arbitrary.  The contraction induces a continuous path
$\omega_f$ from $f$ to $e_0 =  1_{\Mul(\B)}$ (i.e., $\omega_f(0) = f$ and 
$\omega_f(1) = e_0$).   By Definition \ref{df:PereraIsomInverseRealSecondComp},
\begin{equation}  \label{equ:Sept720231AM}
\kappa(f) =_{df} \alpha \circ \omega_f \in \Omega_{b_0} B.
\end{equation}                    
Hence, by Definition \ref{df:PereraIsom2ndPart},
\begin{equation} \label{equ:Sept720232AM}
\gamma_{\lambda}(\kappa(f)) =  
\lambda(1_{\Mul(\B)}, (\alpha \circ \omega_f)^{-1})(1) \in F.
\end{equation} 
But by the definition of $\lambda$  (see Theorem  
\ref{thm:PathLiftingFunctionExists}), 
\begin{align}  \label{equ:Sept720233AM}
\alpha \circ \lambda(1_{\Mul(\B)}, (\alpha \circ \omega_f)^{-1})
= (\alpha \circ \omega_f)^{-1} = \alpha \circ (\omega_f^{-1}) \\
\makebox{  and  } \lambda(1_{\Mul(\B)}, (\alpha \circ \omega_f)^{-1})(0)
= 1_{\Mul(\B)}. 
\end{align} 
And also, by the definition of $\omega_f$,
\begin{equation} \label{equ:Sept720234AM}
\omega^{-1}_f(0) = 1_{\Mul(\B)} \makebox{ and  } \omega^{-1}_f(1) = f.
\end{equation}
By Proposition \ref{prop:UniqueLifting}, (\ref{equ:Sept720232AM}),
(\ref{equ:Sept720233AM}) and (\ref{equ:Sept720234AM}),  we must have that
$\gamma_{\lambda}(\kappa(f))$ is in the same (path-connected) component
as $\omega^{-1}_f(1) = f$ in $F$.   
Since $f \in F$ was arbitrary, we have that for all $f \in F$,  
$$(\gamma_{\lambda})_* \circ \kappa_*([f]) = [f]$$ 
where $[f] \in \pi_0(F)$ is the (path-) connected component of $f$ in $F$.\\  

Next, let us prove that $\kappa_* \circ (\gamma_{\lambda})_* 
= id_{\Omega_{b_0} B}$. 
Let $\omega \in \Omega_{b_0}B$ be arbitrary. 
By the definition of $\gamma_{\lambda}$ (Definition \ref{df:PereraIsom2ndPart}),
\begin{equation} \label{equ:Sept720235AM}
\gamma_{\lambda}(\omega) =_{df} 
\lambda(1_{\Mul(\B)}, \omega^{-1})(1) \in F,
\end{equation}
and (by Theorem \ref{thm:PathLiftingFunctionExists}), $\lambda(1_{\Mul(\B)},
\omega^{-1})$ is a continuous path in $E = U(\Mul(\B))$ with
\begin{equation} \label{equ:Sept720236AM}
\alpha \circ \lambda(1_{\Mul(\B)}, \omega^{-1}) = \omega^{-1} 
\makebox{  and  }
\lambda(1_{\Mul(\B)}, \omega^{-1})(0) = 1_{\Mul(\B)}.
\end{equation}

Now the contraction of $E$ (to $e_0 = 1_{\Mul(\B)}$), defined in the second
paragraph of this proof, induces a continuous path $\widetilde{\omega}$ in 
$E$ that connects   $\lambda(1_{\Mul(\B)}, \omega^{-1})(1)$ to $e_0 = 
1_{\Mul(\B)}$, i.e., 
\begin{equation} \label{equ:Sept720237AM}
\widetilde{\omega}(0) = \lambda(1_{\Mul(\B)}, \omega^{-1})(1) 
\makebox{  and  }
\widetilde{\omega}(1) = 1_{\Mul(\B)}.
\end{equation} 
Moreover, by Definition \ref{df:PereraIsomInverseRealSecondComp} and Definition
\ref{df:PereraIsom2ndPart},
\begin{equation} \label{equ:Sept720238AM}
\kappa(\gamma_{\lambda}(\omega)) =_{df} 
\alpha \circ \widetilde{\omega} \in \Omega_{b_0} B.
\end{equation}

From (\ref{equ:Sept720236AM}) and (\ref{equ:Sept720237AM}), we see
that $\widetilde{\omega}$ and 
$\lambda(1_{\Mul(\B)}, \omega^{-1})^{-1}$ 
are two continuous paths in $E$ with the same 
endpoints.  But since $E = U(\Mul(\B))$ is (norm-) contractible, $E$ is
simply connected.  Hence, there is a homotopy between $\widetilde{\omega}$
and $\lambda(1_{\Mul(\B)}, \omega^{-1})^{-1}$, in $E$, which fixes the
endpoints.    
Hence, by (\ref{equ:Sept720236AM}) and (\ref{equ:Sept720238AM}),
there is a homotopy between 
$\kappa(\gamma_{\lambda}(\omega)) = \alpha \circ \widetilde{\omega}$ and 
$$\alpha \circ \lambda(1_{\Mul(\B)}, \omega^{-1})^{-1} = \omega$$
in $\Omega_{b_0} B$.
Since $\omega$ was arbitrary, we have that          
for all $\omega \in \Omega_{b_0} B$,
$$\kappa_* \circ (\gamma_{\lambda})_* ([\omega])  = [\omega]$$
where $[\omega]$ is the (path-connected) component of 
$\omega$ in $\pi_0(\Omega_{b_0} B)$.\\  

\end{proof}

From Lemmas \ref{lem:HatcherPi_nIsom} and \ref{lem:kappaPi0Bijection},
we have that (in our context),
the map $\kappa : U_{P_0}(\Mul(\B)) \rightarrow \Omega_{b_0} \ProI (\C(\B))$
is a weak homotopy equivalence  (see \cite{Switzer} Definition 3.17).\\

\subsection{The DZWLP approach to spectral flow}
\label{subsection:DZWLPDefinition}
Here, we very briefly summarize parts of  the approach to spectral flow 
in \cite{Wu} and \cite{LeichtnamPiazzaJFA} (see also
\cite{DaiZhang} and \cite{WahlSpectralFlow}).  
``DZWLP" abbreviates ``Dai--Zhang--Wu--Leichtnam--Piazza". 
We will focus on the bounded case and
 only present quickly the major definitions without going into nontrivial 
discussions of existence (and without giving examples).  We also recall that in the original theory in
\cite{Wu} for the unbounded case, there was a need to assume that the unbounded
self-adjoint operators had compact resolvent, but this was not needed (and
not true) for the bounded case.

\begin{df} \label{df:SpectralCutSection}
Let $\B$ be a separable stable C*-algebra, and let $X \in \Mul(\B)$
be a self-adjoint Fredholm operator (i.e., $\pi(X)$ is invertible in 
$\C(\B)$). 

\begin{enumerate}
\item A \emph{spectral cut} of $X$ is a continuous function $\chi : \mathbb{R}
\rightarrow [0,1]$ such that 
there exists $r, s \in \mathbb{R}$ with
$$\max(sp(\pi(X)_-) < r < s < \min(sp(\pi(X)_+)$$
for which 
$$\chi(t) = \begin{cases} 1  & \makebox{ if  } t > s \\
0 & \makebox{ if } t < r.
\end{cases}$$  
\item A projection $P \in \Mul(\B)$ is a \emph{spectral section} of $X$ 
if there exist spectral cuts $\chi_1$ and $\chi_2$ for $X$, with
$$\chi_2 \chi_1 = \chi_1,$$
 such that
 $$\chi_1(X) \leq P \leq \chi_2(X)$$  
or, equivalently, $$P \chi_1(X) = \chi_1(X) \makebox{  and  } \chi_2(X)P = P.$$
\end{enumerate}
\end{df}

As mentioned in this paper (e.g., see the Introduction and Subsection \ref{subsect:LiftingProjectionsCondition}), the existence
of spectral sections is a very nontrivial question, which we will not discuss
in this appendix. Note though how similar the definition of spectral
sections, as in Definition \ref{df:SpectralCutSection} part (2), 
is to interpolation of projection results associated to real rank zero
C*-algebras, especially real rank zero multiplier and corona algebras
(e.g., see \cite{BrownPedersen} Theorem 2.6 and \cite{BrownInterpolation};  see again
Subsection \ref{subsect:LiftingProjectionsCondition}).
Note also that it follows immediately from Definition 
\ref{df:SpectralCutSection} that if $P, Q \in \Mul(\B)$ are both 
spectral sections of $X$, then $P - Q \in \B$; and so if, in addition,
$Q \leq P$, then $P - Q$ is a projection in $\B$.

\begin{df} \label{df:DifferenceClass}
Let $\B$ be a separable stable C*-algebra, and let $X \in \Mul(\B)$ be a 
self-adjoint Fredholm operator.  Suppose that $P, Q \in \Mul(\B)$ are spectral
sections of $X$.  Then the \emph{difference class} or
\emph{difference element}  $[P-Q]$ (if it exists) is 
defined to be 
$$[P-Q] =_{df} [P-R] - [Q-R] \in K_0(\B)$$
where $R$ is a spectral section of $X$ for which
$$R \leq P \makebox{  and  } R \leq Q$$
(if it exists).  

Note that in the above, $P-R$ and $Q-R$ are necessarily projections in $\B$. 
\end{df}

We leave to the reader the exercise of showing that the difference
class $[P- Q]$ in Definition \ref{df:DifferenceClass} (if it exists)
is well-defined and independent of the choice of $R$, and that $[P-Q]$
is actually equal to the essential codimension $[P:Q]$. A key thing is that
$[P-Q]$ need not exist but the essential codimension $[P:Q]$ always exists. 
See Definition \ref{df:EssentialCodimension}, the beginning of Subsection
\ref{subsection:DZWLPWDefinition}, and the Introduction.
The definition of $[P-Q]$ implies that $\B$ has a nonzero projection, which,
as we have discussed (e.g., see the Introduction), is a strong assumption, and we
can easily find examples of a stable, stably projectionless C*-algebra $\B$ and projections
$P, Q \in \Mul(\B)$ with $P - Q \in \B$ but $[P:Q] \neq 0$ in $K_0(\B)$.
(Moreover, Definition \ref{df:DifferenceClass} (or something similar) will not work,
since $\B$ has no nonzero projections.)

Finally, again, for the DZWLP definition of spectral flow, we do not
here discuss the question of existence, and we do not give examples, since this is meant to be a very quick
summary.

\begin{df} \label{df:DZWLPSpectralFlow}
Let $\B$ be a separable stable C*-algebra, and let $\{ X_t \}_{t \in [0,1]}$ 
be a norm-continous path of self-adjoint Fredholm operators in $\Mul(\B)$.
Suppose that $X_0$ and $X_1$ have spectral sections $P_0$ and $P_1$ 
respectively.
Suppose also that there exists a 
norm-continuous path $\{ Q_t \}_{t \in [0,1]}$ of projections in $\Mul(\B)$
such that for all $t \in [0,1]$, $Q_t$ is a spectral section of $X_t$.

Then we define the DZWLP-spectral flow, if it exists,
 in terms of difference classes:  
$$sf_{DZWLP}(\{ X_t \}_{t \in [0,1]}; P_0, P_1) =_{df} 
[P_0 - Q_0] - [P_1 - Q_1] \in K_0(\B).$$ 

The DZWLP-spectral flow
$sf_{DZWLP}(\{ X_t \}_{t \in [0,1]}; P_0, P_1)$, if it exists, is well-defined
and independent of the choice of $\{ Q_t \}_{t \in [0,1]}$, but we do not
prove this here. 
\end{df}


\begin{thebibliography}{00}  

\bibitem{AtiyahPatodiSinger1} M. Atiyah, V. Patodi and I. M. Singer,
\emph{Spectral asymmetry and Riemannian geometry, I.}  Proc. Camb.
Phil. Soc. \textbf{77} (1975), 43--69.

\bibitem{AtiyahPatodiSinger3}  M. Atiyah, V. Patodi and I. M. Singer,
\emph{Spectral asymmetry and Riemannian geometry, III.} Proc. Camb.
Phil. Soc. \textbf{79} (1976), 71--99.  

\bibitem{AtiyahSingerFirst}  M. Atiyah and I. M. Singer,
\emph{The index of elliptic operators on compact manifolds.}
Bull. Amer. Math. Soc. \textbf{69} (3) (1963), 422--433.  


\bibitem{AtiyahSinger1} M. Atiyah and I. M. Singer,
\emph{The index of elliptic operators, I.} Ann. of Math. \textbf{87} 
(1968), 484--530.   


\bibitem{AtiyahSingerSkew} M. Atiyah and I. M. Singer,
\emph{Index theory for skew-adjoint Fredholm operators.}
Publ. Math. Inst. Hautes Etudes Sci. (Paris),
\textbf{37} (1969), 5--26. 


\bibitem{BenPhillEtAl} 
M. T. Benameur, A. L. Carey, J. Phillips, A. Rennie, F. A. Sukochev,
and K. P. Wojciechowski,
\emph{An analytic approach to spectral flow in von Neumann algebras.}
Analysis, Geometry and Topology of Elliptic Operators,
World Sci. Publ., 2006, 297--352.





\bibitem{BlackadarBook}  B. Blackadar,
K-Theory for Operator Algebras, Second Edition.
Mathematical Sciences Research Institute Publications 5,
Cambridge University Press, 1998.  


\bibitem{BlackadarHomotopyLifting}  B. Blackadar,
\emph{The homotopy lifting theorem for semiprojective C*-algebras,}
Math. Scand. \textbf{118} (2016), no. 2, 291-302.


\bibitem{BlanchardRohdeRordam}  E. Blanchard,
R. Rohde and M. R$\phi$rdam,
\emph{Properly infinite $K_1$-algebras and $K_1$-injectivity.}
J. Noncomm. Geom. \textbf{2} (2008), no. 3, 263--282. 



\bibitem{BrownInterpolation}  L. G. Brown,
\emph{Interpolation by projections in C*-algebras of real rank zero,}
J. Operator Theory \textbf{26} (1991), 383-387.  


\bibitem{BDF}
L. G. Brown, R. Douglas and P. Fillmore,
\emph{Unitary equivalence modulo the compact operators and
extensions of C*-algebras,}  Proceedings of a conference on operator
theory, Halifax, Nova Scotia (Berlin--Heidelberg--New York), Lecture
notes in Math., vol. 345, Springer--Verlag, 1973, 58--128.  


\bibitem{BrownLee}  L. G. Brown and H. H. Lee,
\emph{Homotopy classification of projections in the corona algebra of 
a nonsimple C*-algebra,}  Canad. J. Math., 
\textbf{64} (2012), no. 4, 755--777.   



\bibitem{BrownPedersen}  L. G. Brown and G. Pedersen,
\emph{C*-algebras of real rank zero.}
J. Funct. Anal. \textbf{99} (1991), 131-149.

\bibitem{CareyPhillipsClifford}  A. Carey and J. Phillips,
\emph{Algebras almost commuting with Clifford Algebras in a 
$II_{\infty}$-factor}  K-theory, \textbf{4} (1991), 445--475.  


\bibitem{ChandNgSutradhar}   A. Chand, P. W. Ng and A. Sutradhar,
\emph{Multiplier unitary groups, extension groups, and
$K_1$-injectivity.}
Banach. J. Math. Anal. \textbf{16} (2022), no. 2, Paper No. 22,
34 pp.


\bibitem{CuntzKTh}  J. Cuntz,
\emph{K-theory for certain C*-algebras.}  Ann. Math.
\textbf{113} (1981), 181--197.  


\bibitem{DaiZhang}  X. Dai and W. Zhang,
\emph{Higher spectral flow.}
J. Funct. Anal. \textbf{157} (1998), 432--469.


\bibitem{DavidsonBook} K. Davidson,
C*-algebras by Example, The Fields Institute Monographs, 6.
{\em American Mathematical Society, Providence, RI,} 1996.


 
\bibitem{Dold} A. Dold, 
\emph{Partition of unity in the theory of fibrations,}
Ann. of Math. \textbf{78} (1963), 223--255.    



\bibitem{EGLNW}  G. A. Elliott, G. Gong, H. Lin and Z. Niu,
\emph{The classification of simple separable KK-contractible
C*-algebras with finite nuclear dimension,}
J. Geom. Phys. \textbf{158} (2020), 103861, 51 pp.  


\bibitem{Engelking}  R. Engelking,  Outline of general topology. Translated
from the Polish by K. Sielucki.  North Holland Publishing Company, Amsterdam;
PWN--Polish Scientific Publishers, Warsaw;  Interscience Publishers Division
John Wiley and Sons, Inc., New York, 1968.    


\bibitem{Georgescu}   M. C. Georgescu,  \emph{Spectral flow in semifinite 
von Neumann algebras.}  PhD Thesis, University of Victoria, 2013.  


\bibitem{GongLinRange}  G. Gong and H. Lin,
\emph{On classification of nonunital amenable simple C*-algebras, III:
The range and the reduction,}  Ann. K-Theory \textbf{7} (2022), no. 2,
279--384.  


\bibitem{HatcherTopologyBook}
A. Hatcher, Algebraic topology. Cambridge University Press, Cambridge, 2002.

\bibitem{HigsonKK}  
N. Higson, \emph{A characterization of KK theory,}  Pacific J. Math. \textbf{126} (1987), no. 2, 253--276.  

\bibitem{HjelmRor}   J. Hjelmborg and M. R$\phi$rdam, \emph{On stability of C*-algebras.}
J. Funct. Anal. \textbf{155} (1998), no. 1, 153--171.


\bibitem{Hu} S. T. Hu, Theory of Retracts, Wayne State University Press,
Detroit, 1965. 


\bibitem{JensenThomsenBook} K. K. Jensen and K. Thomsen,
Elements of KK-theory.  Mathematics:  Theory and Applications.
Birkhauser Boston, Inc., Boston, MA, 1991.   


\bibitem{JiangSu}  X. Jiang and H. Su,
\emph{On a simple unital projectionless C*-algebra,}
Amer. J. Math. \textbf{121} (1999), no. 2, 359--413.  


\bibitem{KadisonPyth}  R. V. Kadison,
\emph{The Pythagorean Theorem II: the infinite discrete case,}
Proc. Natl. Acad. Sci. USA \textbf{99}  (2002), no. 8, 5217-5222.   


\bibitem{KaftalLoreaux} V. Kaftal and J. Loreaux,
\emph{Kadison's Pythagorean theorem and essential codimension,}
Integral Equations Operator Theory \textbf{87} (2017), no. 4, 565-580. 


\bibitem{KNZMinimal} V. Kaftal, P. W. Ng and S. Zhang,
\emph{The minimal ideal in multiplier algebras,}
J. Operator Theory \textbf{79} (2018), no. 2, 419-462.   

\bibitem{KNZPI}  V. Kaftal, P. W. Ng and S. Zhang,
\emph{Purely infinite corona algebras,}  
J. Operator Theory \textbf{82} (2019), no. 2, 307-355.

\bibitem{LeeProjLift} H. H. Lee,
\emph{Proper asymptotic unitary equivalence in KK-theory and projection
lifting from corona algebras,}  J. Funct. Anal. \textbf{260} (2011), no. 1,
135--145. 


\bibitem{Lee13} H. H. Lee, \emph{Deformation of a projection in the multipleir algebra and projection lifting from the corona algebra of a non-simple C*-algebra.} J. Funct. Anal. \textbf{265} (2013), no.6, 926–940.



\bibitem{LeichtnamPiazzaJFA}  E. Leichtnam and P. Piazza,
\emph{Dirac index classes and the noncommutative spectral flow.}
J. Funct. Anal.
\textbf{200} (2003), 348--400.  


\bibitem{LeschUniqueness} M. Lesch, \emph{The uniqueness of spectral flow on 
spaces of unbounded self-adjoint Fredholm operators.} 
Spectral geometry of manifolds with boundary and decomposition of manifolds,
Contemp. Math. \textbf{366}, Amer. Math. Soc., Providence, R.I., 2005,  
193--224.  


\bibitem{LinContScale}  H. Lin, \emph{Simple C*-algebras with continuous
scales and simple corona algebras,}
Proc. Amer. Math. Soc.
\textbf{112} (1991), no. 3, 871--880.  



\bibitem{LinWvN}  H. Lin, \emph{Generalized Weyl--von Neumann theorems. II.}
Math. Scand. \textbf{77} (1995), no. 1, 129--147.  

\bibitem{LinBook} H. Lin,
An introduction to the classification of amenable C*-algebras. World Scientific Publishing Company, Inc., River Edge, NJ, 2001.


\bibitem{LinNgZ}  H. Lin and P. W. Ng,
\emph{The corona algebra of the stabilized Jiang--Su algebra,}
J. Funct. Anal. \textbf{270} (2016), no. 3, 1220-1267.  


\bibitem{LinNgR} H. Lin and P. W. Ng,
\emph{Extensions of C*-algebras by a small ideal,}
Int. Math. Res. Not. IMRN (2023), no. 12,
10350-10438.   


\bibitem{LoreauxNgSutradharEC} J. Loreaux, P. W. Ng and
A. Sutradhar,
\emph{Essential codimension and lifting projections.}
Preprint.

\bibitem{LoreauxNgSutradhar}  J. Loreaux, P. W. Ng and Arindam
Sutradhar,  \emph{$K_1$-injectivity of the Paschke dual algebra
for certain simple C*-algebras.}  Preprint.  

\bibitem{LundellWeingram} A. T. Lundell and S. Weingram,
The topology of CW complexes.   Univ. Ser. Higher Math.
Van Nostrand Reinhold Co., New York, 1969.  



\bibitem{MelrosePiazza}  R. B. Melrose and P. Piazza,
\emph{Families of Direac operators, boundaries and the $B$-Calculus,}
J. Differential Geom. \textbf{45} (1997), 99-180.  


\bibitem{Michael}  E. Michael,
\emph{Continuous Selections. I.}
Ann. of Math. (2) \textbf{63}
(1956), 361-382.  




\bibitem{MilnorCWHomotopyType}  J. Milnor,
\emph{On spaces having the homotopy type of a CW-complex.}
Trans. Amer. Math. Soc. \textbf{90} (1959), 272--280.  


\bibitem{PereraThesis}  V. S. Perera,
\emph{Real valued spectral flow in a type $II_{\infty}$ factor.}
PhD Thesis, Purdue University (1993) 

\bibitem{PereraPaper} V. S. Perera,
\emph{Real valued spectral flow in a type 
$II_{\infty}$ factor,}
Houston J. Math. \textbf{25} (1999), no, 1, 55-66.


\bibitem{Phillips1996}  J. Phillips, 
\emph{Self-adjoint Fredholm operators and spectral flow,}
Canad. Math. Bull. \textbf{39} (1996), no. 4, 460--467.  



\bibitem{PhillipsVictoria} J. Phillips,
\emph{Spectral flow in type I and II factors -- A new approach.}  
Fields Institute Coomun. \textbf{17}, American Mathematical Society,
Providence, RI, 1997, 137--153. 


\bibitem{Razak}  S. Razak, 
\emph{On the classification of simple stably projectionless C*-algebras,}
Canad. J. Math. \textbf{54} (2002), 138-224.  


\bibitem{RohdeThesis}  R. Rohde,
$K_1$-injectivity of C*-algebras.  PhD Thesis, University of Southern 
Denmark, 2009.  


\bibitem{Spanier}  E. Spanier, Algebraic Topology, McGraw--Hill, 1966 (reprinted
by Springer--Verlag). 


\bibitem{Steenrod} N. Steenrod, Topology of Fiber Bundles, Princeton 
University Press, New York, 1951.  


\bibitem{Switzer}  R. M. Switzer,  Algebraic Topology:  Homotopy
and Homology, Springer--Verlag, Berlin 1975.   


\bibitem{Waterstraat}   N. Waterstraat,
\emph{Fredholm operators and spectral flow,}
Rend. Semin. Mat. Univ. Politec. Torino, \textbf{75} (2017), no. 1,
7--51.   


\bibitem{WahlSpectralFlow}  
C. Wahl, \emph{On the noncommutative spectral flow.}
J. Ramanujan Math. Soc. \textbf{22} (2007), no. 2, 
135--187.

\bibitem{WeggeOlsen}  N. E. Wegge--Olsen,
K-theory and C*-algebras. A friendly approach.
Oxford Science Publications.  The Clarendon Press, Oxford University
Press. New York, 1993.


\bibitem{Wu}  F. Wu, \emph{Noncommutative spectral flow}, unpublished
preprint (1996)




\bibitem{SZhang} S. Zhang, \emph{$K_1$-groups, quasidiagonality, and 
interpolation by multiplier projections.} Trans. Amer. Math. Soc. \textbf{325} (1991), no. 2, 793–818.


\bibitem{ZhangPacific} S. Zhang,
\emph{Certain C*-algebras with real rank zero and their corona and multiplier
algebras. I.}, Pacific J. Math. \textbf{155} (1992), no. 1, 169--197.  
\end{thebibliography}
\end{document}